\documentclass[10pt, a4paper]{amsart}

\usepackage[T1]{fontenc}
\usepackage[utf8]{inputenc}
\usepackage{amsmath, amssymb, amsfonts, amsthm}
\usepackage[margin=2.0cm]{geometry}
\usepackage[]{setspace}%\onehalfspacing
\usepackage[usenames,dvipsnames]{color}
\usepackage[, unicode]{hyperref}
\usepackage{float}
\hypersetup{
    urlcolor=blue,
    colorlinks=true,
   linkcolor= blue,
   citecolor=blue
}
\usepackage{bm}
\usepackage{paralist}
\usepackage{amsmath}
\numberwithin{equation}{section}
\usepackage{dirtytalk}
\usepackage{amssymb}
\usepackage{am
sthm}
\usepackage{enumitem}
\usepackage{tikz-cd}
\usepackage[T1]{fontenc}
\usepackage[utf8]{inputenc}
\usepackage{mathrsfs}
\usepackage{xcolor}
\usepackage{hyperref}
\usepackage[title]{appendix}
\hypersetup{
   colorlinks,
   citecolor=olive,
   filecolor=black,
   linkcolor=blue,
   urlcolor=blue
 }%
\usepackage{mathtools,halloweenmath}
\usepackage{mathbbol}
\theoremstyle{plain}
\newtheorem{propo}{Proposition}[section]
\newtheorem{lema}[propo]{Lemma}
\theoremstyle{plain}
\newtheorem{teorema}[propo]{Theorem}
\newtheorem{cor}[propo]{Corollary}
\newtheorem{fact}[propo]{Fact}
\newtheorem{claim}[propo]{Claim}

\theoremstyle{definition}

\newtheorem{defin}[propo]{Definition}

\newtheorem{remark}[propo]{Remark}
\newtheorem{condition}[propo]{Condition}

\newcommand{\ol}[1]{\overline{#1}}
\newcommand{\id}{id}

\renewcommand{\.}{\ldots}

\newcommand{\dl}{ \Big( \kern-0.6em{ \Big( }}
\newcommand{\dr}{ \Big) \kern-0.6em{ \Big) }}
\newcommand{\hs}[2]{#1\hspace{0.03cm}\dl\hspace{0.05cm}t^{#2}\hspace{0.05cm}\dr}

\newcommand{\sqdl}{ \Big[ \kern-0.55em{ \Big[ }}
\newcommand{\sqdr}{ \Big] \kern-0.55em{ \Big] }}

\def\aa{\textbf{a}}
\def\bb{\textbf{b}}

\DeclareMathOperator{\tp}{tp} % TYPE
\DeclareMathOperator{\qftp}{qftp} % QUANTIFIER-FREE TYPE
\DeclareMathOperator{\dcl}{dcl} % DEFINABLE CLOSURE
\DeclareMathOperator{\Th}{Th} % THEORY
\DeclareMathOperator{\imdeg}{imp-deg} % IMPERFECTION DEGREE
\DeclareMathOperator{\Aut}{Aut} % AUTOMORPFISM GROUP
\DeclareMathOperator{\alt}{alt} % ALTERNATION
\renewcommand{\char}{char} % CHARACTERISTIC
\DeclareMathOperator{\ac}{ac} %ANGULAR COMPONENT
\DeclareMathOperator{\res}{res} % Residue map
\DeclareMathOperator{\Gal}{Gal} % Galois group
\DeclareMathOperator{\Mon}{Mon} % Monomials
\DeclareMathOperator{\adj}{adj} % adjucate of a matrix

\def\a{\alpha}
\def\b{\beta}
\def\g{\gamma}
\def\G{\Gamma}
\def\d{\delta}
\def\D{\Delta}
\newcommand{\e}{\varepsilon}
\newcommand{\f}{\varphi}

\renewcommand{\th}{\theta}
\renewcommand{\t}{\tau}
 
\let\lw\l
\renewcommand\l{{\lambda}}
\def\k{\kappa}

\def\S{\Sigma}

\def\E{\exists} 
\def\A{\forall} 
\newcommand{\sii}{\longleftrightarrow}
\newcommand{\Sii}{\Longleftrightarrow}
\def\N{\mathbb{N}} 
\def\Z{\mathbb{Z}}

\def\F{\mathbb{F}}
\def\GG{\mathbb{G}}
\def\KK{\mathbb{K}} 
\def\kk{\mathbb{k}}
\def\so{\mathbb{S}}

\def\AA{\mathfrak{A}} % model
\def\BB{\mathfrak{B}} % model
\def\MM{\mathfrak{M}} % model
\def\NN{\mathfrak{N}} % model
\def\LL{\mathscr{L}} % language
\def\UU{\mathcal{U}} % ultrafilter
\def\O{\mathcal{O}} % valuation ring
\def\M{\mathscr{M}} % max. ideal of valuation ring
\def\Ind#1#2{#1\setbox0=\hbox{$#1x$}\kern\wd0\hbox to 0pt{\hss$#1\mid$\hss}
\lower.9\ht0\hbox to 0pt{\hss$#1\smile$\hss}\kern\wd0}

\def\Notind#1#2{#1\setbox0=\hbox{$#1x$}\kern\wd0\hbox to 0pt{\mathchardef
\nn=12854\hss$#1\nn$\kern1.4\wd0\hss}\hbox to
0pt{\hss$#1\mid$\hss}\lower.9\ht0 \hbox to
0pt{\hss$#1\smile$\hss}\kern\wd0}

\usepackage[utf8]{inputenc}
\usepackage{amsmath}
\usepackage{amsthm}
\usepackage{amssymb}
\usepackage{fancyhdr}
\usepackage{csquotes}
\usepackage{faktor}
\usepackage{enumitem}
\usepackage{dirtytalk}

\usepackage[english]{babel}
\newtheoremstyle{named}{}{}{}{}{}{}{.5em}{\normalfont{\thmnumber{#2}} \itshape\thmnote{#3.}}
\theoremstyle{named}
\newtheoremstyle{asd}{}{}{}{}{\scshape}{}{.5em}{\scshape{\thmname{#1}} \normalfont{\thmnumber{#2}}}
\theoremstyle{asd}
\fancypagestyle{mypagestyle}{%

\lhead{$\,$}
\rhead{\Cross}
}
\fancypagestyle{plain}{
\fancyfoot{}
\fancyfoot[LE,RO]{\thepage}
}
\usepackage{titlesec}
\titleformat{\section}
  {\centering\huge\scshape}{\scshape\thesection}{1em}{}
\titleformat{\subsection}
  {\Large\scshape}{\scshape\thesubsection}{1em}{}
\allowdisplaybreaks
\usepackage{caption}

\title[RUNNING TITLE]{\Large\rm On some NIP Fragments of Fields}
\author{PAULO ANDRÉS SOTO MORENO}
\address{Universit\'{e} Paris Cit\'{e}, Sorbonne Universit\'{e}, CNRS, IMJ-PRG, F-75013 Paris, France}
\email{paulo.soto@imj-prg.fr}

\begin{document}

\begin{abstract}
   In this note we study sets of NIP formulas in some theories of fields and valued fields, with a special focus on the sets of quantifier-free and existential formulas. First, we give a new proof of the fact that Separably Closed Valued Fields of any characteristic and any imperfection degree are NIP, and use this result to fill some gaps of a proof of the so-called NIP Transfer Theorem for henselian valued fields of equal characteristic. Second, we prove a variant of a theorem of Johnson: every positive characteristic valued field whose existential formulas are NIP is henselian. Finally, we set the ground for the finer question of transfer of NIP formulas of valued fields with bounded quantifier rank. Namely, we prove that for any henselian equicharacteristic valued field, any formula of quantifier rank at most $n\geq 1$ is NIP if and only if the same is true for the residue field and the value group, provided that the valued field is separably defectless Kaplansky and conditional on a multi-variable generalization of a well known statement about indiscernible sequences of singletons in ac-valued fields. 
\end{abstract}
\maketitle

\section{Introduction}

In this document we study the interplay between the \emph{independence property} and the \emph{quantifier complexity} of definable sets in some theories of valued fields of equal characteristic. These subjects have played central roles in the model theory of valued fields for the last years, with outstanding results covering, on the one hand, a classification of henselian valued fields without the independence property, cf.~\cite{js} and \cite{aj}, and on the other hand, a thorough analysis of quantifier complexity of definable henselian valuation rings, cf.~\cite{jf} for a survey. The goal of this document is to present refined versions of a number of well known results of the theory of NIP henselian valued fields of equal characteristic, improving on the quantifier rank of formulas without the independence property, thus setting the ground for the study of the aforementioned interplay.   
Let $\LL$ be a language and let $T$ be an $\LL$-theory. If $\f(x,y)$ is an $\LL$-formula, we say that $\f(x,y)$ has the \emph{independence property} (with respect to $T$) or that $\f(x,y)$ \emph{has IP} if there is some model $\MM$ of $T$ and some sequences $(a_i:i<\omega), (b_{S}:S\subseteq\omega)$ of $|x|$- and $|y|$-tuples of $M$ respectively such that $\MM\models\f(a_i,b_S)$ if and only if $i\in S.$ Equivalently, $\f(x,y)$ has IP if there is some model $\MM$ of $T$, and indiscernible sequence $(a_i:i<\omega)$ of $|x|$-tuples of $M$ and a $|y|$-tuple $b$ of parameters from $M$ such that $\MM\models\f(a_i,b)$ if and only if $i$ is even, cf.~\cite[Lemma 2.7]{simon}. A formula is \emph{NIP} if it does not have IP, and an $\LL$-theory is called \emph{NIP} if all $\LL$-formulas $\f(x,y)$ are NIP with respect to $T.$ Also, \cite[Lemma 2.5]{simon} states that a formula $\f(x,y)$ is NIP if and only if $\f^{op}(y,x)=\f(x,y)$ is NIP, which allows to interchange the position of the indiscernible sequence and the parameters. For more on NIP theories, see e.g.~\cite{simon}. If $(K,v)$ is a valued field, we will usually denote by $vK$ and $Kv$ its value group and residue field, $\O=\O_v$ will denote its valuation ring and $\M=\M_v$ its maximal ideal. Finally, since henselian NIP valued fields of equal characteristic are separable-algebraically maximal Kaplansky, we set this class of valued fields as our underlying object of study. Our main theorems are the following.

\begin{teorema}[Theorem \ref{lqfnip}, cf.~{\cite[Corollary 5.2.13]{ht}} for $e\in\N$ and Delon for $e=\infty$]
\label{t1}
    Quantifier-free formulas of the language of valued fields $\LL_{div}$ expanded by the parameterized $\l$-functions are NIP. As a consequence, the theory $\texttt{SCVF}_e$ of Separably Closed Valued Fields of Ershov degree $e\in\N\cup\{\infty\}$ is NIP.  
\end{teorema}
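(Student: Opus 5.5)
Working in the language $\LL_{div}$ together with the parameterized $\l$-functions, I will use the standard criterion: it suffices to show that every \emph{atomic} formula $\f(x;y)$ is NIP. NIP formulas are closed under Boolean combinations, and every quantifier-free formula is a Boolean combination of atomic ones. By \cite[Lemma 2.7]{simon}, a formula is NIP as soon as one variable is a singleton, so I would like to reduce to atomic formulas in one variable $x$. The $\l$-functions complicate this reduction, because a term may apply $\l$-functions to expressions involving several variables. To handle this, I will work with an indiscernible sequence $(a_i)_{i<\omega}$ of tuples and a parameter tuple $b$. Then I will use the closure of NIP formulas under Boolean combinations together with the alternation characterisation: for each atomic $\f$, the truth value of $\f(a_i,b)$ must change only finitely often along $i$.

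The first key step is to simplify the terms. Let $K$ be a model and $A=\dcl$ of the relevant parameters. Atomic formulas in this language have the form $t(x,y)=0$ or $v(t_1(x,y))\le v(t_2(x,y))$, where $t,t_1,t_2$ are terms built from field operations, inversion, and the $\l$-functions $\l_{n,j}$ relative to a $p$-basis. Each $\l$-function value is a coordinate of an element with respect to a $p$-basis of $K$ over $K^p$. Iterating, every term is a rational function, with coefficients in the relative $p$-independent closure, of finitely many $p^{-m}$-th roots. In other words, the structure generated by $(x,y)$ is $\mathbb{F}_p(x,y)^{p^{-\infty}}\cap K$, up to $\l$-coordinates. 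The key observation is that $K^{p^m}$ and $K$ are isomorphic via Frobenius, and that the $\l$-coordinates of $a$ are polynomial-definable from $a$ and the fixed $p$-basis elements $b_1,\dots,b_e$, which I treat as parameters. After this, each atomic formula in $(x,y)$ becomes, with the $\l$-coordinates of $x$ and $y$ added as new variables, a formula $v(f(\bar x,\bar y))\le v(g(\bar x,\bar y))$ or $f=0$ with polynomials $f,g$. This is a quantifier-free formula in the pure language $\LL_{div}$.

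The second step is to show that quantifier-free $\LL_{div}$-formulas are NIP in any valued field. I will embed $(K,v)$ into its algebraic closure with an extension of $v$, which is a model of ACVF. Quantifier-free formulas are preserved under substructure, and ACVF is NIP (a known result I am allowed to use). Hence each atomic formula is NIP in $K$ as well. Combining this with the first step shows that quantifier-free formulas of the expanded language are NIP. The main obstacle is making the first step uniform. The map $a\mapsto(\l$-coordinates$)$ must be given by a fixed term, so that replacing a tuple $a_i$ by its $\l$-closure tuple $\tilde a_i$ still yields an indiscernible sequence. This holds because the closure map is a tuple of $\LL$-terms, and I will check that carefully. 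The case $e=\infty$ also needs care, since only finitely many $\l$-functions occur in a single formula, so only finitely many $p$-basis parameters are involved.

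Finally, for the consequence: $\texttt{SCVF}_e$ admits quantifier elimination in $\LL_{div}$ expanded by the $\l$-functions (Delon, and Hong for finite $e$; for finite $e$ the $p$-basis is named by constants). Every formula is therefore equivalent modulo the theory to a quantifier-free one, which is NIP by the above, so $\texttt{SCVF}_e$ is NIP.
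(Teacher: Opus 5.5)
There is a genuine gap, and it sits in the case $e=\infty$, which is the part of the statement not already covered by Hong. Your reduction reads the $\l$-functions as coordinate maps with respect to a fixed finite $p$-basis $b_1,\dots,b_e$ named as parameters, so that each tuple $a$ can be replaced by a fixed tuple of terms $\tilde a$. But the $\l$-functions of $\LL_{div,\l}$ are \emph{parameterized}. In $\l_{n,m}(t_0;t_1,\dots,t_n)$ the tuple playing the role of the basis is itself a term in $x,y$, and the value is $0$ unless $t_1,\dots,t_n$ is $p$-independent and $t_0\in K^p(t_1,\dots,t_n)$. When $e=\infty$, no finite tuple of parameters $p$-generates all members of a potential IP witness $(a_i)_{i<\omega}$; they may, for instance, be $p$-independent singletons. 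So your sentence ``only finitely many $p$-basis parameters are involved'' is false, and the substitution $a_i\mapsto\tilde a_i$ by a fixed tuple of terms does not exist.

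Even for finite $e$ your sketch is incomplete in two ways. Nested terms require \emph{iterated} coordinates. Moreover, a parameterized $\l$-term is only piecewise a rational function of those coordinates, with pieces cut out by $p$-independence conditions whose quantifier-free expressibility must be proved; this is Lemma~\ref{prep}(1), via minors of the coordinate matrix. Your description of the generated structure as $\F_p(x,y)^{p^{-\infty}}\cap K$ is also wrong. For example, if $x=c_0^p+c_1^py$ with $c_0,c_1,y$ algebraically independent, the coordinate $c_0$ of $x$ with respect to $y$ is transcendental over $\F_p(x,y)$.

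The missing idea is the paper's Indiscernible $\l$-Resolution (Corollary~\ref{indprep} and Lemma~\ref{indtra}), which builds the $p$-basis out of the indiscernible sequence itself. One finds a $\subseteq$-decreasing sequence $S_n\subseteq[N]$, a subtuple $\bb'$ of the parameters and some $l$ such that, for every $n\geq l$, the tuple $a_n^{S_l}a_{<l}^{S_{<l}}\bb'$ is $p$-independent and $p$-generates both $a_n$ and $\bb$. Only the finitely many elements $a_{<l}^{S_{<l}}\bb'$ are frozen as parameters, so this works for $e=\infty$. The new tuples $\a_n=(a_n^{S_l},\l(a_n;a_n^{S_l}a_{<l}^{S_{<l}}))$, iterated along the construction of the term, are uniformly $0$-definable from $a_na_{<l}$ and so still form an indiscernible sequence. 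Every term then satisfies $t(a_n,\bb)=f(\a_n,d)$, with $f$ rational over $\F_p$ and $d$ fixed, for all \emph{sufficiently large} $n$.

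The case distinctions disappear precisely because of this ``sufficiently large''. They are quantifier-free conditions along an indiscernible sequence, hence eventually constant since $\texttt{ACVF}$ is NIP. So the indiscernible-sequence formulation is essential, not a convenience; you announce it but never use it. The one-variable reduction you invoke is \cite[Proposition 2.11]{simon}, a statement about whole theories that is not available for the quantifier-free fragment, and it is not needed.

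Your final two steps agree with the paper: NIP of quantifier-free $\LL_{div}$-formulas via $\texttt{ACVF}$, and the deduction for $\texttt{SCVF}_e$ from quantifier elimination (Fact~\ref{scvfqe}).
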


Theorem \ref{t1} is well stated in the literature for $e\in\N,$ see e.g.~\cite[Corollary 5.2.13]{ht}, and the case $e=\infty$ is attributed to Delon, who argued using \emph{Poizat's coheir counting} argument. However, to the best of our knowledge, her result remains unpublished.

\begin{teorema}[Theorem \ref{eniphens}, cf.~{\cite[Theorem 2.8]{dp1a}}]
    Let $(K,\O)$ be a valued field of positive characteristic whose existential formulas are NIP. Then $\O$ is henselian. 
\end{teorema}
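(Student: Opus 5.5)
The strategy is to run Johnson's argument for henselianity of NIP valued fields of positive characteristic, checking at every step that the only formulas whose NIP-ness we use are existential in $(K,\O)$. The two ingredients are the Kaplan--Scanlon--Wagner theorem (an infinite NIP field of characteristic $p$ is Artin--Schreier closed) and the valuation-theoretic reduction of henselianity to the behaviour of Artin--Schreier polynomials in finite extensions. We may assume $K$ is infinite and $\O\neq K$, since otherwise the statement is trivial.

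\textbf{Step 1: existential interpretation of finite extensions.} Let $L/K$ be a finite separable extension of degree $n$. Identify $L$ with $K^n$ via a basis, so that the field operations are quantifier-free definable with parameters. Let $B$ be the integral closure of $\O$ in $L$. Since $\O$ is integrally closed, $x\in B$ if and only if the coefficients of the characteristic polynomial of $x$ over $K$ lie in $\O$. These coefficients are polynomials in the coordinates of $x$, so $B$ is quantifier-free definable in $(K,\O)$. Any prolongation $w$ of $v$ to $L$ has valuation ring $\O_w=B_{\mathfrak m}$ for a maximal ideal $\mathfrak m$ of $B$. Since $B$ is semilocal with finitely many maximal ideals, we may fix $c\in B$ lying in $\mathfrak m$ and in no other maximal ideal. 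Then $\O_w=\{a/b: a,b\in B,\ b\notin \mathfrak m\}$, and $\mathfrak m$ is described through $c$. I expect this to give an existential definition of $\O_w$, and hence that existential formulas of $(L,\O_w)$ translate into existential formulas of $(K,\O)$. Consequently $(L,\O_w)$ again has NIP existential formulas.

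\textbf{Step 2: Artin--Schreier closedness from existential NIP.} I would redo the proof of Kaplan--Scanlon--Wagner for $L$, observing that it only uses NIP of the formula $\exists y\,(x=a(y^p-y))$ and of a few formulas of the same shape. Indeed, the family of additive subgroups $a\wp(L)$ is uniformly defined by an existential formula. Baldwin--Saxl then bounds finite intersections of these subgroups. The rest of the argument is the algebraic computation with the additive polynomials $\prod_{i}(\ldots)$ on the $p$-rank side, and this part involves no further model theory. The conclusion is that $\wp(L)=L$ for every finite separable $L/K$. The same argument, applied to the existentially definable subgroup $\wp(\M_w)\cap\M_w$ of $(\M_w,+)$, should moreover give that every $a\in\M_w$ has a root of $y^p-y=a$ inside $\M_w$.

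\textbf{Step 3: henselianity.} Suppose $v$ is not henselian. Then there is a finite Galois extension $L$ with two distinct prolongations $w_1\neq w_2$. By the approximation theorem for independent or incomparable valuations, we can choose $a\in L$ with $w_1(a)>0$ and $w_2(a)<0$, arranging in addition that $w_2(a)\notin p\,w_2L$, which is possible after passing to a suitable element or a further finite extension. By Step 2, $y^p-y=a$ has a root $y$ in $L$, and its conjugates are $y+i$ for $i\in\mathbb F_p$. With respect to $w_2$ we get $w_2(y)=w_2(a)/p$, which is impossible in $w_2L$. This contradiction shows that $v$ is henselian.

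\textbf{Expected main obstacle.} I expect the difficulty to lie in Step 3, not in the model theory. The point is to choose the Artin--Schreier element so that the $w_2$-side produces a genuine obstruction: the value-group divisibility trick may fail, for instance when $w_2L$ is $p$-divisible. In that case I would instead use the refined form of Step 2, namely surjectivity of $\wp$ on $\M_{w}$ with roots in $\M_w$, applied in the decomposition field. This reduces the problem to showing that the decomposition field equals $K$, following Johnson's route.
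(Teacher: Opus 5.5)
\paragraph{Step 3 fails.} The obstruction you aim for never exists. Step 2 gives $\wp(L)=L$. If $w_2(a)<0$, then every root of $y^p-y=a$ satisfies $p\,w_2(y)=w_2(a)$, so $w_2L$ is always $p$-divisible; this is exactly how Corollary \ref{vgpdiv} obtains $p$-divisibility. Hence no $a$ with $w_2(a)\notin p\,w_2L$ exists in $L$ or in any finite extension. Note also that $w_1$ plays no role in your contradiction, so the argument could not detect non-henselianity anyway.

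The fallback adds nothing. Roots inside $\M_w$ are automatic from $\wp(L)=L$: a root of $y^p-y=a$ with $w(a)>0$ lies in $\O_w$ and has residue in $\F_p$, so some $\F_p$-translate of it lies in $\M_w$. And ``the decomposition field equals $K$'' is just a restatement of henselianity.

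In fact, no argument using only Artin--Schreier closedness of the finite separable extensions of $K$ can work. Let $K$ be the fixed field of an $\ell$-Sylow subgroup of $G_{\F_p(t)}$ with $\ell\neq p$. Every finite separable extension of $K$ has $\ell$-power degree and is therefore Artin--Schreier closed. Yet the restriction to $K$ of any prolongation $u$ of the $t$-adic valuation is not henselian: $G_K$ surjects onto $(\Z/\ell)^n$ for every $n$, while closed pro-$\ell$ subgroups of the decomposition group $D_u\cong G_{\F_p((t))}$ are $2$-generated.

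\paragraph{What is missing.} You need a second, genuinely model-theoretic use of the hypothesis, namely Johnson's comparability argument (Lemma \ref{compar}). Work in the $2$-valued field $(L,\O_1,\O_2)$ given by the two prolongations.

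Your Step 1 adapts to show that this structure has NIP existential formulas. This is a more direct route than the paper's, which goes through separable defectlessness and Lemma \ref{defin}. You must, however, also define $L\setminus\O_i$ existentially, since negated atoms occur in existential formulas. For instance, if $f\in B$ lies in every maximal ideal of $B$ except $\mathfrak m_1$, then $\mathfrak m_1=\{a\in B: af\in J(B)\}$. Here $J(B)$ is defined by requiring the non-leading coefficients of the characteristic polynomial to lie in $\M$.

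The contradiction then runs as follows.
\begin{itemize}
\item On $J=\M_1\cap\M_2$, Artin--Schreier roots give a well-defined homomorphism $J\to\F_p$, $x\mapsto a-b$, where a root of $y^p-y=x$ lies in $(a+\M_1)\cap(b+\M_2)$.
\item This homomorphism is existentially definable. It is surjective by weak approximation, since $\O_1$ and $\O_2$ are incomparable.
\item Its kernel is therefore an existentially definable subgroup of index $p$ in $J$.
\item This contradicts $J=J^{00,\pm\E}$. That equality holds because the residue fields are Artin--Schreier closed, hence infinite, and type-connected components exist in $\pm\E$-NIP theories (Proposition \ref{g00}, Lemma \ref{j00}).
\end{itemize}
So $\O_1$ and $\O_2$ are comparable, hence equal, and $\O$ is henselian.
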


For a given language $\LL$ and a natural number $n\geq 1,$ denote by $\E_n$ the set of formulas in prenex normal form, of quantifier rank at most $n$, whose block of quantifiers starts with an existential quantifier, cf.~Definition \ref{deffr}. Below, Corollary \ref{enniptr} states a resplendent version of the next theorem.

\begin{teorema}[Theorem \ref{nonrespltr}]
    Let $\LL_{3s}$ be the usual three-sorted language of valued fields, let $n\geq 1$ and let $\MM$ be the $\LL_{3s}$-structure generated by a henselian equi-characteristic valued field $(M,v)$. If $\MM$ satisfies that all of its formulas from $\E_n$ are NIP, then $(M,v)$ is separably-defectless Kaplansky and all $\LL_{ring}$-formulas from $\E_n$ are NIP with respect to the theory of $Mv$. The converse implication holds if Condition \ref{acv} is satisfied.
\end{teorema}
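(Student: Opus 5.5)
Both directions rest on relative quantifier elimination for separably defectless Kaplansky fields, combined with careful bookkeeping of quantifier rank. For the forward implication, the residue field and the value group are interpretable in $\MM$ as sorts. An $\E_n$ $\LL_{ring}$-formula about $Mv$ is already an $\E_n$ formula of $\LL_{3s}$, once its quantifiers are relativized to the residue sort. Hence every $\E_n$ formula of $Mv$ is NIP.

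For the Kaplansky and separably-defectless conditions, I would redo the classical arguments inside $\E_1$. The argument of Kaplan--Scanlon--Wagner shows that an NIP field of characteristic $p$ has no Artin--Schreier extensions and no extensions of degree divisible by $p$. It only uses the formula $\exists y\,(x = y^p - y)$ and finite index considerations of $\wp(K)$, which are existential. Applying this to $M$ and to $Mv$ gives that $Mv$ is perfect with no degree-$p$ extensions. The same argument gives that $vM$ is $p$-divisible, via the definable set $v(M^{\times p})$ and an existential formula. Separable defectlessness should follow from Artin--Schreier closedness plus henselianity, as in Anscombe--Jahnke; I would check that only existential formulas are used there. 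So the forward direction uses only the case $n = 1$ at its core, together with $\E_1 \subseteq \E_n$.

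For the converse, assume Condition~\ref{acv}. I would pass to an ac-expansion, which is harmless for NIP since it sits in an elementary extension. Then I would use the relative quantifier elimination of Kuhlmann/Pas-type for separably defectless Kaplansky fields in the language with $\l$-functions and angular component. Every formula becomes a Boolean combination of quantifier-free field-sort conditions and of formulas in $Kv$ and $vK$ applied to terms $\ac(t(x))$ and $v(t(x))$. The crucial bookkeeping is that an $\E_n$ formula of $\MM$ translates into residue-sort formulas of quantifier rank at most $n$. For this I would run the elimination procedure quantifier by quantifier, tracking that each field quantifier is replaced by at most one residue or value quantifier. This is where Condition~\ref{acv} enters: it controls indiscernible sequences of tuples, so the NIP test can be done one variable-block at a time.

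Then I would argue by contradiction. Take an indiscernible sequence $(a_i)$ and a parameter $b$ witnessing IP for an $\E_n$ formula $\f(x,b)$. Condition~\ref{acv} says the type of $a_i b$ is determined by the residue and value-group data of the tuples. So the alternation transfers to a formula about $\ac$ and $v$ of finitely many terms, which are indiscernible in $Kv$ and $vK$ respectively. The $vK$ part is NIP because ordered abelian groups are NIP. The $Kv$ part yields an $\E_n$ formula of $Kv$ with IP, a contradiction. The quantifier-free field part is NIP by Theorem~\ref{lqfnip}. I expect the main obstacle to be exactly this quantifier-rank preservation in the relative elimination, together with showing that the resulting residue formula is again in $\E_n$ and not merely of rank $n$.
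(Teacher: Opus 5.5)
Your forward direction follows the paper's route (Lemmas \ref{resfieldint}, \ref{kapl} and \ref{sd}), with a few corrections.
\begin{itemize}
\item Kaplan--Scanlon--Wagner only excludes \emph{separable} extensions of degree divisible by $p$. Applying it to finite separable extensions of $M$ uses that simple algebraic extensions of $\E$-NIP fields are $\E$-NIP (Lemma \ref{finextint}).
\item $p$-divisibility of $vM$ is immediate from Artin--Schreier closedness: if $v(a)<0$ and $y^p-y=a$, then $pv(y)=v(a)$.
\item Perfectness of $Mv$ requires $v$ non-trivial, since one uses $X^p+cX-a$ with $0\neq c\in\M$. A trivially valued imperfect separably closed field is NIP but not Kaplansky, so non-triviality is an implicit hypothesis of the statement.
\end{itemize}

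The converse has a genuine gap exactly where you locate the main obstacle. The relative elimination for $\texttt{SAMK}_e^{\l,\ac}$ comes from an Embedding Lemma, not from a syntactic procedure that can be run one quantifier at a time. Nothing supports the claim that each field quantifier is traded for at most one residue or value quantifier. What is needed is Proposition \ref{delredn}: modulo $\texttt{SAMK}_e^{\l,\ac}$, every $\E_n$-formula is equivalent to a $\{\vee,\wedge\}$-combination of formulas of three kinds:
\begin{itemize}
\item $t(x)=0$ and $t(x)\neq 0$;
\item $\theta(\ac(t_1(x)),\dots,\ac(t_k(x)),y)$ with $\theta$ an $\E_n$-formula of $\LL_\kk$;
\item $\psi(v(t_1(x)),\dots,v(t_k(x)),z)$ with $\psi$ an $\E_n$-formula of $\LL_\GG$.
\end{itemize}
The paper proves this semantically. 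By Corollary \ref{delredeq} it suffices that $\Rightarrow_{\D'}$ implies $\Rightarrow_{\E_n}$. The latter is witnessed by an alternating chain of $n$ embeddings into saturated elementary extensions (Fact \ref{enequiv}). These are built stage by stage from residue-field and value-group maps preserving $\E_{n-l}$-formulas (Fact \ref{exembg}), lifted to the valued fields by the Embedding Lemma \cite[Theorem 1.1]{sm}. Your quantifier-by-quantifier idea can also be made rigorous by induction on $n$: name the $\A_{n-1}$ residue and value formulas by new predicates, apply the resplendent case $n=1$, and substitute back. For the NIP transfer, positivity of the combination is not even needed, since $\E_n$-NIP implies $\pm\E_n$-NIP; what must be controlled is the number of alternations. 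Finally, Condition \ref{acv} plays no role in this step, since Proposition \ref{delredn} is unconditional.

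The second gap is in transferring the alternation. Condition \ref{acv} does not say that $\tp(a_ib)$ is determined by residue and value-group data. It computes $v(P(a_i))$ and $\ac(P(a_i))$ for \emph{polynomials} $P$ over $\F_p[\mathbf{d}]$, in terms of new indiscernible sequences of $N$-tuples of $vM$ and $Mv$. The terms produced by the elimination are $\LL_\l$-terms $t_k(a_i,\mathbf{b})$ built with $\l$-functions, so the condition cannot be applied to them directly. The missing ingredient is the Indiscernible $\l$-Resolution (Lemma \ref{indtra}), applied to each $t_k$. It produces an indiscernible sequence $(\alpha_i)_{i\geq l}$ with $\alpha_i\subseteq^{u}\dcl(a_ia_{<l})$ and $t_k(a_i,\mathbf{b})=P_k(\alpha_i,d_k)/Q_k(\alpha_i,d_k)$ for all large $i$. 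Multiplicativity of $\ac$ and additivity of $v$ then let you apply Condition \ref{acv} to the $P_k$ and $Q_k$. Note also that the sequences $(\ac(t_k(a_i,\mathbf{b})))_i$ and $(v(t_k(a_i,\mathbf{b})))_i$ need not be indiscernible, because of the parameter $\mathbf{b}$. The indiscernible sequence of $Mv$ (resp.\ $vM$) on which an $\E_n$-formula alternates is the one supplied by Condition \ref{acv}. The remaining parameters are absorbed into the polynomials $q$ (resp.\ the constants $\gamma$).
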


This document is organized as follows. In Section \ref{2}, we recall some languages and theories that allow to axiomatize the class $\texttt{SAMK}_e$ of separable-algebraically maximal Kaplansky fields of given Ershov degree $e\in\N\cup\{\infty\}$. We introduce the \emph{parameterized $\l$-functions} and explain their role of detecting separable extensions of fields. Additionally, we introduce \emph{fragments} of an arbitrary language, we define the independence property \emph{with respect to} a given fragment, and give an account of some model theoretic results of NIP theories that hold in our setting, such as the existence of type-connected components in Proposition \ref{g00}, and Poizat's coheir counting criterion in Proposition \ref{poizfr}. 
In Section \ref{sectlres} we focus on the first of the fragments we will study: the fragment of quantifier-free formulas. In a language of valued fields expanded by the parameterized $\l$-functions, we prove that any such formula is NIP. Given that the theory $\texttt{SCVF}_e$ of separably closed valued fields of Ershov degree $e\in\N\cup\{\infty\}$ eliminates quantifiers in this language, cf.~\cite[Theorem 4.12]{h}, we conclude that it is a NIP theory, completing the proof of our first main theorem.  
As an application, we revisit in Section \ref{4} the so-called \emph{NIP transfer principle} for $\texttt{SAMK}_e$ fields: if the residue field and the value group of any such valued field are NIP in any expansion of their languages, then the valued field in question is NIP, cf.~Corollary \ref{fulltrans}. Although the original statement of the theorem is correct, cf.~\cite[Proposition 4.1]{aj} for $e\in\N\cup\{\infty\}$ and \cite[Theorem 3.3]{js} for $e\in\N$, building on \cite[Lemma 3.2]{js}, we found that the proofs given in \cite[Proposition 4.1]{aj} and \cite[Lemma 3.2]{js} are not accurate as stated, and that a small but consequential modification of the proofs is required. For a short explanation of the gaps in these proofs, see Remark \ref{gaps}. We study this transfer principle in a language expanded by the parameterized $\l$-functions, and restate the conditions \textbf{SE} and \textbf{Im} accordingly, cf.~Definition \ref{sideconds}. We take the opportunity to record a corrected version in Corollary \ref{fulltrans} and notably in Lemma \ref{e+im}, to supply the missing details in a self-contained manner. 
In Section \ref{5} we revisit some of the most fundamental theorems of NIP fields, and see that the quantifier rank of the formulas witnessing IP in several of these theorems is one. Most notably, we improve on a result of Johnson \cite[Theorem 2.8]{dp1a} and prove that any valued field of positive characteristic \emph{whose existential formulas are NIP} is necessarily henselian, completing our second main theorem.  
Finally, in Section \ref{6}, we address the question of whether theories of henselian valued fields of equal characteristic \emph{whose formulas of quantifier rank at most $n\in\N$ are NIP} can be characterized in a similar fashion as their NIP counterparts. We prove in Proposition \ref{enequiv} a (relative, resplendent) quantifier elimination principle up to formulas of quantifier rank at most $n\in\N,$ deduce the corresponding Ax-Kochen-Ershov principles in Corollary \ref{akesen}, and use these results to give a positive answer of the main question in Proposition \ref{enniptr}, conditional to a multi-variable generalization of a well known statement about indiscernible sequences of singletons in ac-valued fields, cf.~Condition \ref{acv}.

\section{Preliminaries}
\label{2}

\subsection{Languages and theories of \texorpdfstring{$\texttt{SAMK}_e$}{} fields}

A valued field $(K,v)$ is called \emph{separable-algebraically maximal} if it does not admit any proper separable immediate algebraic extension, and it is called \emph{Kaplansky} if $\char(Kv)=0$ or $\char(K)=p>0,$ $vK$ is $p$-divisible and $Kv$ does not admit any finite extension of degree divisible by $p.$ We will use the notations introduced in Section 2 of \cite{sm}, which gathers a more detailed introduction to separable-algebraically maximal Kaplansky fields. Throughout this document, this class will be denoted by $\texttt{SAMK}.$ Unless stated otherwise, we will work with three sorted languages for valued fields, with sorts $\KK$ for the home field, $\kk$ for the residue field and $\GG$ for the value group. If $\LL$ is any such language, we write $\LL(\KK)$ (resp. $\LL(\kk),$ $\LL(\GG)$) to mean the language associated to the sort $\KK$ (resp. $\kk$ and $\GG$), and we always assume that any such $\LL$ contains a function symbol $\underline{v}:\KK\to\GG$ to be interpreted as a valuation, a function symbol $\underline{\res}:\KK\to\kk$ to be interpreted as the associated residue map, and we also assume that $\LL(\KK)$ and $\LL(\kk)$ contain the language of rings $\LL_{ring}$ and $\LL(\GG)$ contains the language of ordered groups $\LL_{og}$ together with a symbol $\infty$ for a point at infinity. In order to stress the dependence on the auxiliary languages $\LL(\kk)$ and $\LL(\GG),$ we will even write $\LL=\LL(\LL_\kk,\,\LL_\GG)$ to mean that $\LL(\kk)=\LL_\kk$ and $\LL(\GG)=\LL_\GG$ for a given couple of languages $\LL_\kk$ and $\LL_\GG$ containing $\LL_{ring}$ and $\LL_{og}\cup\{\infty\}$ respectively. 
Let $\LL_{3s}=\LL_{3s}(\LL_\kk,\,\LL_\GG)$ be the three sorted language for valued fields, \emph{possibly expanded in $\LL_{3s}(\GG)$ and $\LL_{3s}(\kk),$} i.e. where $\LL_\kk$ and $\LL_\GG$ are allowed to have more symbols than $\LL_{ring}$ and $\LL_{og}\cup\{\infty\}$ respectively. 
Define $\LL_{\ac}$ as the expansion of $\LL_{3s}$ by a function symbol for an angular component $\underline{\ac}:\KK\to\kk$, and let $\LL$ be the expansion of $\LL_{\ac}$ by countably many function symbols $\{\underline{\l}_{n,m}(x_0;x_1,\.,x_n):n\geq1,m\in\Mon(n)\}$ where each $\underline{\l}_{n,m}$ is of sort $\KK^{n+1}\to\KK.$ 

As noted in Section 2.2 of \cite{sm}, the class $\texttt{SAMK}$ of equi-characteristic separable-algebraically maximal Kaplansky fields is elementary in any such $\LL_{3s},$ axiomatized by the following sentences:
\begin{itemize}[wide]
    \item The axioms of henselian valued fields,

    \item The axioms of extremality for separable polynomials in one variable,

    \item Equi-characteristic: $$\left\{\sum_{i=1}^{p}1_{\KK}=0\sii\sum_{i=1}^{p}1_{\kk}=0:p\text{ prime}\right\},$$
    where $1_\KK$ is the unit symbol of the home field sort $\KK$ and $1_\kk$ is the unit of the residue field sort $\kk,$
    \item Kaplansky (uniform across all positive characteristics): $$\left\{\sum_{i=1}^{p}1_{\kk}=0\to\A x\E y\left(v(x)=pv(y)\right):p\text{ prime}\right\},$$ and $$\left\{\sum_{i=1}^{p}1_{\kk}=0\to\A x_0,\.,x_n,y\,\E z\left(\sum_{i=0}^{n}x_iz^{p^{i}}=y\right):p\text{ prime}, n<\omega\right\},$$ where all the mentioned variables are of sort $\kk.$
\end{itemize}
For $e\in\N\cup\{\infty\},$ let $\texttt{SAMK}_{e}^{\l,\ac}$ be the $\LL$-theory given by the following axioms:
\begin{itemize}[wide]
    \item The $\LL_{\ac}$-theory of non-trivial $\texttt{SAMK}$ equi-characteristic valued fields admitting an angular component,

    \item If $e\in\N,$ the $\LL_{ring}$-axioms fixing the \emph{Ershov degree} of the home sort equal to $e:$ If $x_1,\.,x_n$ are variables from the sort $\KK,$ let $\Mon(x_1,\.,x_n)=\{\prod_{i=1}^{n}x_i^{\a_i}:0\leq\a_i<p\text{ for all }i\in\{1,\.,n\}\}$ be the set of monomials obtained with these variables. Then the axioms (one for each prime $p$) say that if $\sum_{i=1}^{p}1_\KK=0,$ then there are $e$ elements $x_1,\.,x_e$ whose set of monomials $\Mon(x_1,\.,x_e)$ is linearly independent over $\KK^{p}$ and generates $\KK$ over $\KK^{p}.$

    \item If $e=\infty,$ the axioms fixing the Ershov degree of the home sort equal to $e:$ with the same notation as above, the axioms (one for each $e\in\N$ and each prime $p$) say that if $\sum_{i=1}^{p}1_\KK=0,$ then there are $e$ elements $x_1,\.,x_e$ whose set of monomials $\Mon(x_1,\.,x_e)$ is linearly independent over $\KK^{p}.$  

    \item The axioms describing the \emph{parameterized $\l$-functions:} for any $n<\omega$ and any prime $p,$ if $\sum_{i=1}^{p}1_\KK=0,$ then $$t=\sum_{m\in\Mon(n)}(\l_{n,m}(t;x))^{p}\cdot x^{m}$$ for all tuples $x=(x_1,\.,x_n)$ whose monomial set $\Mon(x_1,\.,x_n)$ is linearly independent over $\KK^{p}$ and for all $t$ in the $\KK^{p}$-span of $\Mon(x_1,\.,x_n),$ and $\l_{n,m}(t;x)=0$ otherwise. 
\end{itemize}
Note that this axiomatization allows to define each parameterized $\l$-function in $\LL_{3s}.$ 

We will use the following fact that describes the structure generated by a field under the parameterized $\l$-functions of a given extension thereof. For now, let $\LL_{\l}$ denote the language $\LL_{ring}$ expanded by symbols for the parameterized $\l$-functions. If $M|F$ is a field extension and $b$ is any tuple from $M,$ we denote by $\langle F,b\rangle$ the $\LL_{\l}$-substructure of $M$ generated by $F$ and $b.$

\begin{fact}[Cf.~{\cite[Proposition 2.7]{at}}]
\label{sylvy}
Let $M|F$ be a field extension. The following statements are equivalent.
\begin{enumerate}[label*={\arabic*.}]
    \item $M|F$ is separable.

    \item For each tuple $b$ from $F$ which is $p$-independent in $M,$ we have that $M^{p}(b)\cap F=F^{p}(b).$

    \item For each tuple $b$ from $F$ which is $p$-independent in $M,$ each monomial $m\in\Mon(|b|)$ and each $a\in F\cap M^{p}(b),$ the element $\l_m^{b}(a)$ lies in $F.$
\end{enumerate}
\end{fact}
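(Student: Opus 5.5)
We prove the cycle $1\Rightarrow2\Rightarrow3\Rightarrow1$, working in characteristic $p>0$ throughout (in characteristic $0$ every extension is separable and all three statements hold trivially). The tool is the characterization of separability via $p$-independence: $M|F$ is separable if and only if $F$ and $M^{p}$ are linearly disjoint over $F^{p}$. Equivalently, every $p$-independent tuple of $F$ stays $p$-independent in $M$. Here a tuple is $p$-independent in $M$ exactly when $\Mon(b)$ is linearly independent over $M^{p}$.

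\textbf{From 1 to 2.} Let $b$ be a tuple from $F$ that is $p$-independent in $M$; then it is also $p$-independent in $F$. Every element of $M^{p}(b)=M^{p}[b]$ can be written uniquely as $\sum_{m\in\Mon(|b|)}c_m^{p}b^{m}$ with $c_m\in M$. Take $a\in M^{p}(b)\cap F$ and extend $b$ to a $p$-basis $b\cup b'$ of $F$. Then $a=\sum_{m'}d_{m'}^{p}(b\cup b')^{m'}$ with $d_{m'}\in F$. By separability, the monomials in $b\cup b'$ remain linearly independent over $M^{p}$. Comparing this with the expansion of $a$ in $M^{p}[b]$, uniqueness forces every coefficient of a monomial involving $b'$ to vanish, and forces $c_m^{p}=d_m^{p}$. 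Hence $a\in F^{p}(b)$. The reverse inclusion $F^{p}(b)\subseteq M^{p}(b)\cap F$ is immediate.

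\textbf{From 2 to 3.} Take $a\in F\cap M^{p}(b)$. By 2 we can write $a=\sum_m e_m^{p}b^{m}$ with $e_m\in F$. Since $b$ is $p$-independent in $M$, this expansion is also the unique one over $M^{p}$. By the defining axiom, $\l_m^{b}(a)$ is the $p$-th root of the $m$-th coefficient, so $\l_m^{b}(a)=e_m\in F$.

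\textbf{From 3 to 1.} This is the direction I expect to be the main obstacle, since it is the only one that must produce separability from a closure condition. I argue by contrapositive. Suppose $M|F$ is not separable. Then some finite tuple $b$ of $F$ is $p$-independent in $F$ but not in $M$. Choose a minimal such tuple, so that $b=(b_1,\.,b_n)$ with $(b_1,\.,b_{n-1})$ $p$-independent in $M$ and $b_n\in M^{p}(b_1,\.,b_{n-1})$.

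Set $b^{*}=(b_1,\.,b_{n-1})$ and $a=b_n\in F\cap M^{p}(b^{*})$. By 3, every $\l_m^{b^{*}}(a)$ lies in $F$, and the axiom gives $a=\sum_m(\l_m^{b^{*}}(a))^{p}b^{*m}$. This exhibits $b_n\in F^{p}(b^{*})$, which contradicts the $p$-independence of $b$ in $F$.

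The delicate point is the existence of this minimal witness: I need that a failure of linear disjointness of $F$ and $M^{p}$ over $F^{p}$ is always detected by a finite $p$-independent tuple of $F$ that becomes $p$-dependent in $M$. This is the standard Mac Lane criterion, which I would cite rather than reprove. Once it is available, the exchange argument above closes the cycle.
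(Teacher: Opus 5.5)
Your proof is correct. The paper gives no argument for this Fact; it is imported from \cite[Proposition 2.7]{at}, so there is no in-text route to compare against. Your cycle $1\Rightarrow2\Rightarrow3\Rightarrow1$ is a complete proof. It rests only on MacLane's criterion in its $p$-basis form ($M|F$ is separable iff a, equivalently every, $p$-basis of $F$ stays $p$-independent in $M$) and on the exchange property of $p$-dependence.

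The step you call delicate is routine once that criterion is cited. A linear dependence of $\Mon(B)$ over $M^{p}$ involves only finitely many monomials, hence finitely many elements of $B$, which gives the finite witness. For a minimal-length witness $b=(b_1,\ldots,b_n)$, the degree $[M^{p}(b_1,\ldots,b_n):M^{p}(b_1,\ldots,b_{n-1})]$ lies in $\{1,p\}$. It must equal $1$: otherwise $[M^{p}(b):M^{p}]=p^{n}$, and the $p^{n}$ monomials spanning $M^{p}(b)$ would be $M^{p}$-independent. Degree $1$ is exactly $b_n\in M^{p}(b_1,\ldots,b_{n-1})$.

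You should make one point explicit. If the minimal witness has length $1$, then $b^{*}$ is empty, and you are applying Statement 3 to the empty tuple, where $\Mon(0)=\{1\}$ and $\l^{\emptyset}_{1}(a)=a^{1/p}$ for $a\in F\cap M^{p}$. This is not a flaw in your argument; it is the only reading under which the Fact holds. Take $F=\F_p(t)$ and $M=F^{1/p}$. Then $F\subseteq M^{p}$, so no nonempty tuple of $F$ is $p$-independent in $M$. Statements 2 and 3 restricted to nonempty tuples therefore hold vacuously, yet $M|F$ is not separable. The Preliminaries introduce the symbols $\underline{\l}_{n,m}$ only for $n\geq 1$, so you should say explicitly that the empty tuple, i.e.\ the $p$-th root on $F\cap M^{p}$, is included in Statements 2 and 3.
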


We say that $F$ is \emph{$\l$-closed in $M$} if Statement 3 of Fact \ref{sylvy} holds. Therefore, $F$ is $\l$-closed in $M$ if and only if $M|F$ is separable, by Fact \ref{sylvy}.

\begin{fact}[Cf.~{\cite[Proposition 2.28]{at}}]
\label{lcl}
    Let $M|F$ be a separable extension of fields, seen as structures in the language $\LL_{\l}$, and let $b\in M.$ Then $\langle F,b\rangle=\Lambda_M(F(b))$ is the lambda-closure of $F(b)$ in $M$. In particular, $M|\langle F,b\rangle$ is separable. 
\end{fact}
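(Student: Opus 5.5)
We want to show that if $M|F$ is separable and $b\in M$, then $\langle F,b\rangle$ equals the $\l$-closure $\Lambda_M(F(b))$. Here $\Lambda_M(E)$ means the smallest subfield of $M$ containing $E$ that is $\l$-closed in $M$, so that $M|\Lambda_M(E)$ is separable. The plan is to prove the two inclusions separately and then read off separability from Fact \ref{sylvy}.

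\textbf{Step 1: $\langle F,b\rangle\subseteq\Lambda_M(F(b))$.} Put $L=\Lambda_M(F(b))$. It suffices to show that $L$ is an $\LL_\l$-substructure of $M$, since it contains $F$ and $b$.
\begin{itemize}
\item Closure under the ring operations is clear.
\item Take a tuple $c$ from $L$, a monomial $m$, and $t\in L$. We need $\l_{n,m}^M(t;c)\in L$.
\item If $c$ is not $p$-independent in $M$, or $t\notin M^p(c)$, then $\l_{n,m}^M(t;c)=0\in L$ by the axioms.
\item Otherwise $t\in L\cap M^p(c)$ with $c$ $p$-independent in $M$. Since $L$ is $\l$-closed in $M$, Statement 3 of Fact \ref{sylvy} gives $\l_m^c(t)\in L$.
\end{itemize}

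\textbf{Step 2: $\langle F,b\rangle\supseteq\Lambda_M(F(b))$.} Put $S=\langle F,b\rangle$. Since $S\supseteq F(b)$ and $\Lambda_M(F(b))$ is the smallest $\l$-closed field containing $F(b)$, it suffices to show that $S$ is $\l$-closed in $M$.
\begin{itemize}
\item $S$ is a subring of $M$. It is in fact a subfield: for $0\neq s\in S$ we have $1=s^p\cdot s^{-p}$, so $s^{-1}=\l_{1,1}(1;s)^{\dots}$ can be expressed through $\l$-functions. More simply, $s^{-1}=s^{p-1}\cdot \l_{0,1}(1/s^p)$ fails to be direct, so instead use $\l_{1,m}(1;s)$ for the tuple $(s)$ when $s\notin M^p$, and the $0$-tuple case $\l_{0,1}(s)=s^{1/p}$ when $s\in M^p$. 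This routine check is standard in \cite{at}.
\item Take a tuple $c$ from $S$ that is $p$-independent in $M$, and $a\in S\cap M^p(c)$.
\item Being $p$-independent in $M$ is exactly the condition under which the $\l$-function symbols of $\LL_\l$ evaluate to the true coordinates. So $\l_m^c(a)=\l_{n,m}^M(a;c)\in S$, because $S$ is closed under the function symbols.
\item Hence Statement 3 of Fact \ref{sylvy} holds for $S$.
\end{itemize}

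\textbf{Step 3: separability.} Since $\langle F,b\rangle=\Lambda_M(F(b))$ is $\l$-closed in $M$, Fact \ref{sylvy} gives that $M|\langle F,b\rangle$ is separable.

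\textbf{Where the hypothesis is used.} The hypothesis that $M|F$ is separable is what makes $\Lambda_M(F(b))$ meaningful as the \emph{$\l$-closure generated over $F$}: it ensures $F$ is already $\l$-closed, so nothing is lost by starting from $F$ rather than from $\Lambda_M(F)$.

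The main obstacle is the field-closure check in Step 2: showing that the $\LL_\l$-substructure generated is a field and not merely a ring. The approach is to handle inverses via $p$-th roots obtained from $\l_{0,1}$ on $M^p$, together with the identity $s^{-1}=s^{p-1}(s^{-p})$. If this check turns out to be delicate, I would instead define $\langle F,b\rangle$ as the generated substructure in $\LL_\l$ with inverse included, as is implicit in the statement.
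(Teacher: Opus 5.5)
Steps 1 and 3 are correct, and Step 2 correctly reduces everything to showing that $S=\langle F,b\rangle$ is a subfield of $M$. That is the only real content of the statement, and your argument for it fails:
\begin{itemize}
\item The term $\l_{1,m}(1;s)$ only returns the coordinates of $1$ in the basis $\Mon(s)$, i.e.\ $1$ for the trivial monomial and $0$ otherwise.
\item The symbols of $\LL_\l$ all have $n\geq 1$, so there is no $\l_{0,1}$.
\item Taking $p$-th roots never produces $s^{-1}$ anyway.
\end{itemize}
A symptom is that the separability of $M|F$ is used nowhere in Steps 1--3; your last paragraph describes a role it does not play.

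The missing idea is a trick like the following. If $s\in S\setminus M^p$, then $s^{p+1}$ is $p$-independent in $M$ and $s=(s^{-1})^p\cdot s^{p+1}$. Hence $\l_{1,m}(s;s^{p+1})=s^{-1}$ for $m=x_1$. If $0\neq s\in S\cap M^p$ and $S$ contains some $x$ that is $p$-independent in $M$, then $sx\notin M^p$ and $s^{-1}=x\cdot(sx)^{-1}\in S$. The same $x$ also gives $a^{1/p}=\l_{1,m_0}(a;x)\in S$ for $a\in S\cap M^p$, with $m_0$ the trivial monomial. Step 2 needs this too, because Statement 3 of Fact \ref{sylvy} must include the empty tuple. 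Such an $x$ exists if $b\notin M^p$, or if $F$ is imperfect. This is where separability enters: any $x\in F\setminus F^p$ stays outside $M^p$, because $F\cap M^p=F^p$.

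In the remaining case, $F$ perfect and $b\in M^p$, you have $S\subseteq M^p$. Then every $\l_{n,m}$ vanishes on $S$ and $S=F[b]$, so no check can make $S$ a field when $b$ is transcendental over $F$. For instance, with $M=\F_p(t,u)$, $F=\F_p$ and $b=t^p$, you get $\langle F,b\rangle=\F_p[t^p]$, while $\Lambda_M(F(b))=\F_p(t)$. So in this case the statement holds only if $\langle F,b\rangle$ is understood to be closed under inverses \emph{and} under $p$-th roots of elements of $M^p$ (the empty-tuple $\l$-function). Your fallback adopts only the first of these, and by fiat rather than by proof. With inverses alone you would get $S=\F_p(t^p)$, which your Step 2 would certify as $\l$-closed vacuously, although $M|\F_p(t^p)$ is not separable.
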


\subsection{Fragments and NIP Formulas}

\begin{defin} Let $\LL$ be a language.
\label{deffr}
\begin{itemize}[wide]
    \item  An $\LL$-\emph{fragment} is a set of $\LL$-formulas closed under $\wedge$ and $\vee,$ containing $\top$ and $\perp.$ If $\D_0$ is a set of $\LL$-formulas, the $\LL$-\emph{fragment generated by }$\D_0$ is the intersection of all the $\LL$-fragments that contain $\D_0.$ 

    \item If $\D_0$ is a set of $\LL$-formulas, we denote by $\pm\D_0$ the $\LL$-fragment generated by $\D_0\cup\{\neg\f:\f\in\D_0\}.$ 
    
    \item Given a language $\LL$ and some $n\geq1,$ we denote by $\E_n$ (resp. $\A_n$) the set of $\LL$-formulas in prenex normal form, of quantifier rank $n,$ whose outermost quantifier is $\E$ (resp. $\A$). Note that the sets $\E_n$ and $\A_n$ are, up to logical equivalence, closed under conjunctions and disjunctions, i.e. they form $\LL$-fragments for any language $\LL.$ We call these sets the \emph{classical fragments}.  
\end{itemize}
\end{defin}

Though a fragment is not formally a set of \emph{equivalence classes} of formulas, for most of our purposes we do not make the distinction between a formula $\f$ actually belonging to a fragment $\D$ and the formula $\f$ being logically equivalent to a formula from $\D.$ Note that if $\D$ is an $\LL$-fragment closed under negations, then $\pm\D=\D.$ In Item 3 of the last definition, we adopted Cherlin's notation for the set of $\E_n$-formulas, cf.~\cite[Definition 28]{cherlin}. This notation is not standard in the literature, see e.g.~Anscombe and Fehm's \cite[Definition 3.1]{af} where the same notation represents a different fragment.

\begin{defin}
    Let $\LL,\LL'$ be two languages, and let $\AA$ and $\BB$ be some $\LL$ and $\LL'$-structures respectively. We say that $\AA$ is \emph{interpretable} in $\BB$ if there is a number $n>0$ and a partial surjective function $\Phi:B^{n}\to A$ such that, for any $\LL$-formula $\f(x_1,\.,x_l,z),$ there is an $\LL'$-formula $\psi(y_1,\.,y_l,w)$ satisfying $|y_i|=n$ for all $i\in[l]$ and, for all $\a\in A^{|z|},$ there is some $\b\in B^{|w|}$ with $$\AA\models\f(\Phi(y_1),\.,\Phi(y_l),\a)\Sii\BB\models\psi(y_1,\.,y_l,\b).$$
    We also say that $\AA$ is interpretable in $\BB$ \emph{through $(n,\Phi).$}
    If $\D$ and $\D'$ are sets of $\LL$- and $\LL'$-formulas respectively, and if $\AA$ is interpretable in $\BB$ through $(n,\Phi),$ we say that $(n,\Phi)$ is \emph{$\D$-to-$\D'$} if for any formula $\f(x,y)\in\D$ there is some formula $\psi(z,w)\in\D'$ such that, if $D=\f(A,\a)$ for some tuple $\a$ of $A,$ then $\Phi^{-1}D=\psi(B,\b)$ for some tuple $\b$ of $B.$
\end{defin}

The following Lemma is well known, see e.g.~Section 5.3 of \cite{hodges} or Section 9.4 of \cite{poizmt}. We state a proof that allows us to draw some corollaries that will be used throughout the document.

\begin{lema}
\label{intcrit}
    Let $\LL,\LL'$ be two languages, and let $\AA$ and $\BB$ be some non-empty $\LL$ and $\LL'$-structures respectively. Then $\AA$ is interpretable in $\BB$ if there is a number $n>0$ and a partial surjection $\Phi:B^{n}\to A$ for which the following sets are $\LL'$-definable:  
    \begin{enumerate}
        \item $\Phi^{-1}A,$ say, by $\f_A(B,\b_A),$

        \item $\{(x,y)\in B^{2n}:\AA\models\Phi(x)=\Phi(y)\},$ say, by $\f_x(B,\b_x),$

        \item If $c$ is a constant symbol in $\LL,$ the set $\{x\in B^{n}:\AA\models\Phi(x)=c\},$ say, by $\f_c(B,\b_c),$
        
        \item If $f$ is an $l$-ary function symbol in $\LL$, the set $\{(x_1,\.,x_l,y)\in B^{n(l+1)}:\AA\models f(\Phi(x_1),\.,\Phi(x_l))=\Phi(y)\},$ say, by $\f_f(B,\b_f),$

        \item If $R$ is an $l$-ary relation symbol in $\LL,$ the set $\{(x_1,\.,x_l)\in B^{ln}:\AA\models R(\Phi(x_1),\.,\Phi(x_l))\},$ say, by $\f_R(B,\b_R).$
    \end{enumerate}
\end{lema}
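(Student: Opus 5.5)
We prove the lemma by induction on the complexity of $\LL$-formulas, producing for each $\LL$-formula $\f(x_1,\.,x_l,z)$ an $\LL'$-formula $\psi(y_1,\.,y_l,w)$ with $|y_i|=n$ that works uniformly in the parameters. To avoid dealing with nested terms directly, we first pass to \emph{unnested} formulas. Every $\LL$-formula is logically equivalent to one in which each atomic subformula has one of the forms $x=y$, $c=x$, $f(x_1,\.,x_l)=y$ or $R(x_1,\.,x_l)$ with $x_i,y$ variables. This is done by introducing existentially quantified variables for subterms, cf.~Section 2.6 of \cite{hodges}. So it suffices to treat unnested formulas.

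The translation is defined recursively, with the parameters $\b_A,\b_x,\b_c,\b_f,\b_R$ collected into a single tuple $\b$ that is substituted for $w$. The atomic cases are handled by the five hypotheses.
\begin{itemize}
    \item $x=y$ is sent to $\f_x(y_x,y_y,\b_x)$.
    \item $c=x$ is sent to $\f_c(y_x,\b_c)$.
    \item $f(x_1,\.,x_l)=y$ is sent to $\f_f(y_1,\.,y_l,y,\b_f)$.
    \item $R(x_1,\.,x_l)$ is sent to $\f_R(y_1,\.,y_l,\b_R)$.
\end{itemize}
Boolean connectives commute with the translation. The quantifier step sends $\E x\,\f$ to $\E y\,(\f_A(y,\b_A)\wedge\psi)$, where $y$ is an $n$-tuple of variables. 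This works because $\Phi$ is surjective onto $A$ and its domain is $\f_A(B,\b_A)$, so quantifying over $A$ corresponds exactly to quantifying over $\Phi^{-1}A$. By induction, for all $y_1,\.,y_l$ in the domain of $\Phi$ we get $\AA\models\f(\Phi(y_1),\.,\Phi(y_l))$ if and only if $\BB\models\psi(y_1,\.,y_l,\b)$.

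For a formula $\f(x_1,\.,x_l,z)$ with parameters $\a\in A^{|z|}$, we use surjectivity to pick preimages $\g\in(\Phi^{-1}A)^{|z|}$ of $\a$. We then take $\psi'(y_1,\.,y_l,w')$ to be the translation of $\f$ with the $z$-variables replaced by fresh $n$-tuples, and set $w'=(w,z')$ and $\b'=(\b,\g)$. The formula $\psi'$ depends only on $\f$, while $\b'$ depends on $\a$, as the definition of interpretability demands. To restrict to the domain of $\Phi$, we conjoin $\bigwedge_i\f_A(y_i,\b_A)$.

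The main point needing care is well-definedness under the equivalence relation defined by $\f_x$. This is automatic here: the translation of each atomic formula depends only on $\Phi$-values, because the defining sets in items 2--5 are given semantically in terms of $\Phi$. For example, item 4 describes the graph of $f$ pulled back along $\Phi$, so any representatives may be used. The induction therefore never needs invariance under the equivalence as a separate verification. The only genuine bookkeeping is the reduction to unnested formulas and the choice of preimages for the parameters. The construction also makes visible the quantifier cost, namely one block of $n$ quantifiers per $\LL$-quantifier plus the complexity of the $\f_\ast$. This is the source of the $\D$-to-$\D'$ corollaries drawn afterwards.
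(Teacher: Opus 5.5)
Your proof is correct. Its skeleton matches the paper's: induction on formulas, relativising each quantifier to $\varphi_A(B,\beta_A)$, pulling the parameters back along chosen preimages, and conjoining $\bigwedge_i\varphi_A(y_i,\beta_A)$ so that $\psi(B,\beta')$ is exactly $\Phi^{-1}D$. The genuine difference is how you treat nested terms. You unnest in $\LL$ first, so only the shapes $x=y$, $c=x$, $f(\bar x)=y$, $R(\bar x)$ occur, and each is translated verbatim by one of items 2--5. The paper instead proves, by induction on $\LL$-terms, that every pulled-back graph $S_t$ is definable in $\BB$. It then translates $R(t_1,\dots,t_k)$ and $t_1=t_2$ by quantifying over $\varphi_A$-relativised witnesses for the values of the $t_i$, so in effect it does the unnesting on the $\LL'$ side. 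Your version is shorter and needs no term graphs, at the cost of invoking an external normal-form theorem. What the paper's version buys is explicit control of quantifier shape: each $\varphi_t$ and each atomic translation comes in both an $\exists$-form and a $\forall$-form, which is exactly what Corollary \ref{eint} uses. For the same reason your closing remark on quantifier cost is too quick. The unnesting quantifiers are not counted in ``one block of $n$ quantifiers per $\LL$-quantifier''. Moreover, with purely existential unnesting as you describe it, a negated nested atom becomes universal: $\exists x\,\neg R(f(x))$ turns into $\exists x\,\neg\exists v\,(f(x)=v\wedge R(v))$, which is $\E_2$ rather than $\E_1$. To get the $\E_m$-to-$\E_m$ statements from your route you would have to unnest literals by polarity, e.g.\ via $\neg R(t)\leftrightarrow\exists v\,(v=t\wedge\neg R(v))$, so that the new quantifiers merge into the innermost block. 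That is precisely the paper's ``either \dots\ or \dots'' choice in its term and atomic cases.
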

\begin{proof}
    First, by induction on $t,$ we show that for any $\LL$-term $t(x_1,\.,x_l),$ the set $$S_t=\{(y_1,\.,y_l,y)\in B^{n(l+1)}:\AA\models t(\Phi(y_1),\.,\Phi(y_l))=\Phi(y)\}$$ is definable (possibly with parameters $\b_t$) in $\BB$ by some formula $\f_t(y_1,\.,y_l,y,\b_t).$ 
    \begin{itemize}[wide]
        \item \emph{$t(x_1,\.,x_l)$ is the variable $x_1.$} Define $\f_t(y_1,y,z_1,z_2)=\f_A(y_1,z_1)\wedge\f_A(y,z_1)\wedge\f_{x_1}(y_1,y,z_2).$ Then $(y_1,y)\in S_t$ if and only if $\BB\models\f_t(y_1,y,\b_A,\b_{x_1}).$ We let $\b_t$ be the tuple $(\b_A,\b_{x_1}).$

        \item \emph{$t(x_1,\.,x_l)$ is the constant symbol $c.$} Define $\f_t(y,z_1,z_2)=\f_A(y,z_1)\wedge\f_c(y,z_2).$ Then $y\in S_t$ if and only if $\BB\models\f_t(y,\b_A,\b_c).$ We let $\b_t$ be the tuple $(\b_A,\b_c).$

        \item \emph{$t(x_1,\.,x_l)$ is the term $f(t_1(x_1,\.,x_l),\.,t_k(x_1,\.,x_l)).$} Let $\f_t(y_1,\.,y_l,y,z_1,\.,z_k,w,v)$ be either $$\E v_1,\.,v_k\left(\left(\bigwedge_{i=1}^{k}\f_A(v_i,v)\wedge\f_{t_i}(y_1,\.,y_l,v_i,z_i)\right)\wedge\f_f(v_1,\.,v_k,y,w)\right)$$ or $$\A v_1,\.,v_k\left(\left(\bigwedge_{i=1}^{k}\f_A(v_i,v)\wedge\f_{t_i}(y_1,\.,y_l,v_i,z_i)\right)\to\f_f(v_1,\.,v_k,y,w)\right).$$ 
        Then $(y_1,\.,y_l,y)\in S_t$ if and only if  $\BB\models\f_t(y_1,\.,y_l,y,\b_{t_1},\.,\b_{t_k},\b_f,\b_A).$ We let $\b_t$ be the tuple $(\b_{t_1},\.,\b_{t_k},\b_f,\b_A).$
    \end{itemize}
    
    Second, by induction on $\f,$ we show that for any $\LL$-formula $\f(x_1,\.,x_l),$ there is some $\LL'$-formula $\psi(y_1,\.,y_l,w)$ such that $|y_i|=n$ for all $i\in[l]$ and the set $S_\f=\{(y_1,\.,y_l)\in B^{nl}:\AA\models\f(\Phi(y_1),\.,\Phi(y_l))\}$ is equal to the set $\psi(B,\b_\f)$ for some $\b_\f\in B^{|w|}.$ Note that this is enough to claim that $\AA$ is interpretable in $\BB$ through $(n,\Phi),$ for if $D=\f(A,\a),$ then $\Phi^{-1}D=\psi(B,\b,\b_\f)$ for any $\b\in\Phi^{-1}(\a),$ because
    $$(y_1,\.,y_l)\in\Phi^{-1}D\Sii\AA\models\f(\Phi(y_1),\.,\Phi(y_l),\a)\Sii\BB\models\psi(y_1,\.,y_l,\b,\b_\f).$$
    \begin{itemize}[wide]
        \item \emph{$\f(x_1,\.,x_l)$ is the formula $R(t_1(x_1,\.,x_l),\.,t_k(x_1,\.,x_l)).$} Let $\psi(y_1,\.,y_l,z_1,\.,z_k,w,v)$ be either $$\E v_1,\.,v_k\left(\left(\bigwedge_{i=1}^{k}\f_A(v_i,v)\wedge\f_{t_i}(y_1,\.,y_l,v_i,z_i)\right)\wedge\f_R(v_1,\.,v_k,w)\right)$$ or $$\A v_1,\.,v_k\left(\left(\bigwedge_{i=1}^{k}\f_A(v_i,v)\wedge\f_{t_i}(y_1,\.,y_l,v_i,z_i)\right)\to\f_R(v_1,\.,v_k,w)\right).$$ Then $(y_1,\.,y_n)\in S_\f$ if and only if $\BB\models \psi(y_1,\.,y_l,\b_{t_1},\.,\b_{t_k},\b_R,\b_A).$ We let $\b_\f=(\b_{t_1},\.,\b_{t_k},\b_R,\b_A).$ 

        \item \emph{$\f(x_1,\.,x_l)$ is the formula $t_1(x_1,\.,x_l)=t_2(x_1,\.,x_l).$} We may define $\psi(y_1,\.,y_l,w_1,w_2,z)$ either as $$\E v(\f_A(v,z)\wedge\f_{t_1}(y_1,\.,y_l,v,w_1)\wedge\f_{t_2}(y_1,\.,y_l,v,w_2))$$ or as $$\A v\Big((\f_A(v,z)\wedge\f_{t_1}(y_1,\.,y_l,v,w_1))\to\f_{t_2}(y_1,\.,y_l,v,w_2)\Big).$$ Then $(y_1,\.,y_l)\in S_\f$ if and only if $\BB\models \psi(y_1,\.,y_l,\b_{t_1},\b_{t_2},\b_A).$ We let $\b_\f$ be the tuple $(\b_{t_1},\b_{t_2},\b_A).$

        \item \emph{$\f(x_1,\.,x_l)$ is the formula $\f_1(x_1,\.,x_l)\wedge\f_2(x_1,\.,x_l).$} Let $(\psi_1,\b_{\f_1})$ and $(\psi_2,\b_{\f_2})$ be the data corresponding to the interpretations of $\f_{\f_1}$ and $\f_{\f_2}$ respectively. We define $\psi(y_1,\.,y_l,w_1,w_2)$ as the formula $\psi_1(y_1,\.,y_l,w_1)\wedge \psi_2(y_1,\.,y_l,w_2).$ In this case, $(y_1,\.,y_l)\in S_\f$ if and only if $\BB\models \psi(y_1,\.,y_l,\b_{\f_1},\b_{\f_2}).$ We let $\b_\f$ be the tuple $(\b_{\f_1},\b_{\f_2}).$

        \item \emph{$\f(x_1,\.,x_l)$ is the formula $\neg\th(x_1,\.,x_l).$} Let $(\xi,\b_\th)$ be the data corresponding to the interpretation of $\th.$ Then we can define $\psi(y_1,\.,y_l,w)$ as $\neg\xi(y_1,\.,y_l,w).$ Then $(y_1,\.,y_l)\in S_\f$ if and only if $\BB\models\psi(y_1,\.,y_l,\b_\th).$ We let $\b_\f$ be the tuple $\b_\th.$

        \item \emph{$\f(x_1,\.,x_l)$ is the formula $\E x~\th(x,x_1,\.,x_l).$} Let $(\xi,\b_\th)$ be the data corresponding to the interpretation of $\th.$ We can define $\psi(y_1,\.,y_l,w_1,w_2)$ as $\E x\Big(\f_A(x,w_1)\wedge \xi(y_1,\.,y_l,w_2)\Big).$ Then $(y_1,\.,y_l)\in S_\f$ if and only if $\BB\models\psi(y_1,\.,y_l,\b_A,\b_\th).$ We let $\b_\f$ be the tuple $(\b_A,\b_\th).$
    \qedhere
    \end{itemize}
\end{proof}

By tracking down quantifier complexity along the inductions of the proof of Lemma \ref{intcrit}, we get the following Corollary. 

\begin{cor}
\label{eint}
    Let $\LL,\LL'$ be two languages, and let $\AA$ and $\BB$ be some $\LL$ and $\LL'$-structures respectively. Suppose the highlighted sets of Lemma \ref{intcrit} are $\LL'$-definable. 
    \begin{enumerate}
        \item If the sets of items 1, 2, 3 and 4 are quantifier-free definable, then for any $\LL$-term $t,$ the set $S_t$ is both existentially and universally definable.

        \item Suppose moreover that there is some $m\geq 1$ such that the sets of item 5 are definable by formulas both from $\E_m$ and $\A_m.$ Then for any $\E_m$-formula (resp. any $\A_m$-formula) $\f$ from $\LL$ there is some $\E_m$-formula (resp. some $\A_m$-formula) from $\LL'$ that defines the set $S_\f.$ In particular, the resulting interpretation $(n,\Phi)$ is $\E_m$-to-$\E_m$ and $\A_m$-to-$\A_m.$
    \end{enumerate}
\end{cor}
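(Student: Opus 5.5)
We follow the two inductions in the proof of Lemma \ref{intcrit}, now recording the quantifier shape of each formula built. Throughout we use that $\E_m$ and $\A_m$ are, up to logical equivalence, closed under $\wedge$ and $\vee$ (Definition \ref{deffr}). We also use that a quantifier-free formula belongs to both $\E_m$ and $\A_m$ for every $m\geq1$, by adding dummy quantifiers.

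For item 1 we argue by induction on the term $t$. In the variable case and the constant case, $\f_t$ is a conjunction of the quantifier-free formulas $\f_A$, $\f_{x}$ and $\f_c$. It is therefore quantifier-free, hence both existential and universal. In the case $t=f(t_1,\.,t_k)$, the proof of Lemma \ref{intcrit} offers two equivalent definitions of $S_t$. The first is $\E v_1,\.,v_k(\bigwedge_i(\f_A\wedge\f_{t_i})\wedge\f_f)$. By induction each $\f_{t_i}$ is existential, and $\f_A,\f_f$ are quantifier-free, so after prenexing this is existential. The second is $\A v_1,\.,v_k(\bigwedge_i(\f_A\wedge\f_{t_i})\to\f_f)$. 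Here each $\f_{t_i}$ occurs negatively, so we take the universal... rather, the \emph{existential} definition of $\f_{t_i}$; its negation is universal, and prenexing yields a universal formula.

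The key point, which must be checked, is that these two formulas really define the same set, so that $S_t$ has both definitions. This uses that $\Phi$ is a function on $\Phi^{-1}A$: for each $(y_1,\.,y_l)$ the value $t(\Phi(y_1),\.,\Phi(y_l))$ is attained by some $v_i$, by surjectivity, and every witness $v_i$ represents that same value. Hence "there exist witnesses" and "for all witnesses" agree.

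For item 2 we argue by induction on $\E_m$- and $\A_m$-formulas $\f$ of $\LL$ simultaneously, after first reducing to prenex form. The atomic cases need the most care, and we handle them as follows.
\begin{itemize}
\item \emph{$R(t_1,\.,t_k)$.} The existential version $\E\bar v(\bigwedge(\f_A\wedge\f_{t_i})\wedge\f_R)$ is used with the existential $\f_{t_i}$ and the $\E_m$ version of $\f_R$. Prenexing gives an $\E_m$-formula: the leading existential blocks merge with the first block of $\f_R$, so no new alternation appears, provided $m\geq1$.
\item The universal version $\A\bar v(\neg\bigwedge(\cdots)\vee\f_R)$ is used with the negations of the existential $\f_{t_i}$, which are universal, and the $\A_m$ version of $\f_R$. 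This gives an $\A_m$-formula.
\item \emph{$t_1=t_2$.} This is analogous, with item 1 only, and yields formulas in both classes.
\end{itemize}
For the inductive steps, $\wedge$ and $\vee$ preserve both classes. An outer quantifier $\E x$ applied to an $\E_m$-formula is relativised via the quantifier-free $\f_A$: it becomes $\E x(\f_A(x)\wedge\xi)$, which stays in $\E_m$ because quantifiers of the same kind are absorbed into the first block. Dually, $\A x(\f_A(x)\to\xi)$ stays in $\A_m$. Negation swaps $\E_m$ and $\A_m$, and the induction hypothesis covers both classes, so negation is also handled.

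The main obstacle is bookkeeping the quantifier rank. One has to check that merging the existential witnessing blocks coming from the terms into the outermost block of the relation formula does not raise the rank above $m$. This works because item 1 gives the term definitions with a single block of either polarity, which we can always match to the adjacent block. The final claim, that $(n,\Phi)$ is $\E_m$-to-$\E_m$ and $\A_m$-to-$\A_m$, then follows as in the proof of Lemma \ref{intcrit}: we substitute parameters $\b\in\Phi^{-1}(\a)$, which does not change the shape of the formula.
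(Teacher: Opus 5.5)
The gap is in the quantifier step of item 2, and for $m\geq 2$ it cannot be closed, because Statement 2 is false. Your item 1 and the atomic cases of item 2 are fine. One small repair is needed there: the universal versions should also carry the conjuncts $\f_A(y_j)$. Otherwise, for tuples outside $\Phi^{-1}A$ there are no witnesses $v_i$ and the universal formula holds vacuously.

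\textbf{Where the induction breaks.} You only treat $\E x$ placed in front of an $\E_m$-formula, i.e.\ lengthening an existing first block. Now suppose $\f=\E\bar x\,\xi\in\E_m$ and $\bar x$ is the whole first block. Then $\xi$ is an $\A_{m-1}$-formula. The best your induction hypothesis gives for it is an $\A_m$-definition, since the atomic step only supplies translations at level $m$. So $\E\bar x(\f_A\wedge\xi')$ lies in $\E_{m+1}$, not $\E_m$. Regarding $\xi$ as an $\E_m$-formula via a dummy quantifier does not help, because your rule for $\A y$ outputs $\A_m$-formulas.

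Tracked honestly, the translated matrix contributes $m$ blocks, and only one of them merges with the innermost block of the prefix. So an $\E_m$-formula of $\LL$ becomes, in general, an $\E_{2m-1}$-formula of $\LL'$. For $m=1$ the problem disappears, because the body is then quantifier-free and has both translations. In that case your argument is correct.

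\textbf{Counterexample for $m=2$.} Take $\BB=(\N,+,\times,0,1)$, $\AA=(\N,R)$, $n=1$ and $\Phi$ the identity. Let $R(x,y,z)$ hold iff $y<x$ or $y\in W_z$, where $W_z$ is the $z$-th c.e.\ set. Items 1--4 are trivially quantifier-free. $R$ is c.e., hence existentially definable by MRDP, hence definable both by an $\E_2$- and by an $\A_2$-formula after adding dummy quantifiers. But the $\E_2$-formula $\E x\A y\,R(x,y,z)$ defines $\{z: W_z\text{ is cofinite}\}$. This set is $\Sigma^0_3$-complete and so not $\Sigma^0_2$. Every $\E_2$-definable subset of $\N$ in $\BB$ is $\Sigma^0_2$, since parameters are numerals.

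\textbf{What survives.} The paper's one-line justification (tracking quantifier complexity through Lemma \ref{intcrit}) hides the same problem. What the tracking does prove is the following: if the sets of item 5 are both existentially and universally definable, then $(n,\Phi)$ is $\E_k$-to-$\E_k$ and $\A_k$-to-$\A_k$ for every $k\geq 1$. The reason is that each literal of the matrix then has both an existential and a universal translation, so it can be absorbed into the innermost block whatever its polarity. This is all the paper actually uses, in Remark \ref{inte}, Lemma \ref{resfieldint} and Corollary \ref{enipexp}. More generally, level-$m$ definitions of the item-5 sets give $\E_k$-to-$\E_{k+m-1}$.
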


For any subset $\D$ of $\LL$-sentences, and any $\LL$-theory $T,$ consider the space $S^{\D}(T)$ of complete $\D$-types. If $\Phi\in\D,$ define the subset $D(\Phi)=\{p\in S^{\D}(T):\Phi\not\in p\}.$ The family of subsets $\{D(\Phi):\Phi\in\D\}$ determines a sub-basis of a topology on $S^{\D}(T).$ 
The following lemma should be well known, as it is an application of the fact that $S^{\D}(T)$ is a spectral topological space, cf.~\cite[Theorem 14.2.5]{ss}.

The following notations are taken from Chapter 3 of \cite{tz}.

\begin{defin}
    Let $\LL$ be a language, $\D$ be a subset of $\LL$-formulas, and let $\AA,\BB$ be two $\LL$-structures. We write $\AA\Rightarrow_\D\BB$ if $\AA\models\f$ implies that $\BB\models\f$ for every sentence $\f\in\D,$ and if $f:A\to B$ is a function, we write $f:\AA\to_\D\BB$ if $\AA\models\f(a)$ implies that $\BB\models\f(f(a))$ for all formulas $\f(x)\in\D$ and all tuples $a$ from $A,$ i.e. if $(\AA,a)_{a\in A}\Rightarrow_{\D}(\BB,f(a))_{a\in A}.$ We write $\AA\equiv_\D\BB$ if $\AA\Rightarrow_{\D}\BB$ and $\BB\Rightarrow_{\D}\AA.$ Finally, if $\AA\subseteq\BB,$ we write $\AA\preceq_\D\BB$ if $(\AA,a)_{a\in A}\equiv_\D(\BB,a)_{a\in A}.$ 
\end{defin}

The notation $\AA\Rightarrow_\D\BB$ is not uniform throughout the literature, where sometimes instead this relation is denoted $\Th_\D(\AA)\subseteq\Th_{\D}(\BB).$.
The following fact is well known. The referenced result only shows (1) is equivalent to (3), but the equivalence of (2) follows from the proof.

\begin{fact}[Cf.~{\cite[Lemma 3.1.2]{tz}}]
\label{exembg}
Let $\AA$ and $\BB$ be two $\LL$-structures, and let $\D$ be a set of $\LL$-formulas closed under existential quantifications and conjunctions. The following statements are equivalent.
\begin{enumerate}
    \item $\AA\Rightarrow_\D\BB.$

    \item For any $|A|^{+}$-saturated elementary extension $\BB^{*}$ of $\BB,$ there is some function $f:\AA\to_\D\BB^{*}.$

    \item There is some $\BB^{*}\succeq\BB$ and some function $f:\AA\to_\D\BB^{*}.$ 
\end{enumerate}
\end{fact}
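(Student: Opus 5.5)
Our plan is to prove the cycle (1)$\Rightarrow$(2)$\Rightarrow$(3)$\Rightarrow$(1). The implication (2)$\Rightarrow$(3) is immediate: take $\BB^{*}$ to be any $|A|^{+}$-saturated elementary extension of $\BB$, which exists by standard model theory.

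For (3)$\Rightarrow$(1), let $\f\in\D$ be a sentence with $\AA\models\f$. Applying the definition of $f:\AA\to_\D\BB^{*}$ with the empty tuple gives $\BB^{*}\models\f$. Since $\BB\preceq\BB^{*}$, we conclude $\BB\models\f$. If one prefers to avoid the empty tuple, the same argument works by padding $\f$ with a dummy variable.

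The content lies in (1)$\Rightarrow$(2). Fix an $|A|^{+}$-saturated $\BB^{*}\succeq\BB$ and introduce constants $c_a$ for $a\in A$. Consider the set of $\LL(\{c_a\})$-formulas
$$\Sigma(x_a)_{a\in A}=\{\f(x_{a_1},\.,x_{a_k}):\f\in\D,\ \AA\models\f(a_1,\.,a_k)\}$$
in variables $(x_a)_{a\in A}$. A realization of $\Sigma$ in $\BB^{*}$ yields exactly a function $f:\AA\to_\D\BB^{*}$ via $f(a)=$ the value of $x_a$.

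To find a realization, we check finite satisfiability first. A finite subset of $\Sigma$ consists of formulas $\f_1,\.,\f_r$ involving finitely many $a_1,\.,a_k$. Since $\D$ is closed under conjunctions, $\psi=\bigwedge_j\f_j\in\D$, and $\AA\models\psi(a_1,\.,a_k)$. Since $\D$ is closed under existential quantification, $\E x_1\.x_k\,\psi\in\D$ is a sentence true in $\AA$. By (1) it is true in $\BB$, hence in $\BB^{*}$, so the finite subset is satisfiable there.

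The main obstacle is that $\Sigma$ has $|A|$ many variables, whereas $|A|^{+}$-saturation only realizes types in finitely many variables over fewer than $|A|^{+}$ parameters. We therefore realize $\Sigma$ one variable at a time. Enumerate $A=(a_\a:\a<|A|)$ and build $f(a_\a)=b_\a$ by transfinite recursion, maintaining the invariant that for every $\f\in\D$ and every finite tuple from $\{a_\b:\b<\a\}$ together with any further elements $\bar a$ of $A$, if $\AA\models\f(\bar a_{<\a},\bar a)$ then $\BB^{*}\models\E\bar y\,\f(\bar b_{<\a},\bar y)$. This invariant holds at stage $0$ by the previous paragraph. It is preserved at limit stages because it only involves finitely many indices.

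At a successor stage we must realize the type
$$\{\E\bar y\,\f(\bar b_{<\a},x,\bar y):\AA\models\f(\bar a_{<\a},a_\a,\bar a)\}$$
over at most $|A|$ parameters. It is finitely satisfiable by the invariant together with closure of $\D$ under conjunction and $\E$. Saturation therefore realizes it, and the realization $b_\a$ preserves the invariant. Taking $\bar a$ empty at the end of the recursion shows that $f$ preserves all formulas of $\D$.
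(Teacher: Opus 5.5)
Your overall strategy is the standard one: you reduce (1)$\Rightarrow$(2) to realizing $\Sigma$ in $\BB^{*}$ by saturation, one variable at a time. The paper gives no proof and only cites \cite[Lemma 3.1.2]{tz}, whose argument is this compactness argument. Your (2)$\Rightarrow$(3), (3)$\Rightarrow$(1), the limit stages and the final step are fine. The gap is in the finite-satisfiability claims of (1)$\Rightarrow$(2). The members of $\Sigma$ are not formulas of $\D$; they are substitution instances $\f(x_{a_1},\ldots,x_{a_k})$, in which free variables of $\f$ are identified whenever the tuple repeats an element. Likewise, to form $\bigwedge_j\f_j$ with the right sharing you must rename the free variables of the $\f_j$ so that positions carrying the same element coincide and positions carrying different elements do not. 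Closure under $\wedge$ and $\E$ alone does not give you this. The same problem sits in your successor step. Your invariant only yields $\BB^{*}\models\E\bar y\,\psi(\bar b_{<\a},\bar y)$, with a separate existential witness for each occurrence of $a_\a$. So it cannot force a single $x$ to serve all occurrences of $a_\a$ in the formulas of $p(x)$.

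No proof can close this gap, because the Fact as literally stated is false. Let $\LL=\{R\}$ with $R$ binary, and let $\D$ be the closure of $\{R(x_1,x_2)\}$ under $\wedge$ and $\E$. Every member of $\D$ is equivalent to a conjunction of some of $R(x_1,x_2)$, $\E x_1R(x_1,x_2)$, $\E x_2R(x_1,x_2)$ and $\E x_1\E x_2R(x_1,x_2)$. Hence, up to equivalence, the only $\D$-sentence says that $R\neq\emptyset$. Take $\AA$ to be one point $a$ with $R(a,a)$, and $\BB$ two points $b,c$ with only $R(b,c)$. Then $\AA\Rightarrow_\D\BB$. However, every $\BB^{*}\succeq\BB$ is loop-free, so no $f$ can satisfy $\BB^{*}\models R(f(a),f(a))$, which $\to_\D$ demands for $\f=R(x_1,x_2)$ and the tuple $(a,a)$. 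In your recursion this appears at stage $0$: $p(x)$ contains $R(x,x)$, while the invariant only gives $\E y_1\E y_2R(y_1,y_2)$. The fix is to also assume that $\D$ is closed under substitution of variables, which is how this lemma is usually formulated. Then you rename each $\f_j$ so that equal elements share a variable and distinct elements get distinct variables, and your conjunction and invariant arguments go through verbatim. This extra hypothesis holds, up to logical equivalence, for the fragments $\E_n$ to which the paper applies the Fact, so nothing later in the paper is affected.
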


\begin{fact}[cf.~{\cite[Theorem 14.2.17]{ss}}] 
\label{delredeqf}
Let $T$ be an $\LL$-theory and let $\D$ be a set of $\LL$-formulas closed under conjunctions and disjunctions. Let $\Phi,\Psi$ be subsets of $\LL$-formulas. The following are equivalent.
\begin{enumerate}[label*={\arabic*.}]
    \item There is a set $\S\subseteq\D\cup\{\bot\}$ such that $T\cup\Phi\vdash\Sigma$ and $T\cup\Sigma\vdash\Psi,$

    \item If $p,q\in S(T)$ and $p\cap\D\subseteq q,$ then $\Phi\subseteq p$ implies $\Psi\subseteq q.$
\end{enumerate}
If $\Psi$ is finite, $\Sigma$ can be chosen to be a single formula.
\end{fact}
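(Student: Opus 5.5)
The plan is to prove the two implications separately. For (2)$\Rightarrow$(1) I will take $\Sigma$ to be the set of all consequences of $T\cup\Phi$ that lie in $\D\cup\{\bot\}$, and show by compactness that it works. Throughout, elements of $S(T)$ are complete types, that is, maximal consistent sets of formulas containing $T$ (in a fixed tuple of free variables).

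\emph{(1)$\Rightarrow$(2).} Let $\Sigma$ be as in (1), and let $p,q\in S(T)$ with $p\cap\D\subseteq q$ and $\Phi\subseteq p$. Since $p$ is complete and contains $T\cup\Phi$, it contains every consequence of $T\cup\Phi$, so $\Sigma\subseteq p$. As $p$ is consistent, $\bot\notin p$, hence $\Sigma\subseteq p\cap\D\subseteq q$. Since $q\supseteq T\cup\Sigma$ is complete, it contains every consequence of $T\cup\Sigma$, and so $\Psi\subseteq q$.

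\emph{(2)$\Rightarrow$(1).} Put $\Sigma=\{\sigma\in\D\cup\{\bot\}:T\cup\Phi\vdash\sigma\}$, so that $T\cup\Phi\vdash\Sigma$ holds by construction. Suppose towards a contradiction that $T\cup\Sigma\not\vdash\psi$ for some $\psi\in\Psi$. Then there is $q\in S(T)$ with $\Sigma\cup\{\neg\psi\}\subseteq q$; in particular $\bot\notin\Sigma$, so $T\cup\Phi$ is consistent. The key step is to show that
$$T\cup\Phi\cup\{\neg\delta:\delta\in\D,\ \delta\notin q\}$$
is consistent. If it were not, compactness would give $\delta_1,\dots,\delta_k\in\D\setminus q$ with $T\cup\Phi\vdash\delta_1\vee\dots\vee\delta_k$. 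Here $k\geq1$, because $T\cup\Phi$ is consistent. Since $\D$ is closed under disjunctions, this disjunction belongs to $\D$, hence to $\Sigma$, hence to $q$. Completeness of $q$ then forces some $\delta_i\in q$, which is a contradiction.

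Now complete this consistent set to some $p\in S(T)$. Then $\Phi\subseteq p$, and $p\cap\D\subseteq q$ because every $\delta\in\D\setminus q$ has $\neg\delta\in p$. By (2) we get $\Psi\subseteq q$, contradicting $\neg\psi\in q$.

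\emph{Finite $\Psi$.} If $\Psi$ is finite, replace it by the conjunction of its members. By compactness, $T\cup\Sigma_0\vdash\Psi$ for some finite $\Sigma_0\subseteq\Sigma$. If $\bot\in\Sigma_0$, take $\Sigma=\{\bot\}$; otherwise take the conjunction of $\Sigma_0$, which lies in $\D$ since $\D$ is closed under conjunctions. The only mildly delicate points are bookkeeping ones. One is the role of $\bot$, which handles the case where $T\cup\Phi$ is inconsistent. The other is the degenerate case $\Sigma_0=\emptyset$, in which case $T\vdash\Psi$; there one uses any member of $\Sigma$ if it is nonempty, or a valid formula of $\D$ (e.g.\ $\top$, which lies in every fragment by Definition \ref{deffr}). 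I expect no real obstacle beyond the compactness argument in the key step.
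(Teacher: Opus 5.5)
Your proof is correct. It takes a different route from the paper, which gives no argument of its own: it cites \cite[Theorem 14.2.17]{ss} and points to the fact that $S^{\D}(T)$ is a spectral space. In that framework one looks at the restriction map $p\mapsto p\cap\D$ from $S(T)$ to $S^{\D}(T)$. The image of the closed set $\{p:\Phi\subseteq p\}$ is proconstructible. In a spectral space the closure of a proconstructible set is its specialization closure, which here is its $\subseteq$-upward closure. That closure is therefore closed, hence of the form $\{r:\S\subseteq r\}$ for some $\S\subseteq\D$, and condition 2 says exactly that it lies inside $\{q:\Psi\subseteq q\}$.

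Your key compactness step, the consistency of $T\cup\Phi\cup\{\neg\delta:\delta\in\D,\ \delta\notin q\}$, is a hands-on proof of precisely this closedness, with $\S$ taken to be all $\D\cup\{\bot\}$-consequences of $T\cup\Phi$. Your version is self-contained and shows exactly where each hypothesis enters: closure under disjunctions only in the key step, closure under conjunctions only in the single-formula clause. The spectral viewpoint instead ties the Fact to the topology on $S^{\D}(T)$ introduced just before it.

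One caveat about your last step. The Fact assumes only that $\D$ is closed under $\wedge$ and $\vee$, not that it is a fragment, so you cannot cite Definition \ref{deffr} to get $\top\in\D$. This is a defect of the statement rather than of your proof. Take $\D$ to be the $\{\wedge,\vee\}$-combinations of $x=y$, let $T$ have a model with two elements, and put $\Phi=\emptyset$, $\Psi=\{x=x\}$. Then condition 1 holds with $\S=\emptyset$ and condition 2 holds trivially. Yet no single $\sigma\in\D\cup\{\bot\}$ satisfies $T\vdash\sigma$.

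So the single-formula clause genuinely needs $\D$ to contain a $T$-valid formula, or $\top$ must be allowed alongside $\bot$. This is harmless for the paper's later use, where the relevant set of formulas contains a valid formula such as $0=0$.
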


\begin{cor}
\label{delredeq}
Let $T$ be an $\LL$-theory and let $\D'$ be a set of $\LL$-formulas closed under conjunctions and disjunctions. 
\begin{enumerate}[label={\arabic*.}]
    \item Let $\f(x)$ be an $\LL$-formula. The following statements are equivalent.
        \begin{enumerate}
            \item There is a formula $\psi(x)\in\D'\cup\{\bot\}$ such that $T\vdash\A x(\f(x)\sii\psi(x)),$
        
            \item For any pair of types $p,q\in S_x(T),$ if $p\cap\D'\subseteq q$ and $\f(x)\in p,$ then $\f(x)\in q.$
        \end{enumerate}
    \item Let $\D$ be a set of $\LL$-formulas. The following statements are equivalent.
        \begin{enumerate}
            \item Any formula from $\D$ is equivalent, modulo $T,$ to a formula from $\D'\cup\{\bot\}.$

            \item For any finite tuple of variables $x$ and any pair of types $p,q\in S_x(T),$ if $p\cap\D'\subseteq q,$ then $p\cap\D\subseteq q.$
            
            \item For any pair of models $\MM,\NN$ of $T$ and any pair of tuples $a\in M,b\in N$ of the same length, if $(\MM,a)\Rightarrow_{\D'}(\NN,b),$ then $(\MM,a)\Rightarrow_{\D}(\NN,b).$
        \end{enumerate}
\end{enumerate}
\end{cor}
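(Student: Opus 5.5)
Part 1 follows from Fact \ref{delredeqf} with $\Phi=\{\f(x)\}$ and $\Psi=\{\f(x)\}$. To make this literal, I would first replace $x$ by new constants $c$, working in $\LL(c)$ with the same theory $T$. Then types $p\in S_x(T)$ correspond to complete $\LL(c)$-theories extending $T$. Let $\D'$ also denote the corresponding set of sentences $\psi(c)$; it is still closed under $\wedge,\vee$.

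For (a)$\Rightarrow$(b): assume $T\vdash\A x(\f\sii\psi)$ with $\psi\in\D'\cup\{\bot\}$, and let $\f\in p$.
\begin{itemize}
\item Then $\psi\in p$, which rules out $\psi=\bot$, so $\psi\in\D'$.
\item Hence $\psi\in p\cap\D'\subseteq q$, and so $\f\in q$.
\end{itemize}

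For (b)$\Rightarrow$(a): condition (b) is exactly item 2 of Fact \ref{delredeqf}. That fact gives a single $\psi\in\D'\cup\{\bot\}$ (since $\Psi$ is finite) with $T\cup\{\f\}\vdash\psi$ and $T\cup\{\psi\}\vdash\f$. Returning from constants to variables gives $T\vdash\A x(\f\sii\psi)$.

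Part 2, (a)$\Leftrightarrow$(b), follows by applying Part 1 to each $\f\in\D$.
\begin{itemize}
\item If (a) holds, then (b) of Part 1 holds for every $\f\in\D$, so $p\cap\D\subseteq q$ whenever $p\cap\D'\subseteq q$.
\item Conversely, (b) gives Part 1(b) for each $\f\in\D$, hence an equivalent formula in $\D'\cup\{\bot\}$.
\end{itemize}

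For (b)$\Leftrightarrow$(c), translate between tuples in models and types.
\begin{itemize}
\item Given $\MM,\NN,a,b$, put $p=\tp(a)$ and $q=\tp(b)$. Then $(\MM,a)\Rightarrow_{\D'}(\NN,b)$ means exactly $p\cap\D'\subseteq q$, and likewise for $\D$, so (b) yields (c).
\item Conversely, realize $p$ and $q$ by tuples $a,b$ in models of $T$; these exist since the types are consistent with $T$. Then (c) gives (b).
\end{itemize}

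A subtlety is that formulas in $\D$ or $\D'$ may mention only some of the variables $x$. Taking $x$ to range over all finite tuples handles this, after padding with dummy variables.

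The main obstacle is the bookkeeping of free variables versus constants when invoking Fact \ref{delredeqf}, which is stated for sentences. Beyond that, everything is a direct unwinding of definitions.
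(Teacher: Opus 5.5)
Your proposal is correct and follows the paper's proof. Part 1 comes from Fact \ref{delredeqf}, and you usefully make explicit the constants-for-variables translation and the easy direction, which the paper leaves implicit; Part 2 (a)$\Leftrightarrow$(b) is Part 1 applied to each $\f\in\D$, and (c) is handled by passing between tuples in models and their types, the only cosmetic difference being that you prove (b)$\Rightarrow$(c) directly where the paper observes that (a)$\Rightarrow$(c) is clear. One fine point is shared with the paper's statement: the single-formula clause of Fact \ref{delredeqf} implicitly needs some $T$-valid formula in $\D'$ when $T$ is consistent and $T\vdash\A x\,\f(x)$, which is automatic if $\D'$ contains $\top$ (as every fragment does), so strictly both arguments use that mild extra hypothesis.
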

\begin{proof}
Statement 1 follows from Fact \ref{delredeqf}. The equivalence between (a) and (b) of statement 2 follows from statement 1. It is clear that (a) implies (c), so we just have to prove that (c) implies (b). To this end, let $x$ be a finite tuple of variables, and let $p,q\in S_x(T)$ be two types. Then there are two models $\MM,\NN$ of $T$ and two $|x|$-tuples $a,b$ of $M$ and $N$ respectively such that $p=\tp^{\MM}(a)$ and $q=\tp^{\NN}(b).$ If $p\cap\D'\subseteq q,$ then $(\MM,a)\Rightarrow_{\D'}(\NN,b),$ which by hypothesis implies that $(\MM,a)\Rightarrow_{\D}(\NN,b),$ meaning that $p\cap\D\subseteq q,$ as wanted. 
\end{proof}

\begin{defin}
    Let $\D$ be a set of $\LL$-formulas. We say that an $\LL$-theory is \emph{$\D$-NIP} if all formulas from $\D$ are NIP. If $\AA$ is some structure, we say that $\AA$ is $\D$-NIP if $\Th(\AA)$ is $\D$-NIP.
\end{defin}

Recall that an $\LL$-theory is NIP if and only if all $\LL$-formulas $\f(x,y)$ with $|x|=1$ are NIP, cf.~\cite[Proposition 2.11]{simon}. We were unable to prove the analogous statement for arbitrary sets of formulas $\D,$ and this difference stands as one of the main difficulties when dealing with $\D$-NIP theories. For example, in Section \ref{enniptr}, given a $\texttt{SAMK}$ valued field, we prove an NIP transfer of certain families of formulas from the residue field and the value group to a valued field, conditional on a statement that holds for indiscernible sequences \emph{of several variables} instead of indiscernible sequences of singletons, as a consequence of the inability to bound to one the number of variables involved in formulas without the independence property. 
Note also that if a theory is $\D$-NIP, then it is $\pm\D$-NIP too, because the set of NIP formulas is closed under boolean combinations, cf.~\cite[Lemma 2.9]{simon}. One standard way of transferring NIP from a set of formulas $\D$ in one structure to another set of formulas $\D'$ in a second structure is done via \emph{$\D$-to-$\D'$ interpretations}, as the following Lemma explains.  

\begin{lema}
\label{dtod'nip}
    Let $\LL$ and $\LL'$ be two languages, let $\MM$ be an $\LL$-structure and $\NN$ be an $\LL'$-structure. Suppose that $\MM$ is interpretable in $\NN$ through $(n,\Phi),$ and that $(n,\Phi)$ is $\D$-to-$\D'.$ Then $\MM$ is $\D$-NIP whenever $\NN$ is $\D'$-NIP. 
\end{lema}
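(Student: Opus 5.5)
We argue by contraposition: assuming some formula of $\D$ has IP in $\Th(\MM)$, we produce a formula of $\D'$ with IP in $\Th(\NN)$.

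First we move the witnesses of IP into $\MM$ itself. Let $\f(x,y)\in\D$ have IP with respect to $\Th(\MM)$. By compactness, for every $N<\omega$ the sentence saying that $\f(x,y)$ shatters $N$ points is true in some model of $\Th(\MM)$, hence in $\MM$. So for each $N$ there are tuples $(a_i)_{i<N}$ in $M^{|x|}$ and $(c_S)_{S\subseteq N}$ in $M^{|y|}$ with $\MM\models\f(a_i,c_S)$ if and only if $i\in S$.

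Next we transport these configurations to $\NN$. Since $(n,\Phi)$ is $\D$-to-$\D'$, there is a formula $\psi(z,w)\in\D'$ attached to $\f$, with $|z|=n|x|$. For each parameter $c_S$, the set $D_S=\f(M,c_S)$ satisfies $\Phi^{-1}D_S=\psi(N,\b_S)$ for some tuple $\b_S$ of $N$. Because $\Phi$ is surjective, we may choose $\tilde a_i\in\Phi^{-1}(a_i)$, meaning $n$-tuples coordinatewise mapped onto the coordinates of $a_i$. Then
$$\NN\models\psi(\tilde a_i,\b_S)\Sii \tilde a_i\in\Phi^{-1}D_S\Sii \MM\models\f(a_i,c_S)\Sii i\in S.$$
Thus $\psi(z,w)$ shatters $N$ points in $\NN$ for every $N$. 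The only subtlety is that the definition of $\D$-to-$\D'$ gives one formula $\psi$ for all parameters, so that $\psi$ is fixed while only $\b_S$ varies with $S$; this uniformity is exactly what the definition provides. Even if the length of $\b_S$ were not uniform, it could be padded to a common length, since $\psi$ is fixed.

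Finally, by compactness, $\psi$ shattering arbitrarily large finite sets in $\NN$ means $\psi$ has IP with respect to $\Th(\NN)$. Hence $\NN$ is not $\D'$-NIP, which proves the contrapositive.

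The main point to watch, rather than a real obstacle, is that IP is defined via some model of the theory and not via $\MM$ itself. That is why the first step passes through finite shattering, which is expressible by sentences and therefore reflected in $\MM$ by elementary equivalence; no saturation of $\NN$ is then needed.
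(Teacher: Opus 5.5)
Your proof is correct, and it takes a different, more elementary route than the paper's. The paper starts from an infinite IP witness in some $\MM'\equiv\MM$ and embeds $\MM'$ elementarily into an $|M'|^{+}$-saturated ultrapower $\MM^{U}$, which needs a good ultrafilter. It then defines $\Phi^{U}$ coordinatewise and uses \L o\'s's theorem to check that $(n,\Phi^{U})$ is again a $\D$-to-$\D'$ interpretation of $\MM^{U}$ in $\NN^{U}$. Finally it pulls the infinite witness back along $\Phi^{U}$ to an infinite witness for $\psi$ in $\NN^{U}$.

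You instead exploit the finitary character of IP. Shattering $N$ points is expressed by a single $\LL$-sentence, so these configurations already occur in $\MM$ because $\Th(\MM)$ is complete. You transport each one along $\Phi$ itself, and compactness for $\Th(\NN)$ finishes the argument.

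Both proofs hinge on the point you flag: the $\D'$-formula $\psi$ attached to $\f$ does not depend on the parameter. The paper needs this too, to apply \L o\'s to the family $\Phi^{-1}D_j=\psi(N,n_j)$.

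What each approach buys:
\begin{itemize}
\item Your version needs no saturation, no ultrafilter, and no check that interpretations lift to ultrapowers.
\item The paper's version produces a concrete infinite witness in a specific elementary extension $\NN^{U}$. It also sets up the ultrapower device reused in Lemma~\ref{dclosedip}, which your finitary argument would prove just as directly.
\end{itemize}

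Three cosmetic remarks:
\begin{itemize}
\item Your first step needs no compactness: just restrict the infinite witness to $N$ points and use $\MM'\equiv\MM$.
\item The padding remark is moot, since $|w|$ is fixed once $\psi(z,w)$ is.
\item You use $N$ both for the number of shattered points and for the universe of $\NN$.
\end{itemize}
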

\begin{proof}
    Suppose that the formula $\th(x,y)\in\D$ has IP in $\MM'\equiv\MM,$ witnessed by the sequences $(\a_i:i<\omega),(\b_S:S\subseteq\omega).$ Let $U$ be a non-principal ultrafilter such that the ultrapower $\MM^{U}$ is $|M'|^{+}$-saturated,\footnote{Say, a countably incomplete $\k^{+}$-good ultrafilter over $\k$, where $\k>|M'|+|M|+|\LL|+\aleph_0.$} so that $\MM'$ can be elementarily embedded into $\MM^{U}.$
    Therefore we may assume that $\th(x,y)$ has IP in $\MM^{U},$ witnessed by the same sequences $(\a_i:i<\omega),(\b_S:S\subseteq\omega).$
    If $(a_1,\.,a_n)$ is a tuple in $(N^{U})^{n}$ and $I=\cup U$ is the underlying set of indices of $U,$ we can write $a_i=[(b_j^{i}:j\in I)]_U$ for each $i\in[n].$ This allows us to define the partial function $\Phi^{U}:(N^{U})^{n}\to M^{U}$ by $$\Phi^{U}(a_1,\.,a_n)=[(\Phi(b_j^{1},\.,b_j^{n}):j\in I)]_U$$ for all $(a_1,\.,a_n)\in(\Phi^{-1}M)^{U}.$ Note that $\Phi^{U}$ is a surjection because $\Phi:N^{n}\to M$ was surjective in the first place. 
    First, we claim that $\MM^{U}$ is interpretable in $\NN^{U}$ through $(n,\Phi^{U}).$ Indeed, Let $D=\f(M^{U},\a)$ be a definable set in $\MM^{U},$ where $\a=[(m_j:j\in I)]_U.$ Let $D_j=\f(M,m_j)$ be the corresponding definable set in $\MM.$ Since $\MM$ is interpretable in $\BB$ through $(n,\Phi)$, there is some $\LL'$-formula $\psi(z,w)$ and some sequence $(n_j:j\in I)$ such that $\Phi^{-1}D_j=\psi(N,n_j)$ for all $j\in I.$ By \L oś' Theorem, it follows that $(\Phi^{U})^{-1}D=\psi(N^{U},\b),$ where $\b=[(n_j:j<\omega)]_U.$ 
    Note that if $(n,\Phi)$ is $\D$-to-$\D'$ and if $\f$ is a formula from $\D,$ then we can choose $\psi$ to be a formula from $\D',$ making $(n,\Phi^{U})$ a $\D$-to-$\D'$ interpretation too. In particular, given that $\th(x,y)\in\D,$ there is some $\LL'$-formula $\psi(z,w)\in\D'$ and, for each $S\subseteq\omega,$ a parameter $\b_S'$ in $\NN^{U}$ such that $(\Phi^{U})^{-1}\th(M^{U},\b_S)=\psi(N^{U},\b_S').$ If $i<\omega$ and $\a_i'\in(\NN^{U})^{n}$ is such that $\Phi^{U}(\a_i')=\a_i,$ then 
    \begin{align*}
        \NN^{U}\models\psi(\a_i',\b_S')&\Sii\a_i'\in\psi(N^{U},\b_S')\\
        &\Sii\a_i'\in(\Phi^{U})^{-1}\th(M^{U},\b_S)\\
        &\Sii\a_i\in\th(M^{U},\b_S)\\
        &\Sii\MM^{U}\models\th(\a_i,\b_S)\\
        &\Sii i\in S,
    \end{align*}
    implying that (the theory of) $\NN$ is not $\D'$-NIP.
\end{proof}

One can also transfer $\D$-NIP from one theory to another via \emph{$\D$-closed} embeddings. Namely,

\begin{lema}
\label{dclosedip}
    Let $\D$ be a set of formulas of a language $\LL$ and let $\AA\subseteq\BB$ be two $\LL$-structures such that $\AA\preceq_{\D}\BB.$ If $\BB$ is $\D$-NIP, then so is $\AA.$ 
\end{lema}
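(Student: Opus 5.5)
We argue by contraposition: assuming $\AA$ is not $\D$-NIP, we produce a formula of $\D$ with IP in $\BB$. So suppose some $\th(x,y)\in\D$ has IP with respect to $\Th(\AA)$. The first step is to move the witnesses into $\AA$ itself. Having IP is equivalent to the consistency with $\Th(\AA)$ of a set of sentences, namely for each finite $N$ and each finite $S\subseteq N$ the sentence
$$\E x_1,\.,x_N\bigwedge_{S\subseteq[N]}\E y_S\bigwedge_{i\in[N]}\big(\th(x_i,y_S)\sii i\in S\big).$$
Since $\th$ has IP in some model of $\Th(\AA)$, every such sentence $\chi_N$ lies in $\Th(\AA)$. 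Hence $\AA\models\chi_N$ for every $N$, and we obtain $(a_i:i\in[N])$ and $(b_S:S\subseteq[N])$ from $A$ with $\AA\models\th(a_i,b_S)$ if and only if $i\in S$.

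Next we transfer these finite configurations to $\BB$ using $\AA\preceq_\D\BB$. This relation says that $(\AA,a)_{a\in A}\equiv_\D(\BB,a)_{a\in A}$, so for every formula $\f(z)\in\D$ and every tuple $c$ from $A$ we have $\AA\models\f(c)$ if and only if $\BB\models\f(c)$. Applying this to $\th(x,y)\in\D$ at each pair $(a_i,b_S)$ gives
$$\BB\models\th(a_i,b_S)\Sii\AA\models\th(a_i,b_S)\Sii i\in S,$$
for all $i\in[N]$ and $S\subseteq[N]$. Note that only positive instances of $\th$ and its own equivalence are used, via $\equiv_\D$ in both directions, so no closure of $\D$ under negation is required.

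Finally, the sentences $\chi_N$ therefore hold in $\BB$ for every $N$. By compactness, $\Th(\BB)$ has a model containing witnesses indexed by all of $\omega$ and all $S\subseteq\omega$. Thus $\th\in\D$ has IP with respect to $\Th(\BB)$, contradicting that $\BB$ is $\D$-NIP.

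The main point needing care is that NIP is defined at the level of the theory and not the structure, and this is handled by the reduction to the finite patterns $\chi_N$. These involve quantifiers over $\th$, but we never need the $\chi_N$ themselves to lie in $\D$: we only transfer the quantifier-free instances $\th(a_i,b_S)$ with parameters from $A$, and this is exactly what $\preceq_\D$ provides. Everything else is routine.
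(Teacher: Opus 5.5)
Your proof is correct. It differs from the paper's only in how the IP witnesses are produced, not in the key transfer step.

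The paper reuses the set-up of Lemma \ref{dtod'nip}. It passes to a sufficiently saturated ultrapower $\AA^{U}$ (via a good ultrafilter) in which a full infinite configuration $(a_i:i<\omega)$, $(b_S:S\subseteq\omega)$ for the $\D$-formula is realized. It then notes by \L o\'s's theorem that $\AA\preceq_\D\BB$ lifts to $\AA^{U}\preceq_\D\BB^{U}$, and reads the configuration off in $\BB^{U}\equiv\BB$.

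You instead use completeness of $\Th(\AA)$ to realize the finite shattering patterns $\chi_N$ inside $\AA$ itself. You then transfer each instance $\th(a_i,b_S)$ directly along $\AA\preceq_\D\BB$, and recover the infinite configuration by compactness over $\Th(\BB)$.

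Both arguments rest on the same observation: $\preceq_\D$ preserves every instance of a $\D$-formula with parameters from $A$ in both directions, so no closure properties of $\D$ are needed. Your route is more elementary, since it avoids saturated ultrapowers, good ultrafilters and the \L o\'s step entirely. The paper's route has the advantage of reusing machinery it has already built for Lemma \ref{dtod'nip}.
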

\begin{proof}
    Suppose $\AA$ is not $\D$-NIP. As in the proof of Lemma \ref{dtod'nip}, there is a formula $\f$ from $\D$ and some sequences $(a_i:i<\omega)$ and $(b_S:S\subseteq \omega)$ that witness the independence property in some ultrapower $\AA^{U}$ of $\AA.$ Since $\AA\preceq_\D\BB,$ one can see by \L oś' Theorem that $\AA^{U}\preceq_{\D}\BB^{U}$,
    which yields the equivalence 
    $$i\in S\Sii\AA^{U}\models\f(a_i,b_S)\Sii\BB^{U}\models\f(a_i,b_S),$$
    for all $i<\omega$,
    from which it follows that the theory of $\BB$ is not $\D$-NIP.
\end{proof}

Now we turn our attention to type-definable sets.

\begin{defin}
    Let $\MM$ be an $\LL$-structure and let $A\subseteq M.$ We say that a subset of a finite power of $M$ is \emph{$A$-$\D$-type-definable} if it is the trace in $\MM$ of an arbitrary intersection of $\D$-formulas over $A.$ 
\end{defin}

We will fill in the details of \cite[Theorem 8.4]{simon} in order to highlight the role that NIP formulas play in the existence of $G^{00}$. We work within a monster model $\UU$ of cardinality $\k$ of an $\LL$-theory $T.$ We will mostly need $\k$ to be a strong limit cardinal of cofinality greater than $\beth_{(2^{|T|})^{+}}.$

\begin{lema}
\label{ind}
    Let $G$ be a group and let $H$ be a subgroup of $G.$ Then $|\{H'\subseteq G:H\leq H'\leq G\}|\leq 2^{[G:H]}.$
\end{lema}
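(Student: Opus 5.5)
The plan is to inject the set of intermediate subgroups into the power set of the coset space $G/H$, which has cardinality $2^{[G:H]}$. To each subgroup $H'$ with $H\leq H'\leq G$ I associate the set
$$\iota(H')=\{gH: g\in H'\}\subseteq G/H$$
of left cosets of $H$ contained in $H'$. It then suffices to show that $\iota$ is injective, since its image lies in $\mathcal{P}(G/H)$.

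The key observation is that $H'$ is a union of left cosets of $H$: if $g\in H'$, then $gH\subseteq H'$ because $H\leq H'$ and $H'$ is closed under multiplication. Hence $H'=\bigcup\iota(H')$, i.e. $H'$ equals the union of the cosets belonging to $\iota(H')$. Consequently, if $\iota(H_1')=\iota(H_2')$, then $H_1'=\bigcup\iota(H_1')=\bigcup\iota(H_2')=H_2'$, which gives injectivity.

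Putting these together,
$$|\{H'\subseteq G:H\leq H'\leq G\}|\leq|\mathcal{P}(G/H)|=2^{|G/H|}=2^{[G:H]}.$$
No step here is a real obstacle. The only point needing care is the closure argument showing that $H'$ is saturated with respect to left $H$-cosets, and this uses only that $H\subseteq H'$ and that $H'$ is a subgroup. The argument works equally well for infinite index, where $2^{[G:H]}$ is read as cardinal exponentiation.
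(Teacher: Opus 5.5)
Your proof is correct and is essentially the paper's argument: the paper uses the same map $H'\mapsto\{gH:g\in H'\}$ into the power set of $G/H$, and proves injectivity by the same observation that an intermediate subgroup is a union of left cosets of $H$. Your formulation $H'=\bigcup\iota(H')$ is slightly cleaner than the paper's element-chasing. The paper writes $gh^{-1}\in H$ where, for left cosets, the correct condition is $h^{-1}g\in H$.
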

\begin{proof}
    Let $H'\subseteq G$ be such that $H\leq H'\leq G.$ Define $f(H')=\{gH:g\in H'\}.$ Then $f$ is injective. Indeed, if $H_1,H_1$ are two subgroups of $G$ containing $H,$ $f(H_1)=f(H_2)$ and $g\in H_1,$ then $gH\in f(H_1)=f(H_2),$ so there is some $h\in H_2$ such that $gh^{-1}\in H\subseteq H_2.$ Therefore $g=gh^{-1}h\in H_2.$ The remaining inclusion is symmetrical. 
\end{proof}

The following Lemma corresponds to the first remark of Section 8.1 of \cite{simon}.

\begin{lema}
\label{count}
    Suppose that $\D$ is a set of $\LL$-formulas closed under conjunctions. Let $A$ be a small subset of $\UU,$ let $G$ be an $A$-$\D$-type-definable group, and let $\mathfrak{F}$ be the family of $A$-$\D$-type-definable groups $H$ such that $G\leq H$ and $H$ is defined by an intersection of countably many formulas from $\D$. Then $G=\bigcap\mathfrak{F}.$
\end{lema}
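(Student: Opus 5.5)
The inclusion $G\subseteq\bigcap\mathfrak{F}$ is trivial: every $H\in\mathfrak{F}$ contains $G$. So the plan concentrates on the reverse inclusion. Take $g\notin G$; I will produce some $H\in\mathfrak{F}$ with $g\notin H$. To do so, write $G=\bigcap_{i\in I}\f_i(\UU)$, where each $\f_i$ is a $\D$-formula over $A$. Since $\D$ is closed under conjunctions, I may assume this family is closed under finite conjunctions.

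\textbf{Step 1.} By compactness, since $g\notin G$, there is some $\f_{i_0}$ with $g\notin\f_{i_0}(\UU)$. I will then build a countable chain of formulas $\psi_0=\f_{i_0}$ and $\psi_1,\psi_2,\.$, each from the family, so that $\psi_{k+1}$ "almost closes" $\psi_k$ under the group operations. Concretely, I want
\begin{itemize}
\item $\psi_{k+1}(x)\wedge\psi_{k+1}(y)\to\psi_k(xy^{-1})$, and
\item $\psi_{k+1}\to\psi_k$.
\end{itemize}

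\textbf{Step 2.} To find $\psi_{k+1}$, consider the partial type
$$\{\f_i(x)\wedge\f_i(y):i\in I\}\cup\{\neg\psi_k(xy^{-1})\}.$$
This type is inconsistent, because $G$ is a group: any $x,y$ realizing all the $\f_i$ lie in $G$, hence so does $xy^{-1}$, which would then satisfy $\psi_k$. By compactness there are finitely many $\f_i$ whose conjunction already forces $\psi_k(xy^{-1})$. I take $\psi_{k+1}$ to be the conjunction of these formulas with $\psi_k$. This conjunction lies in $\D$ and is over $A$.

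Here I must handle a technical point: the group operation and the ambient sort may themselves be only type-definable. The intended reading is that $G$ is a subgroup of an $A$-definable group, or a type-definable group inside the monster model. In that setting the compactness argument works on the domain of the group law, and I will state this assumption explicitly.

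\textbf{Step 3.} Set $H=\bigcap_k\psi_k(\UU)$. This is a countable intersection of $\D$-formulas over $A$, and it contains $G$. It is closed under $xy^{-1}$: if $x,y\in H$, then for every $k$ both satisfy $\psi_{k+1}$, so $xy^{-1}$ satisfies $\psi_k$. Since $H$ is nonempty (it contains $G$), it is a subgroup, so $H\in\mathfrak{F}$. Finally, $g\notin\psi_0(\UU)\supseteq H$, so $g\notin H$.

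The only delicate step is the compactness argument in Step 2. It requires saturation of $\UU$ relative to $A$, which holds because $A$ is small, and it requires the group law to be handled within the type-definable domain.
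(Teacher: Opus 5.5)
Your argument is the same compactness-and-countable-chain idea as the paper's. It is correct under the hypothesis you add, namely that $G$ lies in an $A$-definable group $\Gamma$, so that $x\cdot y^{-1}$ is a definable term and the one-step subgroup test applies. In that setting it is tidier than the paper's proof. One closure condition replaces the paper's two ($\xi_{n,1}$ for products, $\xi_{n,2}$ for inverses). Your requirement $\psi_{k+1}\to\psi_k$ also makes explicit a monotonicity that the paper's choice $\xi_{n+1}=\xi_{n,1}\wedge\xi_{n,2}$ omits, although the paper needs it to see that the inverses found at different stages coincide. If $\Gamma$ is a definable subset rather than a whole sort, you should also put into $\psi_0$ a finite conjunction of the $\varphi_i$ implying membership in $\Gamma$ (available by compactness), so that $H\subseteq\Gamma$.

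The paper, however, does not assume an ambient group. There, $\cdot$ and $1$ are definable operations known to satisfy the group axioms only on $G$, which is why its proof first extracts a formula $\psi$ on which associativity and the identity law hold. In that generality, which your phrase ``or a type-definable group inside the monster model'' points to but does not treat, your Step 3 breaks down. The term $y^{-1}$ is not definable, and associativity, the identity law and inverses may fail off $G$, so closure of $H$ under a binary operation does not make it a group.

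To close the gap, proceed as follows:
\begin{enumerate}
\item Use compactness once more to find a finite conjunction $\psi$ of the $\varphi_i$ on whose realizations associativity and $1\cdot x=x\cdot 1=x$ hold, and include it in $\psi_0$.
\item Replace your single condition by the two conditions $\psi_{k+1}(x)\wedge\psi_{k+1}(y)\to\psi_k(x\cdot y)$ and $\psi_{k+1}(x)\to\exists y\,(\psi_k(y)\wedge x\cdot y=1)$. Each is obtained exactly as in your Step 2, because $G$ is closed under products and inverses. Keep $\psi_k$ as a conjunct of $\psi_{k+1}$ so the chain stays decreasing.
\item Then $H=\bigcap_k\psi_k(\UU)$ is an associative monoid with identity $1\in G\subseteq H$.
\item Each $x\in H$ has a right inverse in $H$. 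The partial type $\{\psi_k(y):k<\omega\}\cup\{x\cdot y=1\}$ is finitely satisfiable because the chain is decreasing, hence it is realized by saturation.
\end{enumerate}
A monoid in which every element has a right inverse is a group, so $H\in\mathfrak{F}$ and the rest of your argument goes through.
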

\begin{proof}
    The inclusion $G\subseteq\bigcap\mathfrak{F}$ follows by definition of $\mathfrak{F}.$ For the reverse inclusion, let $g\in\bigcap\mathfrak{F}$ and suppose that $G=\bigcap_{i\in I}\f_i(\UU,b_i),$ where each $\f_i(x,y_i)$ is a formula from $\D$ and each $b_i$ is a $|y_i|$-tuple of parameters from $A.$ If $g\not\in G,$ then there is some index $i_0$ such that $\models\neg\f_{i_0}(g,b_{i_0}).$ Let $\mathbf{b}=(b_i)_{i\in I}$ and $\mathbf{y}=(y_i)_{i\in I}.$
    We first claim that there is some formula $\psi(x,y)$ from $\D$ obtained as a finite conjunction of formulas from $\pi(x,\mathbf{y})=\{\f_i(x,y_i):i\in I\}$ and some $|y|$-tuple of parameters $c$ from $A$ such that $G\subseteq\psi(\UU,c)$ and such that for any triple $a_1,a_2,a_3$ of singletons of $\UU,$ if $a_1,a_2,a_3\in\psi(\UU,c),$ then $a_1\cdot(a_2\cdot a_3)=(a_1\cdot a_2)\cdot a_3$ and $1\cdot a_1=a_1\cdot 1=a_1.$ Indeed, if this is not the case, then the set $$\pi(x_1,x_2,x_3)=\bigcup_{i=1}^{3}\pi(x_i,\mathbf{b})\cup\{x_1\cdot(x_2\cdot x_3)\neq(x_1\cdot x_2)\cdot x_3\vee \neg(1\cdot x_1=x_1\cdot 1=x_1)\}$$ is finitely consistent and therefore realizable in $\UU$ by saturation, contradicting the fact that the group operation on $G$ is associative and has a neutral element.
    Second, we claim that for any formula $\xi_n(x,y)$ obtained as a conjunction of formulas from $\pi(x,\mathbf{y})$ and any $|y|$-tuple $d_n$ of $A,$ if $G\subseteq\xi_n(\UU,c),$ then there is some formula $\xi_{n,1}(x,z)$ obtained as a finite conjunction of formulas from $\pi(x,\mathbf{y})$ and some $|z|$-tuple $d_{n,1}$ of $A$ such that $G\subseteq\xi_{n,1}(\UU,d_{n,1})$ and $a_1\cdot a_2\in\xi_n(\UU,c)$ whenever $a_1,a_2\in\xi_{n,1}(\UU,d_{n,1}).$ Indeed, if this is not the case, the set 
    $$\pi(x_1,x_2)=\pi(x_1,\mathbf{b})\cup\pi(x_2,\mathbf{b})\cup\{\neg\xi_n(x_1\cdot x_2,c)\}$$ is finitely consistent and therefore realizable in $\UU$ by saturation, contradicting the fact that $G$ is closed under its group operation.
    Third, analogously, we claim that for any formula $\xi_n(x,y)$ obtained as a conjunction of formulas from $\pi(x,\mathbf{y})$ and any $|y|$-tuple $d_n$ of $A,$ if $G\subseteq\xi_n(\UU,c),$ then there is some formula $\xi_{n,2}(x,z)$ obtained as a finite conjunction of formulas from $\pi(x,\mathbf{y})$ and some $|z|$-tuple $d_{n,2}$ of $A$ such that $G\subseteq\xi_{n,2}(\UU,d_{n,2})$ and if $a\in\xi_{n,2}(\UU,d_{n,2}),$ then there is a unique $b\in\xi_n(\UU,c)$ such that $a\cdot b=1.$ Indeed, if this is not the case, the set 
    $$\pi(x,\mathbf{b})\cup\{\neg(\E!\,y(\xi_n(y,c)\wedge x\cdot y=1))\}$$ is finitely consistent and therefore realizable in $\UU$ by saturation, contradicting the fact that $G$ is closed under multiplicative inverses.
    Finally, if we let $d_0=(c,b_{i_0})$ and we define $\xi_0(x,d_0)$ to be the formula $\psi(x,c)\wedge\f_{i_0}(x,b_{i_0}),$ and by induction we let $d_{n+1}=(d_{n,1},d_{n,2})$ and $\xi_{n+1}(x,d_{n+1})=\xi_{n,1}(x,d_{n,1})\wedge\xi_{n,2}(x,d_{n,2}),$ then, if $\mathbf{d}=(d_i)_{i\in I},$ the set $H=\bigcap_{n<\omega}\xi_n(\UU,\mathbf{d})$ is an element of $\mathfrak{F}.$ It follows that $g\in H,$ hence $\models\f_{i_0}(g,b_{i_0}),$ which is absurd.  
\end{proof}

For the rest of this section, let $\D$ be a fragment and let $G$ be a fix $\emptyset$-$\D$-type-definable group. Recall that if $A$ is a small set of parameters, then an $A$-type-definable subgroup $H\leq G$ is said to have \emph{bounded index} if $|G/H|<\k.$ By \cite[Proposition 5.1]{simon}, as the equivalence relation defined by $H$ is $\Aut(\UU/A)$-invariant, $H$ has bounded index in $G$ if and only if $|G/H|\leq2^{|A|+|T|}.$ Since $\k$ is a strong limit, then this equivalence shows that an intersection of \emph{boundedly-many} (i.e.  less than $\k$) subgroups of bounded index is of bounded index itself.

\begin{defin}
For a small set of parameters $A,$ let $G_A^{00,\D}$ be the intersection of all $A$-$\D$-type-definable subgroups of $G$ of bounded index. It is itself $A$-$\D$-type-definable of bounded index. We say that $G^{00,\D}$ \emph{exists} if $G_A^{00,\D}=G_{\emptyset}^{00,\D}$ for all small sets $A.$ If $G^{00,\D}$ exists, we set $G^{00,\D}=G_{\emptyset}^{00,\D}.$ 
\end{defin}

Note that for any small set of parameters $A$, the inclusion $G_A^{00,\D}\subseteq G_{\emptyset}^{00,\D}$ always holds.

\begin{lema}
\label{aut}
Let $\sigma\in\Aut(\UU).$ Then $\sigma G_A^{00,\D}=G_{\sigma A}^{00,\D}.$
\end{lema}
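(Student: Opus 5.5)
The plan is to show that $\sigma$ induces a bijection between the family of $A$-$\D$-type-definable subgroups of $G$ of bounded index and the family of $\sigma A$-$\D$-type-definable subgroups of $G$ of bounded index. Once we have this, the two intersections are exchanged by $\sigma$. Since $\sigma$ is a bijection of $\UU$ (and of its finite powers), it commutes with arbitrary intersections. Hence
\[\sigma G_A^{00,\D}=\sigma\bigcap H=\bigcap\sigma H=G_{\sigma A}^{00,\D}.\]

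First, $\sigma G=G$. Say $G=\bigcap_{i\in I}\f_i(\UU)$ with each $\f_i\in\D$ without parameters. An automorphism preserves the truth of every formula, so $\models\f_i(g)$ if and only if $\models\f_i(\sigma g)$. Therefore $\sigma G=G$, and moreover $\sigma\restriction G$ is a group automorphism of $G$, because $\sigma$ preserves the group operation (itself $\emptyset$-definable).

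Next, let $H\leq G$ be $A$-$\D$-type-definable, say $H=\bigcap_{j\in J}\psi_j(\UU,a_j)$ with each $\psi_j(x,y_j)\in\D$ and each $a_j$ a tuple from $A$. The same argument gives
\[\sigma H=\bigcap_{j\in J}\psi_j(\UU,\sigma a_j),\]
which is $\sigma A$-$\D$-type-definable with the very same formulas from $\D$. Since $\sigma\restriction G$ is a group automorphism, $\sigma H$ is a subgroup of $\sigma G=G$. The map $gH\mapsto\sigma(g)\sigma H$ is a well-defined bijection $G/H\to G/\sigma H$, so $|G/\sigma H|=|G/H|<\k$ and $\sigma H$ has bounded index. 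Applying the same reasoning to $\sigma^{-1}$, which sends $\sigma A$ back to $A$, shows that $H\mapsto\sigma H$ is a bijection between the two families.

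I do not expect a genuine obstacle. The only point needing care is that bounded index is an absolute cardinality condition, $|G/H|<\k$, which is automorphism-invariant by the bijection of cosets above. It is worth stating the argument in this form rather than via the bound $2^{|A|+|T|}$ of \cite[Proposition 5.1]{simon}; that bound is also symmetric, since $|\sigma A|=|A|$.
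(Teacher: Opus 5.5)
Your proof is correct and is essentially the paper's argument. Both rest on $\sigma G=G$ and on the fact that $\sigma$ (resp.~$\sigma^{-1}$) carries $A$-$\D$-type-definable subgroups of bounded index to $\sigma A$-$\D$-type-definable subgroups of bounded index (resp.~back).

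The only difference is where this is applied. The paper applies it just to $G_A^{00,\D}$ and $G_{\sigma A}^{00,\D}$ themselves, using that each belongs to its own family, and gets the two inclusions. You instead transport the whole family and commute $\sigma$ with the intersection, which has the small advantage of not needing that $G_A^{00,\D}$ itself has bounded index.
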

\begin{proof}
    Note that $\sigma G=G$ because $G$ is $\emptyset$-type-definable.
    Since $G_A^{00,\D}$ is $A$-$\D$-type-definable of bounded index in $G$, we have that $\sigma G_A^{00}$ is $\sigma A$-$\D$-type-definable of bounded index in $\sigma G=G$ and thus $G_{\sigma A}^{00,\D}\subseteq \sigma G_A^{00,\D}.$ The same argument shows that $\sigma^{-1}G_{\sigma A}^{00,\D}$ is an $A$-$\D$-type-definable subgroup of $G$ of bounded index in $G,$ so $G_A^{00,\D}\subseteq\sigma^{-1}G_{\sigma A}^{00,\D}$ and therefore $\sigma G_A^{00,\D}\subseteq G_{\sigma A}^{00,\D}.$
\end{proof}

\begin{lema}
\label{paramtoemp}
    Let $A\supseteq B$ be small subsets of $\UU$ and let $X$ be an $A$-$\D$-type-definable subset of $\UU.$ If $X$ is $\Aut(\UU/B)$-invariant, then it is $B$-$\pm\D$-type-definable.
\end{lema}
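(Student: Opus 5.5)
The plan is to show that $X$ equals the intersection of all $B$-$\pm\D$-formulas containing it, via a compactness argument on types over $B$. Write $X=\bigcap_{i\in I}\f_i(\UU,a_i)$ with $\f_i(x,y_i)\in\D$ and $a_i$ tuples from $A$. Let $\mathbf{a}=(a_i)_{i\in I}$ and $\pi(x,\mathbf{a})=\{\f_i(x,a_i):i\in I\}$. For each $i$, consider $q_i(y_i)=\tp(a_i/B)$, or better the joint type $q(\mathbf{y})=\tp(\mathbf{a}/B)$. By $\Aut(\UU/B)$-invariance of $X$ and strong homogeneity of $\UU$, every realization $\mathbf{a}'\models q$ satisfies $\pi(\UU,\mathbf{a}')=X$. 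Hence $X=\bigcap_{\mathbf{a}'\models q}\pi(\UU,\mathbf{a}')$, and also $x\in X$ iff $\exists\mathbf{y}\,(q(\mathbf{y})\wedge\pi(x,\mathbf{y}))$. The right-hand side is a $B$-type-definable set, by compactness and saturation (projections of small type-definable sets are type-definable). So $X$ is $B$-type-definable. The remaining issue is the fragment.

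To land in $\pm\D$, I would use Fact \ref{delredeqf}, or rather argue directly with types. Let $\Sigma(x)$ be the set of all $\pm\D$-formulas over $B$ true on all of $X$; I claim $\Sigma(\UU)=X$. Take $c\notin X$. Then $\neg\f_{i_0}(c,a_{i_0})$ for some $i_0$. I want a $\pm\D$-formula $\theta(x)$ over $B$ with $X\subseteq\theta(\UU)$ and $\neg\theta(c)$.

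The key step is to show the partial type
\[
q(\mathbf{y})\cup\{\f(x,b)\,:\,\f\in\pm\D,\ b\in B,\ \models\f(c,b)\}\cup\pi(x,\mathbf{y})
\]
is inconsistent. If some $(d,\mathbf{a}')$ realized it, then $\mathbf{a}'\equiv_B\mathbf{a}$, so $d\in\pi(\UU,\mathbf{a}')=X$, and $\tp_{\pm\D}(d/B)\supseteq\tp_{\pm\D}(c/B)$. Since $\pm\D$ is closed under negation, the two $\pm\D$-types over $B$ then coincide. Invariance alone is not enough to push $c$ into $X$ here, because $d$ and $c$ need not have the same full type over $B$. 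This is the main obstacle, and it is where the $\D$-shape of the defining formulas must enter.

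The fix I would try is to use the formula $\f_{i_0}$ directly. From $\models\neg\f_{i_0}(c,a_{i_0})$ and $\models\f_{i_0}(d,a'_{i_0})$, use an automorphism $\sigma\in\Aut(\UU/B)$ sending $\mathbf{a}'$ to $\mathbf{a}$; then $\sigma d\in X$. So I should instead compare $c$ against $\tp(\mathbf{a}/B)$ jointly. I would redo the compactness so that it fixes $\mathbf{a}$ itself, using the set $\{\f(x,b):\f\in\pm\D\ \text{over } B,\ \models\f(c,b)\}\cup\pi(x,\mathbf{a})$. If this set is inconsistent, compactness gives finitely many formulas. Then $\pi_0(x,\mathbf{a})\to\neg\theta(x)$ with $\theta\in\pm\D$ over $B$ true of $c$, and $\neg\theta$ is in $\pm\D$ and contains $X$, which finishes the argument.

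If instead it is consistent, pick $d\in X$ with the same $\pm\D$-type over $B$ as $c$. To conclude $c\in X$, I would show that $X$, being $B$-invariant and $\D$-type-definable, is a union of $\pm\D$-types over $B$. This would again follow by expressing $X$ as $\bigcup_{\sigma}$ of $\sigma$-images and applying Fact \ref{delredeqf}/Corollary \ref{delredeq} with $\Phi=q(\mathbf{y})\cup\pi(x,\mathbf{y})$ and $\Psi$ the formulas defining $X$ in the theory $\Th(\UU,B)$. That requires checking the type-inclusion hypothesis, which I expect to be the delicate point.
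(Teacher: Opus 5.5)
\textbf{There is a genuine gap, and it cannot be closed: the lemma is false as stated.}

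Your compactness half is fine and matches the paper. If $\{\f(x,b):\f\in\pm\D,\ b\in B,\ \models\f(c,b)\}\cup\pi(x,\mathbf{a})$ is inconsistent, you get a single $\theta\in\pm\D$ over $B$ with $X\subseteq\neg\theta(\UU)$ and $\neg\theta\in\pm\D$ up to equivalence. This is exactly the paper's observation that the restriction map $S^{\pm\D}(A)\to S^{\pm\D}(B)$ is closed, i.e.\ that the image of $[\Theta]$ has the form $[\Sigma]$.

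The gap is the consistent case, which you isolate correctly but do not close. There you need $X$ to be a union of $\pm\D$-types over $B$: from $d\in X$ and $\tp^{\pm\D}(d/B)=\tp^{\pm\D}(c/B)$ you must conclude $c\in X$. $\Aut(\UU/B)$-invariance only gives closure under equality of \emph{complete} types over $B$. Neither the $\D$-shape of the definition over $A$ nor Fact \ref{delredeqf} bridges this, because the type-inclusion hypothesis you would have to verify is precisely this closure property.

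Here is a counterexample to the statement.
\begin{itemize}
\item Let $\LL=\{E\}$ and let $T$ be the complete theory of an equivalence relation with exactly two classes: a singleton $\{e\}$ and an infinite class.
\item Let $\D$ be the set of quantifier-free formulas, so $\pm\D=\D$. Let $A=\{e\}$ and $B=\emptyset$.
\item $X=\{e\}$ is defined by the atomic formula $x=e$ over $A$. It is $\Aut(\UU)$-invariant, since $e$ is the unique element whose class is a singleton.
\item The only parameter-free quantifier-free formulas in one variable are boolean combinations of $x=x$ and $E(x,x)$, hence equivalent to $\top$ or $\bot$. So $X$ is not $\emptyset$-$\D$-type-definable.
\item Concretely, for any $c\neq e$, your partial type (in either version) is realized by $d=e$, yet $c\notin X$.
\end{itemize}

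The paper's own proof makes the same leap in its last sentence (``By $\Aut(\UU/B)$-invariance, $X=\bigcap_{\f\in\Sigma}\f(\UU)$''). So the difficulty you flagged is a defect of the lemma, not of your approach. Your argument becomes a correct proof under either of these hypotheses:
\begin{itemize}
\item $X$ is closed under equality of $\pm\D$-types over $B$, in place of $\Aut(\UU/B)$-invariance;
\item $\pm\D$-types over $B$ determine complete types over $B$, for instance when every formula is equivalent modulo $T$ to one in $\pm\D$.
\end{itemize}
The application in Proposition \ref{g00} would then need one of these hypotheses checked.
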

\begin{proof}
    Denote by $S^{\pm\D}(C)$ the space of $\pm\D$-types over $C$. When endowed with its natural topology, it is compact, Hausdorff and totally disconnected, because $\pm\D$ is closed under negations. It follows that the restriction map $f:S^{\pm\D}(A)\to S^{\pm\D}(B)$ is closed, being a continuous map from a compact space to a Hausdorff space. If $X$ is defined by a closed subset $[\Theta]=\{p\in S^{\pm\D}(A):p\supseteq\Theta\}$ of $S^{\pm\D}(A),$ then $f([\Theta])$ is closed in $S^{\pm\D}(B),$ so by compactness there is some set of $\pm\D$-formulas $\Sigma$ over $B$ such that $f([\Theta])=\{p\in S^{\pm\D}(B):p\supseteq\Sigma\}.$ By $\Aut(\UU/B)$-invariance, $X=\bigcap_{\f\in\Sigma}\f(\UU).$ 
\end{proof}

\begin{propo}[Cf.~{\cite[Theorem 8.4]{simon}}]
\label{g00}
Suppose $\D$ is an $\LL$-fragment closed under negations, and let $T$ be a $\D$-NIP $\LL$-theory. Then $G^{00,\D}$ exists.
\end{propo}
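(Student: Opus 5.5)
We follow the proof of \cite[Theorem 8.4]{simon}, checking at each step that only formulas from $\D$ are used. Since $G_A^{00,\D}\subseteq G_{\emptyset}^{00,\D}$ always holds, it suffices to show that $G_A^{00,\D}$ is $\Aut(\UU)$-invariant (equivalently, $\Aut(\UU/\emptyset)$-invariant) for every small $A$. Then Lemma \ref{paramtoemp}, applied with $B=\emptyset$ and using $\pm\D=\D$ since $\D$ is closed under negations, shows that $G_A^{00,\D}$ is $\emptyset$-$\D$-type-definable. It has bounded index, so $G_\emptyset^{00,\D}\subseteq G_A^{00,\D}$, which gives equality.

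The core is a bound on the number of conjugates. We argue by contradiction: suppose that for some small $A$ the set $\{G^{00,\D}_{\sigma A}:\sigma\in\Aut(\UU)\}=\{\sigma G^{00,\D}_A\}$ (Lemma \ref{aut}) is unboundedly large. By Lemma \ref{count}, each $G^{00,\D}_{\sigma A}$ is an intersection of countably-$\D$-type-definable bounded-index subgroups. Using Lemma \ref{ind} together with the bound $|G/H|\le 2^{|A|+|T|}$, we count the possible intersections. The plan is to find a fixed countable partial type $\pi(x,y)=\{\f_n(x,y_n):n<\omega\}$ of $\D$-formulas and parameters $(c_i)_{i<\lambda}$, with $\lambda$ large (say $\lambda\geq\beth_{(2^{|T|})^+}$, which the cofinality assumption on $\k$ allows), such that the groups $H_i=\pi(\UU,c_i)$ have bounded index and, for every $i$, $\bigcap_{j\neq i}H_j\not\subseteq H_i$. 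Obtaining this requires the standard pigeonhole and Erdős–Rado style thinning: if the intersection is unbounded, one can extract an "independent-like" family in which no $H_i$ contains the intersection of the others. Lemma \ref{ind} is used to see that, otherwise, the family of all conjugates would generate only boundedly many intersections.

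Next we extract IP. By Erdős–Rado and compactness we can take $(c_i)_{i<\omega}$ indiscernible while preserving that $\bigcap_{j\ne i}H_j\not\subseteq H_i$ for each $i$. Pick $g_i\in\bigcap_{j\neq i}H_j\setminus H_i$, and for finite $S\subseteq\omega$ set $g_S=\prod_{i\in S}g_i$ (in increasing order). Then $g_S\in H_i$ iff $i\notin S$: for $i\notin S$ every factor lies in $H_i$, while for $i\in S$ every factor except $g_i$ lies in $H_i$. Since each $H_i$ is an intersection of the $\f_n(x,c_i)$, for each pair $(i,S)$ with $i\in S$ some $n$ has $\neg\f_n(g_S,c_i)$. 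Using indiscernibility together with a compactness/Ramsey argument, we fix a single $n$ (this step needs care). We then get that the formula $\f_n(x,y_n)\in\D$ has IP with respect to the sequence $(c_i)$, witnessed along finite sets and hence by compactness. This contradicts $T$ being $\D$-NIP. In the classical proof one also replaces $\f_n$ by a conjunction $\f_n\wedge$ (approximate group axioms); since $\D$ is a fragment, this stays in $\D$.

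The main obstacle is the uniformization of $n$. A naive approach only gives that $g_S\notin H_i$, i.e.\ failure of the infinite conjunction, and the witnessing $n$ could depend on $(i,S)$. I would handle this as Simon does. Choose $n$ so that $\f_{n+1}(x)\cdot\f_{n+1}(x)\subseteq\f_n$ on the relevant group (achievable by the construction of the sequence $\xi_n$ in the proof of Lemma \ref{count}). Then choose $g_i$ outside $\f_n(\UU,c_i)$ but inside $H_j$ for $j\neq i$, and show that products of the $g_j$ with $j\neq i$ stay inside $\f_{n+1}(\UU,c_i)$, so that multiplying by $g_i$ leaves $\f_{n+1}(\UU,c_i)$. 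The second difficulty is the counting step producing an unbounded "non-redundant" family; I expect that to go through verbatim from the classical case, since it is purely combinatorial given Lemmas \ref{ind} and \ref{count}.
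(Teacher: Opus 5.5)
Your proposal has a genuine gap at the step you call ``the second difficulty'': producing uniformly $\Delta$-type-definable bounded-index subgroups $H_i=\pi(\mathcal{U},c_i)$ with $\bigcap_{j\neq i}H_j\not\subseteq H_i$ for every $i$. You assert this follows purely combinatorially from Lemmas \ref{ind} and \ref{count}, but you give no argument for it. In the paper, as in Simon, it is not a combinatorial step: it relies on indiscernibility and compactness, and the steps come in the reverse of your order.

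The paper first pigeonholes the $\kappa$ pairwise distinct groups of $\bigcup_{|B|=|A|}\mathfrak{F}_B$ onto a single countable $\Phi(x,\mathbf{y})$. It then uses Erd\H{o}s--Rado to get an indiscernible sequence $(b_i)_{i<|T|^{+}}$ with the $H_i=\Phi(\mathcal{U},b_i)$ pairwise distinct and of bounded index. Only then does it prove non-redundancy. Suppose $\bigcap_{j\neq i_0}H_j\subseteq H_{i_0}$. By compactness, each $\varphi(x,b_{i_0})$ with $\varphi\in\Phi$ is implied by finitely many instances of $\Phi$ at $b_j$'s with $j\neq i_0$. By indiscernibility this persists after stretching position $i_0$ into a block of length $\lambda=(2^{\eta})^{+}$, where $\eta=[G:\bigcap_{j\neq i_0}H_j]$. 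Then $\lambda$ distinct subgroups all contain one fixed subgroup of index $\eta$, which contradicts Lemma \ref{ind}. This stretching argument is where Lemma \ref{ind} is actually used, and nothing in your sketch plays its role.

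The uniformization of $n$ is also left open. Your remedy chooses $g_i\notin\varphi_n(\mathcal{U},c_i)$ for one fixed $n$ and all $i$, which presupposes what has to be shown. On an $\omega$-indexed sequence nothing prevents the least $n_i$ with $\neg\varphi_{n_i}(g_i,c_i)$ from being unbounded. The paper avoids this by keeping the indiscernible sequence of length $|T|^{+}>|\Phi|$ and pigeonholing there before passing to an $\omega$-subsequence.

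In your ordering you could instead fix $n$ on the large family before extracting indiscernibles. You need this anyway for your claim that Erd\H{o}s--Rado preserves non-redundancy. The condition $\bigcap_{j\neq i}H_j\not\subseteq H_i$ is not type-definable. By contrast, for a fixed $n$, consistency of $\bigcup_{j\neq i}\pi(x,c_j)\cup\{\neg\varphi_n(x,c_i)\}$ is a family of conditions on finite subtuples, so it transfers.

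Finally, $g_S=c\,g_i\,d$ with $c,d\in H_i$ on both sides, so the IP witness must absorb a threefold product. The paper takes a finite conjunction $\theta$ of members of $\Phi$ with $\theta(x_1)\wedge\theta(x_2)\wedge\theta(x_3)\to\varphi(x_1x_2x_3)$. With your binary condition $\varphi_{m+1}\cdot\varphi_{m+1}\subseteq\varphi_m$ you would need $\varphi_{n+2}$ rather than $\varphi_{n+1}$.
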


\begin{proof}
We will split the proof of the Proposition into two statements. First, if $G^{00,\D}$ does not exist, we will see that there is an \emph{indiscernible uniformly $\D$-type definable sequence of subgroups of $\G$ of bounded index}, i.e. an indiscernible sequence $(b_i:i<|T|^{+})$ of countable tuples and a type $\Phi(x,\mathbf{y})$ consisting of $\D$-formulas, where $|\mathbf{y}|$ is countable, such that $(\Phi(\UU,b_i):i<|T|^{+})$ defines a family of pairwise different subgroups of $G$ of bounded index. Second, we will prove that in this case, there is a $\D$-formula with IP. 
\begin{claim}
    Suppose that $G^{00,\D}$ does not exist. Then there is an indiscernible sequence of $\D$-type-definable subgroups of bounded index. 
\end{claim}
\begin{proof}[Proof of Claim]
If $G^{00,\D}$ does not exist, then there is some small set $A$ for which $G_A^{00,\D}\subsetneq G_\emptyset^{00,\D}.$ Then $\bigcap_{|B|=|A|}G_B^{00,\D}$ is a $\D$-type-definable subgroup. Lemma \ref{aut} shows that this subgroup is $\Aut(\UU)$-invariant, hence $\emptyset$-$\D$-type-definable by Lemma \ref{paramtoemp} and because $\pm\D=\D.$ It is not of bounded index, otherwise $$G_\emptyset^{00,\D}\subseteq\bigcap_{|B|=|A|}G_B^{00,\D}\subseteq G_A^{00,\D}\subsetneq G_\emptyset^{00,\D}.$$

For any set of parameters $B$ of cardinality $|A|,$ let $\mathfrak{F}_B$ be the family corresponding to $G_B^{00,\D}$ as in Lemma \ref{count}. Then $\bigcup_{|B|=|A|}\mathfrak{F}_B$ has to have at least $\k$ elements, otherwise the group $$\bigcap_{|B|=|A|}G_B^{00,\D}=\bigcap_{|B|=|A|}\bigcap\mathfrak{F}_B$$ would have bounded index in $G,$ as it would be an intersection of boundedly many subgroups of bounded index. This allows us to choose a sequence $(H_i:i<\k)$ of pairwise different elements of $\bigcup_{|B|=|A|}\mathfrak{F}_B,$ where for any $i<\k,$ $H_i=\Phi_i(\UU,d_i)$ is a $d_i$-$\D$-type-definable subgroup of $G$ of bounded index, $d_i$ is a countable tuple of elements of $\UU$ and $\Phi_i(x,\mathbf{y})$ is some countable set of $\D$-formulas. 
Since there are at most $2^{|T|}<\k$ sets of $\D$-formulas from $\LL$, by the pigeonhole principle we may assume that there is some set $\Phi(x,\mathbf{y})$ such that $H_i=\Phi(x,d_i)$ for all $i<\k$ and the sequence $(H_i:i<\k)$ is made of pairwise different subgroups of $G$ of bounded index.

Since $\k>\beth_{(2^{|T|})^{+}}$, by Erd\H{o}s–Rado and saturation, there is an indiscernible sequence $(c_i:i<\omega)$ in $\UU$ such that for any $n<\omega$ there are some $i_0<\.<i_n<\k$ such that $c_0\.c_n\equiv d_{i_0}\.d_{i_n},$ cf.~\cite[Proposition 1.1]{simon}.
Use Ramsey and saturation of $\UU$ to find an indiscernible sequence $(b_i:i<|T|^{+})$ realizing the EM-type of $(c_i:i<\omega).$ In particular, for a fix $j<|T|^{+},$ $b_j\equiv b_0\equiv c_0\equiv d_{i_0}$ implies that $\Phi(\UU,b_j)$ is an $\Aut(\UU)$-conjugate of $\Phi(\UU,d_{i_0}),$ so it defines itself a subgroup of $G$ of bounded index. Similarly, if $j_1<j_2<|T|^{+},$ then $b_{j_1}b_{j_2}\equiv b_0b_1\equiv c_0c_1\equiv d_{i_1}d_{i_2}$ and $\Phi(\UU,d_{i_1})\neq\Phi(\UU,d_{i_2})$ implies that $\Phi(\UU,b_{j_1})\neq\Phi(\UU,b_{j_2}).$ 
\qedhere{$_\text{Claim}$}
\end{proof}
This allows us to abuse notation by writing $H_i=\Phi(\UU,b_i)$ for each $i<|T|^{+}.$ Note that for any $i<|T|^{+},$ the index $\left[G:\bigcap_{j\neq i}H_j\right]$ is bounded, for $\bigcap_{i\neq j<|T|^{+}}H_j$ is an intersection of boundedly many subgroups of bounded index.   
\begin{claim}
For a fix $i_0<|T|^{+},$ the set $\bigcap_{i_0\neq j<|T|^{+}}H_j\setminus H_{i_0}$ is not empty.    
\end{claim}
\begin{proof}[Proof of the Claim]
Otherwise, let $\eta=\left[G:\bigcap_{j\neq i_0}H_j\right]$ and let $\l=(2^{\eta})^{+}<\k.$ Find an indiscernible sequence $(c_i:i<(i_0-1)+\l+(|T|^{+}-i_0))$ realizing the EM-type of $(b_i:i<|T|^{+}).$ Put $H_i'=\Phi(x,c_i).$ Then $(H_i':i<(i_0-1)+\l+(|T|^{+}-i_0))$ is again a sequence of pairwise different $\D$-type-definable subgroups of $G$ of bounded index.
In particular $[G:H_j']=[G:H_j]$ for all $j\not\in\l$ and $[G:H_i']=[G:H_{i_0}]$ for all $i\in\l,$ so $$\left[G:\bigcap_{j\not\in\l} H_j'\right]=\left[G:\bigcap_{j\neq i_0}H_j\right].$$
Since by hypothesis $\bigcap_{j\neq i_0}H_j\subseteq H_{i_0},$ we get that $\bigcap_{j\not\in\l} H_j'\subseteq H_i'$ for any $i\in \l.$ Indeed, 
\begin{itemize}[wide]
            \item For all $\f\in\Phi(x,\mathbf{y})$ there is some finite conjunction $\th_\f$ of formulas from $\Phi(x,\mathbf{y})$ and some $j_1,\.,j_n$ different than $i_0$ such that $\models\A x(\th_\f(x,b_{j_1},\.,b_{j_n})\to\f(x,b_{i_0})).$ If this does not hold for the formula $\f\in\Phi(x,\mathbf{y}),$ then the set $\bigcup_{j\neq i_0}\Phi(x,b_j)\cup\{\neg\f(x,b_{i_0})\}$ is finitely satisfiable and thus, by saturation of $\UU$, non-empty, contradicting the hypothesis.

            \item By indiscernibility of $(b_i:i<|T|^{+})$ and the choice of the sequence $(c_i:i<(i_0-1)+\l+(|T|^{+}-i_0)),$ we get that for any formula $\f\in\Phi(x,\mathbf{y})$ and any $i\in\l$ there is a conjunction  $\th_\f$ of formulas from $\Phi(x,\mathbf{y})$ and some $j_1,\.,j_n\not\in\l$ such that $\models\A x(\th_\f(x,c_{j_1},\.,c_{j_n})\to\f(x,c_i)).$

            \item If $g\in\bigcap_{j\not\in\l} H_j'$ and $\f(x,c_i), i\in\l$ are given, then $g$ satisfies the corresponding $\th_\f(x,c_{j_1},\.,c_{j_n})$ and thus it satisfies $\f(x,c_i).$ This shows the desired inclusion. 
\end{itemize}
Therefore there are at least $\l$ subgroups of $G$ containing $\bigcap_{j\not\in\l} H_j',$ so $\l\leq2^{\eta}$ by Lemma \ref{ind}, which is absurd by the choice of $\l.$
\qedhere{$_\text{Claim}$}
\end{proof}
Now we are ready to find a formula from $\D$ with the independence property. For any $i<|T|^{+},$ choose some $a_i\in\bigcap_{j\neq i}H_j\setminus H_i.$ Then there is some $\f_i\in\Phi(x,\mathbf{y})$ such that $\models\neg\f_i(a_i,b_i).$ The function sending $i$ to $\f_i$ has to have an infinite fiber, for $|T|^{+}>|\Phi(x,\mathbf{y})|.$ Thus, there is a $\D$-formula $\f\in\Phi(x,\mathbf{y})$ and a subsequence $(a_i,b_i:i<\omega)$ of $(a_i,b_i:i<|T|^{+})$ such that $\models\neg\f(a_i,b_i)$ for all $i<\omega.$ Also, $a_i\in H_j$ for all $i\neq j,$ so we have that $\models\f(a_i,b_j)\Sii i\neq j.$
We may find as well a conjunction $\th$ of formulas from $\Phi(x,\mathbf{y})$ such that 
\begin{equation}
\label{mult}
    \models\A x_1,x_2,x_3\left(\left(\bigwedge_{k=1}^{3}\th(x_k,b_i)\right)\to\f(x_1x_2x_3,b_i)\right)  
\end{equation}
for all $i<\omega.$ By indiscernibility, it is enough to check this for $i=0.$ If equation \ref{mult} does not hold for any finite conjunction $\th$ of formulas from  $\Phi(x,\mathbf{y}),$ by saturation of $\UU,$ the set $H_0=\Phi(\UU,b_0)$ would not be closed under the group operation.
Given a finite subset $I=\{i_1,\.,i_n\}$ of $\omega,$ define $a_I=a_{i_1}\.a_{i_n}.$ Then for any such $I$ and any $j<\omega$ we have that $$\models\th(a_I,b_j)\Sii j\not\in I.$$ 
Indeed, if $j\not\in I,$ then $a_{i_1},\.,a_{i_n}$ and $a_I$ are elements of $H_j=\Phi(x,b_j),$ so $\models\th(a_I,b_j).$ If $j\in I,$ then $a_I=ca_jd$ for some $c,d\in H_j,$ so $a_j=c^{-1}a_Id^{-1}.$ Therefore $\models\neg\f(c^{-1}a_Id^{-1},b_j),$ so $\models\neg\th(c^{-1},b_j)\vee\neg\th(a_I,b_j)\vee\neg\th(d^{-1},b_j).$ But $c^{-1},d^{-1}\in H_j=\Phi(x,b_j),$ so $\models\th(c^{-1},b_j)\wedge\th(d^{-1},b_j)$ and thus $\models\neg\th(a_I,b_j)$ as wanted.
We get that the VC-dimension of $\th$ is at least $n$ for any $n<\omega,$ so it has to be infinite, contradicting NIP, and $\th$ is a $\D$-formula as it is a conjunction of formulas from $\Phi(x,\mathbf{y}).$
\end{proof}

The following are some well known lemmas about type-connected ideals in NIP structures.

\begin{lema}
    Suppose that $\D$ is a fragment closed under negations. Let $R$ be a $\D$-type-definable ring in a $\D$-NIP theory. If $I$ is a $\D$-type-definable ideal of $R,$ then $I^{00,\D}$ is also an ideal of $R.$
\end{lema}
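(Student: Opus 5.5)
The plan is to show that for every $r\in R$ the set $rI^{00,\D}$ is contained in $I^{00,\D}$, where $I^{00,\D}$ is the additive connected component of $(I,+)$. This component exists by Proposition \ref{g00}, applied to the $\emptyset$-$\D$-type-definable group $(I,+)$ and the negation-closed fragment $\D$. Since $I^{00,\D}$ is already an additive subgroup of $I\subseteq R$, closure under multiplication by elements of $R$ is all that remains. By symmetry the same argument handles $I^{00,\D}r$ in the non-commutative case.

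Fix $r\in R$ and consider the set $J_r=\{x\in I: rx\in I^{00,\D}\}$.
\begin{enumerate}
\item $J_r$ is a subgroup of $(I,+)$. This holds because $x\mapsto rx$ is an additive homomorphism $I\to I$, using that $I$ is an ideal, and $J_r$ is the preimage of the subgroup $I^{00,\D}$.
\item $J_r$ is $r$-$\D$-type-definable. If $I^{00,\D}=\bigcap_k\psi_k(\UU)$ with $\psi_k\in\D$, then $J_r$ is defined by the partial type describing $I$ together with the formulas $\psi_k(r\cdot x)$. These are again $\D$-formulas, under the harmless assumption that the fragment is closed under substituting terms. This is the case for the classical fragments and for quantifier-free formulas; otherwise one writes $\E z(z=r\cdot x\wedge\psi_k(z))$, which needs a fragment closed under this kind of existential, and I would note this assumption.
\item $J_r$ has bounded index in $I$. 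The map $I/J_r\to I/I^{00,\D}$, $x+J_r\mapsto rx+I^{00,\D}$, is injective by definition of $J_r$, so $[I:J_r]\le[I:I^{00,\D}]<\k$.
\end{enumerate}

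From these three points, $J_r$ is an $r$-$\D$-type-definable subgroup of $I$ of bounded index, so $I^{00,\D}_{\{r\}}\subseteq J_r$. Since $G^{00,\D}$ exists for $G=I$, we have $I^{00,\D}=I^{00,\D}_{\{r\}}\subseteq J_r$, which means $rI^{00,\D}\subseteq I^{00,\D}$. As $r$ was arbitrary, $I^{00,\D}$ is an ideal of $R$.

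The main obstacle is step 2, namely that composing the defining $\D$-formulas with multiplication by $r$ stays inside $\D$. Where $R$ is itself only type-definable and the multiplication is given by a formula, I would first try to use the fact that multiplication is a term in the ring language, so substitution preserves membership in the relevant fragments. The independence from parameters supplied by Proposition \ref{g00} is exactly what makes the argument work.
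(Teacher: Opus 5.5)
Your proof is correct, but it is organized differently from the paper's, and it is more direct.

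The paper fixes a non-zero $a\in R$ and argues through the auxiliary group $aI$:
\begin{itemize}
\item First, $aI^{00,\D}\subseteq(aI)^{00,\D}$, because $a^{-1}(aI)^{00,\D}$ is a bounded-index $\D$-type-definable subgroup of $I$.
\item Then $(aI)^{00,\D}\subseteq aI\cap I^{00,\D}\subseteq I^{00,\D}$, because $aI\cap I^{00,\D}$ has bounded index in $aI$.
\end{itemize}
Composing these, the paper in effect shows $I^{00,\D}\subseteq a^{-1}(aI\cap I^{00,\D})$, which is exactly your $J_a$. You obtain $I^{00,\D}\subseteq J_r$ in one step, and your injection $I/J_r\hookrightarrow I/I^{00,\D}$ replaces the paper's two index estimates.

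What your route buys:
\begin{itemize}
\item It never uses $a^{-1}$, so it works verbatim for an arbitrary ring $R$ and any $r\in R$. The paper tacitly needs $R$ to sit inside a field; this is harmless for its use in Lemma \ref{j00}.
\item It needs the existence of only one connected component, $I^{00,\D}$, not also $(aI)^{00,\D}$ for a group type-defined over $a$.
\end{itemize}
The paper's route additionally records $aI^{00,\D}\subseteq(aI)^{00,\D}$, but nothing later depends on that inclusion.

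The substitution caveat you flag in step 2 is not an extra cost of your approach. The paper needs the same closure property to treat $a^{-1}(aI)^{00,\D}$ (substituting $a\cdot x$) and $aI$ (substituting $a^{-1}\cdot y$) as $\D$-type-definable. It holds for the fragments of interest: $\E_n$, quantifier-free, and $\pm\E$.

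One bookkeeping point: if $I$ is type-defined over a parameter set $A$, the equality you need is $I^{00,\D}_A=I^{00,\D}_{A\cup\{r\}}$, obtained from Proposition \ref{g00} after naming $A$. It is not literally $I^{00,\D}=I^{00,\D}_{\{r\}}$.
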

\begin{proof}
Suppose that $a\in R$ is non-zero. First, to see that $aI^{00,\D}\subseteq(aI)^{00,\D},$ note that since $(aI)^{00,\D}$ is a subgroup of $aI$ of bounded index, then $a^{-1}(aI)^{00,\D}$ is also a subgroup of $I$ of bounded index. Therefore $I^{00,\D}\subseteq a^{-1}(aI)^{00,\D},$ hence $aI^{00,\D}\subseteq (aI)^{00,\D}.$ Second, to see that $(aI)^{00,\D}\subseteq I^{00,\D},$ note that since $|I/I^{00,\D}|$ is bounded, $|(aI+I^{00,\D})/I^{00,\D}|$ and $|aI/(aI\cap I^{00,\D})|$ are also bounded. Therefore $(aI)^{00,\D}\subseteq aI\cap I^{00,\D}\subseteq I^{00,\D}$ as wanted.
\end{proof}

\begin{lema}[Cf.~{\cite[Lemma 2.6]{dp1a}}]
\label{j00}
    Suppose that $\D$ is a fragment closed under negations. Let $\KK=(K,\O_1,\.,\O_n)$ be a $\D$-NIP multi-valued field, $R=\O_1\cap\.\cap\O_n$ and $I$ be a $\D$-type-definable ideal of $R.$ If all the residue fields $\O_i/\M_i$ are infinite, then $I=I^{00,\D}.$
\end{lema}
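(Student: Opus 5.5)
The plan is to follow Johnson's argument for \cite[Lemma 2.6]{dp1a}, checking that every formula used stays in $\D$ or is a conjunction of formulas in $\D$.

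First, by the preceding lemma, $I^{00,\D}$ is an ideal of $R$. It is also $\D$-type-definable of bounded index in $I$, where $I^{00,\D}$ exists by Proposition \ref{g00}. Suppose towards a contradiction that $I^{00,\D}\subsetneq I$, and pick $b\in I\setminus I^{00,\D}$. The ideal property gives $Rb\subseteq I^{00,\D}+Rb\subseteq I$. We then aim to produce unboundedly many cosets of $I^{00,\D}$ inside $Rb$, which contradicts bounded index.

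The natural route goes through $Rb/(Rb\cap I^{00,\D})$. This quotient is a bounded $R$-module; call it $\pi(Rb)$. Set $J=\{r\in R: rb\in I^{00,\D}\}$. Since $I^{00,\D}$ is an ideal, $J$ is an ideal of $R$, and $R/J\cong Rb/(Rb\cap I^{00,\D})$ is bounded. Moreover $J$ is type-definable: the map $r\mapsto rb$ is a term, and $I^{00,\D}$ is given by $\D$-formulas, so $J$ is defined by $\D$-formulas with parameter $b$. Here I will implicitly assume $\D$ is closed under substituting terms, as the classical fragments are. Also $b\notin I^{00,\D}$ gives $1\notin J$, so $J$ is a proper ideal of bounded index in $R$.

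The key step is to show that a proper type-definable ideal of $R$ cannot have bounded index when every residue field $\O_i/\M_i$ is infinite. $R$ is a Bézout semilocal domain whose maximal ideals are $\mathfrak m_i=\M_i\cap R$. By approximation (Chinese remainder), $R/\mathfrak m_i\cong \O_i/\M_i$ for each $i$. A proper ideal $J$ lies in some $\mathfrak m_i$, so $R/J$ surjects onto $R/\mathfrak m_i\cong\O_i/\M_i$. This residue field is infinite and definable, and in the monster model an infinite definable set has cardinality $\k$, hence is unbounded. That contradicts $R/J$ being bounded.

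The obstacle is that the surjection $R/J\to R/\mathfrak m_i$ alone only says $|R/J|\ge|\O_i/\M_i|$. So I must make sure the residue field counted is the one in the monster model, where it is genuinely of size $\k$, and not merely infinite. Since the theory is complete and $\O_i/\M_i$ is infinite, it is indeed large in $\UU$. One also needs to use the approximation theorem to identify $R/\mathfrak m_i$ with $\O_i/\M_i$. If the valuations are dependent, I would first reduce to the incomparable case by replacing comparable rings with their composite; this does not change $R$ up to the intersection. With $R/J$ unbounded we obtain the contradiction, and therefore $I=I^{00,\D}$.
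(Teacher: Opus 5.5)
Your proposal is correct and is essentially the paper's proof. The paper also takes $b\in I\setminus I^{00,\D}$ and the proper kernel $J=\{r\in R: rb\in I^{00,\D}\}$, places it in some $\mathfrak m_i=\M_i\cap R$, and combines the surjection $R/J\to R/\mathfrak m_i\cong\O_i/\M_i$ with $R/J\cong (Rb+I^{00,\D})/I^{00,\D}\hookrightarrow I/I^{00,\D}$ to contradict bounded index in a saturated extension. Two of your extra steps are unnecessary: the type-definability of $J$ (and the assumption that $\D$ is closed under substituting terms) is never used, only the properness of $J$ and the boundedness of $R/J$; and no reduction for dependent valuations is needed, since $R/\mathfrak m_i\cong\O_i/\M_i$ holds for any pairwise incomparable valuation rings, as in \cite[Lemma 2.1]{dp1a}.
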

\begin{proof}
    For any $b\in I\setminus I^{00,\D},$ there is a natural injection from $(Rb+I^{00,\D})/I^{00,\D}$ into $I/I^{00,\D},$ given that $I$ and $I^{00,\D}$ are ideals of $R.$ The natural map $R\to Rb/(Rb\cap I^{00,\D})$ sending $x$ to $xb+Rb\cap I^{00,\D}$ has a proper kernel $I'=\{x\in R:xb\in I^{00,\D}\},$ forcing $I'$ to be contained in a maximal ideal $\mathfrak{m}_i$ of $R,$ for some $i\in\{1,\.,l\}.$ Thus there is a natural surjection $R/I'\to R/\mathfrak{m}_i.$ Finally, since $\O_i/\M_i\cong R/\mathfrak{m}_i$ given that $\mathfrak{m}_i=R\cap\M_i$ (cf.~\cite[Lemma 2.1]{dp1a}), we can conclude that $|\O_i/\M_i|\leq|I/I^{00,\D}|.$
    If $\KK^{*}$ is a $\k$-saturated, $\k$-strongly homogeneous elementary extension of $\KK$ with $\k>2^{|T|},$ then $\k\leq|\O_i(K^{*})/\M_i(K^{*})|\leq |I(K^{*})/I^{00,\D}(K^{*})|\leq2^{|T|},$ where the last equality follows from the fact that $I^{00,\D}$ has bounded index in $I$, cf.~\cite[Proposition 5.1]{simon}.
\end{proof}

We finish this section by adapting Poizat's NIP Criterion for suitable fragments $\D$ of $\LL$-formulas.

\begin{fact}[Cf.~{\cite[Théorème 8]{poizat} or \cite[Proposition 2.43]{simon}}]
\label{poiteo}
Let $T$ be an $\LL$-theory.
    \begin{enumerate}
        \item If $T$ is a NIP, then for all models $\MM$ of $T$ and all types $p\in S_1(M),$ the number of coheirs of $p$ is at most $2^{|M|+|T|}.$

        \item If $T$ has IP, then for all $\l\geq|T|$ there is a model $\MM$ of $T$ of cardinality $\l$ and a type $p\in S_1(M)$ admitting $2^{2^{\l}}$ coheirs.
    \end{enumerate}
\end{fact}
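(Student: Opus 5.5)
The plan is to treat the two items separately. Item 1 uses the fact that, under NIP, a global coheir is determined by the type over $M$ of its Morley sequence. Item 2 builds many ultrafilter limits of an independent family and then applies a pigeonhole argument. Throughout I work in a monster model $\UU$ of $T$. A \emph{coheir} of $p\in S_1(M)$ is a global type $q\supseteq p$ that is finitely satisfiable in $M$. Every such $q$ is $M$-invariant, so its Morley sequence $q^{(\omega)}$ is well defined.

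For item 1, the key step is the following claim, whose proof uses NIP. \emph{If $q_1,q_2$ are global $M$-invariant types with $q_1^{(\omega)}|_M=q_2^{(\omega)}|_M$, then $q_1=q_2$.}
\begin{enumerate}
\item Suppose not, and pick $\f(x,b)\in q_1$ with $\neg\f(x,b)\in q_2$.
\item Build a sequence $(c_i:i<\omega)$ alternately: $c_{2k}\models q_1|_{Mbc_{<2k}}$ and $c_{2k+1}\models q_2|_{Mbc_{<2k+1}}$.
\item By induction on $n$, using $M$-invariance of $q_1$ and $q_2$ together with the hypothesis, show $\tp(c_0\ldots c_{n-1}/M)=q_1^{(n)}|_M=q_2^{(n)}|_M$. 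Hence $(c_i)$ is $M$-indiscernible.
\item By construction $\models\f(c_i,b)$ if and only if $i$ is even. By the equivalent formulation of IP via indiscernible sequences recalled in the introduction (\cite[Lemma 2.7]{simon}), $\f$ then has IP, contradicting that $T$ is NIP.
\end{enumerate}
Given the claim, every coheir $q$ of $p$ is determined by $q^{(\omega)}|_M\in S_\omega(M)$. Since $|S_\omega(M)|\leq2^{|M|+|T|}$, the bound in item 1 follows. The main point to get right is step 3: one must check at each stage that the partial sequence realizes the common Morley type over $M$, which is exactly where invariance over $M$ (and not merely over $Mb$) is used.

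For item 2, by \cite[Proposition 2.11]{simon} there is a formula $\f(x,y)$ with $|x|=1$ that has IP. Fix $\l\geq|T|$.
\begin{enumerate}
\item By compactness, find singletons $(a_i:i<\l)$ in $\UU$ and tuples $(b_S:S\subseteq\l)$ such that $\models\f(a_i,b_S)$ if and only if $i\in S$.
\item By downward L\"owenheim--Skolem, let $\MM\preceq\UU$ contain $\{a_i:i<\l\}$ and have cardinality $\l$.
\item For each ultrafilter $U$ on $\l$, let $q_U$ be the global type given by $\psi(x,c)\in q_U$ if and only if $\{i<\l:\models\psi(a_i,c)\}\in U$. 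This is a complete global type, and it is finitely satisfiable in $\{a_i\}\subseteq M$, hence a coheir of its restriction $q_U|_M$.
\item If $U\neq U'$, pick $S\in U\setminus U'$. Then $\f(x,b_S)\in q_U\setminus q_{U'}$, so the map $U\mapsto q_U$ produces $2^{2^\l}$ distinct global types finitely satisfiable in $M$.
\item These types have at most $|S_1(M)|\leq 2^{\l}$ restrictions to $M$. Since $\mathrm{cf}(2^{2^\l})>2^\l$ by K\"onig's theorem, some single $p\in S_1(M)$ has $2^{2^\l}$ coheirs.
\end{enumerate}
The only delicate point in item 2 is the cardinal arithmetic of this final pigeonhole step, which K\"onig's theorem handles.
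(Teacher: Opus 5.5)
Your proof is correct and follows the same route the paper uses for its fragment version, Proposition \ref{poizfr}; the paper gives no separate proof of the Fact itself and only cites it. In both, item 1 shows that $M$-invariant global types are determined by their Morley sequence over $M$ and then counts $S_\omega(M)$, and item 2 takes ultrafilter limits of an IP-witnessing sequence and applies a pigeonhole argument. The one local difference is the key lemma: you prove it by directly interleaving realizations of $q_1$ and $q_2$ into a single Morley sequence on which $\f(x,b)$ alternates, whereas the paper's Lemma \ref{minvdet} argues via a Morley sequence of maximal $\f$-alternation; both work, and you also make the K\"onig step and the reduction to $|x|=1$ explicit.
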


This fact is crucial to us, as it serves as a central tool in the study of transfer of the independence property from the (theories of the) residue field and value group to the valued field, cf.~\cite[Théorème 8]{delon} for henselian valued fields of residue characteristic $0,$ and \cite[Corollaire 7.5]{bel} for algebraically maximal valued fields of positive characteristic or henselian unramified valued fields of mixed characteristic. Roughly speaking, if $\MM=(M,v)$ is a henselian valued field, the \emph{coheir counting strategy} consists of showing that for any type $p\in S_1(\MM),$ the number of coheirs of $p$ is bounded by the number of coheirs of some type over $vM$ plus the number of coheirs of some type over $Mv.$    

In what follows, we fix a monster model $\UU$ for $T,$ so that global types (and, thus, coheirs) are types over $\UU.$ 
Recall that if $p\in S_x(\UU)$ is $A$-invariant, then a \emph{Morley sequence} of $p$ over $A$ is an $A$-indiscernible sequence whose Ehrenfeucht-Mostowski type over $A$ is given by $\bigcup\{p^{(n)}|_A:1\leq n<\omega\}.$ In fact, a sequence $I=(a_i:i<\omega)$ is a Morley sequence of $p$ over $A$ if and only if $I$ realizes the type $p^{(\omega)}|_A\in S_\omega(A).$ For the rest of the section, $\D$ is a fragment closed under negations, so that the topology of the stone space $S^{\D}(\UU)$ is Hausdorff. For a summary of Morley sequences, alternation ranks and limit types in NIP theories, see e.g.~Section 2 of \cite{simon}.  

\begin{lema}[Cf.~{\cite[Proposition 2.36]{simon}}]
\label{minvdet}
    Let $T$ be a $\D$-NIP theory and let $p,q\in S_x^{\D}(\UU)$ be two $A$-invariant $\D$-types. If $p^{(\omega)}|_A=q^{(\omega)}|_A,$ then $p=q.$  
\end{lema}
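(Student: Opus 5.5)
We adapt the standard proof of \cite[Proposition 2.36]{simon}, checking that only formulas from $\D$ are used and that NIP is needed only for formulas in $\D$. Since both $p$ and $q$ are $\D$-types, it suffices to show $\f(x,b)\in p\Sii\f(x,b)\in q$ for every $\f(x,y)\in\D$ and every tuple $b$ of $\UU$. We argue by contradiction: assume $\f(x,b)\in p$ and $\neg\f(x,b)\in q$ for some such $\f$ and $b$. Because $\D$ is closed under negations, $\neg\f\in\D$ as well, so both types decide $\f(x,b)$ and the situation is symmetric.

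The plan is to build a sequence $(c_i:i<\omega)$ inductively. We take $c_{2k}\models p|_{A b c_{<2k}}$ and $c_{2k+1}\models q|_{Abc_{<2k+1}}$. Here $p|_C$ denotes the restriction of the $\D$-type $p$ to $\D$-formulas over $C$, realized in $\UU$ by saturation.

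The key step is to show that $(c_i:i<\omega)$ is indiscernible over $A$, at least for the relevant formulas. We compute the type of $(c_0,\.,c_{n-1})$ over $A$. By induction on $n$, using $A$-invariance, the type of $c_{n-1}$ over $Ac_{<n-1}$ is determined by $p$ or $q$. We then invoke the hypothesis $p^{(\omega)}|_A=q^{(\omega)}|_A$, reading it as equality of $\D$-types, i.e.\ $p^{(n)}|_A$ and $q^{(n)}|_A$ agree on $\D$-formulas for each $n$. This should let us conclude that every finite increasing subtuple of the alternating sequence has the same $\D$-type over $A$ as a Morley sequence of $p$.

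This is the main obstacle. The iterated product $p^{(n)}$ of $\D$-types must be well defined, which uses $A$-invariance: $p\otimes p$ is defined by evaluating $p$ over realizations. Moreover, the induction mixing $p$-steps and $q$-steps requires the equality at every level $n$ and over the mixed tuples. The standard trick handles this: show by induction that any tuple built from $p$ and $q$ steps in any pattern has type $p^{(n)}|_A$. To do so, replace the last step, then use invariance and the inductive hypothesis to shift $q$-steps into $p$-steps one at a time. If the mixing over full types is problematic, we restrict to $\D$-formulas and note that the relevant formulas $\f(x_1,\.,x_n)$ over $A$ stay in $\D$.

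Having indiscernibility over $A$, we also handle $b$. By construction $\models\f(c_i,b)$ exactly when $i$ is even, since each $c_i$ realizes $p$ or $q$ restricted to sets containing $b$. The sequence $(c_i)$ is therefore $A$-$\D$-indiscernible and $\f(x,b)$ alternates along it infinitely often. By the indiscernible-sequence characterization of IP, \cite[Lemma 2.7]{simon}, $\f$ then has IP. That characterization only needs indiscernibility with respect to $\f$ itself: one extracts a genuinely indiscernible sequence by Ramsey and compactness, preserving the alternation of $\f$. This contradicts $\D$-NIP, so $p=q$.
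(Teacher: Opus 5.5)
\paragraph{Comparison with the paper.} Your strategy differs from the paper's; it is essentially the standard proof of \cite[Proposition 2.36]{simon}. You build one sequence with $c_{2k}\models p|_{Abc_{<2k}}$ and $c_{2k+1}\models q|_{Abc_{<2k+1}}$, show by induction that it is a Morley sequence of $p$ over $A$, and read off that $\varphi(x,b)$ alternates infinitely often.

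The paper instead fixes a Morley sequence $I$ of $p$ over $A$ with maximal $\alt(\varphi(x,d),I)$, which is finite by NIP. Depending on whether $\lim(I)\vdash\varphi(x,d)$, it appends a realisation $b$ of $q|_{AId}$ or of $p|_{AId}$. By the hypothesis, $I+(b)$ is again a Morley sequence of $p$ with one more alternation.

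Both routes rest on the same background facts:
\begin{enumerate}
\item Morley sequences of $p$ and of $q$ over $A$ coincide.
\item Appending to a Morley sequence of $q$ a realisation of $q$ over it gives another one.
\item Morley sequences are $A$-indiscernible.
\end{enumerate}
Yours is more direct: it needs no limit types and no extremal choice, and only finite Morley sequences appear. The paper's needs an infinite $I$ and $I+(b)$, but only ever checks Morley-ness after one appended element.

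Your inductive step is also simpler than you describe. If $c_{<n}\models p^{(n)}|_A=q^{(n)}|_A$, then by parity $c_{\leq n}$ realises $p^{(n+1)}|_A$ or $q^{(n+1)}|_A$, and these are equal. No shifting of $q$-steps one at a time is needed.

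\paragraph{The last step fails as written.} You only conclude that $(c_i)$ is $A$-$\Delta$-indiscernible, and then claim this suffices for \cite[Lemma 2.7]{simon}. It does not. Turning alternation into IP requires transporting formulas such as $\exists y\bigl(\bigwedge_{i\in S}\varphi(x_i,y)\wedge\bigwedge_{i\notin S}\neg\varphi(x_i,y)\bigr)$ between increasing subtuples, and these need not lie in $\Delta$.

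Here is a counterexample. Let $\Delta$ be the quantifier-free formulas of the language $\{P,Q,R\}$. Take $P,Q$ disjoint infinite sets, split into infinite halves $P=P_0\sqcup P_1$ and $Q=Q_0\sqcup Q_1$, with $R(x,y)$ holding iff $x\in P_j$ and $y\in Q_j$ for some $j$.
\begin{itemize}
\item $R(x;y)$ has at most three traces, so it is NIP.
\item Any sequence of distinct elements of $P$ alternating between $P_0$ and $P_1$ is quantifier-free indiscernible.
\item $R(x,b)$ alternates on that sequence for $b\in Q_0$.
\end{itemize}

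Ramsey and compactness do not rescue this. Extracting over $b$ from the pairs $c_{2i}c_{2i+1}$ keeps the alternation. But the new sequence of singletons only realises the EM-type of $(c_i)$, so it is no more indiscernible than $(c_i)$ was.

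For a related reason, your fallback of ``restricting to $\Delta$-formulas'' in the induction does not work. $A$-invariance makes $\psi(e,x)\in q$ depend on $\tp(e/A)$, not on the $\Delta$-type of $e$ over $A$. So matching $\Delta$-types of the prefix gives no control over the next step.

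\paragraph{The fix.} Do not weaken to $\Delta$-indiscernibility. Run the induction with $p^{(n)}|_A$ in the sense the paper uses, so the hypothesis says Morley sequences of $p$ and of $q$ over $A$ coincide. Conclude that $(c_i)$ is a Morley sequence of $p$ over $A$, hence $A$-indiscernible outright, as recalled before the lemma. Then the NIP $\Delta$-formula $\varphi$ cannot alternate infinitely on it. This is the same fact the paper needs to bound $\alt(\varphi(x,d),I)$. With this change your proof is complete at the same level of rigour as the paper's.
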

\begin{proof}
    Note that the hypothesis says that any sequence is a Morley sequence of $p$ over $A$ if and only if it is a Morley sequence of $q$ over $A.$ Suppose that $p\neq q.$ Since $S^{\D}(\UU)$ is Hausdorff, $p\vdash\f(x,d)$ and $q\vdash\neg\f(x,d)$ for some parameters $d$ and some formula $\f(x,y)$ in $\D.$ 
    Since each $\f(x,y)$ is NIP, the set of natural numbers $\{\alt(\f(x,d),I):I\text{ is a Morley sequence of }p\text{ over }A\}$ has a maximum. Let $I$ be a Morley sequence of $p$ over $A$ with maximal $\alt(\f(x,d),I).$ There are two cases:
    \begin{itemize}[wide]
        \item $\lim(I)\vdash\f(x,d)$: Let $b\models q|_{AId}.$ Then $b\models\neg\f(x,d)$ and $I+(b)$ is a Morley sequence of $q$ over $A.$ By hypothesis, $I+(b)$ is also a Morley sequence of $p$ over $A,$ contradicting maximality of $\alt(\f(x,d),I):$ if $\lim(I)\vdash\f(x,d),$ then $b$ has to satisfy $\f(x,d),$ for it is put at the end of $I.$ 

        \item $\lim(I)\vdash\neg\f(x,d)$: Let $a\models p|_{AId}.$ Then $I+(a)$ is a Morley sequence of $p$ over $A,$ and $a\models\f(x,d).$ This contradicts maximality of $\alt(\f(x,d),I):$ if $\lim(I)\vdash\neg\f(x,d),$ then $a$ has to satisfy $\neg\f(x,d),$ for it is put at the end of $I.$ 
    \qedhere
    \end{itemize}   
\end{proof}

\begin{propo}[Cf.~Fact \ref{poiteo}]
\label{poizfr}
Let $T$ be an $\LL$-theory and let $x$ be a finite tuple of variables.
    \begin{enumerate}
        \item If $T$ is $\D$-NIP, then for all models $\MM$ of $T$ and all types $p\in S_x^{\D}(M),$ the number of coheirs of $p$ in $S_x^{\D}(\UU)$ is at most $2^{|M|+|T|}.$

        \item If $T$ admits a formula in $\D$ with IP, then for all $\l\geq|T|$ there is a model $\MM$ of $T$ of cardinality $\l$ and a type $q\in S_x^{\D}(M)$ admitting $2^{2^{\l}}$ coheirs.
    \end{enumerate}
\end{propo}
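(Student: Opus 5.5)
Both parts follow Poizat's argument (Fact \ref{poiteo}), restricted to the fragment $\D$; the only new ingredient is Lemma \ref{minvdet}. Throughout, $\D$ is a fragment closed under negations, so $S^{\D}_x(\UU)$ is a Hausdorff Stone space.

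\emph{Part (1).} Fix $\MM\models T$ and $p\in S^{\D}_x(M)$. A coheir $q\in S^{\D}_x(\UU)$ of $p$ is finitely satisfiable in $M$, hence $M$-invariant: if $b\equiv_M b'$ and $\f(x,b)\wedge\neg\f(x,b')\in q$ with $\f\in\D$, then this formula (again in $\D$, since $\D$ is a fragment closed under negations) is realized in $M$, which contradicts $b\equiv_M b'$. Lemma \ref{minvdet} then shows that $q$ is determined by $q^{(\omega)}|_M$. Two points need checking here. First, Morley sequences and $q^{(n)}$ must make sense for $\D$-types. I would lift $q$ to a global complete type that is finitely satisfiable in $M$: extend the filter generated by $q$ together with the sets $\neg\psi(x,b)$ for $\psi(M,b)=\emptyset$ to an ultrafilter. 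Invariant products and Morley sequences are then taken from that lift, and only its $\D$-part is used, which is how Lemma \ref{minvdet} was used. Second, the count. The restriction $q^{(\omega)}|_M$ is a complete type in countably many variables over $M$, so there are at most $2^{|M|+|T|}$ choices. This gives at most $2^{|M|+|T|}$ coheirs. The step I would treat carefully is that different lifts of the same $\D$-coheir do not break injectivity. This is fine: Lemma \ref{minvdet} compares the $\D$-types themselves, so we may fix one lift per coheir, and distinct coheirs then yield distinct restrictions $q^{(\omega)}|_M$.

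\emph{Part (2).} Let $\f(x,y)\in\D$ have IP. By \cite[Lemma 2.5]{simon} we may swap variables if needed so that $x$ is the tuple in the statement. Given $\l\geq|T|$, compactness and IP give a model $\MM$ of size $\l$ containing $b_i$ for $i<\l$ such that for every $S\subseteq\l$ the set
$$\{\f(x,b_i):i\in S\}\cup\{\neg\f(x,b_i):i\notin S\}$$
is consistent. We also want a finite-satisfiability version: there should be a set $A=\{a_s\}\subseteq M$ realizing all finite patterns, so that for every ultrafilter $\mathcal V$ on $\l$ the set
$$\{\f(x,b_i):i\in X\}\cup\{\neg\f(x,b_i):i\notin X\}\cup\{\psi(x):\psi\in p_0\},\qquad X\in\mathcal V,$$
is finitely satisfiable in $M$. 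We get this by first taking $\MM$ containing witnesses $a_s$ for each finite pattern $s$, with $|\{a_s\}|=\l$, and then taking a non-principal ultrafilter $\mathcal D$ on the set of finite patterns.

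Fix one $\D$-type $q\in S^{\D}_x(M)$ that is the limit along a fixed ultrafilter. Poizat's construction, as in the proof of \cite[Proposition 2.43]{simon}, produces $2^{2^\l}$ global types, all finitely satisfiable in $M$, with the same restriction to $M$, and pairwise separated by instances $\f(x,c)$ for new parameters $c\in\UU$, indexed by ultrafilters on $\l$. Their $\D$-parts are coheirs of $q$, and they remain pairwise distinct because the separating formulas $\f(x,c)$ lie in $\D$.

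The main obstacle I expect is carrying out this construction faithfully: one needs parameters $c_{\mathcal V}$ in $\UU$ with the required $\f$-pattern against the realizations. If that is cumbersome, I would simply reproduce Simon's proof verbatim and observe that every separating formula is an instance of $\f$, hence in $\D$.
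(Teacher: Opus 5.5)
Your Part (1) is correct and is the paper's argument: coheirs are $M$-invariant, Lemma~\ref{minvdet} makes $q\mapsto q^{(\omega)}|_M$ injective, and there are at most $2^{|M|+|T|}$ possible values. Lifting each $\Delta$-coheir to a complete global type finitely satisfiable in $M$ is a sensible way to give $q^{(\omega)}$ a meaning, which the paper leaves implicit.

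Part (2) has a genuine gap: you never actually produce a type over $M$ with $2^{2^{\lambda}}$ coheirs. The problem is where you put the shattered parameters. You place the $b_i$ inside $M$. But formulas $\varphi(x,b_i)$ with $b_i\in M$ cannot distinguish two coheirs of the same $q$, since any two global types disagreeing on such a formula already have different restrictions to $M$. So you need separating parameters outside $M$. You flag this as ``the main obstacle'' and defer to Simon. Neither the finite-patterns paragraph (where $p_0$ is undefined) nor the idea of fixing $q$ in advance supplies these parameters.

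The missing idea is to use IP in the other orientation, followed by a counting step:
\begin{enumerate}
\item By compactness and the definition of IP, take $|x|$-tuples $(a_i:i<\lambda)$ and parameters $(b_S:S\subseteq\lambda)$ in $\mathcal{U}$ with $\models\varphi(a_i,b_S)$ iff $i\in S$. Take a model $M\supseteq\{a_i:i<\lambda\}$ with $|M|=\lambda$.
\item For each ultrafilter $U$ on $\lambda$, let $p_U$ consist of the $\Delta$-formulas $\psi(x,c)$, $c\in\mathcal{U}$, with $\{i:\models\psi(a_i,c)\}\in U$. This is a global $\Delta$-type, finitely satisfiable in $M$, and $p_U\vdash\varphi(x,b_S)$ iff $S\in U$.
\item Hence $U\mapsto p_U$ is injective: for $U\neq U'$ the separating formula is $\varphi(x,b_S)$ with $S\in U\setminus U'$. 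No parameters $c_{\mathcal V}$ need to be constructed, because the $b_S$ already are such parameters.
\item This gives $2^{2^{\lambda}}$ global $\Delta$-types finitely satisfiable in $M$, while $|S_x(M)|\leq 2^{\lambda}$ because $|M|=\lambda\geq|T|$. By pigeonhole, some $q$ is the restriction of $2^{2^{\lambda}}$ of them. The pigeonhole is legitimate since $\mathrm{cf}(2^{2^{\lambda}})>2^{\lambda}$ by K\"onig's theorem.
\end{enumerate}
This pigeonhole is what produces ``the same restriction to $M$''; $q$ cannot be chosen beforehand.

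A minor point: Simon's Lemma 2.5 only exchanges the roles of the two variable tuples. It cannot adjust the length of the object variable to an arbitrary prescribed $x$. So (2) must be read with $x$ the object variable of the IP formula, as the paper's proof implicitly does.
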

\begin{proof}
    \begin{enumerate}[wide]
        \item Any coheir of any $p\in S_x^{\D}(M)$ has to be an $M$-invariant global $\D$-type. We claim that there are at most $2^{|M|+|T|}$ such types. First, note that any $M$-invariant global $\D$-type $q$ is determined by $q^{(\omega)}|_M\in S_\omega^{\D}(M),$ following Lemma \ref{minvdet}. This means that the function $q\mapsto q^{(\omega)}|_M$ is injective, i.e. there are at most $|S_\omega^{\D}(M)|=2^{|M|+|T|}$ $M$-invariant global types.   

        \item  Let $\l\geq|T|$ be an infinite cardinal and suppose $\f(x,y)\in\D,(a_i:i<\l),(b_S:S\subseteq\l)$ witness IP. Let $U$ be an ultrafilter over $\l,$ and consider the set 
            $$p_U(x)=\Big\{\psi(x,b):\{i<\l:\psi(x,b)\in\tp^{\D}(a_i/\UU)\}\in U\Big\}.$$
        Then, for any model $\MM$ containing $(a_i:i<\l)$ and of size $\l,$ the set $p_U$ is a global $\D$-type which is finitely satisfiable over $M.$ In particular, $S\in U$ if and only if $p_U\vdash\f(x,b_S).$ 
        This implies that if $U,U'$ are ultrafilters on $\l$ with $S\in U\setminus U',$ then $p_U\vdash \f(x,b_S)$ and $p_{U'}\vdash\neg\f(x,b_S).$ Therefore the function $U\mapsto p_U$ is injective, meaning that there are at least $2^{2^{\l}}$ types $p_{U}\in S_x^{\D}(\UU),$ as there are $2^{2^{\l}}$ many ultrafilters over $\l.$ 
        Since there are at most $2^{\l}$ types over $M,$ there must be at least one type $q$ in $S_x(M)$ which is the restriction of $2^{2^{\l}}$ many of the $p_U$'s. Since all of the $p_U$'s are $\D$-types, the same holds for $q,$ as it is a restriction thereof. This proves that $q$ admits at least $2^{2^{\l}}$ many coheirs.
        \qedhere
    \end{enumerate}
\end{proof}

\section{Quantifier-free \texorpdfstring{$\LL_{div,\l}$}{}-formulas and \texorpdfstring{$\texttt{SCVF}_e$}{}}
\label{sectlres}

In what follows, a \emph{$\l$-field} is a field expanded by its parametrized $\l$-functions. Recall that $\LL_{div}$ is the language $\LL_{ring}$ expanded by a binary relation $|,$ and any valued field $(K,v)$ is naturally an $\LL_{div}$-structure by interpreting $x|y$ as $v(x)\leq v(y)$ for all $x,y\in K.$ We denote by $\LL_{div,\l}$ the expansion of $\LL_{div}$ by the symbols of the parametrized $\l$-functions.     

In this section we prove that quantifier-free $\LL_{div,\l}$-formulas are NIP in any theory of valued $\l$-fields. As a corollary, we get that $\texttt{SCVF}_e$ is NIP for \emph{any} Ershov degree $e\in\N\cup\{\infty\}.$ We will use the following fact.

\begin{fact}[Cf.~{\cite[Theorem 4.12]{h}}]
\label{scvfqe}
    The theory $\texttt{SCVF}_e$ admits quantifier elimination in the language $\LL_{div,\l}$ for any $e\in\N\cup\{\infty\}.$ 
\end{fact}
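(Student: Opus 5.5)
We plan to use the standard embedding test for quantifier elimination. Let $\MM,\NN\models\texttt{SCVF}_e$ with $\NN$ $|M|^{+}$-saturated, let $A$ be an $\LL_{div,\l}$-substructure of $\MM$, and let $f:A\to\NN$ be an $\LL_{div,\l}$-embedding. It suffices to extend $f$ to an embedding of $\MM$ into $\NN$; by the usual back-and-forth/compactness reduction, it is even enough to extend $f$ to $\langle A,b\rangle$ for a single $b\in M$.

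\emph{Reduction to separable valued subfields.} First replace $A$ by its fraction field $K$.
\begin{itemize}
\item The relation $|$ determines a valuation on $K$, and $f$ extends uniquely as a valued field embedding.
\item $K$ is still $\l$-closed in $M$, since $\l_m^{b}(a/c)$ can be computed from $\l$-values of $ac^{p-1}$ divided by $c$.
\item Consequently, by Fact \ref{sylvy}, $M|K$ is separable, and $f$ preserves the $\l$-functions on $K$.
\end{itemize}
Next pass to the relative separable-algebraic closure $K^{s}\cap M=K^{s}$ (since $M$ is separably closed). Extensions of the valuation of $K$ to $K^{s}$ are conjugate over $K$, so $f$ extends to a valued field embedding of $K^{s}$ into $N$. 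Separability of $M|K^{s}$ follows from that of $M|K$, and Fact \ref{lcl} keeps everything $\l$-closed.

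\emph{Matching $p$-bases.} In characteristic $p$, if $K^{s}$ does not contain a $p$-basis of $M$, we pick $b\in M$ that is $p$-independent over $K^{s}$ in $M$. We then look for $b'\in N$ with two properties:
\begin{itemize}
\item $b'$ has the same quantifier-free valued-field type over $f(K^{s})$ as $b$;
\item $b'$ is $p$-independent over $f(K^{s})$ in $N$.
\end{itemize}
The valued-field type is a cut/ball condition. The failure of $p$-independence is a union of $|K|$ many sets of the form $N^{p}(c)$-spans with $c$ a tuple from $f(K^{s})$. Each of these has empty interior in the valuation topology (e.g.\ $c+t$ with $t$ suitable is avoided in every ball). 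So compactness and $|M|^{+}$-saturation of $\NN$ give the desired $b'$. Fact \ref{lcl} then identifies $\langle K^{s},b\rangle$ with the $\l$-closure of $K^{s}(b)$ in $M$, and the $\l$-data on it are determined by the $p$-independence pattern. Hence $f$ extends to $\langle K^{s},b\rangle$, and we iterate the two preceding steps. For $e=\infty$ this takes countably many steps, handled by a union of chains.

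\emph{The transcendental step.} Once $K$ contains a $p$-basis of $M$, every $b\in M$ lies in $K\cdot M^{p}$-spans, and $\l$-closure adds nothing beyond field operations and $p$-th roots compatible with the fixed $p$-basis. We now extend by one transcendental $b$, splitting according to whether $K^{s}(b)|K^{s}$ is immediate, residue-transcendental, or value-transcendental. This is the standard ACVF-style analysis: the type is determined by the cut, realized in $N$ by saturation, and separable closedness gives the needed henselization inside $N$.

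We expect the main obstacle to be the $p$-independence step, especially for $e=\infty$. There one must simultaneously realize a valuation type and avoid all the small $p$-dependence sets, and then verify that the $\l$-functions on $\langle K,b\rangle$ are determined by this data so that the extended map is a genuine $\LL_{div,\l}$-embedding. The first thing to try there is to combine the valuation-theoretic "nowhere density" of proper $N^{p}$-subspaces with saturation.
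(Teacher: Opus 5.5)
The paper gives no proof of this fact; it cites Hong's theorem. So I judge your plan on its own. The reductions and the $p$-independence step are sound, but the final transcendental step has a genuine gap.

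\textbf{The gap.} Suppose $K$ contains a $p$-basis $B$ of $M$ and $b\in M\setminus K$. Then $\langle K,b\rangle$ is \emph{not} algebraic over $K(b)$. It equals $K(b^{(\infty)})$, where $b^{(\infty)}$ is the tuple of iterated $\l$-coordinates of $b$ with respect to finite subtuples of $B$. These coordinates can be algebraically independent over $K(b)$, so $\langle K,b\rangle|K$ can have transcendence degree bigger than one. Your claim that $\l$-closure adds only field operations and $p$-th roots is false, and with it the idea that the cut of $b$ over $K$ determines the embedding.

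Here is a concrete counterexample. Work in a saturated model of $\texttt{SCVF}_1$ with $p$-basis $t$, and let $K=\F_p(t)^{sep}$ with the $t$-adic valuation.
\begin{itemize}
\item Pick $b_0,\ldots,b_{p-1}\in\O$ whose residues are algebraically independent over $\F_p^{alg}$, and set $b=\sum_{i<p}b_i^{p}t^{i}$.
\item Pick $b_0'\in\O$ with transcendental residue, and set $b'=b_0'^{p}\sum_{i<p}t^{i}$.
\end{itemize}
Both $b$ and $b'$ are residue-transcendental over $K$ with the Gauss valuation, so $K(b)\cong K(b')$ as valued fields over $K$. Yet the quantifier-free formula $\l_{1,m_0}(x;t)=\l_{1,m_1}(x;t)$, where $m_0=1$ and $m_1=X$, holds of $b'$ and fails of $b$. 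Hence the immediate/residue-transcendental/value-transcendental analysis of $K(b)|K$, plus realizing the cut in $N$, cannot produce an $\LL_{div,\l}$-embedding of $\langle K,b\rangle$.

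\textbf{What is needed.} You must realize in $N$ the quantifier-free $\LL_{div}$-type over $f(K)$ of the whole tuple $b^{(\infty)}$. This is possible because $\langle K,b\rangle|K$ is separable, so each $K(b^{(\le n)})$ is separably generated over $K$. The steps are:
\begin{itemize}
\item realize a finite piece of the type in $N^{alg}$ by quantifier elimination for $\texttt{ACVF}$;
\item move a separating transcendence basis into the dense subfield $N$;
\item recover the remaining coordinates as roots of a separable polynomial by continuity of roots; they lie in $N$ because $N$ is separably closed;
\item conclude by saturation.
\end{itemize}
The equations $b_\sigma=\sum_m b_{\sigma m}^{p}B^{m}$ belong to this type. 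So the image $g(\langle K,b\rangle)$ has $f(B)$ as a $p$-basis. Since $f(B)$ is $p$-independent in $N$, the extension $N|g(\langle K,b\rangle)$ is separable, and $g$ commutes with the $\l$-functions. This is the real core of the proof.

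By contrast, the step you flagged as the main obstacle is routine. Any proper subfield of a non-trivially valued field has empty interior, so your compactness argument there goes through directly. For $e=\infty$ the chain of extensions is in general transfinite rather than countable, which is harmless.
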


Let us start with a remark. Let $M$ be a field of characteristic exponent $p$, let $m\geq n$ be positive integers and let $A$ be an $m\times n$ matrix of rank $n$ over $M.$ This implies that there are some indices $\mu_1<\.<\mu_n\leq m$ for which the corresponding rows of $A$ form an $n\times n$ invertible sub-matrix $A^{\mu}.$ Let $A^{\mu}_{L}$ be the $n\times m$ matrix whose $\mu_i^{th}$ column corresponds to the $i^{th}$ column of $(A^{\mu})^{-1}$ for all $i\in\{1,\.,n\},$ and whose $j^{th}$ column is zero for all $j\in\{1,\.,m\}\setminus\{\mu_1,\.,\mu_n\}.$ It follows that $A^{\mu}_{L}$ is a left inverse of $A.$ If $\adj(A^{\mu})$ is the adjugate matrix of $A^{\mu},$ then $(A^{\mu})^{-1}_{i,j}=(\adj(A^{\mu}))_{i,j}/\det(A^{\mu}).$ If $x=(X_{k,l}:(k,l)\in n\times n),$ then there is some polynomial $J_{i,j}\in\F_p[x]$ such that $(\adj(C))_{i,j}=J_{i,j}(C)$ for all $n\times n$ matrices $C,$ and thus $(A^{\mu})^{-1}_{i,j}=J_{i,j}(A^{\mu})/\det(A^{\mu}).$ Therefore $$(A^{\mu}_{L})_{i,j}=\begin{cases}
        J_{i,k}(A^{\mu})/\det(A^{\mu})&\text{ if }j=\mu_k\text{ for some }k\in\{1,\.,n\},\\
        0&\text{ otherwise. }
    \end{cases}$$ 
We have proved the following Lemma.

\begin{lema}
\label{la}
If $A$ is a full rank $m\times n$ matrix over $M,$ with $m\geq n,$ and $A^{\mu}$ is an invertible $n\times n$ sub-matrix of $A,$ then $A^{\mu}_{L}$ is a left inverse of $A$ and there are some rational functions $(h_{i,j})_{(i,j)\in n\times m}$ over $\F_p$ such that $(A^{\mu}_{L})_{i,j}=h_{i,j}(A).$
\end{lema}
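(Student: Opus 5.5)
The statement is essentially established by the remark preceding it, so the plan is to organize that remark into a proof with two parts: that $A^{\mu}_{L}$ is a left inverse of $A$, and that its entries are rational functions over $\F_p$ in the entries of $A$.

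First, the existence of $\mu$. Since $A$ has rank $n$ and $m\geq n$, its row space has dimension $n$. Hence some $n$ rows $\mu_1<\dots<\mu_n$ of $A$ are linearly independent, and the resulting square submatrix $A^{\mu}$ is invertible. The statement already fixes such an $A^{\mu}$, so this step only serves to make the construction meaningful.

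Second, the left-inverse identity. Write $P$ for the $n\times m$ selection matrix whose $k$-th row is the standard basis vector $e_{\mu_k}^{T}$. Then $PA=A^{\mu}$. By construction the $\mu_k$-th column of $A^{\mu}_{L}$ is the $k$-th column of $(A^{\mu})^{-1}$, and its other columns vanish, so $A^{\mu}_{L}=(A^{\mu})^{-1}P$. Therefore
\[
A^{\mu}_{L}A=(A^{\mu})^{-1}PA=(A^{\mu})^{-1}A^{\mu}=I_n .
\]
The only point needing care is this index bookkeeping, namely the identification $A^{\mu}_{L}=(A^{\mu})^{-1}P$.

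Third, rationality. Cramer's rule gives $(A^{\mu})^{-1}=\adj(A^{\mu})/\det(A^{\mu})$. Each cofactor is, up to sign, a determinant of a minor, and so is a polynomial with integer coefficients in the entries. Reducing modulo $p$ (or working over $\mathbb{Q}$ if $p=1$) gives polynomials $J_{i,k}$ over the prime field with $\adj(C)_{i,k}=J_{i,k}(C)$ for every $n\times n$ matrix $C$ over $M$. The determinant is likewise a polynomial $D$ over the prime field.

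Since the entries of $A^{\mu}$ are coordinates of $A$, we set
\[
h_{i,\mu_k}=J_{i,k}(A^{\mu})/D(A^{\mu}), \qquad h_{i,j}=0 \text{ for } j\notin\{\mu_1,\dots,\mu_n\}.
\]
These are rational functions in the variables indexed by $m\times n$. They are defined at $A$ because $\det(A^{\mu})\neq0$, and the displayed formula in the remark shows that $(A^{\mu}_{L})_{i,j}=h_{i,j}(A)$.

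There is no real obstacle. The one subtlety to state explicitly is that the $h_{i,j}$ depend on the choice of $\mu$, but not on $A$ beyond that choice. This uniformity is presumably what later arguments need.
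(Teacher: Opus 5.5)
Your proposal is correct and follows the same argument as the paper, which proves the lemma in the remark immediately preceding it. Both select $n$ independent rows to form $A^{\mu}$, place the columns of $(A^{\mu})^{-1}$ in columns $\mu_1,\dots,\mu_n$, and use the adjugate formula with polynomials $J_{i,k}$ over the prime field. Your factorization $A^{\mu}_{L}=(A^{\mu})^{-1}P$ with $PA=A^{\mu}$ just makes explicit the left-inverse identity the paper asserts with ``it follows''. Your observation that the $h_{i,j}$ depend only on $\mu$ is exactly the uniformity used later in Lemma \ref{prep}.
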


Now, we are going to define some notations. If $N\in\N,$ then we define $[N]$ to be the set $\{1,\.,N\}.$ For each $n<\omega,$ let $a_n=(a_n^{k})_{k\in[N]}$ be an $N$-tuple of elements of a $\l$-field $\MM$ of characteristic exponent $p.$ If $S\subseteq[N],$ we will write $a_n^{S}=(a_n^{k})_{k\in S}.$ If $l<\omega$ and $(S_n)_{n<\omega}$ is a sequence of subsets of $[N],$ we will write $a_{<l}^{S_{<l}}=(a_n^{S_n})_{n<l},$ $a_{\leq l}^{S_{\leq l}}=(a_n^{S_n})_{n\leq l}$ and $a_{\geq l}^{S_{\geq l}}=(a_n^{S_n})_{n\geq l}.$ 
Also, let $\bb$ be a finite tuple of parameters from $\MM.$ Throughout this section, we will assume that the sequence $a_{<\omega}=(a_n)_{n<\omega}$ is indiscernible.
If $x=(x_1,\.,x_n)$ and $y$ are finite tuples of elements of $M$ and $m\in\Mon(|y|),$ we will write $\l_{m}(x_1;y)=\l_{|y|,m}(x_1;y)$ and $\l_{m}(x;y):=(\l_{m}(x_1;y),\.,\l_{m}(x_n;y)).$ Finally, we will write $\l(x;y):=(\l_{m}(x;y))_{m\in\Mon(|y|)}.$

\begin{lema}
    There is a sequence $(S_{n})_{n<\omega}$ of subsets of $[N]$ such that $a_{<\omega}^{S_{<\omega}}$ is $p$-independent in $M,$ $S_{n+1}\subseteq S_{n}$ and $a_{<n}\subseteq M^{p}(a_{<n}^{S_{<n}})$ for all $n<\omega.$
\end{lema}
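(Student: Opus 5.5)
The plan is to construct the sets $S_n$ by a canonical greedy rule and to deduce the inclusion $S_{n+1}\subseteq S_n$ from indiscernibility. Write $B_n$ for the field $M^{p}(a_{<n})$, with $B_0=M^{p}$. For each $n<\omega$ define
$$S_n=\{k\in[N]: a_n^{k}\notin B_n(a_n^{1},\.,a_n^{k-1})\}.$$
This is the usual greedy extraction of a $p$-basis of $a_n$ relative to $B_n$. By the standard exchange properties of $p$-dependence, two facts hold. First, $a_n^{S_n}$ is $p$-independent over $B_n$, meaning its monomials are linearly independent over $B_n$. Second, every $a_n^{k}$ lies in $B_n(a_n^{S_n})$, because each discarded coordinate already lies in the field generated by the earlier ones.

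By induction on $n$, this gives $B_n=M^{p}(a_{<n}^{S_{<n}})$. Indeed, $B_{n+1}=B_n(a_n)=B_n(a_n^{S_n})=M^{p}(a_{\leq n}^{S_{\leq n}})$. This is the required spanning property $a_{<n}\subseteq M^{p}(a_{<n}^{S_{<n}})$.

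For the $p$-independence of $a_{<\omega}^{S_{<\omega}}$, it suffices to check every finite initial part. I would use the tower $M^{p}=B_0\subseteq B_1\subseteq\cdots$. At each step $a_n^{S_n}$ is $p$-independent over $B_n$, so $[B_{n+1}:B_n]=p^{|S_n|}$. Multiplicativity of degrees then gives $[M^{p}(a_{\leq n}^{S_{\leq n}}):M^{p}]=p^{\sum_{i\leq n}|S_i|}$. This is exactly $p$-independence of $a_{\leq n}^{S_{\leq n}}$ in $M$.

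The main step is $S_{n+1}\subseteq S_n$. The key observation is that "$x\in M^{p}(y)$" is expressible by a first-order $\LL_{ring}$-formula $\chi(x,y)$: there exist coefficients $c_m$ with $x=\sum_{m} c_m^{p}\,m(y)$. Applying this with $y=(a_{<n},a_n^{<k})$ turns the condition $k\notin S_n$ into the statement $\MM\models\chi(a_n^{k};a_{<n},a_n^{<k})$, which depends only on the type of $(a_0,\.,a_n)$.

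Now suppose $k\notin S_n$. The tuples $(a_0,\.,a_{n-1},a_{n+1})$ and $(a_0,\.,a_n)$ have the same type by indiscernibility. Hence $a_{n+1}^{k}\in M^{p}(a_{<n},a_{n+1}^{<k})$. The latter field is contained in $M^{p}(a_{<n+1},a_{n+1}^{<k})=B_{n+1}(a_{n+1}^{<k})$, so $k\notin S_{n+1}$.

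I expect the only delicate points to be verifying the exchange-type facts for relative $p$-independence, which are standard linear algebra over $B_n$, and making sure the formula $\chi$ has only finitely many variables, which holds because $n$ and $k$ are fixed.
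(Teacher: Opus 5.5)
Your proof is correct, but it divides the work differently from the paper.

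The paper builds $S_{n+1}\subseteq S_n$ into the construction. It extends the already $p$-independent tuple $a_{\leq n}^{S_{\leq n}}$ to a maximal $p$-independent subset of $a_{\leq n}^{S_{\leq n}}a_{n+1}^{S_n}$, so monotonicity and independence are automatic. Indiscernibility, together with the inductive hypothesis $a_n\subseteq M^{p}(a_{<n}^{S_{<n}}a_n^{S_n})$, is then used to show that the discarded coordinates $a_{n+1}^{i}$, $i\notin S_n$, still lie in $M^{p}(a_{\leq n+1}^{S_{\leq n+1}})$.

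You instead make $S_n$ canonical: the greedy $p$-basis of $a_n$ over $M^{p}(a_{<n})$ in the fixed coordinate order. This gives spanning and independence for free, via the degree count $[M^{p}(a_{\leq n}^{S_{\leq n}}):M^{p}]=p^{\sum_{i\leq n}|S_i|}$. You then spend indiscernibility on proving $S_{n+1}\subseteq S_n$. Your transfer needs only the one-step fact $k\notin S_n$, not the full inductive spanning statement.

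Both arguments rest on the same point, which the paper leaves implicit and you make explicit. For a finite tuple $y$, membership $x\in M^{p}(y)$ is expressed by the existential $\LL_{ring}$-formula $\exists (c_m)_m\,(x=\sum_m c_m^{p}y^{m})$. It is therefore preserved when $a_n$ is replaced by $a_{n+1}$ over $a_{<n}$.

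A small bonus of your version is that each $S_n$ is determined by $\tp(a_{\leq n})$ and the coordinate order, rather than by arbitrary choices of maximal independent subsets.
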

\begin{proof}
    Let $S_0\subseteq[N]$ be such that $a_0^{S_0}$ is a maximally $p$-independent subset of $a_0.$ In this way, $a_0\subseteq M^{p}(a_0^{S_0}).$ If $S_{\leq n}$ have been defined for some $n<\omega$ in such a way that $a_{\leq n}^{S_{\leq n}}$ is $p$-independent and $a_{\leq n}\subseteq M^{p}(a_{\leq n}^{S_{\leq n}}),$ then let $S_{n+1}\subseteq S_n$ be such that $a_{\leq n+1}^{S_{\leq n+1}}$ is a maximally $p$-independent subset of $a_{\leq n}^{S_{\leq n}}a_{n+1}^{S_{n}}.$ It follows that $a_{\leq n}^{S_{\leq n}}a_{n+1}^{S_{n}}\subseteq M^{p}(a_{\leq n+1}^{S_{\leq n+1}}).$ We claim that, moreover, $a_{\leq n+1}\subseteq M^{p}(a_{\leq n+1}^{S_{\leq n+1}}).$ To achieve this, we only have to check that $a_{n+1}^{i}\in M^{p}(a_{\leq n+1}^{S_{\leq n+1}})$ for any $i\not\in S_{n+1}.$ Indeed, by induction hypothesis, $a_n^{i}\in M^{p}(a_{\leq n}^{S_{\leq n}}),$ and by indiscernibility, $a_{n+1}^{i}\subseteq M^{p}(a_{<n}^{S_{<n}}a_{n+1}^{S_{n}})\subseteq M^{p}(a_{\leq n+1}^{S_{\leq n+1}}),$ as wanted.    
\end{proof}

Let $\bb'\subseteq\bb$ be such that $a_{<\omega}^{S_{<\omega}}\bb'$ is a maximally $p$-independent subset of $a_{<\omega}^{S_{<\omega}}\bb.$ 
In particular, since $\bb$ is finite, there is a minimum $l_1<\omega$ such that $\bb\subseteq M^{p}(a_{<l_1}^{S_{<l_1}}\bb').$
Also, since $(S_n)_{n<\omega}$ is a $\subseteq$-decreasing sequence of subsets of the finite set $[N],$ there is some minimum $l_2<\omega$ such that $S_n=S_{l_2}$ for all $n\geq l_2.$ This implies, by indiscernibility, that $a_n\subseteq M^{p}(a_n^{S_{l_2}}a_{<l_2}^{S_{<l_2}}).$ Therefore, if $l=\max\{l_1,l_2\},$ then $\F_p(a_n,\bb)\subseteq M^{p}(a_n^{S_l}a_{<l}^{S_{<l}}\bb')$ for all $n\geq l.$
As a consequence, if $n\geq l,$ then $a_{n}\subseteq M^{p}(a_n^{S_l}a_{<l}^{S_{<l}})$ and $\bb\subseteq M^{p}(a_{<l}^{S_{<l}}\bb')$ imply that $\l(a_n;a_n^{S_l}a_{<l}^{S_{<l}}\bb')\subseteq\l(a_n;a_n^{S_l}a_{<l}^{S_{<l}})\cup\{0\}$ and $\l(\bb;a_n^{S_l}a_{<l}^{S_{<l}}\bb')\subseteq\l(\bb;a_{<l}^{S_{<l}}\bb')\cup\{0\}$ respectively.

\begin{cor}
\label{ltorat}
    For any monomial $m\in\Mon(|S_0|+\.+|S_l|+|\bb'|)$ and any rational function $f$ over $\F_p,$ there is some rational function $g$ over $\F_p$ such that $$\l_m(f(a_n,\bb);a_n^{S_l}a_{<l}^{S_{<l}}\bb')=g(a_{n}^{S_l},\l(a_n;a_n^{S_l}a_{<l}^{S_{<l}}),\l(\bb;a_{<l}^{S_{<l}}\bb'),a_{<l}^{S_{<l}}\bb')$$ for all $n\geq l.$
\end{cor}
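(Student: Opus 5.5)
We expand $\l_m(f(a_n,\bb);c)$ in the $p$-basis $c=a_n^{S_l}a_{<l}^{S_{<l}}\bb'$ and read off the coefficients. Write $c$ for this tuple and $k=|c|$. By the discussion preceding the statement, $\F_p(a_n,\bb)\subseteq M^{p}(c)$ and $c$ is $p$-independent in $M$ for every $n\geq l$. Hence $t:=f(a_n,\bb)$ lies in the $M^p$-span of $\Mon(c)$, and $\l_m(t;c)$ is its unique coefficient, i.e. $t=\sum_{m}\l_m(t;c)^p c^m$. The plan is to express each generator of $\F_p(a_n,\bb)$ in this basis with coefficients in the claimed field, and then show that $\l_m(-;c)$ is compatible with field operations in a computable way.

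First, each coordinate $a_n^i$ satisfies $a_n^i=\sum_{m}\l_m(a_n^i;c)^p c^m$. Its coefficients lie in $\l(a_n;c)$, which by the remark above is contained in $\l(a_n;a_n^{S_l}a_{<l}^{S_{<l}})\cup\{0\}$; this is the inclusion $\l(a_n;c)\subseteq\l(a_n;a_n^{S_l}a_{<l}^{S_{<l}})\cup\{0\}$ noted before the statement. More precisely, the monomials of $c$ involving $\bb'$ get coefficient $0$, and the other coefficients are the $\l$-values with respect to the shorter tuple. Likewise, each $b\in\bb$ has coefficients in $\l(\bb;a_{<l}^{S_{<l}}\bb')\cup\{0\}$. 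The index bookkeeping relating $\Mon(c)$ to the monomials of the subtuples is fixed independently of $n$, which is exactly where indiscernibility and the choice of $l$ (stabilisation of $S_n$ and $l_1$) enter.

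Next, take $R=\F_p\big(a_n^{S_l},\l(a_n;a_n^{S_l}a_{<l}^{S_{<l}}),\l(\bb;a_{<l}^{S_{<l}}\bb'),a_{<l}^{S_{<l}}\bb'\big)$. We show by induction on the construction of $f$ (sums, products, inverses) that every element $t$ of $\F_p(a_n,\bb)$ has $\l_m(t;c)\in R$, given by rational functions over $\F_p$ independent of $n$.

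Sums are immediate, since $\l_m$ is additive. For products, $c^{m}c^{m'}=c^{m+m'}$ where $m+m'$ is reduced mod $p$ with a factor $(c^{e})^p$ for the carries. So $\l_{m''}(tt';c)=\sum_{m+m'\equiv m''}\l_m(t)\l_{m'}(t')\,c^{e(m,m')}$, a polynomial expression over $\F_p$ in elements of $R$, using $c\subseteq R$.

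Inverses are the main obstacle. If $t\neq0$, the map $x\mapsto tx$ is $M^p$-linear on $M^p(c)$. Its matrix $A_t$ in the basis $\Mon(c)$ has entries that are $p$-th powers of elements of $R$ times monomials in $c$, given by the product formula. The coefficients of $t^{-1}$ form the solution vector $A_t^{-1}e_1$. By Cramer's rule, equivalently the adjugate formula recalled before Lemma~\ref{la}, they are rational over $\F_p$ in the entries, once one takes $p$-th roots of the coordinates. To make this work I would set up the matrix over the field $R$ itself, writing $t^{-1}=\sum_m y_m^p c^m$ and solving for the $y_m^p$. That yields $y_m^p\in R$, which means we must show $R$ is closed under $p$-th roots of such elements.

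This is where I expect difficulty. I would handle it by applying the same argument to $t^{-p}$ instead: $t^{-1}=t^{p-1}\cdot(t^{-p})$, and $t^{-p}=(t^{p})^{-1}$, where $t^p=\sum\l_m(t)^{p^2}c^{mp}$ lies in $M^p$ with $p$-th root $\sum\l_m(t)^pc^m$. So $1/t=t^{p-1}\big(1/\sum_m\l_m(t)^pc^m\big)^p$, where $1/\sum_m\l_m(t)^pc^m\in R$. The coefficients of $t^{-1}$ are therefore those of $t^{p-1}$ (handled by the product case) multiplied by the $p$-th root of a $p$-th power, namely $1/\sum\l_m(t)^pc^m$. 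Rationality over $\F_p$ then follows, and uniformity in $n\geq l$ holds because every formula depends only on the shape of $f$ and $m$.
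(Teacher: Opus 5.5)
Your proposal is correct and follows the paper's route. You express the $\l$-coordinates of $f(a_n,\bb)$ with respect to $c=a_n^{S_l}a_{<l}^{S_{<l}}\bb'$ rationally and uniformly in $n$, in terms of $c$ and the $\l$-coordinates of $a_n$ and $\bb$ with respect to $c$. You then use the inclusions $\l(a_n;c)\subseteq\l(a_n;a_n^{S_l}a_{<l}^{S_{<l}})\cup\{0\}$ and $\l(\bb;c)\subseteq\l(\bb;a_{<l}^{S_{<l}}\bb')\cup\{0\}$ to pass to the shorter tuples. The only difference is that the paper cites \cite[Lemma 2.16]{at} (iterating Lemma 2.15 for uniformity) for the first step, whereas you reprove it by induction on sums, products with carries, and inverses.

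In the inverse step, note that $\sum_m\l_m(t;c)^pc^m$ is just $t$. Your formula therefore amounts to $\l_m(t^{-1};c)=\l_m(t^{p-1};c)\cdot t^{-1}$, coming from $t^{-1}=t^{p-1}\cdot(t^{-1})^p$, and the Cramer's-rule detour can be dropped.
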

\begin{proof}
    By \cite[Lemma 2.16]{at}, there is a canonical rational function $h$ over $\F_p,$ depending only on $m$ and $f,$\footnote{Uniformity of $h$ follows rather from an iteration of \cite[Lemma 2.15]{at} than from \cite[Lemma 2.16]{at}.} satisfying that $$\l_m(f(a_n,\bb);a_n^{S_l}a_{<l}^{S_{<l}}\bb')=h(a_n^{S_l},\l(a_n;a_n^{S_l}a_{<l}^{S_{<l}}\bb'),\l(\bb;a_n^{S_l}a_{<l}^{S_{<l}}\bb'),a_{<l}^{S_{<l}}\bb')$$ for all $n\geq l.$ By construction, the inclusions $\l(a_n;a_n^{S_l}a_{<l}^{S_{<l}}\bb')\subseteq\l(a_n;a_n^{S_l}a_{<l}^{S_{<l}})\cup\{0\}$ and $\l(\bb;a_n^{S_l}a_{<l}^{S_{<l}}\bb')\subseteq\l(\bb;a_{<l}^{S_{<l}}\bb')\cup\{0\}$ imply that there is some rational function $g$ over $\F_p,$ depending only on $h,$ satisfying that $$h(a_n^{S_l},\l(a_n;a_n^{S_l}a_{<l}^{S_{<l}}\bb'),\l(\bb;a_n^{S_l}a_{<l}^{S_{<l}}\bb'),a_{<l}^{S_{<l}}\bb')=g(a_n^{S_l},\l(a_n;a_n^{S_l}a_{<l}^{S_{<l}}),\l(\bb;a_{<l}^{S_{<l}}\bb'),a_{<l}^{S_{<l}}\bb'),$$ as wanted.   
\end{proof}

Note that the sequence $(a_n^{S_l}\l(a_n;a_n^{S_l}a_{<l}^{S_{<l}}))_{n\geq l}$ remains indiscernible. 
The following Lemma is inspired by Statement 1 of \cite[Proposition 2.27]{at} and by \cite[Theorem 3.2]{h}. We chose to phrase it in a way that is convenient for our purposes, though the proof is the same as those of the cited statements.  

\begin{lema}
\label{prep}
    For each $k\geq1,$ let $Q_k$ be the set of $p$-independent $k$-tuples of $M.$ 
    \begin{enumerate}
        \item If $k\geq 1$ and $f_1,\.,f_k$ are rational functions over $\F_p,$ then there is a quantifier-free $\LL_{ring}$-formula $\phi$ such that $(f_1(a_n,\bb),\.,f_k(a_n,\bb))\in Q_k$ if and only if $$\MM\models\phi(a_n^{S_l},\l(a_n;a_n^{S_l}a_{<l}^{S_{<l}}),\l(\bb;a_{<l}^{S_{<l}}\bb'),a_{<l}^{S_{<l}}\bb')$$ for all $n\geq l.$

        \item If $k\geq 1,m\in\Mon(k)$ and $f_0,\.,f_k$ are rational functions over $\F_p,$ then there is some rational function $f$ over $\F_p$ such that $$\l_m(f_0(a_n,\bb);f_1(a_n;\bb),\.,f_k(a_n,\bb))=f(a_n^{S_l},\l(a_n;a_n^{S_l}a_{<l}^{S_{<l}}),\l(\bb;a_{<l}^{S_{<l}}\bb'),a_{<l}^{S_{<l}}\bb')$$ for all sufficiently large $n\geq l.$
        
    \end{enumerate}
\end{lema}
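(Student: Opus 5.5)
The plan is to reduce everything to linear algebra over $M^{p}$ with respect to the fixed $p$-basis $c_n:=a_n^{S_l}a_{<l}^{S_{<l}}\bb'$, which is $p$-independent in $M$ for every $n\geq l$ and satisfies $\F_p(a_n,\bb)\subseteq M^{p}(c_n)$. Write $\Lambda_n$ for the tuple $(a_n^{S_l},\l(a_n;a_n^{S_l}a_{<l}^{S_{<l}}),\l(\bb;a_{<l}^{S_{<l}}\bb'),a_{<l}^{S_{<l}}\bb')$. By Corollary \ref{ltorat}, for any rational function $g$ over $\F_p$ and any monomial $m\in\Mon(|c_n|)$, the coordinate $\l_m(g(a_n,\bb);c_n)$ is $\tilde g_m(\Lambda_n)$ for a rational function $\tilde g_m$ over $\F_p$ depending only on $g,m$, uniformly in $n\geq l$. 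Hence each $f_i(a_n,\bb)$ has a coordinate vector $v_i(n)=(\tilde f_{i,m}(\Lambda_n)^{p})_{m}$ with respect to the $M^{p}$-basis $\Mon(c_n)$ of $M^{p}(c_n)$, and each monomial $f^{\alpha}(a_n,\bb)=\prod_i f_i^{\alpha_i}$ with $\alpha\in\{0,\.,p-1\}^{k}$ is itself a rational function of $(a_n,\bb)$, so it too has coordinates given uniformly by rational functions of $\Lambda_n$.

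For item 1: the tuple $(f_1,\.,f_k)(a_n,\bb)$ is $p$-independent in $M$ iff its $p^{k}$ monomials are $M^{p}$-linearly independent. Since they all lie in $M^{p}(c_n)$, which is a free $M^{p}$-module with basis $\Mon(c_n)$, this holds iff the $p^{|c_n|}\times p^{k}$ coordinate matrix $A(n)$ has full column rank $p^{k}$, i.e. iff some $p^{k}\times p^{k}$ minor is nonzero. Each minor is a polynomial in the entries, hence a rational function $r_\mu(\Lambda_n)$ over $\F_p$; the formula $\phi$ is the disjunction over $\mu$ of ``$\mathrm{den}(r_\mu)\neq0\wedge\mathrm{num}(r_\mu)\neq0$'', plus the requirement that the denominators of all $\tilde f$'s involved are nonzero (which they are, as the identities of Corollary \ref{ltorat} hold). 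A small point: the $p$-th powers introduced are harmless since $x\mapsto x^{p}$ is a polynomial. This is uniform in $n\geq l$.

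For item 2: if $(f_1,\.,f_k)(a_n,\bb)\notin Q_k$ or $f_0(a_n,\bb)$ is not in the $M^{p}$-span of their monomials, $\l_m$ is $0$. By indiscernibility of $(\Lambda_n)_{n\geq l}$ (the sequence remains indiscernible, as noted above), and since by item 1 and the rank condition below membership is a quantifier-free condition on $\Lambda_n$, the truth value of these conditions is constant for all sufficiently large $n$ (indeed for all $n\geq l$); in the degenerate case take $f=0$. Otherwise the rank is constant and, again by indiscernibility, we fix one index set $\mu$ such that $A(n)^{\mu}$ is invertible for all $n\geq l$. Solving $A(n)\,w=v_0(n)$ with the left inverse of Lemma \ref{la} gives $w=A(n)^{\mu}_L v_0(n)$, whose entries are rational functions of $\Lambda_n$; these entries are $\l_m(f_0;f)^{p}$. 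Finally, the $p$-th root is taken by noting that $A(n)^{\mu}_L v_0(n)$ is computed from $p$-th powers of rational expressions in $\Lambda_n$, so by Cramer's rule it equals the $p$-th power of the same rational function applied to the $\tilde f_{i,m}(\Lambda_n)$ themselves; this yields $f$. The main obstacle is exactly this bookkeeping: ensuring the choice of the invertible minor $\mu$ (and the case distinction) is independent of $n$, which is where ``sufficiently large $n$'' and indiscernibility enter.
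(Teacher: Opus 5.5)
Your linear-algebra reduction is the paper's. You take coordinates with respect to the $p$-basis $a_n^{S_l}a_{<l}^{S_{<l}}\bb'$, given uniformly by Corollary \ref{ltorat}; you express $p$-independence as a nonvanishing $p^{k}$-minor; and you solve with the left inverse of Lemma \ref{la}. Item 1 is correct. In item 2 your detour through the matrix of $p$-th powers is harmless, since Frobenius commutes with rational functions over $\F_p$ and is injective; the paper just works directly with the matrix of $\l$-coordinates.

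The gap is in item 2, where you get the case distinction and a fixed invertible minor $\mu$ ``by indiscernibility of $(\Lambda_n)_{n\geq l}$''. That sequence is not indiscernible. Only its $n$-dependent part $(a_n^{S_l}\l(a_n;a_n^{S_l}a_{<l}^{S_{<l}}))_{n\geq l}$ is indiscernible (over $a_{<l}$). The entries $\l(\bb;a_{<l}^{S_{<l}}\bb')$ and $\bb'$ form a fixed parameter coming from the arbitrary tuple $\bb$, and the sequence need not be indiscernible over it.

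In particular, your claim that the truth values are constant for all $n\geq l$ is false. Take $a_n=(s,t_n^{p})$ with $s\notin M^{p}$ and $(t_n)_{n<\omega}$ an $s$-indiscernible sequence of pairwise distinct elements, and take $\bb=(t_7^{p})$. Then $S_0=\{1\}$, $S_n=\emptyset$ for $n\geq 1$, $\bb'=\emptyset$ and $l=1$. With $f_1(x_1,x_2,y)=(x_2-y)x_1$ we get $f_1(a_n,\bb)=(t_n-t_7)^{p}s$, which lies in $Q_1$ exactly when $n\neq 7$. For the same reason, no single minor need be invertible for all $n\geq l$.

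The missing ingredient is the one the paper uses: quantifier-free $\LL_{ring}$-formulas are NIP in any field. Their truth values agree with those in $M^{alg}$, and $\texttt{ACVF}$ (or already $\texttt{ACF}$) is NIP. So each such formula, with the fixed parameter, alternates only finitely often along the indiscernible sequence $(a_n^{S_l}\l(a_n;a_n^{S_l}a_{<l}^{S_{<l}}))_{n\geq l}$, and is therefore eventually constant.

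This gives eventual constancy of ``$(f_1,\ldots,f_k)(a_n,\bb)\in Q_k$ and $(f_0,\ldots,f_k)(a_n,\bb)\notin Q_{k+1}$''. For $\mu$, each of the finitely many conditions $\det A(n)^{\mu}\neq 0$ is eventually constant. They cannot all be eventually false, since $A(n)$ eventually has full rank, so you can pick one that is eventually true. This is exactly why the statement only claims the identity for sufficiently large $n$.
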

\begin{proof}
Before we start, given some $\mu\geq1,$ let $(m_i)_{i\in[p^{\mu}]}$ be a fix enumeration of $\Mon(\mu).$ If $\nu\leq\mu,$ we may assume that $(m_i)_{i\in[p^{\nu}]}$ is an enumeration of $\Mon(\nu)$. 
    \begin{enumerate}[wide]
        \item Let $\nu=|S_0|+\.+|S_l|+|\bb'|,$ let $n\geq l$ and let $\mathbf{f}(a_n,\bb)$ be the tuple $(f_1(a_n,\bb),\.,f_k(a_n,\bb)).$ For an index $j\in[p^{\nu}],$ the element $\mathbf{f}(a_n,\bb)^{m_j}$ belongs to $M^{p}(a_n^{S_l}a_{<l}^{S_{<l}}\bb'),$ 
         which allows us to write $$\mathbf{f}(a_n,\bb)^{m_j}=\sum_{i=1}^{p^{\nu}}\left(\l_{m_i}(\mathbf{f}(a_n,\bb)^{m_j};a_n^{S_l}a_{<l}^{S_{<l}}\bb')\right)^{p}\cdot \left(a_n^{S_l}a_{<l}^{S_{<l}}\bb'\right)^{m_i}.$$
         Note that if $k>\nu,$ then there are no $p$-independent $k$-tuples of elements of $M^{p}(a_n^{S_l}a_{<l}^{S_{<l}}\bb'),$ in which case we can define $\phi$ to be $\bot.$ 
         If $k\leq\nu,$ and if $A_n$ and $B_n$ are the $p^{\nu}\times p^{k}$ matrices over $M$ given by $(A_n)_{i,j}=\l_{m_i}(\mathbf{f}(a_n,\bb)^{m_j};a_n^{S_l}a_{<l}^{S_{<l}}\bb')$ and $(B_n)_{i,j}=(A_n)_{i,j}^{p},$ then
        \begin{align*}
            \mathbf{f}(a_n,\bb)\in Q_k&\Sii \mathbf{f}(a_n,\bb)\text{ is }p\text{-independent in }M\\
            &\Sii\{\mathbf{f}(a_n,\bb)^{m_1},\.,\mathbf{f}(a_n,\bb)^{m_{p^{k}}}\}\text{ is linearly independent over }M^{p}\\
            &\Sii B_n\text{ has rank }p^{k}\\
            &\Sii B_n\text{ admits some non-zero minor of order }p^{k}\\
            &\Sii A_n\text{ admits some non-zero minor of order }p^{k}.
        \end{align*}
        Let $x=(x_{i,j})_{(i,j)\in p^{\nu}\times p^{k}}$ and let $\psi(x)$ be the $\LL_{ring}$-formula stating that the matrix $x$ has a non-zero minor of order $p^{k}.$ By Corollary \ref{ltorat}, for each $(i,j)\in p^{\nu}\times p^{k}$ there is some rational function $g_{i,j}$ over $\F_p,$ not depending on $n,$ such that 
        $$\l_{m_i}(\mathbf{f}(a_n,\bb)^{m_j};a_n^{S_l}a_{<l}^{S_{<l}}\bb')=g_{i,j}(a_{n}^{S_l},\l(a_n;a_n^{S_l}a_{<l}^{S_{<l}}),\l(\bb;a_{<l}^{S_{<l}}\bb'),a_{<l}^{S_{<l}}\bb'),$$ making $(A_n)_{i,j}=g_{i,j}(a_{n}^{S_l},\l(a_n;a_n^{S_l}a_{<l}^{S_{<l}}),\l(\bb;a_{<l}^{S_{<l}}\bb'),a_{<l}^{S_{<l}}\bb')$ for all $(i,j)\in p^{\nu}\times p^{k}.$ Therefore, 
        $$\mathbf{f}(a_n,\bb)\in Q_k\Sii\MM\models\psi(A_n).$$ If $w$ is a suitable tuple of variables and if $g$ is the matrix $(g_{i,j})_{(i,j)\in p^{\nu}\times p^{k}},$ then we can put $\phi(w)=\psi(g(w)),$ which is (equivalent to, up to clearing denominators) a quantifier-free $\LL_{ring}$-formula, as wanted.

        \item Since the sequence $(a_n^{S_l}\l(a_n;a_n^{S_l}a_{<l}^{S_{<l}}))_{n\geq l}$ is indiscernible and $\phi$ is a quantifier-free $\LL_{ring}$-formula, the truth value of $(f_1(a_n,\bb),\.,f_k(a_n,\bb))\in Q_k\wedge(f_0(a_n,\bb),\.,f_k(a_n,\bb))\not\in Q_{k+1}$ as a function of $n\geq l$ is eventually constant.\footnote{This holds because $\texttt{ACVF}$ is NIP.} If such a truth value is false, then $$\l_m(f_0(a_n,\bb);f_1(a_n;\bb),\.,f_k(a_n,\bb))=0$$ for all sufficiently large $n\geq l,$ in which case we can put $f(a_{n}^{S_l},\l(a_n;a_n^{S_l}a_{<l}^{S_{<l}}),\l(\bb;a_{<l}^{S_{<l}}\bb'),a_{<l}^{S_{<l}}\bb')=0.$ If such a truth value is true, then $f_0(a_n,\bb)\in M^{p}(f_1(a_n;\bb),\.,f_k(a_n,\bb))$ for all sufficiently large $n\geq l,$ by the Exchange Principle. If $A_n$ is the $p^{\nu}\times p^{k}$ matrix of the last statement, then $A_n$ has full rank and, comparing the coordinates of $f_0(a_n,\bb)$ with respect to $f_1(a_n;\bb),\.,f_k(a_n,\bb)$ and with respect to $a_n^{S_l}a_{<l}^{S_{<l}}\bb',$ we get that  $$A_n\cdot\Big[\l_{m_{i}}(f_0(a_n,\bb);f_1(a_n;\bb),\.,f_k(a_n,\bb))\Big]_{i\in[p^{k}]}^{t}=\Big[\l_{m_{i}}(f_0(a_n,\bb);a_n^{S_l}a_{<l}^{S_{<l}}\bb')\Big]_{i\in[p^{\nu}]}^{t}.$$   
        Let $\{A_n^{\mu}:\mu\in[\binom{p^{\nu}}{p^{k}}]\}$ be an enumeration of the set of $p^{k}\times p^{k}$ sub-matrices of $A_n$. We claim that there is some $\mu\in[\binom{p^{\nu}}{p^{k}}]$ for which the determinant of $A_n^{\mu}$ is non-zero for all sufficiently large $n\geq l.$ Indeed, it is enough to check by induction on $\mu\in[\binom{p^{\nu}}{p^{k}}]$ if the truth value of the quantifier-free $\LL_{ring}$-formula $\det(A_n^{\mu})\neq 0$ is true for all sufficiently large $n\geq l.$ This procedure must find some suitable $\mu,$ because $A_n$ admits some non-zero minor of order $p^{k}$ for all sufficiently large $n\geq l.$   
        By Lemma \ref{la}, we get that $(A_n)^{\mu}_{L}$ is a left inverse of $A_n$ and that there are rational functions $(h_{i,j})_{(i,j)\in p^{k}\times p^{\nu}}$ over $\F_p$ such that $((A_n)^{\mu}_{L})_{i,j}=h_{i,j}(A_n)$ for all $n\geq l.$ Thus, 
        $$\Big[\l_{m_{i}}(f_0(a_n,\bb);f_1(a_n;\bb),\.,f_k(a_n,\bb))\Big]_{i\in[p^{k}]}^{t}=(A_n)^{\mu}_{L}\cdot\Big[\l_{m_{i}}(f_0(a_n,\bb);a_n^{S_l}a_{<l}^{S_{<l}}\bb')\Big]_{i\in[p^{\nu}]}^{t}$$ 
        and, in particular, if $m$ corresponds to $m_i$ for some unique $i\in[p^{k}],$ if $j\in[p^{\nu}]$ and if $g_j$ is a rational function over $\F_p$ such that $\l_{m_j}(f_0(a_n,\bb);a_n^{S_l}a_{<l}^{S_{<l}}\bb')=g_j(a_{n}^{S_l},\l(a_n;a_n^{S_l}a_{<l}^{S_{<l}}),\l(\bb;a_{<l}^{S_{<l}}\bb'),a_{<l}^{S_{<l}}\bb')$ for all $n\geq l,$ which exists by Corollary \ref{ltorat}, we get that 
        \begin{align*}
            \l_{m}(f_0(a_n,\bb);f_1(a_n,\bb),\.,f_{k}(a_n,\bb))&=\sum_{j=1}^{p^{\nu}}((A_n)^{\mu}_{L})_{i,j}\cdot\l_{m_j}(f_0(a_n,\bb);a_n^{S_l}a_{<l}^{S_{<l}}\bb')\\
            &=\sum_{j=1}^{p^{\nu}}h_{i,j}(A_n)\cdot g_j(a_{n}^{S_l},\l(a_n;a_n^{S_l}a_{<l}^{S_{<l}}),\l(\bb;a_{<l}^{S_{<l}}\bb'),a_{<l}^{S_{<l}}\bb')\\
            &=:f(a_{n}^{S_l},\l(a_n;a_n^{S_l}a_{<l}^{S_{<l}}),\l(\bb;a_{<l}^{S_{<l}}\bb'),a_{<l}^{S_{<l}}\bb')
        \end{align*}
        for all sufficiently large $n\geq l,$ as wanted.
        \qedhere
    \end{enumerate}
\end{proof}

The following Corollary is a summary of what we have proved thus far.

\begin{cor}
\label{indprep}
    Let $a_{<\omega}$ be an indiscernible sequence of $N$-tuples of some $\l$-field $\MM$ of characteristic exponent $p,$ and let $\bb\subseteq M$ be a finite tuple of parameters. Then there is some $\subseteq$-decreasing sequence $(S_n)_{n<\omega}$ of subsets of $[N],$ some subset $\bb'$ of $\bb$ and some $l<\omega$ such that 
    \begin{enumerate}
        \item $S_n=S_l$ for all $n\geq l,$ 

        \item $a_n^{S_l}a_{<l}^{S_{<l}}\bb'$ is $p$-independent in $M,$ $a_n\subseteq M^{p}(a_n^{S_l}a_{<l}^{S_{<l}})$ and $\bb\subseteq M^{p}(a_{<l}^{S_{<l}}\bb')$ for all $n\geq l,$ and

        \item If $k\geq 1,m\in\Mon(k)$ and $f_0,\.,f_k$ are rational functions over $\F_p,$ then there is some rational function $f$ over $\F_p$ such that $$\l_m(f_0(a_n,\bb);f_1(a_n;\bb),\.,f_k(a_n,\bb))=f(a_n^{S_l},\l(a_n;a_n^{S_l}a_{<l}^{S_{<l}}),\l(\bb;a_{<l}^{S_{<l}}\bb'),a_{<l}^{S_{<l}}\bb')$$ for all sufficiently large $n\geq l.$
    \end{enumerate}
\end{cor}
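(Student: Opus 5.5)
The corollary is stated as a summary, so the plan is to assemble it from the results already established in this section, keeping the same choices of $(S_n)_{n<\omega}$, $\bb'$ and $l$ throughout.

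First, invoke the unnamed lemma preceding Corollary \ref{ltorat}. It gives a $\subseteq$-decreasing sequence $(S_n)_{n<\omega}$ of subsets of $[N]$ such that $a_{<\omega}^{S_{<\omega}}$ is $p$-independent and $a_{<n}\subseteq M^{p}(a_{<n}^{S_{<n}})$ for every $n$.

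Next, choose $\bb'\subseteq\bb$ so that $a_{<\omega}^{S_{<\omega}}\bb'$ is a maximal $p$-independent subset of $a_{<\omega}^{S_{<\omega}}\bb$. Let $l_1$ be the least index with $\bb\subseteq M^{p}(a_{<l_1}^{S_{<l_1}}\bb')$; it exists because $\bb$ is finite and each element needs only finitely many generators. Let $l_2$ be the stabilization index of the decreasing sequence $(S_n)$ in the finite set $[N]$, and put $l=\max\{l_1,l_2\}$. Item 1 then holds by the choice of $l_2$.

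For item 2, $p$-independence of $a_n^{S_l}a_{<l}^{S_{<l}}\bb'$ is inherited from $a_{<\omega}^{S_{<\omega}}\bb'$, since $S_n=S_l$ for $n\geq l$ makes this a subtuple. The inclusion $\bb\subseteq M^{p}(a_{<l}^{S_{<l}}\bb')$ follows because $l\geq l_1$. For $a_n\subseteq M^{p}(a_n^{S_l}a_{<l}^{S_{<l}})$ when $n\geq l$, I would argue from indiscernibility, as in the paragraph after that lemma. Since $a_{\le l}\subseteq M^{p}(a_{\le l}^{S_{\le l}})$, the coordinates of $a_l$ outside $S_l$ are $p$-dependent on $a_l^{S_l}a_{<l}^{S_{<l}}$. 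This dependence is expressible by the quantifier-free condition that the relevant tuple is not $p$-independent. That condition is not quantifier-free in $\LL_{ring}$ alone, but it is in the $\l$-language, or can be expressed as a statement about the type. It therefore transfers to every $n\geq l$ because $a_{<l}a_n\equiv a_{<l}a_l$.

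Item 3 is exactly statement 2 of Lemma \ref{prep}, which holds for these choices of $(S_n)$, $\bb'$ and $l$. That lemma in turn rests on Corollary \ref{ltorat} and Lemma \ref{la}.

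The only delicate point is the indiscernibility transfer in item 2. The concern is whether "$a_n^{i}\in M^{p}(\ldots)$" is preserved by equality of types. It is, because membership in $M^{p}(c)$, for $c$ $p$-independent, is first-order expressible as the existence of coefficients. Everything else is bookkeeping.
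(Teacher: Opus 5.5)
Your proposal is correct and matches the paper, which gives no separate proof: it presents the corollary as a summary of the unnamed lemma producing the sets $S_n$, the subsequent choice of $\bb'$, $l_1$, $l_2$ and $l=\max\{l_1,l_2\}$, and statement 2 of Lemma \ref{prep}. Your justification of the indiscernibility step just spells out what the paper leaves as ``by indiscernibility'': membership in $M^{p}(c)$ is given by an existential formula, which transfers along $a_{<l}a_n\equiv a_{<l}a_l$.
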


\begin{lema}[Indiscernible $\l$-Resolution]
\label{indtra}
    Let $N\geq 1,$ let $a_{<\omega}$ and $\bb$ be as above. For any $\LL_{\l}$-term $t(x,y),$ there is some rational function $f$ over $\F_p,$ some $l<\omega,$ some indiscernible sequence $\a_{\geq l}$ and some tuple $d\subseteq M$ of parameters such that
    \begin{enumerate}
        \item For all $k\in[|\a_l|],$ there is some 0-definable function $g_k$ such that $\a_n^{k}=g_k(a_n,a_{<l})$ for all $n\geq l,$ and

        \item $t(a_n,\bb)=f(\a_n,d)$ for all sufficiently large $n\geq l.$  
    \end{enumerate}
    
\end{lema}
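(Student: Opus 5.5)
The plan is to take $\a_n$ to be the tuple $\big(a_n^{S_l},\l(a_n;a_n^{S_l}a_{<l}^{S_{<l}})\big)$ for $n\geq l$, and $d$ to be the tuple $\big(\l(\bb;a_{<l}^{S_{<l}}\bb'),a_{<l}^{S_{<l}}\bb'\big)$, where $(S_n)_{n<\omega}$, $\bb'$ and $l$ are given by Corollary \ref{indprep}. Item 1 is then immediate. Each coordinate of $\a_n$ is either a coordinate of $a_n$ or a value $\l_{m}(a_n^{k};a_n^{S_l}a_{<l}^{S_{<l}})$, and this is a $0$-definable function of $(a_n,a_{<l})$. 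We already noted after Corollary \ref{ltorat} that $(\a_n)_{n\geq l}$ is indiscernible. The crucial point is that $\a_n$ and $d$ do not depend on the term $t$. This lets us prove item 2 by induction on $t$, with the same $\a$ and $d$ throughout, increasing only the threshold ``sufficiently large $n$'' at each step.

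Rather than prove item 2 directly, I would strengthen it to the following claim: for every $\LL_{\l}$-term $t(x,y)$ there is a rational function $f_t$ over $\F_p$ in the variables $(x,y)$, that is, in $(a_n,\bb)$, such that $t(a_n,\bb)=f_t(a_n,\bb)$ for all sufficiently large $n$. Item 2 then follows by applying Corollary \ref{indprep}(3) with $k=0$, or more simply by noting that $a_n$ is itself a subtuple of $\a_n$ and $\bb\subseteq M^{p}(a_{<l}^{S_{<l}}\bb')$. The last point means each coordinate of $\bb$ is a polynomial in the entries of $d$: its $p$-th-power coordinates are $p$-th powers of entries of $\l(\bb;a_{<l}^{S_{<l}}\bb')$, multiplied by monomials in $a_{<l}^{S_{<l}}\bb'$. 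Substituting these expressions into $f_t$ gives the desired $f$ with $t(a_n,\bb)=f(\a_n,d)$.

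The claim is proved by induction on $t$. Variables and constants are trivial. For $+,-,\cdot$, compose the rational functions; the statement holds for large $n$ once it holds for each subterm. For $t=\l_{k,m}(t_0;t_1,\.,t_k)$, the induction hypothesis gives rational $f_0,\.,f_k$ with $t_i(a_n,\bb)=f_i(a_n,\bb)$ for large $n$. Corollary \ref{indprep}(3) then gives a rational $f$ with $t(a_n,\bb)=f(\a_n,d)$ for large $n$. This expression is in the $\a,d$ form rather than the $(a_n,\bb)$ form, so I would carry the induction in the form ``$t(a_n,\bb)=F_t(a_n,\a_n,d,\bb)$'' with $F_t$ rational. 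To close the induction under nested $\l$'s, I would need Corollary \ref{indprep}(3) for rational functions in $(\a_n,d)$ rather than in $(a_n,\bb)$.

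This nesting is the main obstacle. I would resolve it by noting that $\a_n$ and $d$ consist of $a_n^{S_l}$, $a_{<l}^{S_{<l}}\bb'$, and $\l$-coordinates of elements of $M^{p}(a_n^{S_l}a_{<l}^{S_{<l}}\bb')$. Hence every rational expression in them lies in $\F_p(a_n,a_{<l},\bb)$ and still belongs to $M^{p}(a_n^{S_l}a_{<l}^{S_{<l}}\bb')$. The $p$-basis argument of Corollary \ref{ltorat} and Lemma \ref{prep} therefore applies verbatim: $\l$ of such an element relative to the basis $a_n^{S_l}a_{<l}^{S_{<l}}\bb'$ is rational in $\l$'s of its ingredients, and the $\l$'s of $\l$-coordinates are handled by iterating \cite[Lemma 2.15]{at}. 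The ``eventually constant'' truth values come from the indiscernibility of $\a_{\geq l}$ together with NIP of quantifier-free $\LL_{ring}$-formulas. Concretely, I would redo Lemma \ref{prep} with $a_n$ replaced by $\a_n a_n$ and $\bb$ replaced by $d\bb$. If this substitution fails, the fallback is to enlarge $\a_n$ by finitely many iterated $\l$-coordinates chosen according to the term depth of $t$; this is allowed, since the statement lets $\a$ depend on $t$.
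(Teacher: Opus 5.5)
The plan fails at what you call the crucial point. One application of Corollary \ref{indprep} to $(a_{<\omega},\bb)$ does \emph{not} give a pair $(\a,d)$ that works for every term, because nested $\l$-functions produce $\l$-coordinates of $\l$-coordinates, and these need not be rational over $\F_p(\a_n,d)$.

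Here is a concrete counterexample. Let $M=\F_p(u_n,e_n,g_n,h_n:n<\omega)$ with algebraically independent generators. Put $c_n=g_n^{p}+h_n^{p}u_n$, $w_n=c_n^{p}+e_n^{p}u_n$ and $a_n=(u_n,w_n)$, and let $\bb$ be empty. The sequence is indiscernible, since field automorphisms permuting the blocks of generators commute with the $\l$-functions. Here $l=0$ and $d$ is empty. Taking $S_n=\{1\}$ (the other admissible choice gives the same field), your $\a_n=(u_n,\l(u_n;u_n),\l(w_n;u_n))$ generates $\F_p(u_n,c_n,e_n)$. Let $m_0\in\Mon(1)$ be the constant monomial and $t(x_1,x_2)=\l_{m_0}(\l_{m_0}(x_2;x_1);x_1)$. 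Then $t(a_n)=g_n$. But $g_n\notin\F_p(u_n,c_n,e_n)$: otherwise $h_n^{p}=(c_n-g_n^{p})/u_n$ would lie there too, and $u_n,e_n,g_n,h_n$ would all be algebraic over a field of transcendence degree $3$. So item 2 fails for your fixed $(\a,d)$.

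The same example refutes three of your supporting claims:
\begin{itemize}
\item Your strengthened claim fails, since $\l_{m_0}(w_n;u_n)=c_n\notin\F_p(u_n,w_n)$ by the same transcendence degree count.
\item Rational expressions in $\a_n,d$ need not lie in $\F_p(a_n,a_{<l},\bb)$.
\item If $c_n$ is instead taken to be a new generic element, it is not even in $M^{p}(a_n^{S_l}a_{<l}^{S_{<l}}\bb')$. So Corollary \ref{ltorat} and Lemma \ref{prep}, which compute everything through that fixed $p$-independent tuple, cannot be applied ``verbatim'' to such coordinates.
\end{itemize}

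What is missing is that the data $(\a,d,l)$ must be re-chosen at every $\l$-node; this is how the paper argues. Induct on $t$ with data depending on $t$:
\begin{itemize}
\item Polynomials use $\a=a$, $d=\bb$, $l=0$.
\item Ring operations concatenate the data of the two subterms.
\item At a node $\l_m(t_0;t_1,\ldots,t_k)$, take the data $(\b^{(j)},l_j,d_j,f_j)$ given by the induction hypothesis for each $t_j$. Form $\b'_n=(\b^{(j)}_n)_{j\leq k}$ for $n\geq l'=\max_j l_j$; this is indiscernible because each $\b^{(j)}_n$ is uniformly $0$-definable from $a_na_{<l_j}$. Set $d=(d_j)_{j\leq k}$, and apply Corollary \ref{indprep} to \emph{this} sequence, \emph{these} parameters and the rational functions $f_0,\ldots,f_k$.
\end{itemize}
That application selects new $p$-independent subtuples of the $\b'_n$ and adjoins their $\l$-coordinates; this is how elements like $g_n,h_n$ above enter. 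Item 1 survives with a larger threshold, because the finitely many initial terms of $\b'$ that are used are definable from an initial segment of $a_{<\omega}$.

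Your suggestion to ``redo Lemma \ref{prep} with $a_n$ replaced by $\a_na_n$ and $\bb$ by $d\bb$'' is exactly this step. But its output is a new $\a$ and a new $d$, so it cannot be reconciled with carrying one fixed pair through the induction. The proof has to be organized around $t$-dependent data from the start.
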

\begin{proof}
    We proceed by induction on $t(x,y).$ If a sequence $\a_{<\omega}$ satisfies the statement of Item 1, we write $\a_{n}\subseteq^{u}\dcl(a_n,a_{<l})$ to stress the uniformity over $n\geq l$ of the corresponding definable functions.
    \begin{itemize}[wide]
        \item \emph{$t(x,y)=P(x,y)$ where $P$ is a polynomial over the base field $\F_p.$} In this case we let $f=P,$ $l=0,$ $\a_{\geq l}=a_{<\omega}$ and $d=\bb.$

        \item \emph{$t(x,y)=\l_{m}(x_0;x_1,\.,x_n)$ for some $n\geq 1$, some $m\in\Mon(n)$ and some splitting $(x,y)$ of $(x_0,\.,x_n).$} The result follows from Corollary \ref{indprep} applied to $a_{<\omega},$ $\bb$ and $f_i(x,y)=x_i$ for $i\leq n,$ where $\a_n=(a_n^{S_l}\l(a_n;a_n^{S_l}a_{<l}^{S_{<l}}))$ and $d=(\l(\bb;a_{<l}^{S_{<l}}\bb'),a_{<l}^{S_{<l}}\bb').$

        \item \emph{$t(x,y)=t_1(x,y)\diamond t_2(x,y)$ where $\diamond\in\{+,\cdot,-\}$ and $t_1,t_2$ are two $\LL_{\l}$-terms.} By induction hypothesis, there are two numbers $l_1,l_2<\omega,$ two tuples of parameters $d_1,d_2$ of $M,$ two rational functions $f_1,f_2$ over $\F_p$ and two indiscernible sequences $\b_{<\omega}^{(1)},\b_{<\omega}^{(2)}$ such that $\b_n^{(1)}\subseteq^{u}\dcl(a_na_{<l_1}),$ $\b_n^{(2)}\subseteq^{u}\dcl(a_na_{<l_2}),$ $t_1(a_n,\bb)=f_1(\b_n^{(1)},d_1)$ and $t_2(a_n,c)=f_2(\b_n^{(2)},d_2)$ for all sufficiently large $n\geq\max\{l_1,l_2\}.$\footnote{The superscripts $(1),(2)$ are just indices.} If we put $l=\max\{l_1,l_2\},$ $\a_n=(\b_n^{(1)},\b_n^{(2)}),$ $d=(d_1,d_2)$ and $f=f_1\diamond f_2,$ we get that $t(a_i,c)=f(\a_n,d)$ for all sufficiently large $n\geq l,$ that $\a_n\subseteq^{u}\dcl(a_na_{<l})$ and that $\a_{\geq l}$ is indiscernible, as wanted. 

        \item \emph{$t(x,y)=\l_{m}(t_0(x,y);t_1(x,y),\.,t_n(x,y))$ for some $n\geq 1,$ some $m\in\Mon(n)$ and some $\LL_\l$-terms $t_0(x,y),\.,t_n(x,y).$} If $j\leq n,$ let $(\b_{<\omega}^{(j)},l_j,d_j,f_j)$ be the data obtained by the induction hypothesis applied to $t_j,$ i.e.~the data for which $t_j(a_i,c)=f_j(\b_i^{(j)},d_j)$ and $\b_i^{(j)}\subseteq^{u}\dcl(a_ia_{<l_{j}})$ for all sufficiently large $i\geq l_j.$ 
        Define $d=(d_j)_{j\leq n},$ $l'=\max\{l_j:j\leq n\}$ and $\b_{i}'=(\b_i^{(j)})_{j\leq n},$ so that the sequence $\b_{\geq l}'$ remains indiscernible. The result follows from Corollary \ref{indprep} applied to the sequence $\b_{\geq l}',$ the tuple of parameters $d$ and the rational functions $f_0,\.,f_n.$ 
        \qedhere
    \end{itemize}
\end{proof}

The following result was already proved by Hong for finite Ershov degree $\l$-fields, cf.~{\cite[Corollary 5.2.13]{ht}}. The case of infinite Ershov degree $\l$-fields was treated by Delon, who argued through Poizat's coheir counting argument. As mentioned in the introduction, such a result, to the best of the author's knowledge, remains unpublished.

\begin{teorema}
\label{lqfnip}
    Let $t_1(x,y),t_2(x,y)$ be two $\LL_\l$-terms. Then the atomic $\LL_{div,\l}$-formulas $t_1(x,y)=0$ and $v(t_1(x,y))\square v(t_2(x,y)),$ with $\square\in\{>,=\},$ are NIP with respect to any theory of valued $\l$-fields. In particular, quantifier-free $\LL_{div,\l}$-formulas are NIP in any theory of valued $\l$-fields, and the $\LL_{div,\l}$-theory $\texttt{SCVF}_e$ is NIP for any Ershov degree $e\in\N\cup\{\infty\}.$
\end{teorema}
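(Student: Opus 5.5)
We use the indiscernible-sequence characterization of IP, with the sequence in the variable slot $x$ and the parameters in the slot $y$. By \cite[Lemma 2.5]{simon} this is the convention we may adopt freely. Suppose, towards a contradiction, that one of the atomic formulas $\theta(x,y)$ has IP in some model $\MM$ of a theory of valued $\l$-fields. After passing to an elementary extension, there is an indiscernible sequence $a_{<\omega}$ of $N$-tuples and a finite tuple $\bb$ such that $\MM\models\theta(a_n,\bb)$ if and only if $n$ is even. We will show instead that the truth value of $\theta(a_n,\bb)$ is eventually constant in $n$, which is the desired contradiction.

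The first step is to apply Lemma \ref{indtra} (Indiscernible $\l$-Resolution) to each of $t_1$ and $t_2$. Taking the maximum of the two resulting values of $l$ and concatenating the two indiscernible sequences and the two parameter tuples, we obtain a single $l<\omega$, an indiscernible sequence $\a_{\geq l}$ with $\a_n\subseteq^{u}\dcl(a_n,a_{<l})$, a tuple $d\subseteq M$, and rational functions $f_1,f_2$ over $\F_p$. These satisfy $t_i(a_n,\bb)=f_i(\a_n,d)$ for all sufficiently large $n$. The concatenation remains indiscernible because both components are uniformly definable from $a_n$ over the fixed finite tuple $a_{<l}$, which is exactly how the term case of Lemma \ref{indtra} is handled.

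With this in hand, write $f_i=P_i/Q_i$ with polynomials $P_i,Q_i$ over $\F_p$. For large $n$, each atomic condition becomes a quantifier-free $\LL_{div}$-formula in $(\a_n,d)$ that involves no $\l$-functions:
\begin{itemize}[wide]
    \item $t_1(a_n,\bb)=0$ becomes $P_1(\a_n,d)=0$;
    \item $v(t_1)\,\square\, v(t_2)$ becomes $v(P_1Q_2)\,\square\, v(P_2Q_1)$.
\end{itemize}
Two points of care are needed here. First, one must account for the convention on division by zero, e.g.\ by splitting into cases on whether $Q_i(\a_n,d)=0$. Each such case is itself a quantifier-free $\LL_{div}$ condition, and its truth value stabilizes by the same argument given below. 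Second, the hypothesis "for all sufficiently large $n$" only removes finitely many indices, which does not affect eventual constancy.

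The reduced formula is a quantifier-free $\LL_{div}$-formula $\psi(z,w)$ evaluated at the indiscernible sequence $(\a_n)_{n\geq l}$ with parameter $d$. This is the same reduction already used in the footnote to Lemma \ref{prep}. The valued field $(M,v)$ embeds into a model of $\texttt{ACVF}$, for instance the algebraic closure with an extension of $v$. Quantifier-free $\LL_{div}$-formulas are preserved under this embedding, so $(\a_n)$ remains quantifier-free indiscernible there. Since $\texttt{ACVF}$ is NIP, $\psi$ is NIP, and hence the truth value of $\psi(\a_n,d)$ is eventually constant. This contradicts the even/odd alternation. Finally, quantifier-free formulas are boolean combinations of atomic ones, and NIP formulas are closed under boolean combinations \cite[Lemma 2.9]{simon}, so all quantifier-free $\LL_{div,\l}$-formulas are NIP. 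Combined with quantifier elimination for $\texttt{SCVF}_e$ (Fact \ref{scvfqe}), this shows that $\texttt{SCVF}_e$ is NIP for every $e\in\N\cup\{\infty\}$.

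All of the genuine difficulty is contained in Lemma \ref{indtra}, which is already proved. What remains to check carefully is the passage to $\texttt{ACVF}$: it only requires quantifier-free indiscernibility of $(\a_n)$ over $d$, and this follows from full indiscernibility in $\MM$.
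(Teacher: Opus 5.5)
Your proposal is correct and is essentially the paper's own proof. You resolve $t_1,t_2$ via Lemma \ref{indtra}, clear denominators to get a quantifier-free $\LL_{div}$-formula alternating along the indiscernible sequence $(\a_n)_{n\geq l}$ with parameter $d$, and contradict NIP of $\texttt{ACVF}$; your remarks on vanishing denominators and on embedding into a model of $\texttt{ACVF}$ only make explicit what the paper leaves implicit.

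There is one slip in your last sentence. $(\a_n)$ is indiscernible, but not over $d$: $d$ carries data from $\bb$, and indiscernibility over $d$ would rule out the alternation outright. Also, upgrading quantifier-free indiscernibility to indiscernibility in $\texttt{ACVF}$ uses its quantifier elimination in $\LL_{div}$. More simply, you can build the IP witnesses for $\psi$ inside $\MM$ from full indiscernibility and push them into $\texttt{ACVF}$, which works because $\psi$ is quantifier-free.
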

\begin{proof}
    Let $\MM$ be a valued $\l$-field of characteristic exponent $p.$ Let $N\geq 1,$ let $a_{<\omega}$ be an indiscernible sequence of $|x|$-tuples of $M$ and let $\bb$ be a $|y|$-tuple of $M.$ By Lemma \ref{indtra}, there is some number $l<\omega,$ and for each $j\in\{1,2\},$ a rational function $f_j$ over $\F_p,$ some finite tuple $d_j$ of elements of $\MM$ and some indiscernible sequence $\a_{\geq l}^{(1)},\a_{\geq l}^{(2)}$ such that $t_j(a_n,\bb)=f_j(\a_n^{(j)},d_j)$ for all $n\geq l$ and $\a_n^{(j)}\subseteq^{u}\dcl(a_na_{<l})$ for all sufficiently large $n\geq l.$ 
    If $f_j=P_j/Q_j$ for some polynomials $P_j,Q_j$ over $\F_p,$ then the truth value of $t_1(x,\bb)=0$ or $v(t_1(x,\bb))\square v(t_2(x,\bb))$ alternates infinitely often with respect to the indiscernible sequence 
    $a_{<\omega}$ if and only if the truth value of $P_1(x',d_1)=0$ or $v(P_1(x',d_1)\cdot Q_2(x'',d_2))\square v(P_2(x'',d_2)\cdot Q_1(x',d_1))$ alternates infinitely often with respect to the indiscernible sequence 
    $(\a_n^{(1)},\a_{n}^{(2)})_{n\geq l},$ where $x'$ and $x''$ are tuples of variables of the same length as those of $\a_{\geq l}^{(1)}$ and $\a_{\geq l}^{(2)}$ respectively. In either case, this implies that the quantifier-free $\LL_{div}$-formula $\f(x',y')$ given by $P_1(x',y')=0$ has IP, or that the formula $\f(x',x'';y'z')$ given by $v(P_1(x',y')\cdot Q_2(x'',z'))\square v(P_2(x'',z')\cdot Q_1(x',y'))$ has IP, in contradiction with the fact that $\texttt{ACVF}$ is NIP. 
    
    Since a boolean combination of NIP formulas is still a NIP formula, we get that any quantifier-free $\LL_{div,\l}$-formula is NIP, as they are boolean combinations of formulas of the form $t_1(x,y)=0$ and $v(t_1(x,y))\square v(t_2(x,y)),$ with $\square\in\{>,=\}.$ 
    Finally, if $e\in\N\cup\{\infty\},$ then the $\LL_{div,\l}$-theory $\texttt{SCVF}_e$ is NIP because, by Fact \ref{scvfqe}, any formula is equivalent to a quantifier-free $\LL_{div,\l}$-formula.
\end{proof}

\section{NIP Transfer Revisited}
\label{4}

In this section we revisit the so-called \emph{NIP transfer principle} for valued fields of equal characteristic,~\cite[Proposition 4.1]{aj}, building on Sections~2 and~3 of~\cite{js}. As mentioned in the introduction, although the statement of the transfer principle is correct, we found that the proofs given in~\cite[Lemma 3.2]{js} and~\cite[Proposition 4.1]{aj} are not accurate as stated, and that a small but consequential modification of the argument is required.
We study this transfer principle in a language expanded by the parameterized $\l$-functions, and restate the conditions~\textbf{SE} and~\textbf{Im} accordingly in Definition~\ref{sideconds}. We record an updated transfer principle in Corollary~\ref{fulltrans}, and to give a corrected and self-contained proof of~\cite[Lemma 3.2]{js} and~\cite[Proposition 4.1]{aj} in Lemma~\ref{e+im}. 

\begin{remark}
\label{gaps}
    The issue in the proof of \cite[Lemma 3.2]{js} is the following. Let $(M,v)$ be an equi-characteristic separable-algebraically maximal Kaplansky field of characteristic exponent $p$ and \emph{finite} imperfection degree $e.$ Let $(K,v)\preceq(M,v)$ and let $b\in M$ be some element that is transcendental over $K$ and such that $(K(b)|K,v)$ is immediate. The authors proceed to find a separable extension $\widetilde{K}|K$ containing $b$ which is also separably algebraically maximal Kaplansky and which is uniquely determined up to $\LL$-isomorphism over $K(b).$ This extension does not necessarily embed elementarily \emph{over $K(b)$} into $M,$ so $\tp^{\widetilde{K}}(b/K)$ and $\tp^{M}(b/K)$ may differ: for instance, as it is constructed, $\widetilde{K}$ contains all ${p^{n}}$-th roots of $b,$ while this may not be the case in $M$. Even if we considered $L=\widetilde{K}\cap M\cap K(b)^{sep}$ instead of $\widetilde{K}$ alone, 
    the extension $M|L$ is \emph{not} guaranteed to be separable: since $L|K(b)$ is algebraic, then $M|L$ is separable if and only if $M|K(b)$ is separable, which in turn is equivalent to $b\not\in K^{p}(B)$ for any $p$-basis $B$ of $K,$ see e.g.~\cite[Lemma 2.9]{fvkp}.
    This is the same reason why, in the proof of \cite[Proposition 4.1]{aj} for $e=\infty,$  the extension $M|L$ is not necessarily separable. In this case, $L$ is chosen to be $M\cap K(b)^{sep}\cap F,$ where $F$ is a maximal immediate algebraic extension of $K(b).$\footnote{One has to be careful with these notations. Anscombe-Jahnke's $K^{*}$ denotes what we called $M$ above, and $L$ denotes $K^{*}\cap K(b)^{sep}\cap M,$ where $M$ is a maximal immediate algebraic extension of $K(b),$ i.e.~Anscombe-Jahnke's $M$ plays the role of our $F$ above.} 
        
    In Lemma \ref{e+im} we fix this issue by studying the \emph{lambda closure $\langle K,b\rangle$ of $K(b)$ in $M$} instead of just $K(b),$ in order to make sure that $M|\langle K,b\rangle$ is separable.
\end{remark}

\subsection{Indiscernibles and Stably Embedded Sets}

Let $\UU$ be a monster model of a complete theory $T.$ Let $\t:\N\to\N$ be the function defined by $\t(n)=2n$ if $n$ is even, and $\t(n)=2n+1$ otherwise. This is a strictly increasing function satisfying that, for all $n<\omega,$ $n$ is even if and only if so is $\t(n)$ and $\t(2n+1)>\t(2n)+1,$ so $\t(2n)$ and $\t(2n+1)$ are not consecutive. 

\begin{fact}[Cf.~{\cite[Lemma 2.1]{js}}]
\label{dse}
     Let $A$ be a small set of parameters, let $I$ be an $A$-indiscernible sequence and let $d$ be some finite tuple of an $\emptyset$-definable stably-embedded set $D$ with NIP induced structure. Then no formula with parameters in $Ad$ can have infinite alternation on $I.$ 
\end{fact}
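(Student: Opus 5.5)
Suppose, for a contradiction, that $\f(x,a,d)$ has infinite alternation on $I=(a_i:i<\omega)$, with $a$ from $A$ and $d$ a finite tuple from $D$. The plan is to transfer this alternation to a formula of the induced structure on $D$, and then contradict NIP of that structure.

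First, I enlarge $I$ so that the relevant types become externally definable. By compactness and $A$-indiscernibility, replace $I$ by an $A$-indiscernible sequence indexed by a dense order. Choose a subsequence whose $\f(x,a,d)$-truth values alternate. After re-indexing via the map $\t$, I may assume that $\models\f(a_i,a,d)$ holds exactly when $i$ is even, for all $i<\omega$.

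Next I use stable embeddedness. Set
\[
c_i=(a_{\t(2i)},a_{\t(2i+1)}) \quad\text{and}\quad \psi(y;x_0x_1,a)=\f(x_0,a,y)\wedge\neg\f(x_1,a,y).
\]
The sequence $(c_i)$ is still $A$-indiscernible. For each $i$, the set $\psi(D;c_i,a)\subseteq D^{|y|}$ is a definable subset of $D$. By stable embeddedness, it equals $\chi(D;e_i)$ for some formula $\chi$ of the induced structure with parameters $e_i$ from $D$.

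The main obstacle is obtaining uniformity: a single $\chi$, chosen independently of $i$. In the standard proof, stable embeddedness gives uniform definitions for a fixed formula $\psi(y;z)$: there is a finite family of $\chi$'s covering all instances, and by compactness one can be chosen uniformly. I would invoke that uniform version. By Ramsey and indiscernibility, I then refine to one $\chi$ and an indiscernible sequence $(e_i)$ in $D$. The indiscernibility is in the induced structure, which follows by extracting indiscernibles from $(c_ie_i)$.

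To conclude, note that $d\in\chi(D;e_i)$ for every $i$. To obtain alternation, I interleave: consider $c_i'=(a_{\t(2i)},a_{\t(2i)+1})$, where the second coordinate fails to alternate. Because $\t$ skips indices, both patterns occur in one $A$-indiscernible sequence. So, with the combined sequence of pairs, $d$ satisfies $\chi(y;e_j)$ exactly for even $j$, by indiscernibility over $A$ of the defining data. This gives a formula $\chi$ of the NIP induced structure on $D$ with infinite alternation on an indiscernible sequence in $D$. By \cite[Lemma 2.7]{simon}, $\chi$ has IP, which is the desired contradiction.
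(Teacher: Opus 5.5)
Your overall plan matches the paper's: use uniform stable embeddedness to turn the definable subsets of $D$ cut out by $\varphi(a_i,a,y)$ into instances of a single formula $\chi$ of the induced structure, then contradict NIP there. However, the step that is supposed to produce the alternation fails as written.

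With your indexing, $c_i=(a_{\tau(2i)},a_{\tau(2i+1)})=(a_{4i},a_{4i+3})$ and $c_i'=(a_{\tau(2i)},a_{\tau(2i)+1})=(a_{4i},a_{4i+1})$. Since $\varphi(a_j,a,d)$ holds exactly for even $j$, both $\psi(d;c_i,a)$ and $\psi(d;c_i',a)$ hold for \emph{every} $i$. So no instance ever fails, and nothing alternates.

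Interleaving the two families does not help. In any combined sequence, $c_i$ and $c_i'$ share the coordinate $a_{4i}$, whereas $c_i$ and $c_{i+1}'$ are disjoint. The $a_j$ are pairwise distinct, because $I$ is non-constant. Hence the combined sequence of pairs is not $A$-indiscernible. The same defect affects pairs of the form $(a_{\tau(j)},a_{\tau(j)+1})$, because $\tau(2i+1)+1=\tau(2i+2)$. Moreover, an alternation pattern for $d$ cannot come ``by indiscernibility over $A$ of the defining data'': $d\notin A$, and the pattern is determined by $d$, not by the type of the pairs over $A$.

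There is a smaller gap in the Ramsey step as well. Extracting an indiscernible sequence $(c_i^{*}e_i^{*})$ from $(c_ie_i)$ gives a new sequence with no link to $d$. You must pull it back by some $\sigma\in\Aut(\UU/A)$ with $\sigma(c_i^{*})=c_i$ for all $i$. Such a $\sigma$ exists because $(c_i)$ is already $A$-indiscernible. Since $D$ is $\emptyset$-definable, $\sigma(e_i^{*})\in D$ and $\sigma(e_i^{*})$ still defines $\psi(D;c_i,a)$.

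The pairs are unnecessary. Apply uniform stable embeddedness directly to $\varphi(x,a,y)$, obtaining $\chi$ and $e_i\in D$ with $\varphi(a_i,a,D)=\chi(D;e_i)$. From there you have two options.
\begin{enumerate}
\item Perform the automorphism pullback above, so that $(e_i)$ is indiscernible and $\models\chi(d;e_i)$ holds iff $i$ is even. Then conclude by \cite[Lemma 2.7]{simon}.
\item Do as the paper does and avoid indiscernibility of the $e_i$ altogether. By $A$-indiscernibility of $I$, $\emptyset$-definability of $D$ and compactness, the alternation of $\varphi(x,a,d)$ gives, for every $S\subseteq\omega$, some $d_S\in D$ with $\models\varphi(a_i,a,d_S)$ iff $i\in S$. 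Hence $\models\chi(d_S;e_i)$ iff $i\in S$, so $\chi$ has IP in the induced structure directly, with no Ramsey argument needed.
\end{enumerate}
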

\begin{proof}
    Suppose $\f(x,y)$ is a formula over $A,$ and let $d$ be a $|y|$-tuple of $D$ be such that $I=(a_i)_{i<\omega}$ has infinite alternation on $\f(x,d).$ Since $D$ is stably-embedded in $\UU$, there is some formula $\psi(z,y)$ such that for each $i<\omega,$ there is then some $|z|$-tuple $c_i$ of $D$ satisfying that $\models\f(a_i,b)\sii\psi(c_i,b)$ for all $|y|$-tuples $b$ of $D.$ Since $I$ is $A$-indiscernible and $D$ is $\emptyset$-definable, for each $S\subseteq\omega,$ one can find some $|y|$-tuple $d_S$ of $D$ such that $\models\f(a_i,d_S)\Sii i\in S$. Thus, 
    $$A_{ind(\emptyset)}\models R_{\psi(z,y)}(c_i,d_S)\Sii\models\psi(c_i,d_S)\Sii\models\f(a_i,d_S)\Sii i\in S,$$
    implying that the formula $R_{\psi(z,y)}(z,y)$ has IP in $A_{ind(\emptyset)}.$
\end{proof}

\begin{fact}[Cf.~{\cite[Lemma 2.2]{js}}] 
\label{indisc}
    Let $(a_i)_{i<\omega}$ be an indiscernible sequence and let $b$ be a finite tuple of parameters. For every $i<\omega,$ let $b_{2i}$ be a tuple coming from a stably-embedded sort $D$ whose induced structure is NIP. Assume further that the sequences $(a_{2i},a_{2i+1})_{i<\omega}$ and $(a_{2i},b_{2i})_{i<\omega}$ are indiscernible over $b.$ Then we can find some sequences $(b_{2i+1})_{i<\omega},(a_i')_{i<\omega}$ such that, if $(a_{2i}')_{i<\omega}=(a_{2i})_{i<\omega},$ then $(a_i')_{i<\omega}\equiv_b(a_{\t(i)})_{i<\omega}$ and $(a_i',b_i)_{i<\omega}$ is indiscernible.
\end{fact}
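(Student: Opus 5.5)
The plan is a compactness argument combined with finite alternation (Fact \ref{dse}); the gaps built into $\t$ are what make room for it. I first reduce to a finitary statement. Since indiscernibility of $(a_i',b_i)_{i<\omega}$ and the condition $(a_i')_{i<\omega}\equiv_b(a_{\t(i)})_{i<\omega}$ are both type conditions in the new variables $(a'_{2i+1},b_{2i+1})_{i<\omega}$ over $b(a_{2i},b_{2i})_{i<\omega}$, compactness in the monster model lets me fix a finite set $\Delta$ of formulas over $b$ and a bound $K<\omega$. It then suffices to produce $a'_{2i+1},b_{2i+1}$ for $i<K$ such that $(a'_i,b_i)_{i<2K}$ is $\Delta$-indiscernible and $(a_i')_{i<2K}$ satisfies the $\Delta$-part of $\tp((a_{\t(i)})_{i<2K}/b)$.

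Next I stretch the given data to a dense index set. Using the $b$-indiscernibility of $(a_{2i},a_{2i+1})_{i<\omega}$ and of $(a_{2i},b_{2i})_{i<\omega}$, compactness lets me assume $(a_i)$ extends to a $b$-indiscernible sequence $(a_q)_{q\in\mathbb{Q}}$ in which the original $a_{2i}$ sit at integer-like positions. Between $a_{2i}$ and $a_{2i+2}$ there is then a whole interval of candidates for $a'_{2i+1}$. The role of $\t$ is exactly this: $\t(2i)$ and $\t(2i+1)$ are not consecutive, so the odd terms of the target pattern $(a_{\t(i)})$ live strictly inside such gaps, and any choice of $a'_{2i+1}$ from the interior of the gap has the right type over $b$ together with the even terms.

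The key step, and the one I expect to be the main obstacle, is choosing the odd terms so that the pairs become indiscernible while the even $a_{2i}$ and $b_{2i}$ stay fixed. I would use Fact \ref{dse}. Each formula in $\Delta$, with the finitely many relevant $b_{2j}$'s (tuples from the stably embedded NIP sort $D$) plugged in as parameters, has only finitely many alternations on the $b$-indiscernible sequence $(a_q)$. Hence inside each gap I can pick $a'_{2i+1}$ away from the finitely many alternation points. Then every $\Delta$-formula evaluated at $a'_{2i+1}$ agrees with its value at neighbouring candidates arbitrarily close to $a_{2i+2}$.

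This mirrors the way $b_{2i+2}$ is attached to $a_{2i+2}$, so I can choose $b_{2i+1}$ by transporting $b_{2i+2}$ along an automorphism over $b$ together with the other selected terms that moves $a_{2i+2}$ to $a'_{2i+1}$. Such an automorphism should exist by the $b$-indiscernibility of $(a_{2i},b_{2i})$ combined with the stabilized $\Delta$-types. The delicate point is making these choices coherently for all $i<K$ at once. I would handle it by an inductive choice from right to left, each time shrinking the allowed intervals so that the previously fixed parameters do not create new alternations. Finally, Ramsey and compactness upgrade the $\Delta$-indiscernibility to full indiscernibility of $(a'_i,b_i)_{i<\omega}$, with $(a'_{2i})_{i<\omega}=(a_{2i})_{i<\omega}$ as required.
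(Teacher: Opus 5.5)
\textbf{There is a genuine gap, at the key step.} You propose to obtain $b_{2i+1}$ as the image of $b_{2i+2}$ under an automorphism over $b$ (and the other selected terms) sending $a_{2i+2}$ to $a'_{2i+1}$. No such automorphism exists in general.

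\emph{Why the automorphism fails.} The required conclusion $(a'_i)_{i<\omega}\equiv_b(a_{\tau(i)})_{i<\omega}$ forces $a'_{2i+1}\equiv_b a_{\tau(2i+1)}$, which is an odd-indexed term. But $a_{2i+2}$ is even-indexed, and $(a_i)_{i<\omega}$ is not $b$-indiscernible. In the intended application (the proof of Proposition \ref{niptransfer}), $\varphi(x,b)$ holds on even-indexed terms and fails on odd-indexed ones, so $a_{2i+2}\not\equiv_b a'_{2i+1}$. Dropping $b$ does not rescue the step either. An automorphism that moves $a_{2i+2}$ does not fix $(a_{2i+2},b_{2i+2})$, so you get no control over the joint type of $(a'_{2i+1},b_{2i+1})$ with its right neighbour. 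Moreover, agreement of finitely many $\Delta$-formulas with their values at points ``near $a_{2i+2}$'' is not a type equality, so it yields no automorphism.

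\emph{Related errors earlier in the proposal.}
\begin{itemize}
\item A sequence $(a_q)_{q\in\mathbb{Q}}$ through the $a_{2i}$ cannot usefully be $b$-indiscernible; only the pair sequence $(a_{2i}a_{2i+1})$ is. If it were, every interior point would have, together with the even terms, the type over $b$ of an \emph{even} term, which is exactly the wrong type. So ``any choice from the interior of the gap has the right type over $b$'' is false. Only second coordinates of pairs have the right type.
\item Fact \ref{dse} bounds alternation only for parameters over which the sequence is indiscernible, plus a tuple from $D$. So $\Delta$-formulas over $b$ (e.g.\ $\varphi(x,b)$ itself) can alternate infinitely.
\item A final Ramsey step would move the fixed terms $(a_{2i},b_{2i})$. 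Only compactness is allowed there.
\end{itemize}

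\textbf{The missing idea.} Inside each gap you need an \emph{even} position that already carries its own $D$-tuple, and whose $a$ has the same type as the adjacent odd $a$ over the data you keep fixed, not over $b$. This is what the paper does.
\begin{itemize}
\item It extends both sequences jointly to the index set $\omega\times|T|^{+}$, so that $b_{(\alpha,\beta)}$ exists for every even $\beta$.
\item It fixes $C=\{a_{(i,0)},b_{(i,0)}:i<\omega\}$, which does not contain $b$.
\item Using Fact \ref{dse} and a pigeonhole over the $|T|^{+}$ positions in each block, it finds even $s(i)>0$ with $(a_{(i,s(i))})_{i}\equiv_C(a_{(i,s(i)+1)})_{i}$.
\item Transporting $b_{(i,s(i))}$ over $C$ gives $b'_{(i,s(i)+1)}$. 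The new pairs then have the type over $C$ of even-position pairs, so the resulting sequence is indiscernible.
\item The type over $b$ of the $a$-part is automatic: $a_{(i,0)}$ and $a_{(i,s(i)+1)}$ are the first and second coordinates of \emph{distinct} pairs of the $b$-indiscernible pair sequence. This, not a density argument, is the role of the non-consecutiveness built into $\tau$.
\item Finally, an automorphism over $b$ moves $(a_{(i,0)},b_{(i,0)})_i$ back onto $(a_{2i},b_{2i})_i$.
\end{itemize}
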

\begin{proof}
    Call an ordinal number \emph{even} if, when written as $\l+n$ where $\l$ is limit (or 0) and $n<\omega,$ then $n$ is even, and consider the lexicographic order on $\omega\times|T|^{+}.$ This is just the order type of the ordinal $|T|^{+}\cdot\omega.$
    Use Ramsey and saturation to extend $(a_i)_{i<\omega}$ and $(b_{2i})_{i<\omega}$ to sequences $(a_{(\a,\b)}:(\a,\b)\in\omega\times|T|^{+})$ and $(b_{(\a,\b)}:(\a,\b)\in\omega\times|T|^{+},\b\text{ even})$ satisfying the same hypotheses as the original sequences. 
    We may thus assume that the original sequences appear in the product as $(a_i)_{i<\omega}=\left(a_{(0,i)}\right)_{i<\omega}$ and $(b_{2i})_{i<\omega}=\left(b_{(0,2i)}\right)_{i<\omega}.$ 
    Let $C=\{a_{(i,0)}:i<\omega\}\cup\{b_{(i,0)}:i<\omega\}.$  
    \begin{claim}
    There is some function $s:\omega\to|T|^{+}$ such that $s(i)>0$ is even for all $i<\omega,$ and such that $$\left(a_{(i,s(i))}\right)_{i<\omega}\equiv_C\left(a_{(i,s(i)+1)}\right)_{i<\omega}.$$
    \end{claim}
    \begin{proof}[Proof of the Claim]
    If this is not the case, then for all such functions $s:\omega\to|T|^{+}$ there is a formula $\d_s(x_1,\.,x_n)$ over $C$ such that $\not\models\d_s\left(a_{(1,s(1))},\.,a_{(n,s(n))}\right)\sii\d_s\left(a_{(1,s(1)+1)},\.,a_{(n,s(n)+1)}\right).$ By the pigeonhole principle, we may assume that there is one formula $\d(x_1,\.,x_n,\ol{a},\ol{b})$ where $\ol{a}\subseteq\{a_{(i,0)}:i<\omega\},$ $\ol{b}\subseteq\{b_{(i,0)}:i<\omega\},$ for which $\not\models\d\left(a_{(1,s(1))},\.,a_{(n,s(n))},\ol{a},\ol{b}\right)\sii\d\left(a_{(1,s(1)+1)},\.,a_{(n,s(n)+1)},\ol{a},\ol{b}\right)$ for all such functions $s:\omega\to|T|^{+}.$ Since $(a_{(\a,\b)}:\,(\a,\b)\in\omega\times|T|^{+})$ is indiscernible, this allows us to construct an indiscernible sequence of tuples of length $|\ol{a}|+n$ from $(a_{(\a,\b)}:\,(\a,\b)\in\omega\times|T|^{+})$ over which the formula $\d(x_1,\.,x_n,\ol{x},\ol{b})$ alternates infinitely often. This contradicts Fact \ref{dse}.
        \qedhere{$_\text{Claim}$}
    \end{proof}
    Let $\left(b_{(i,s(i)+1)}'\right)_{i<\omega}$ be such that $\left(a_{(i,s(i))},b_{(i,s(i))}\right)_{i<\omega}\equiv_C\left(a_{(i,s(i)+1)},b_{(i,s(i)+1)}'\right)_{i<\omega},$ and let $\sigma$ be an automorphism of $\UU$ over $b$ sending $\left(a_{(i,0)},b_{(i,0)}\right)_{i<\omega}$ to $\left(a_{(0,2i)},b_{(0,2i)}\right)_{i<\omega}.$ We can finally define $(a_{2i+1}',b_{2i+1})_{i<\omega}=\left(\sigma\left(a_{(i,s(i)+1)}\right),\sigma\left(b_{(i,s(i)+1)}'\right)\right)_{i<\omega}.$ If $(a_{2i}')_{i<\omega}=(a_{2i})_{i<\omega},$ then
    \begin{itemize}
        \item $(a_i')_{i<\omega}\equiv_{b}(a_{\t(i)})_{i<\omega}:$ let $\f(x_0,x_1,\.,x_{2k},x_{2k+1})$ be a formula over $b.$ Then, since the sequence of couples $\left(a_{(\a,\b)}a_{(\a,\b+1)}:(\a,\b)\in\omega\times|T|^{+},\b\text{ even}\right)$ is indiscernible over $b$ and $\sigma$ fixed $b,$
            \begin{align*}
                \models\f(a_0',a_1',\.,a_{2k}',a_{2k+1}')&\Sii\models\f\left(a_{(0,0)},a_{(0,s(0)+1)},\.,a_{(k,0)},a_{(k,s(k)+1)}\right)\\
                &\Sii\models\f\left(a_{\t(0)},a_{\t(1)},\.,a_{\t(2k)},a_{\t(2k+1)}\right).
            \end{align*}
        
        \item \emph{$(a_i',b_i:i<\omega)$ is indiscernible:} let $\f(x_0y_0,x_1y_1,\.,x_{2k}y_{2k},x_{2k+1}y_{2k+1})$ be a formula and let $j_0<\.<j_{2k+1}<\omega.$ Then
            \begin{align*}
                &\models\f\left(a_0'b_0,a_1'b_1,\.,a_{2k}'b_{2k},a_{2k+1}'b_{2k+1}\right)\\\Sii&\models\f\left(a_{(0,0)}b_{(0,0)},a_{(0,s(0)+1)}b_{(0,s(0)+1)}',\.,a_{(k,0)}b_{(k,0)},a_{(k,s(k)+1)}b_{(k,s(k)+1)}'\right)\\
                \Sii&\models\f\left(a_{(0,0)}b_{(0,0)},a_{(0,s(0))}b_{(0,s(0))},\.,a_{(k,0)}b_{(k,0)},a_{(k,s(k))}b_{(k,s(k))}\right)\\
                \Sii&\models\f\left(a_{j_0}b_{j_0},a_{j_1}b_{j_1},\.,a_{j_{2k}}b_{j_{2k}},a_{j_{2k+1}}b_{j_{2k+1}}\right),
            \end{align*}
            where the last equivalence holds because $\left(a_{(\a,\b)}b_{(\a,\b)}:(\a,\b)\in\omega\times|T|^{+},\b\text{ even}\right)$ is indiscernible. 
    \qedhere
    \end{itemize}
\end{proof}

\subsection{Chernikov-Hils-Jahnke-Simon's \textbf{SE} and \textbf{Im}}

Let $T$ be a complete theory of valued fields in some language $\LL=\LL(\LL_\kk,\LL_\GG)$, where $\LL$ possibly extends $\LL_{3s}$ by function symbols of sort $\KK\to\KK.$ Let $\UU$ be a monster model of $T.$ If $\MM$ is an $\LL$-structure and $A,B$ are two subsets of $\KK(\MM),$ we let $\langle A,B\rangle_{\LL}$ be the universe of the $\KK$-sort of the $\LL$-structure generated by $A$ and $B.$ If $\LL$ is clear from the context, we will write $\langle A,B\rangle$ instead of $\langle A,B\rangle_{\LL}$. 

\begin{defin}
\label{sideconds}
    We say that \emph{$T$ satisfies \textbf{SE}} if for all models $\MM\models T,$ if $\so\in\{\kk,\GG\},$ then $\so(\MM)$ is stably-embedded in $\MM.$ We also say that 
    \emph{$T$ satisfies \textbf{Im}} if whenever $\MM=(M,v^{*})$ is a model of $T,$ $(K,v)\preceq\MM$ and $b\in\KK(\MM)$ is such that $(\langle K,b\rangle|K,w)$ is an immediate extension, where $w$ is the restriction of $v^{*}$ to $\langle K,b\rangle$, then there is some subset $S\subseteq \tp(b/K)$ consisting of NIP formulas such that $S\vdash \tp(b/K).$ This means that for all formulas $\f(\ol{x},y)$ and all $|x|$-tuples $\ol{a}$ from $K,$ if $\MM\models\f(\ol{a},b),$ then there is some NIP formula $\psi(\ol{z},y)$ and some $|z|$-tuple $\ol{c}$ of $K$ such that $\MM\models\psi(\ol{c},b)$ and $\MM\models\A y(\psi(\ol{c},y)\to\f(\ol{a},y)).$
    
\end{defin}

\begin{propo}[Cf.~{\cite[Theorem 2.3]{js}}]
\label{niptransfer}
    If $\Th(\MM)$ satisfies \textbf{SE} and \textbf{Im}, then $\MM$ is NIP if $\kk(\MM)$ is NIP with respect to $\LL_\kk$ and $\GG(\MM)$ is NIP with respect to $\LL_\GG.$
\end{propo}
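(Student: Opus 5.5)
We argue by contradiction, following the strategy of Chernikov--Hils--Jahnke--Simon. Since NIP can be tested on formulas with a single free variable in the position of the indiscernible sequence (\cite[Proposition 2.11]{simon}), and the residue field and value group sorts are handled directly by \textbf{SE} and the NIP hypotheses on $\kk(\MM)$ and $\GG(\MM)$ (any formula with parameters restricted to $\so\in\{\kk,\GG\}$ is, by stable embeddedness, equivalent to one of the induced structure, which is interpretable in the NIP $\LL_\so$-structure), it suffices to consider a formula $\f(x,b)$ with $x$ a singleton of sort $\KK$ and an indiscernible sequence $(a_i)_{i<\omega}$ of elements of $\KK(\UU)$ with $\models\f(a_i,b)$ iff $i$ is even. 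Dually, using $\f^{op}$, we may instead fix the singleton $b\in\KK(\UU)$ and let the sequence $(a_i)$ range over tuples; we will take this dual form, so the goal is: for every singleton $b\in\KK$ and every indiscernible sequence $I$ of tuples, $\tp(b/I)$ restricted to $\f$ has finite alternation on $I$.

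Next we find a small model $K\preceq\UU$ over which $I$ remains indiscernible, e.g. by extending $I$ to a sequence indexed by $\omega+\omega^{*}+\omega$ type and taking $K$ to contain a tail, or more concretely by taking $K$ with $I$ indiscernible over $K$ (possible after expanding $I$ and using Ramsey/compactness). Then we split into cases according to the extension $\langle K,b\rangle|K$. If it is immediate, \textbf{Im} gives a set $S\subseteq\tp(b/K)$ of NIP formulas implying $\tp(b/K)$; for the formula $\f(a_i,y)$ we then argue that the type of $b$ over $K\cup I$ is controlled: by indiscernibility over $K$ and compactness, if $\f(a_i,y)$ alternated on $I$ we would find a single NIP formula $\psi(c,y)\in S$ with $c\in K$ and, varying $b$ by automorphisms over $K$, obtain an alternation of a NIP formula on an indiscernible sequence, a contradiction. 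The key trick is to use Fact \ref{indisc}: one realizes the alternation pattern with a modified sequence $(a_i')\equiv(a_{\t(i)})$ so that even and odd terms are separated and the relevant witnesses can be transported.

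If $\langle K,b\rangle|K$ is not immediate, then there is an element $d\in\langle K,b\rangle$ witnessing either a new value or a new residue, i.e.\ some $K$-definable function sends $b$ to a tuple $e$ in $\kk(\UU)\cup\GG(\UU)$ not in $K$. The plan is to reduce to the immediate case relative to this residue/value data: one adjoins the imaginary data $e$ and shows $b$ is (up to NIP formulas) determined over $K e$; Fact \ref{dse} then shows no formula over $K e$ alternates infinitely on $I$, since $e$ lies in a stably embedded sort with NIP induced structure. Concretely, we would pass to an extension $K'\supseteq K$ generated with residue/value parameters, apply Fact \ref{indisc} to keep indiscernibility of $(a_{2i},b_{2i})$ with $b_{2i}$ the sort data, and iterate finitely many times until the extension becomes immediate.

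The main obstacle is this last reduction: ensuring that after absorbing the residue and value group data the remaining extension really is immediate and that indiscernibility of the sequence is preserved over the enlarged base. This is exactly where Fact \ref{indisc} with the reindexing $\t$ is needed, and where one must ensure the enlarged base is still an elementary substructure (or at least that \textbf{Im} applies to it); I would first try to build the enlarged base as a model containing $K$ and the sort parameters, chosen via Ramsey to keep $I$ indiscernible, and only then invoke \textbf{Im}.
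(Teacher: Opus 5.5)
Your reduction to a singleton $b\in\mathbb{K}(\mathcal{U})$ in the parameter position, with an indiscernible sequence of tuples $(a_i)_{i<\omega}$ such that $\models\varphi(a_i,b)$ iff $i$ is even, matches the paper. There is, however, a genuine gap in how you use \textbf{Im}. You take a base model $K$ over which $I$ is indiscernible. Then the $a_i$ do not lie in $K$, so $\tp(b/K)$, and hence the NIP formulas that \textbf{Im} supplies, say nothing about whether $\varphi(a_i,b)$ holds. Also, a NIP formula $\psi(c,y)$ with $c\in K$ cannot witness alternation by ``varying $b$ by automorphisms over $K$'': $\models\psi(c,\sigma(b))$ for every $\sigma\in\mathrm{Aut}(\mathcal{U}/K)$. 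To contradict NIP, the parameters of $\psi$ must move along an indiscernible sequence in step with the $a_i$.

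This is the missing idea. The paper enlarges each $a_i$ to an enumeration of a model $M_i\preceq\mathcal{U}$ such that $(M_i)_{i<\omega}$ is indiscernible and $(M_{2i}M_{2i+1})_{i<\omega}$ is indiscernible over $b$ (Lemmas \ref{2i2i+1} and \ref{tupleexttomod}). It then applies \textbf{Im} to $\tp(b/M_0)$, where $a_0\in M_0$. This yields a NIP formula $\psi(\bar c_0,y)\in\tp(b/M_0)$ with $\models\forall y\,(\psi(\bar c_0,y)\to\varphi(a_0,y))$. That implication involves only parameters from $M_0$, so it transfers by indiscernibility to the corresponding $\bar c_i\subseteq M_i$, which forces $\neg\psi(\bar c_{2i+1},b)$. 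Indiscernibility of the pairs over $b$ then propagates $\psi(\bar c_0,b)$ to $\psi(\bar c_{2i},b)$, so $\psi$ alternates.

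Your non-immediate case also fails as written. Absorbing the residue field and value group of $\langle K,b\rangle$ into a larger model $K'$ does not make $\langle K',b\rangle|K'$ immediate, since new values and residues generally reappear. So nothing guarantees the iteration stops after finitely many steps, and the claim that $b$ is determined up to NIP formulas over $Ke$ is not justified.

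The paper makes no case distinction. It runs an $\omega$-length construction: at stage $n+1$, each even member $M^{n+1}_{2i}$ must contain $M^n_{2i}$ together with the residue field and value group of $\langle M^n_{2i},b\rangle$. Enumerations of these are definable over $M^n_{2i}b$, so they can be attached to the even members. Fact \ref{indisc}, with the parity-preserving reindexing $\tau$ so that the alternation survives, then supplies matching sort data for the odd members and restores indiscernibility. Lemma \ref{tupleexttomod} closes everything off to models again.

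The union $F=\bigcup_n M^n_0$ is an elementary substructure, being the union of an elementary chain, and $\langle F,b\rangle|F$ is immediate by Lemma \ref{liminm}. \textbf{Im} is applied only to this limit, and since $\bar c_0$ already lies in some finite stage $M^n_0$, the indiscernibility at stage $n$ suffices for the alternation argument above. So Fact \ref{indisc} belongs to the absorption step, not to the immediate case where you placed it, and the limit construction is what resolves the obstacle you flagged in your last paragraph.
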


Before proving Proposition \ref{niptransfer}, we will need some lemmas. In what follows, $\UU$ is a monster valued field with underlying valuation $v^{*}.$ 

\begin{lema}
\label{liminm}
    Let $(K_n,w_n)_{n<\omega},(F_n,v_n)_{n<\omega}$ be two sequences of valued subfields of $\UU$ and let $b\in\KK(\UU)$ be such that, for each $n<\omega,$ 
    \begin{enumerate}
        \item $(F_n,v_n)\preceq\UU,$
    
        \item $K_n=\langle F_n,b\rangle$ and $w_n$ is the restriction of $v^{*}$ to $K_n$, and 
        
        \item $\begin{cases}
            (F_n,v_n)\subseteq(F_{n+1},v_{n+1}),\\
            K_nw_n\subseteq F_{n+1}v_{n+1},\text{ and}\\
            w_nK_n\subseteq v_{n+1}F_{n+1}.
        \end{cases}$  
    \end{enumerate}
     Let $(F,v)=\bigcup_{n<\omega}(F_n,v_n)$ and let $w$ be the restriction of $v^{*}$ to $\langle F,b\rangle.$ Then $w$ extends $v,$ the extension $(\langle F,b\rangle|F,w)$ is immediate and $(F,v)\preceq\UU.$ 
\end{lema}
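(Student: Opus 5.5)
The statement has three parts, and I will prove them in increasing order of difficulty. First, $w$ extends $v$; second, $(F,v)\preceq\UU$; third, $(\langle F,b\rangle|F,w)$ is immediate.

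That $w$ extends $v$ should be immediate. Each $v_n$ is the restriction of $v^{*}$ to $F_n$, since $(F_n,v_n)\preceq\UU$ is in particular a valued substructure. Hence $v=\bigcup_n v_n$ is the restriction of $v^{*}$ to $F$. Since $F\subseteq\langle F,b\rangle$ and $w=v^{*}|_{\langle F,b\rangle}$, the restriction of $w$ to $F$ is $v$.

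For $(F,v)\preceq\UU$, I will use that $(F_n,v_n)_{n<\omega}$ is a chain of elementary substructures of $\UU$. Each inclusion $F_n\subseteq F_{n+1}$ is then elementary, because both are elementary in $\UU$. By the Tarski--Vaught union of chains theorem, the union $F$ is elementary in each $F_n$ and in $\UU$. Equivalently, one can argue directly: for a formula $\f(x)$ with parameters in $F$, all parameters lie in some $F_n$, and satisfaction in $F$ agrees with satisfaction in $F_n$ and in $\UU$. The three sorts cause no trouble, since the residue field and value group sorts of $F$ are the unions of those of the $F_n$.

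The substantive part is immediacy. I first need a description of $\langle F,b\rangle$: it should equal $\bigcup_n\langle F_n,b\rangle=\bigcup_n K_n$. The inclusion $\supseteq$ is clear. For $\subseteq$, an element of $\langle F,b\rangle$ is a term $t(\bar c,b)$ in the language $\LL$ of the statement (with its function symbols $\KK\to\KK$) with finitely many parameters $\bar c\subseteq F$, so $\bar c\subseteq F_n$ for some $n$. Hence the element lies in $K_n$, and the $K_n$ form an increasing chain. Now take $x\in\langle F,b\rangle^{\times}$, so $x\in K_n$ for some $n$. Then $w(x)=w_n(x)\in w_nK_n\subseteq v_{n+1}F_{n+1}\subseteq vF$, which shows the value groups agree. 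For residue fields, if $w(x)=0$ then $\res_w(x)=\res_{w_n}(x)\in K_nw_n\subseteq F_{n+1}v_{n+1}\subseteq Fv$. The identifications are coherent because all the valuations and residue maps are restrictions of $v^{*}$ and its residue map. Hence $w\langle F,b\rangle=vF$ and $\langle F,b\rangle w=Fv$.

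The main obstacle I expect is the identification $\langle F,b\rangle=\bigcup_n K_n$. It is true for substructures generated by function symbols, because terms are finitary. I need to check, however, that the paper's $\langle\cdot,\cdot\rangle$ means the $\KK$-sort of the generated substructure (including closure under field inverses), as defined before Definition \ref{sideconds}. If inverses are not built into the language, I will pass to the field generated by each $K_n$. This changes nothing above, because the residue field and value group of a fraction field are generated by those of the ring, which are already contained in $Fv$ and $vF$.
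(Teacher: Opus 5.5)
Your proposal is correct and follows essentially the paper's route. You identify $\langle F,b\rangle$ with $\bigcup_{n}K_n$, use hypothesis 3 to push the values and residues of each $K_n$ into $F_{n+1}$, and obtain $(F,v)\preceq\UU$ from the union of the elementary chain $(F_n,v_n)_{n<\omega}$.

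One small inaccuracy is in your fallback remark: in general the residue field of a fraction field is not generated by residues of elements of the ring. This contingency does not arise, though, since the paper treats $\langle F,b\rangle$ as a field (cf.~Fact \ref{lcl}). Even if it did, you could divide numerator and denominator by an element of $F_{n+1}$ of the same value, using $w_nK_n\subseteq v_{n+1}F_{n+1}$.
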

\begin{proof}
    By construction, $\langle F,b\rangle=\bigcup_{n<\omega}K_n.$ Also, at each step $n<\omega,$ the valuation $w_n$ on $K_n=\langle F_n,b\rangle$ is an extension of $v_n$ on $F_n.$ Therefore, for the value groups, 
    $$w\langle F,b\rangle=\bigcup_{n<\omega}w_nK_n=\bigcup_{n<\omega}v_{n+1}F_{n+1}=vF,$$
    and for the residue field,
    $$\langle F,b\rangle w=\bigcup_{n<\omega}K_nw_n=\bigcup_{n<\omega}F_{n+1}v_{n+1}=Fv,$$
    as wanted. Finally, given that $(F_n,v_n)\subseteq(F_{n+1},v_{n+1})$ and and that $(F_n,v_n)$ and $(F_{n+1},v_{n+1})$ are elementary substructures of $\UU,$ we get that $(F_n,v_n)\preceq(F_{n+1},v_{n+1})$ for each $n<\omega,$ so $(F,v)\preceq\UU.$  
\end{proof}

\begin{lema}
\label{2i2i+1}
    Let $(a_i:i<\omega)$ be an indiscernible sequence in $\KK(\UU)$ and $b\in\KK(\UU)$ be a singleton such that $\models\f(a_i,b)$ if and only if $i$ is even, for some formula $\f(x,y).$ Then there is some automorphism $\sigma$ of $\UU$ such that 
    $(\sigma(a_i):i<\omega)$ is indiscernible, $(\sigma(a_{2i})\sigma(a_{2i+1}):i<\omega)$ is indiscernible over $b,$ and $\models\f(\sigma(a_i),b)$ if and only if $i$ is even. 
\end{lema}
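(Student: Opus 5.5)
The idea is to replace $(a_i)$ by a sequence of the same type over $b$ on which the pairs $(a_{2i}a_{2i+1})$ become indiscernible over $b$, while keeping the alternation pattern of $\f(x,b)$. Since $\UU$ is strongly homogeneous, it suffices to construct a sequence $(a_i':i<\omega)$ with $(a_i')\equiv(a_i)$ (so an automorphism $\sigma$ with $\sigma(a_i)=a_i'$ exists), such that $(a_{2i}'a_{2i+1}':i<\omega)$ is indiscernible over $b$ and $\models\f(a_i',b)$ iff $i$ is even.

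\textbf{Construction.} Consider the sequence of pairs $c_i=(a_{2i},a_{2i+1})$, $i<\omega$. Because $(a_i)$ is indiscernible, $(c_i)$ is an indiscernible sequence of pairs. Moreover, for every $i$ we have $\models\f(a_{2i},b)\wedge\neg\f(a_{2i+1},b)$. By Ramsey and compactness, I will extract from $(c_i)$ a sequence $(c_i')_{i<\omega}$ that is indiscernible over $b$ and realizes the EM-type of $(c_i)$ over $b$ (cf.~\cite[Lemma 1.2]{simon} style extraction). Write $c_i'=(a_{2i}',a_{2i+1}')$.

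\textbf{Verification.}
\begin{enumerate}
\item The EM-type over $b$ contains $\f(x_0,b)\wedge\neg\f(x_1,b)$ for each pair, since every pair $c_i$ satisfies it. Hence $\models\f(a_i',b)$ iff $i$ is even.
\item $(c_i')$ is indiscernible over $b$ by construction.
\item For any $n$, the tuple $(a_0',\ldots,a_{2n+1}')$ realizes a formula that holds of some increasing tuple $c_{i_0}\ldots c_{i_n}$ of pairs. By indiscernibility of $(a_i)$, every increasing $(2n+2)$-tuple of the $a_i$'s has the same type, so $(a_i')_{i<\omega}\equiv(a_i)_{i<\omega}$, and in particular $(a_i')$ is indiscernible.
\end{enumerate}
Finally, I take $\sigma\in\Aut(\UU)$ mapping $a_i\mapsto a_i'$. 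This $\sigma$ need not fix $b$; that is fine, because the statement only involves $\sigma(a_i)$ and $b$ itself.

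The only delicate point is the extraction step. It must preserve the properties over $b$ that are uniform across all pairs, and this is the standard property of the EM-type, since $\f(x_0,b)\wedge\neg\f(x_1,b)$ lies in the EM-type of $(c_i)$ over $b$. It must also preserve the type over $\emptyset$ of the flattened sequence, which follows because the flattened sequence $(a_i)$ was already indiscernible. I do not expect a serious obstacle beyond stating this extraction correctly.
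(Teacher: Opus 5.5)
Your proposal is correct and follows the paper's own proof essentially step for step. You extract a $b$-indiscernible sequence of pairs realizing the EM-type of $(a_{2i}a_{2i+1}:i<\omega)$ over $b$, and use the indiscernibility of $(a_i)$ to see that the flattened sequence has the same type as $(a_i)$. Then you read off the alternation of $\f(x,b)$ from the formula $\f(x_0,b)\wedge\neg\f(x_1,b)$ lying in that EM-type, and take $\sigma$ to be an automorphism sending $(a_i)$ to the new sequence.
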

\begin{proof}
    Let $(c_i:i<\omega)$ be a sequence such that $(c_{2i}c_{2i+1}:i<\omega)$ is indiscernible over $b$ and realizes the EM-type of $(a_{2i}a_{2i+1}:i<\omega)$ over $b.$ This implies that $(c_i:i<\omega)\equiv(a_i:i<\omega),$ because $(a_i:i<\omega)$ is indiscernible. Let $\sigma$ be an automorphism of $\UU$ sending $(a_i:i<\omega)$ to $(c_i:i<\omega).$ Since $\models\f(a_{2i},b)\wedge\neg\f(a_{2i+1},b)$ for all $i<\omega,$ then $\f(x_1,b)\wedge\neg\f(x_2,b)$ is a formula in the EM-type of $(a_{2i}a_{2i+1}:i<\omega)$ over $b,$ and thus is realized by $(c_{2i}c_{2i+1}:i<\omega),$ i.e.~$\models\f(c_{i},b)$ if and only if $i$ is even. 
\end{proof}

\begin{lema}
\label{tupleexttomod}
    For each $i<\omega,$ let $a_i$ be a tuple of length $\k$ from $\UU$ with $\k\leq|T|.$ 
    Suppose that the sequence $(a_i:i<\omega)$ is indiscernible and that the sequence $(a_{2i}a_{2i+1}:i<\omega)$ is indiscernible over some singleton $b.$
    Then, for each $i<\omega,$ there is some $|T|$-tuple $A_i$ and an elementary substructure $\MM_i\preceq\UU$ such that 
    % and a $\D$-automorphism $\sigma$ of $\UU$ fixing $b$ such that
     \begin{enumerate}
        \item $A_i$ enumerates the universe $M_i$ of $\MM_i,$
    
        \item $a_i$ is a subtuple of $A_i,$
    
        \item the sequence $(A_i:i<\omega)$ is indiscernible, and

        \item the sequence $(A_{2i}A_{2i+1}:i<\omega)$ is indiscernible over $b.$
    \end{enumerate}
\end{lema}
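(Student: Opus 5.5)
The plan is the standard ``extend to models while preserving indiscernibility'' argument, via compactness and Ramsey/Erd\H{o}s--Rado extraction. The point to handle with care is that we need two layers of indiscernibility at once: the whole sequence $(A_i)$ over $\emptyset$, and the paired sequence $(A_{2i}A_{2i+1})$ over $b$.

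First, stretch the data. By compactness and the indiscernibility hypotheses, we extend $(a_i:i<\omega)$ to a sequence $(a_i:i<\lambda)$ for a large cardinal $\lambda$, say $\lambda>\beth_{(2^{|T|})^{+}}$. We do this so that $(a_i:i<\lambda)$ is indiscernible and the pairs $(a_{2i}a_{2i+1}:i<\lambda)$ are indiscernible over $b$. Here ``even'' is understood in the ordinal sense of Fact \ref{indisc}, i.e.\ we index by pairs $(\alpha,\epsilon)$ with $\epsilon\in\{0,1\}$. This is consistent because both conditions are expressed by the EM-types of the original sequences: the EM-type of $(a_{2i}a_{2i+1})$ over $b$ already determines the EM-type of $(a_i)$ over $\emptyset$.

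Second, build the models. By downward L\"owenheim--Skolem, choose for each $\alpha<\lambda$ a model $N_\alpha\preceq\UU$ of size $|T|$ containing $a_{2\alpha}a_{2\alpha+1}$. More precisely, we take $N_\alpha$ of the form $\dcl$ of Skolem functions in a fixed Skolemization applied to $a_{2\alpha}a_{2\alpha+1}$. Uniformity across $\alpha$ is the main technical issue. To get it, I would work in a Skolemized expansion $T^{Sk}$ of $T$, with monster model $\UU^{Sk}$. We first re-extract the stretched sequence so that it is indiscernible in $\LL^{Sk}$: indiscernible over $\emptyset$ as a sequence of singletons-tuples, and with pairs indiscernible over $b$. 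For this we apply Ramsey/compactness to $\LL^{Sk}$-EM-types; the conditions only constrain $\LL$-types, which are preserved because the original sequences were $\LL$-indiscernible. Having done that, set $M_i=\dcl^{Sk}(a_i)$ for each $i$, and let $A_i$ enumerate the Skolem terms $t(a_i)$ in a fixed order, so that $|A_i|=|T|$ and $a_i$ is a subtuple. Each $M_i$ is an $\LL^{Sk}$-substructure, hence an elementary substructure in $\LL^{Sk}$, hence $\MM_i\preceq\UU$ in $\LL$ after taking the reduct. Since $A_i$ is a tuple of terms in $a_i$, indiscernibility of $(a_i)$ in $\LL^{Sk}$ gives indiscernibility of $(A_i)$. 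Likewise, indiscernibility of the pairs over $b$ in $\LL^{Sk}$ gives indiscernibility of $(A_{2i}A_{2i+1})$ over $b$. Restricting to $\LL$ yields items 3 and 4.

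The main obstacle is the re-extraction step. We need the new sequence to satisfy both indiscernibility requirements \emph{simultaneously} in $\LL^{Sk}$ while keeping the same $b$ and the same $\LL$-type of the $a_i$, at least up to an automorphism, as in Lemma \ref{2i2i+1}. I would first take $(c_{2i}c_{2i+1}:i<\omega)$ realizing the $\LL^{Sk}$-EM-type over $b$ of the pair sequence, extracted by Ramsey from the long sequence. This sequence is $\LL^{Sk}$-indiscernible over $b$ as pairs, and its $\LL$-EM-type over $b$ equals that of $(a_{2i}a_{2i+1})$. I would then check that $(c_i)$ is $\LL^{Sk}$-indiscernible as singletons. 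This is not automatic, so to secure it I would first extract from the long sequence an $\LL^{Sk}$-indiscernible (over $\emptyset$) version and only then do the pairwise extraction over $b$, appealing to the standard ``extract keeping indiscernibility'' form of Ramsey: the EM-type of a sequence that is already indiscernible is preserved under further extraction. If needed, we finish by moving back via an $\LL$-automorphism fixing $b$ to replace the $c_i$ by the original $a_i$. This is allowed since only the $\LL$-type over $b$ matters for the conclusion, with the $A_i$ transported accordingly.
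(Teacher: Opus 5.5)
The Skolem-hull idea is sound, and you are right that a single extraction cannot in general deliver both indiscernibility requirements. The gap is in your two-step repair.

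Your first extraction yields $(d_i)_i$, which is $\LL^{Sk}$-indiscernible and realizes the $\LL^{Sk}$-EM-type of $(a_i)_i$ over $\emptyset$. This remembers only $\tp_{\LL}((a_i)_i)$; everything about $b$ is lost. In particular, $(d_{2i}d_{2i+1})_i$ need not be $\LL$-indiscernible over $b$, let alone have the $\LL$-type over $b$ of $(a_{2i}a_{2i+1})_i$. In the application, $\f(d_i,b)$ need not alternate at all. Your second extraction over $b$ then realizes the $\LL(b)$-EM-type of $(d_{2i}d_{2i+1})_i$, not that of $(a_{2i}a_{2i+1})_i$. Consequently the closing step, an $\LL$-automorphism fixing $b$ that sends $c_i$ to $a_i$, is unjustified: nothing yields $(c_i)_i\equiv_b(a_i)_i$ in $\LL$. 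So conclusion 2, for the \emph{given} $a_i$ and the \emph{given} $b$, is not reached.

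The missing move is to relocate $b$ instead of the sequence. Since $(d_i)_i\equiv_{\LL}(a_i)_i$, saturation gives $b'$ with $(d_i)_ib'\equiv_{\LL}(a_i)_ib$, so $(d_{2i}d_{2i+1})_i$ is $\LL$-indiscernible over $b'$. Now extract the pairs over $b'$ in $\LL^{Sk}$ to get $(e_i)_i$. This sequence has three properties:
\begin{itemize}
\item it is still $\LL^{Sk}$-indiscernible, because each increasing tuple of pairs from $(d_i)_i$ has the $\LL^{Sk}$-type of $d_0\ldots d_{2n-1}$;
\item its pairs are $\LL^{Sk}$-indiscernible over $b'$;
\item it satisfies $(e_i)_ib'\equiv_{\LL}(a_i)_ib$, because the $\LL(b')$-EM-type of a $b'$-indiscernible pair sequence is complete.
\end{itemize}
Take the Skolem hulls of the $e_i$ and transport them along an $\LL$-elementary map sending $(e_i)_ib'$ to $(a_i)_ib$; this map does not fix $b$.

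A monster model of $T^{Sk}$ is not an expansion of $\UU$. So take it to be a saturated extension of an $\LL^{Sk}$-expansion of $\UU$, and realize the transported configuration inside $\UU$ by saturation. Enumerating an elementary substructure is expressed by Tarski--Vaught formulas with fixed witness coordinates, so this property is preserved.

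The paper does not Skolemize. It takes arbitrary models $N_i\ni a_i$ and asserts one extraction meeting both requirements. It then reads off $(c_i)_i\equiv_b(a_i)_i$ from completeness of the EM-type over $b$ of the pair sequence, and transports by a map fixing $b$. Your simultaneity concern applies there too: for arbitrarily chosen $N_i$ the two requirements can be incompatible. The repaired Skolemized argument has the advantage that hulls make the enumerations uniform, so the extracted tuples automatically enumerate models.
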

\begin{proof} 

    For each $i<\omega,$ let $\NN_i\preceq\UU$ be a model of size $|T|$ containing $a_i,$ and let $B_i=a_i^{\frown}a_i'$ be a $|T|$-tuple enumerating $N_i.$ Let $(C_i:i<\omega)$ be a sequence of $|T|$-tuples in $\UU$ such that 
    \begin{enumerate}
        \item $(C_i:i<\omega)$ is indiscernible and satisfies the EM-type of the sequence $(B_i:i<\omega),$ 

        \item $(C_{2i}C_{2i+1}:i<\omega)$ is indiscernible over $b$ and satisfies the EM-type of the sequence $(B_{2i}B_{2i+1}:i<\omega)$ over $b.$ 
    \end{enumerate}
    Such a sequence exists by Ramsey and saturation of $\UU.$ Since each $B_i$ enumerates an elementary substructure of $\UU,$ the same holds for each $C_i.$ Let $\MM_i'\preceq\UU$ be the elementary substructure enumerated by $C_i.$ If $C_i=c_i^{\frown}c_i',$ then $(c_i:i<\omega)\equiv_{b}(a_i:i<\omega),$ 
    because $(a_{2i}a_{2i+1}:i<\omega)$ is indiscernible over $b.$ Then there is some partial elementary map $F$ that fixes $b,$ that sends $(c_i:i<\omega)$ to $(a_i:i<\omega),$ and whose domain contains each $C_i.$ We may then define $A_i=F(C_i),$ so that $A_i$ is indeed an enumeration of an elementary substructure $\MM_i\preceq\UU.$  
\end{proof}

\begin{proof}[Proof of Proposition \ref{niptransfer}]
    Let $(K,v)$ be the underlying valued field of the $\LL$-structure $\MM\preceq\UU,$ and let $p$ be its characteristic exponent. If $T$ is not NIP, there is some $\LL$-formula $\f(x,y)$ with the independence property. By \cite[Proposition 2.11]{simon}, we may assume that $|y|=1.$ Therefore, there is some indiscernible sequence $(a_i:i<\omega)$ of $|x|$-tuples of $\UU$ and some singleton $b\in\UU$ such that $\models\f(a_i,b)$ if and only if $i$ is even. By Fact \ref{dse}, such a $b$ has to be an element of $\KK(\UU).$ Recall that $v^{*}$ be the underlying valuation of $\UU.$  

    By induction on $n<\omega,$ we are going to construct two sequences of valued fields $(F_n,v_n)_{n<\omega},$ $(K_n,w_n)_{n<\omega}$ and a sequence $(S_n:n<\omega),$ with $S_n=(\aa_i^{n}:i<\omega),$ of sequences of $|T|$-tuples of elements of $\UU,$ satisfying the following properties:
    \begin{enumerate}[wide]
        \item each $\aa_i^{n}=(a_\mu^{i,n}:\mu<|T|)$ enumerates a structure $(M_i^{n},v_i^{n})\preceq\UU,$

        \item $(\aa_i^{n}:i<\omega)$ is indiscernible,

        \item $(\aa_{2i}^{n}\aa_{2i+1}^{n}:i<\omega)$ is indiscernible over $b,$

        \item $\models\f(a_{0}^{i,n},b)$ if and only if $i$ is even,

        \item $w_{2i}^{n}$ is 
        the restriction of $v^{*}$ to $\langle M_{2i}^{n},b\rangle,$

        \item $(F_n,v_n)=(M_0^{n},v_0^{n})$ and $(K_n,w_n)=(\langle M_{0}^{n},b\rangle,w_0^{n}),$ and
        
        \item $\begin{cases}
            (M_{2i}^{n+1},v_{2i}^{n+1})\supseteq(M_{2i}^{n},v_{2i}^{n}),\\
            v_{2i}^{n+1}M_{2i}^{n+1}\supseteq w_{2i}^{n}\langle M_{2i}^{n},b\rangle,\text{ and }\\
            M_{2i}^{n+1}v_{2i}^{n+1}\supseteq \langle M_{2i}^{n},b\rangle w_{2i}^{n}.
        \end{cases}$
        
        \hspace{0.6cm}In particular, $(F_{n+1},v_{n+1})\supseteq(F_n,v_n),$ $F_{n+1}v_{n+1}\supseteq K_nw_n$ and $v_{n+1}F_{n+1}\supseteq w_nK_n.$

    \end{enumerate}
    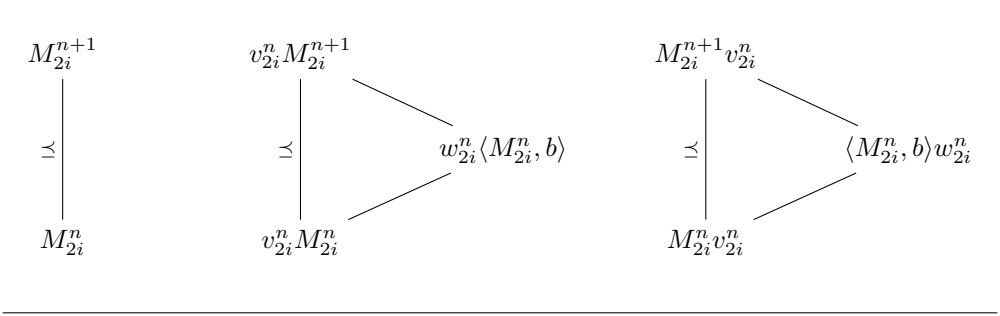
\begin{figure}[ht]
    \centering
    \begin{tabular}{cccc}
    \hline\\
       \begin{tikzcd}
M_{2i}^{n+1}                              &  & v_{2i}^{n}M_{2i}^{n+1}                                                  &                                                             & M_{2i}^{n+1}v_{2i}^{n} \arrow[dd, "\preceq"', no head] \arrow[rd, no head] &                                          \\
                                          &  &                                                                         & {w_{2i}^{n}\langle M_{2i}^{n},b\rangle} \arrow[lu, no head] &                                                                            & {\langle M_{2i}^{n},b\rangle w_{2i}^{n}} \\
M_{2i}^{n} \arrow[uu, "\preceq", no head] &  & v_{2i}^{n}M_{2i}^{n} \arrow[ru, no head] \arrow[uu, "\preceq", no head] &                                                             & M_{2i}^{n}v_{2i}^{n} \arrow[ru, no head]                                   &                                          \\
\end{tikzcd} \\ 
    \hline
    \end{tabular}
    \caption{The construction of the $2i^{th}$ valued field $(M_{2i}^{n+1},v_{2i}^{n+1})$ at stage $n+1.$}
\end{figure}
    Let $(F,v)=\bigcup_{n<\omega}(F_n,v_n)$ and let $w$ be the restriction of $v^{*}$ to $\langle F,b\rangle.$ Then, by Lemma \ref{liminm}, $w$ extends $v,$ the extension $(\langle F,b\rangle|F,w)$ is immediate and $(F,v)\preceq\UU.$ Also, if $F$ is defined as such, then putting $\aa_0=\bigcup_{n<\omega}\aa_0^{n},$
    yields that $(M_0,v_0):=(\bigcup_{n<\omega}M_0^{n},\bigcup_{n<\omega}v_0^{n})\preceq\UU$ is enumerated by $\aa_{0}$ and that $(F,v)=(M_0,v_0),$ i.e.~$(\langle F,b\rangle|F,w)=(\langle M_0,b\rangle|M_0,w).$ 
    Since $(\langle F,b\rangle|M_0,w)$ is immediate, $a_0^{0,0}\in M_0$ and $\models\f(a_0^{0,0},b),$ by \textbf{Im}, we can find some NIP formula $\psi(\ol{z},y)$ and some $|z|$-tuple $\ol{c}_0=(a_{\mu_1}^{0},\.,a_{\mu_{|z|}}^{0})$ of $M_0,$ where $\mu_1<\.<\mu_{|z|}<|T|,$ such that $\models\psi(\ol{c}_0,b)$ and $\models\A y(\psi(\ol{c}_0,y)\to\f(a_0^{0,0},y)).$
    It follows that $\ol{c}_0$ is a tuple from $M_0^{n}$ for some $n<\omega,$ i.e.~a subtuple of $\aa_0^{n}.$ Since $\aa_0^{n}$ extends $\aa_{0}^{0},$ we get that $a_0^{0,0}=a_0^{0,n}.$
    Let $\ol{c}_i=(a_{\mu_1}^{i},\.,a_{\mu_{|z|}}^{i})$ be the corresponding $|z|$-tuple of $M_i^{n}$ for all $i<\omega,$ i.e the corresponding subtuple of $\aa_i^{n}.$ Since $(\aa_i^{n}:i<\omega)$ is indiscernible, the sequence of subtuples $(a_0^{i,n}\ol{c}_i:i<\omega)$ is also indiscernible.
    Since $\models\f(a_0^{i,n},b)$ if and only if $i$ is even, we conclude that $\models\neg\psi(\ol{c}_i,b)$ whenever $i$ is odd.
    Additionally, since $\psi(\ol{c}_0,y)\in\tp(b/M_0),$ then $\models\psi(\ol{c}_0,b)\wedge\neg\psi(\ol{c}_1,b).$ 
    Since $(\aa_{2i}^{n}\aa_{2i+1}^{n}:i<\omega)$ is indiscernible over $b,$ we get that the sequence of subtuples $(\ol{c}_{2i}\ol{c}_{2i+1}:i<\omega)$ is also indiscernible over $b,$ thus $\models\psi(\ol{c}_{2i},b)\wedge\neg\psi(\ol{c}_{2i+1},b)$ for all $i<\omega,$ i.e.~the truth value of $\psi(\ol{c}_i,b)$ alternates infinitely many times. This contradicts the hypothesis of $\psi(\ol{z},y)$ being NIP, because the sequence $(\ol{c}_i:i<\omega)$ is indiscernible.
    Now we proceed with the required construction.

\underline{Base Case:} By Lemma \ref{2i2i+1}, there is some automorphism $\sigma$ of $\UU$ such that $(\sigma(a_i):i<\omega)$ is indiscernible, $(\sigma(a_{2i})\sigma(a_{2i+1}):i<\omega)$ is indiscernible over $b,$ and $\models\f(\sigma(a_i),b)$ if and only if $i$ is even. We may thus extend each $\sigma(a_i)$ to a $|T|$-tuple $A_i$ as in Lemma \ref{tupleexttomod}, where $A_i$ enumerates the universe of some $\MM_i'\preceq\UU$ of size $|T|,$ in order to ensure that $(A_i:i<\omega)$ is indiscernible, $(A_{2i}A_{2i+1}:i<\omega)$ is indiscernible over $b,$ and $\models\f(a_0^{i,0},b)$ if and only if $i$ is even. 
We may simply put $\aa_i^{0}=A_i$ and $(M_i^{0},v_i^{0})=(M_i',v_i')$ in order to get that $(M_i^{0},v_i^{0})\preceq\UU.$ 
Hence, $S_0=(\aa_i^{0}:i<\omega),$ $(F_0,v_0)=(M_0^{0},v_0^{0})$ and $(K_0,w_0)=(\langle F_0,b\rangle,w_0),$ where $w_0$ is the restriction of $v^{*}$ to $K_0,$ satisfy 1-6 above.

\underline{Inductive Step:}

    Suppose $S_n,$ $(F_n,v_n)=(M_0^{n},v_0^{n})$ and $(K_n,w_n)=(\langle M_0^{n},b\rangle,w_0^{n})$ have been constructed satisfying 1-6 above. We shall construct $S_{n+1},$ $(F_{n+1},v_{n+1})$ and $(K_{n+1},w_{n+1})$ satisfying 1-7 above. 
    The fact that $(M_i^{n},v_i^{n})\subseteq\UU$ implies that $v_i^{n}$ is the restriction of $v^{*}$ to $M_i^{n},$ which follows from $(M_i^{n},v_i^{n})\preceq\UU.$ We conclude that $(\langle M_{2i}^{n},b\rangle,w_{2i}^{n})\supseteq(M_{2i}^{n},v_{2i}^{n})$ is an extension of valued fields. Note that $|\langle M_{2i}^{n},b\rangle|=|T|.$ Now, let $\b_{2i}$ and $\g_{2i}$ be tuples from $\kk(\UU)$ and $\GG(\UU)$ respectively, such that 
        $$\begin{cases}
            \b_{2i}\text{ enumerates the residue field }\langle M_{2i}^{n},b\rangle w_{2i}^{n},\text{ and}\\
            \g_{2i}\text{ enumerates the value group }w_{2i}^{n}\langle M_{2i}^{n},b\rangle.
        \end{cases}$$ 
        It follows that $(\aa_{2i}^{n}\b_{2i}:i<\omega)$ is indiscernible over $b.$ This holds because all elements of $\b_{2i}$ are definable over $\aa_{2i}^{n}b$ and the sequence $(\aa_{2i}^{n}:i<\omega)$ is indiscernible over $b.$
        Since $T$ satisfies \textbf{SE}, the induced structure $\so(\UU)_{ind(\emptyset)}$ of $\so(\UU)$ in all sorts $\so\in\{\kk,\GG\}$ is NIP, as $\so(\UU)_{ind(\emptyset)}$ is interpretable in the NIP structure  $\so(\UU).$ 
        By Fact \ref{indisc}, there are tuples $(\b_{2i+1}:i<\omega)$ and $(\a_{2i+1}:i<\omega)$ such that, if $\a_{2i}=\aa_{2i}^{n}$ for all $i<\omega,$ then 
        \begin{equation*}
            (\a_i:i<\omega)\equiv_{b}(\aa_{\t(i)}^{n}:i<\omega)    
        \end{equation*}
        and the sequence $(\a_i\b_i:i<\omega)$ is indiscernible. In particular, if $\a_i=(\a_\mu^{i}:\mu<|T|),$ then
        \begin{equation}
        \label{dtr1}
            \models\f(\a_0^{i},b)\Sii\models\f(a_0^{\t(i),n},b)\Sii i\text{ is even.}
        \end{equation} 
        Also, since $(\aa_{2i}^{n}\aa_{2i+1}^{n}:i<\omega)$ is indiscernible over $b,$ the same holds for the sequences $(\aa_{\t(2i)}^{n}\aa_{\t(2i+1)}^{n}:i<\omega)$ and  $(\a_{2i}\a_{2i+1}:i<\omega).$ Altogether we get that $(\a_{2i}\b_{2i}\a_{2i+1}\b_{2i+1}:i<\omega)$ is indiscernible over $b.$
        With respect to the value group, for the same reason as above, we also get that $(\a_{2i}\b_{2i}\g_{2i}:i<\omega)$ is indiscernible over $b.$
        By Fact \ref{indisc}, since the $\GG(\UU)$ is stably embedded and its induced structure is NIP, there are tuples $(\g_{2i+1}:i<\omega)$ and $(\a_{2i+1}'\b_{2i+1}':i<\omega)$ such that, if $\a_{2i}'\b_{2i}=\a_{2i}\b_{2i}$ for all $i<\omega,$ then
        \begin{equation}
            \label{equivsubb}
            (\a_i'\b_i':i<\omega)\equiv_b(\a_{\t(i)}\b_{\t(i)}:i<\omega),
        \end{equation}
        the sequence $(\a_i'\b_i'\g_i:i<\omega)$ is indiscernible and, as above,  the sequence $(\a_{2i}'\b_{2i}'\g_{2i},\a_{2i+1}'\b_{2i+1}'\g_{2i+1})_{i<\omega}$ is indiscernible over $b$.
        By Lemma \ref{tupleexttomod}, we can extend each $\a_i'\b_i'\g_i$ to a tuple $A_i$ of length $|T|$ that enumerates some elementary substructure $\MM_i'\preceq\UU,$ with underlying valued field $(M_i',v_i'),$ in a way that allows the sequence $(A_i:i<\omega)$ to be indiscernible and the sequence $(A_{2i}A_{2i+1}:i<\omega)$ to be indiscernible over $b.$ 
        Wee may then put $\aa_i^{n+1}:=A_i,$ $(M_i^{n+1},v_i^{n+1}):=(M_i',v_i')$ and $\MM_i^{n+1}:=\MM_i'$ in order to get that $\MM_i^{n+1}\preceq\UU.$
        Since $\aa_i^{n+1}$ extends $\a_i',$ we get that $\aa_0^{i,n+1}=(\a')_0^{i}$ and, by equations \ref{dtr1} and \ref{equivsubb},
        $$\models\f(\aa_0^{i,n+1},b)\Sii\models\f((\a')_0^{i},b)\Sii\models\f(\a_0^{\t(i)},b)\Sii\models\f(\a_0^{i},b)\Sii i\text{ is even.}$$
        Hence $S_{n+1}=(\aa_i^{n+1}:i<\omega),$ $(F_{n+1},v_{n+1})=(M_0^{n+1},v_0^{n+1})$ and $(K_{n+1},w_{n+1})=(\langle F_{n+1},b\rangle,w_{n+1}),$ where $w_{n+1}$ is the restriction of $v^{*}$ to $K_{n+1},$ satisfy 1-6 above, 
        and the tuple $\aa_{2i}^{n+1}$ enumerates an extension of $\a_{2i}=\aa_{2i}^{n}.$ Therefore $(M_{2i}^{n+1},v_{2i}^{n+1})$ extends $(M_{2i}^{n},v_{2i}^{n}),$ as both $v_{2i}^{n}$ and $v_{2i}^{n+1}$ are restrictions of the valuation of $\UU.$ In particular, $(F_{n+1},v_{n+1})$ extends $(F_n,v_n).$ Also, the tuple
        $\aa_{2i}^{n+1}$ enumerates an extension of $\b_{2i}'=\b_{2i},$ meaning that $M_{2i}^{n+1}v_{2i}^{n+1}$ extends $\langle M_{2i}^{n},b\rangle w_{2i}^{n}$ and, in particular, $F_{n+1}v_{n+1}$ extends $K_nw_n.$
        Analogously, the tuple $\aa_{2i}^{n+1}$ enumerates an extension of $\g_{2i},$ which implies that $v_{2i}^{n+1}M_{2i}^{n+1}$ extends $w_{2i}^{n}\langle M_{2i}^{n},b\rangle$ and, in particular, $v_{n+1}F_{n+1}$ extends $w_nK_n.$
        Altogether, we get that $S_{n+1},$ $(F_{n+1},v_{n+1})$ and $(K_{n+1},w_{n+1})$ satisfy 1-7 above.
        \qedhere
\end{proof}

\subsection{Side Conditions}

In this section we will see that, for any $e\in\N\cup\{\infty\},$ the $\LL$-theory $\texttt{SAMK}_e^{\l,\ac}$ satisfies the side conditions, in the language $\LL=\LL(\LL_{\kk},\LL_{\GG})$ which consists of an expansion of $\LL_{3s}=\LL_{3s}(\LL_{\kk},\LL_{\GG})$ by symbols for the parameterized lambda functions of the home sort and a symbol for an angular component map. We will write $\LL_\l=\LL_{\l}(\LL_{\kk},\LL_{\GG})$ (resp.~$\LL_{\ac}=\LL_{\ac}(\LL_{\kk},\LL_{\GG})$) to denote the same language but excluding the symbol for the angular component map (resp.~the symbols for the parameterized lambda functions). 

\begin{lema}
\label{e+se}
    The $\LL$-theory $\texttt{SAMK}_e^{\l,\ac}$ satisfies \textbf{SE}. 
\end{lema}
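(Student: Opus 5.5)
The plan is to obtain \textbf{SE} from a relative quantifier elimination for $\texttt{SAMK}_e^{\l,\ac}$ in the language $\LL$. I will proceed in three steps: state the elimination, use it to reduce formulas to the auxiliary sorts, and then handle parameters from the home sort.

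For the first step, I would invoke (or reprove in the style of Pas/Delon) relative quantifier elimination down to $\kk$ and $\GG$. The statement is that every $\LL$-formula $\f(x,\xi,\eta)$, with $x$ from $\KK$, $\xi$ from $\kk$ and $\eta$ from $\GG$, is equivalent modulo $\texttt{SAMK}_e^{\l,\ac}$ to a boolean combination of formulas of the form
$$\psi\big(\ac(t_1(x)),\.,\ac(t_k(x)),\xi\big)\quad\text{and}\quad\chi\big(v(t_1(x)),\.,v(t_k(x)),\eta\big).$$
Here $t_j$ are $\LL_\l$-terms in the home sort, $\psi$ is an $\LL_\kk$-formula and $\chi$ is an $\LL_\GG$-formula. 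This is the form recorded in \cite{sm} for $\texttt{SAMK}$ fields with $\l$-functions and angular component. The key ingredients are the embedding lemma, which relies on the separability supplied by the $\l$-functions (Facts \ref{sylvy} and \ref{lcl}), and the Kaplansky plus separable-algebraic maximality hypotheses, which make immediate separable extensions unique. I would cite this result rather than reprove it.

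For the second step, take a model $\MM$ and a subset $D\subseteq\kk(\MM)^n$ defined by $\f(\xi,c)$ with arbitrary parameters $c$. Write $c=(a,\b,\g)$ with $a$ from $\KK$, $\b$ from $\kk$ and $\g$ from $\GG$. Apply the relative quantifier elimination to $\f(\xi,a,\b,\g)$, treating $a$ as the $\KK$-variables. In each $\kk$-part $\psi(\ac(t_1(a)),\.,\ac(t_k(a)),\xi,\b)$ the arguments $\ac(t_j(a))$ are now fixed elements of $\kk(\MM)$. In each $\GG$-part the formula does not involve $\xi$, so it reduces to a truth value. Consequently $D$ is defined in $\kk(\MM)$ by a boolean combination of $\LL_\kk$-formulas with parameters in $\kk(\MM)$, and this is exactly stable embeddedness of $\kk$. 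The symmetric argument applies to $\GG$, with parameters $v(t_j(a))\in\GG(\MM)$. The same argument shows that the induced structure on each sort is precisely its $\LL_\kk$- or $\LL_\GG$-structure.

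I expect the main obstacle to be making sure that the cited relative quantifier elimination really applies. The concern is that it holds for the theory with the Ershov degree fixed and the $\l$-functions named, uniformly for $e=\infty$, and in the fully resplendent setting where $\LL_\kk$ and $\LL_\GG$ are arbitrary expansions. A second concern is that the $\ac$-terms are needed in a form where the $\kk$- and $\GG$-parts separate. If only an elimination relative to $\kk$ together with the leading-term structure were available, I would fall back on the back-and-forth criterion. Under that criterion, two tuples from $\kk$ (respectively $\GG$) with the same type in the $\LL_\kk$-reduct over $\kk(\MM)$ would be shown to be related by an automorphism of a saturated model fixing the home sort. That would establish stable embeddedness through the usual criterion for saturated models.
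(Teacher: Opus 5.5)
Your proposal is correct and follows essentially the paper's route: the paper simply cites \cite[Corollary 5.3]{sm}, which is the stable-embeddedness consequence of the relative quantifier elimination for $\texttt{SAMK}_e^{\l,\ac}$ that you derive it from. The separated, resplendent form you were worried about is exactly the one in Proposition \ref{delredn}, since every formula is equivalent to an $\E_n$-formula for some $n$. The extra atoms $t(x)=0$ that this form also involves are harmless: they become truth values once the home-sort parameters are substituted.
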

\begin{proof}
    This is \cite[Corollary 5.3]{sm}.
\end{proof}

The following lemma is well known.

\begin{lema}
\label{tamif}
    Let $(K,v)$ be a \texttt{SAMK} field, and let $w$ be an extension of $v$ to an algebraic closure $K^{alg}$ of $K.$ Then $(K^{1/p^{\infty}},w)$ is algebraically maximal and $(K^{1/p^{\infty}}|K,w)$ is immediate.
\end{lema}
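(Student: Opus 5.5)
We treat the two assertions separately, starting with immediacy of $(K^{1/p^{\infty}}|K,w)$. If $\char(K)=0$ there is nothing to prove, since $K^{1/p^{\infty}}=K$; so assume $\char(K)=p>0$ and write $L=K^{1/p^{\infty}}$. The value group: every $\g\in wL$ satisfies $p^{n}\g\in vK$ for some $n$, because $a^{p^{n}}\in K$ for each $a\in L$. By the Kaplansky hypothesis $vK$ is $p$-divisible, so $\g=\d/p^{n}$ with $\d\in vK$, and $p$-divisibility gives $\d/p^{n}\in vK$. Hence $wL=vK$. The residue field: each $\ol{a}\in Lw$ satisfies $\ol{a}^{p^{n}}\in Kv$. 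The residue field $Kv$ is perfect, since Kaplansky forbids extensions of degree divisible by $p$, in particular purely inseparable ones of degree $p$. So $\ol{a}\in Kv$ and $Lw=Kv$. The extension is therefore immediate.

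Next, algebraic maximality of $(L,w)$. Since $L$ is perfect, every algebraic extension of $L$ is separable. So it suffices to show that $L$ admits no proper immediate finite separable extension. Suppose $L'|L$ is such an extension, with $L'=L(\a)$ and $\a$ separable over $L$. Let $f$ be the minimal polynomial of $\a$ over $L$. The coefficients of $f$ lie in $K^{1/p^{n}}$ for some $n$, so $f$ is also the minimal polynomial of $\a$ over $K_n:=K^{1/p^{n}}$, and $L'=L\cdot K_n(\a)$.

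Now $K_n$ is isomorphic to $K$ as a valued field via Frobenius $x\mapsto x^{p^{n}}$, which scales valuations by $p^{n}$, an automorphism of the $p$-divisible group. Hence $K_n$ is again separable-algebraically maximal Kaplansky. The extension $K_n(\a)|K_n$ is separable of degree $[L':L]>1$, so it is not immediate. This gives two cases.

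Suppose first that the value group or residue field grows in $K_n(\a)|K_n$. Then the same growth should persist up to $L'$: elements of $K_n(\a)$ with new values or residues should remain new over $L$. Indeed, $wL=vK_n$ and $Lw=K_nv$, and $L'|K_n(\a)$ is purely inseparable. The delicate point is to justify this rigorously. Here I would use the fundamental inequality together with the Kaplansky condition: for the tame-like case, no defect means that $[K_n(\a):K_n]=e\cdot f$ with $e,f$ prime to $p$. Moreover $K_n(\a)\otimes_{K_n}L$ is a field of the same degree, because $L|K_n$ is purely inseparable and $\a$ is separable. Index and degree prime to $p$ cannot be absorbed by an extension $L|K_n$ with trivial value-group and residue-field growth. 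Hence $L'|L$ has the same $e$ and $f$, and is not immediate, a contradiction.

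The main obstacle is the case where $K_n(\a)|K_n$ could have defect. This is excluded because separable-algebraically maximal Kaplansky fields are defectless for separable extensions: every finite separable extension of $K_n$ is tame, by Kuhlmann's characterization. Granting that fact, the case analysis above finishes the proof, and I would cite it rather than reprove it.
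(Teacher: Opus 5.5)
Your proof is correct. The immediacy part is the same computation as the paper's; the paper cites Kuhlmann--Pal for $wK^{1/p^{\infty}}=\frac{1}{p^{\infty}}vK$ and $K^{1/p^{\infty}}w=(Kv)^{1/p^{\infty}}$, then uses $p$-divisibility and perfection.

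For algebraic maximality your route is genuinely different. The paper argues entirely by citation to Kuhlmann's theory of tame fields: a $\texttt{SAMK}$ field is separably tame, the perfect hull of a separably tame field is tame, and tame fields are algebraically maximal. You instead descend a finite extension $L(\alpha)$ of $L=K^{1/p^{\infty}}$ to $K_n=K^{1/p^{n}}$. You use the Frobenius isomorphism $K_n\cong K$ to see that $K_n$ is separable-algebraically maximal, and then lift non-immediacy back up to $L$. The paper's route gives the stronger conclusion that $K^{1/p^{\infty}}$ is tame. Yours is self-contained, and it shows that only separable-algebraic maximality of $K$ and immediacy of $L|K$ are needed.

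Your argument is also simpler than you made it: there is no second case and no ``delicate point''. Since $K_n$ is separable-algebraically maximal and $K_n(\alpha)|K_n$ is a proper separable extension, it is not immediate. That means $wK_n(\alpha)\supsetneq wK_n$ or $K_n(\alpha)w\supsetneq K_nw$. Now $K_n(\alpha)\subseteq L'$, and by your first part $wL=wK_n$ and $Lw=K_nw$ (because $K\subseteq K_n\subseteq L$). So the new value or residue lies in $wL'\setminus wL$ or in $L'w\setminus Lw$.

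Consequently you can drop all of the following: defectlessness, the fundamental equality, the linear-disjointness remark, and the appeal to the characterization of separably tame fields. The sentence saying that degrees prime to $p$ ``cannot be absorbed'' reaches the right conclusion, but it holds only because of this inclusion argument, not because of anything about degrees.
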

\begin{proof}
    Since $(K,v)$ is \texttt{SAMK}, then it is separably tame and therefore $(K^{1/p^{\infty}},w)$ is tame by \cite[Lemma 3.13]{fvktf}. By \cite[Theorem 3.2]{fvktf}, $(K^{1/p^{\infty}},w)$ is algebraically maximal. Now, $wK^{1/p^{\infty}}=\frac{1}{p^{\infty}}vK=vK$ and $K^{1/p^{\infty}}w=(Kv)^{1/p^{\infty}}=Kv$ by \cite[Lemma 2.1]{fvkp} and because $vK$ is $p$-divisible and $Kv$ is perfect, meaning that $(K^{1/p^{\infty}}|K,w)$ is immediate. 
\end{proof}

Let $(M,v)$ be a \texttt{SAMK} field, and let $K_a$ and $K_b$ be two subfields of $M$ admitting a valued field isomorphism $f:(K_a,v)\to(K_b,v).$ Moreover, suppose that $(K_a,v)$ is Kaplansky, that $M|K_a$ and $M|K_b$ are separable, and that there is an isomorphism of valued fields $f:(K_a,v)\to(K_b,v).$ It follows that $(K_b,v)$ is also Kaplansky and that both extensions $M|K_a^{rac}$ and $M|K_b^{rac}$ are regular, implying that $(K_a^{rac},v)$ and $(K_b^{rac},v)$ are \texttt{SAMK} by \cite[Lemma 2.22]{sm}. If $(M^{alg},w)$ extends $(M,v),$ then by Lemma \ref{tamif}, $((K_a^{rac})^{1/p^{\infty}},w)$ and $((K_b^{rac})^{1/p^{\infty}},w)$ are algebraically maximal immediate extensions of $(K_a,v)$ and $(K_b,v)$ respectively. By Kaplansky's Uniqueness Theorem \cite[Theorem 5]{kap}, there is an isomorphism of valued fields $\Phi:((K_a^{rac})^{1/p^{\infty}},w)\to((K_b^{rac})^{1/p^{\infty}},w)$ whose restriction to $(K_a,v)$ coincides with $f.$\footnote{This is not exactly how Kaplansky's theorem is stated, because the isomorphism $\Phi$ is supposed to be defined over a \emph{common} subfield of $M$ and not over an isomorphism $f:K_a\to K_b$ of subfields of $M.$ Our version, which follows from his proof, is equivalent to his by taking $K_a=K_b$ and taking $f$ to be the identity.} 

\begin{claim} 
\label{restriso}
The image $\Phi(K_a^{rac})$ equals to $K_b^{rac}.$ As a consequence, if $\iota$ is the restriction of $\Phi$ to $K_a^{rac},$ then $\iota:(K_a^{rac},v)\to(K_b^{rac},v)$ is an isomorphism of valued fields.
\end{claim}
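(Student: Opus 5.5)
The plan is to characterize $K_a^{rac}$ inside $(K_a^{rac})^{1/p^{\infty}}$ purely field-theoretically, so that the characterization is preserved by any field isomorphism extending $f$. Write $L_a=(K_a^{rac})^{1/p^{\infty}}$ and $L_b=(K_b^{rac})^{1/p^{\infty}}$. The key observation is that $K_a^{rac}$ is the relative separable-algebraic closure of $K_a$ in $L_a$:
\[
K_a^{rac}=L_a\cap K_a^{sep}.
\]
To prove this, first note $K_a^{rac}\subseteq L_a\cap K_a^{sep}$. Here $K_a^{rac}=M\cap K_a^{alg}$ is algebraic over $K_a$. Since $M|K_a$ is separable and $K_a^{rac}\subseteq M$, the extension $K_a^{rac}|K_a$ is separable algebraic, so $K_a^{rac}\subseteq K_a^{sep}$.

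For the reverse inclusion, take $x\in L_a$ separable algebraic over $K_a$. Some power $x^{p^n}$ lies in $K_a^{rac}$. Then $x$ is both separable and purely inseparable over $K_a^{rac}$, so $x\in K_a^{rac}$. The same argument gives $K_b^{rac}=L_b\cap K_b^{sep}$.

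Next, transport along $\Phi$. Since $\Phi:L_a\to L_b$ is a field isomorphism with $\Phi|_{K_a}=f$, an element $x\in L_a$ is separable algebraic over $K_a$ if and only if $\Phi(x)$ is separable algebraic over $f(K_a)=K_b$. Indeed, the minimal polynomial of $\Phi(x)$ over $K_b$ is the $f$-image of that of $x$ over $K_a$, and separability (nonvanishing discriminant) is preserved by the coefficient isomorphism $f$. Therefore
\[
\Phi(K_a^{rac})=\Phi(L_a\cap K_a^{sep})=L_b\cap K_b^{sep}=K_b^{rac}.
\]

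Finally, $\iota=\Phi|_{K_a^{rac}}$ is a bijective field homomorphism onto $K_b^{rac}$. It preserves valuations because $\Phi$ is an isomorphism of valued fields for $w$, and $w$ restricts to $v$ on both $K_a^{rac}$ and $K_b^{rac}$, these being subfields of $M$. So $\iota$ is an isomorphism of valued fields.

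The main point to check is that $L_a\cap K_a^{sep}$ really is $K_a^{rac}$ and not something larger. This rests on the purely inseparable step, which needs $K_a^{rac}|K_a$ to be separable (true since $M|K_a$ is separable) and needs every element of $L_a$ to have a $p^n$-th power in $K_a^{rac}$ (true by the definition of $L_a$). The case $p=1$ is trivial, since then $L_a=K_a^{rac}$.
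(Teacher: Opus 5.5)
Your argument is correct, and it differs from the paper's at the key step. The paper proves $\Phi(K_a^{rac})\subseteq K_b^{rac}$ element by element. For $m\in K_a^{rac}$, the element $\Phi(m)$ is separable algebraic over $K_b$, hence over $K_b^{rac}$. Moreover $K_b^{rac}(\Phi(m))|K_b^{rac}$ is immediate, since it sits inside $(K_b^{rac})^{1/p^{\infty}}$ (Lemma \ref{tamif}). Because $K_b^{rac}$ is \texttt{SAMK} by \cite[Lemma 2.22]{sm}, this forces $\Phi(m)\in K_b^{rac}$, and the reverse inclusion follows by symmetry.

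You replace the appeal to immediateness and separable-algebraic maximality with an elementary fact: an element that is both separable and purely inseparable over $K_a^{rac}$ already lies in $K_a^{rac}$. You package this as the intrinsic description $K_a^{rac}=(K_a^{rac})^{1/p^{\infty}}\cap K_a^{sep}$, which any field isomorphism extending $f$ transports.

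Your route shows that $\Phi(K_a^{rac})=K_b^{rac}$ holds for any field isomorphism of the perfect hulls extending $f$. It uses only that $M|K_a$ and $M|K_b$ are separable, and needs no valuation theory until you check that $\iota$ preserves $v$. The paper's route instead uses only that the target of $\Phi$ is an immediate algebraic extension of a \texttt{SAMK} field. So it would apply verbatim to isomorphisms between immediate algebraic extensions not explicitly given as perfect hulls. The two are close, though: by \texttt{SAMK}, any immediate algebraic extension of $K_b^{rac}$ is purely inseparable over it, which is exactly the situation your argument exploits.
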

\begin{proof}
Let $m\in K_a^{rac}.$ Since $M|K_a$ is separable, the extension $K_a(m)|K_a$ is separable-algebraic. It follows that $K_b(\Phi(m))|K_b$ and therefore $K_b^{rac}(\Phi(m))|K_b^{rac}$ are separable-algebraic too. Since the latter extension is immediate, we get that $\Phi(m)\in K_b^{rac}$ because $K_b^{rac}$ is separable-algebraically maximal, i.e.~$\Phi(K_a^{rac})\subseteq K_b^{rac}$. Mutatis mutandis, we see that $\Phi^{-1}(K_b^{rac})\subseteq K_a^{rac},$ so the desired equality follows.
        \qedhere$_{\text{Claim}}$
\end{proof}
\begin{figure}
    
\begin{center}
    \begin{tikzcd}
                                                                              &                                                                    & M^{1/p^{\infty}}                                   &                                                 \\
                                                                              & M \arrow[ru, no head]                                              &                                                    &                                                 \\
                                                                              & (K_a^{rac})^{1/p^{\infty}} \arrow[rr, "\Phi"] \arrow[ruu, no head] &                                                    & (K_b^{rac})^{1/p^{\infty}} \arrow[luu, no head] \\
K_a^{rac} \arrow[ru, no head] \arrow[ruu, no head] \arrow[rr, "\iota", dotted] &                                                                    & K_b^{rac} \arrow[ru, no head] \arrow[luu, no head] &                                                 \\
                                                                              &                                                                    &                                                    &                                                 \\
K_a \arrow[rr, "f"] \arrow[uu, no head]                                       &                                                                    & K_b \arrow[uu, no head]                            &                                                
\end{tikzcd}
\end{center}

    \caption{The picture of Claim \ref{restriso}}
    \label{sttt}
\end{figure}
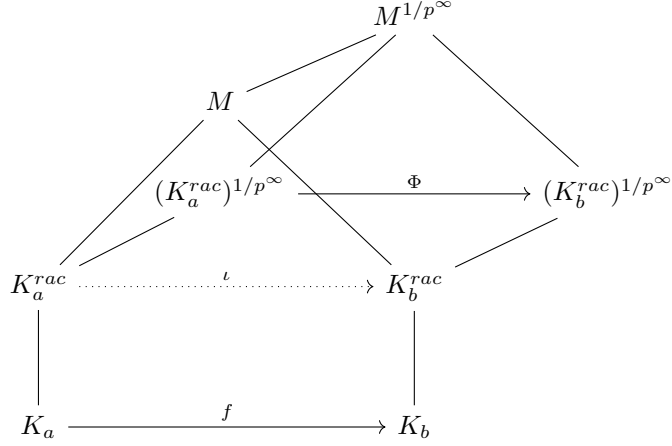
Having said this, we are ready to prove the promised correction of \textbf{Im}.

\begin{lema}
\label{e+im} 
    The $\LL$-theory $\texttt{SAMK}_{e}^{\l,\ac}$ satisfies \textbf{Im}.
\end{lema}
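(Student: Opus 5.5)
We need to show that for $\MM=(M,v^{*})\models\texttt{SAMK}_e^{\l,\ac}$, $(K,v)\preceq\MM$ and $b\in M$ with $(\langle K,b\rangle|K,w)$ immediate, $\tp(b/K)$ is implied by NIP formulas. The plan is to prove that $\tp(b/K)$ is implied by the quantifier-free $\LL$-type of $b$ over $K$ together with the \emph{valuation data} of $\langle K,b\rangle$. Concretely, we aim to show that the set of formulas $\{v(t_1(\ol c,y))\,\square\, v(t_2(\ol c,y)),\ t(\ol c,y)=0,\ t(\ol c,y)\neq 0 : t,t_i\ \LL_\l\text{-terms},\ \ol c\in K\}$ true of $b$ implies $\tp(b/K)$. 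By Theorem \ref{lqfnip}, all these formulas are NIP: they are quantifier-free $\LL_{div,\l}$-formulas, and $\MM$ is a valued $\l$-field. This gives \textbf{Im}.

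To prove the implication, we take $b'\in\UU$ (a monster extension of $\MM$) realizing the same quantifier-free $\LL_{div,\l}$-type over $K$ as $b$, and show that $b'\equiv_K b$. First, $f:\langle K,b\rangle\to\langle K,b'\rangle$, $b\mapsto b'$, is a valued-field isomorphism over $K$ that commutes with the $\l$-functions. Second, by Fact \ref{lcl}, since $M|K$ is separable (because $K\preceq M$ makes $K$ $\l$-closed), both $M|\langle K,b\rangle$ and $\UU|\langle K,b'\rangle$ are separable. Immediateness transfers along $f$ because the value groups and residue fields are computed from the valuation data alone; hence both extensions are immediate over $K$, and $\langle K,b\rangle$ is Kaplansky, having the same value group and residue field as $K$.

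We then apply Claim \ref{restriso} with $K_a=\langle K,b\rangle$ and $K_b=\langle K,b'\rangle$, the latter sitting inside a suitable model containing $b'$. This extends $f$ to a valued-field isomorphism $\iota:K_a^{rac}\to K_b^{rac}$, where these relative algebraic closures are \texttt{SAMK} and the ambient extensions are regular, by \cite[Lemma 2.22]{sm}. The extensions $K_a^{rac}|K$ and $K_b^{rac}|K$ are still immediate (this uses Lemma \ref{tamif} and the immediateness of the $(\cdot)^{1/p^\infty}$-hulls), so $\iota$ is the identity on value groups and residue fields via $K$. It is therefore compatible with the $\kk$- and $\GG$-sorts, and with $\ac$ if we choose $\ac$ appropriately: since $\ac$ restricted to $K$ determines $\ac$ on an immediate extension, via $\ac(x)=\ac(c)\res(x/c)$ for $c\in K$ with $v(c)=v(x)$, $\iota$ preserves $\ac$.

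Finally, we invoke the relative quantifier elimination / embedding lemma for $\texttt{SAMK}_e^{\l,\ac}$ down to the residue field and value group, as in \cite{sm}. Because $M|K_a^{rac}$ and $\UU|K_b^{rac}$ are separable and $\iota$ is an $\LL$-isomorphism of substructures that is elementary on $\kk$ and $\GG$ (trivially so, since it is the identity over the $\kk$- and $\GG$-parts of $K\preceq$ both), $\iota$ is partial elementary; hence $b\equiv_K b'$. The main obstacle is this last step. We must make sure that the embedding lemma in \cite{sm} applies to \emph{separable}, not necessarily algebraically closed, substructures with equal Ershov degree, which is exactly where the $\l$-closure is essential. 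We must also check that $K_a^{rac}$ remains $\l$-closed in $M$, which follows from regularity of $M|K_a^{rac}$.
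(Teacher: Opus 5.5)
Your proof is correct. Up to the construction of $\iota$ via Claim \ref{restriso} it runs parallel to the paper's, and the two routes diverge only at the conclusion.

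\emph{The paper's route.} The paper does not use relative quantifier elimination here. It shows that $N_a=\langle K,a\rangle^{rac}$ and $N_b$ are themselves models of $\texttt{SAMK}_e^{\l,\ac}$: they are SAMK by \cite[Lemma 2.22]{sm}, they are $\l$-closed in $M$, and $\imdeg(K)\leq\imdeg(N_a)\leq\imdeg(M)=\imdeg(K)$. It then gets $\NN_a,\NN_b\preceq\MM$ from the AKE-type result \cite[Corollary 5.1]{sm}. An isomorphism between elementary substructures fixing $K$ trivially preserves types over $K$. Claim \ref{restriso} is needed there precisely because the argument must land in models.

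\emph{Your route.} You conclude with relative quantifier elimination instead, and this is available. It is Proposition \ref{delredn} taken over all $n$. Its proof rests only on the Embedding Lemma of \cite{sm}, not on \textbf{Im}, so there is no circularity. It also applies to arbitrary $\LL$-substructures, which are automatically $\l$-closed, so the ``main obstacle'' you flag does not arise.

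On your route the passage to $K_a^{rac}$ is actually redundant. Every term in the relative-QE normal form of $\Phi(\ol c,y)$, evaluated at $b$, lies in $\langle K,b\rangle$. You have already shown that $f$ preserves the vanishing of such terms. You have also shown that their values and angular components are literally the same elements of $vK$ and $Kv$ as those of their images. So $b\equiv_K b'$ follows from $f$ alone. The trade-off is this: the paper pays with the SAMK and Ershov-degree verification plus Claim \ref{restriso}, while you pay with full relative QE.

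\emph{Choice of NIP formulas.} Your family is the more precise one. Quantifier-free formulas of the three-sorted $\LL_\l(\LL_\kk,\LL_\GG)$ may contain $\LL_\kk$- or $\LL_\GG$-atoms that Theorem \ref{lqfnip} does not cover. Immediateness is what lets you restrict to quantifier-free $\LL_{div,\l}$-formulas over $K$.

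\emph{One correction.} Immediateness of $K_a^{rac}|K$ does not come from Lemma \ref{tamif}. It holds for two reasons, both consequences of $K\preceq M$:
\begin{itemize}
\item $vK_a^{rac}/vK$ is torsion, while $vK$ is pure in $vM$;
\item $K_a^{rac}v$ is algebraic over $Kv$, which is relatively algebraically closed in $Mv$.
\end{itemize}
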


\begin{proof}
Let $\MM\models\texttt{SAMK}_e^{\l,\ac},$ let $(M,v)$ be its underlying valued field, let $K$ be the underlying field of an elementary substructure $\mathfrak{K}\preceq\MM,$ and let $a\in M$ be such that $(\langle K,a\rangle|K,v)$ is immediate. Let us fix an algebraic closure $M^{alg}$ of $M$ and an extension $w$ of $v$ to $M^{alg},$ so that all fields mentioned below are subfields of $M^{alg}.$ Without loss, we can assume that $\MM$ is $|K|^{+}$-saturated. We will show that $\tp^{\MM}(a/K)$ is implied by $\qftp^{\MM}_{\LL_{\l}}(a/K).$ 
This is sufficient to prove \textbf{Im} because quantifier-free $\LL_\l$-formulas are NIP by Theorem \ref{lqfnip}. 
The type $\qftp^{\MM}_{\LL_{\l}}(a/K)$ determines the $\LL_{\l}$-isomorphism type of $\langle K,a\rangle$ over $K,$ i.e.~if $a,b\in M$ and $a\equiv_{K}b$ with respect to $\LL_{\l},$ then $\langle K,a\rangle_{\LL_\l}$ is $\LL_{\l}$-isomorphic to $\langle K,b\rangle_{\LL_\l}$ over $K.$ Since $(\langle K,a\rangle|K,v)$ is immediate, the angular component of $K$ extends uniquely to $\langle K,a\rangle_{\LL_{\l}},$ so $\langle K,a\rangle_{\LL_\l}$ and $\langle K,b\rangle_{\LL_\l}$ are even $\LL$-isomorphic over $K,$ say, by an embedding $f:\langle K,a\rangle_{\LL_\l}\to\langle K,b\rangle_{\LL_\l}$ that fixes $K$ and that takes $a$ to $b.$ Note that $\langle K,a\rangle_{\LL}=\langle K,a\rangle_{\LL_\l},$ so we may drop the subscripts. Our goal is to find, for such a pair $(a,b),$ a pair of $\LL$-substructures $\NN_a,\NN_b\subseteq\MM$ and an $\LL$-isomorphism $\iota:\NN_a\to\NN_b$ such that 
    \begin{enumerate}
        \item $N_a\supseteq\langle K,a\rangle$ and $N_b\supseteq \langle K,b\rangle,$
        
        \item $\NN_a\preceq\MM$ and $\NN_b\preceq\MM,$

        \item $\iota(a)=b$ and $\iota(k)=k$ for all $k\in K.$
    \end{enumerate}
    This is sufficient for what we want, because if $\f(x,y)$ is an $\LL$-formula and $k$ is any $|y|$-tuple from $K,$ then $$\MM\models\f(a,k)\Sii\NN_a\models\f(a,k)\Sii\NN_b\models\f(b,k)\Sii\MM\models\f(b,k),$$ i.e.~$\tp^{\MM}(a/K)=\tp^{\MM}(b/K).$ 
    Since $(\langle K,a\rangle|K,v)$ is immediate, $(\langle K,a\rangle,v)$ is still Kaplansky. Since $\mathfrak{K}\preceq\MM,$ the extension $M|K$ is regular: it is separable and $K$ is relatively algebraically closed in $M$. If $a$ is algebraic over $K,$ then $a$ lies in $K^{alg}\cap M=K,$ so the extension $(\langle K,a\rangle|K,v)$ must be trivial, yielding that $\tp^{\MM}(a/K)$ is implied by the NIP formula $x=a.$ 
    Now suppose that $a$ is transcendental over $K.$ 
    Let $N_a=\langle K,a\rangle^{rac}$ and $N_b=\langle K,b\rangle^{rac}$ be the relative algebraic closures of $\langle K,a\rangle$ and $\langle K,b\rangle$ in $M$ respectively.  
    
    The extension $M|N_a$ is separable if and only if $M|\langle K,a\rangle$ is separable, as $N_a|\langle K,a\rangle$ is algebraic. In fact, the extension $M|\langle K,a\rangle$ \emph{is} separable by Fact \ref{lcl}, from which it follows that $M|N_a$ is regular and that $(N_a,v)$ is \texttt{SAMK}, by \cite[Lemma 2.22]{sm}.
    Since $M|N_a$ is separable, then the $\LL$-structure $\NN_a$ associated to $N_a$ is indeed an $\LL$-substructure of $\MM,$ as $N_a$ is lambda-closed in $M.$ This also implies that $\imdeg(M)\geq\imdeg(N_a).$ 
    In addition, since $M|K$ is separable, then $N_a|K$ is separable, so $\imdeg(N_a)\geq\imdeg(K).$
    Since $K\preceq M,$ we get that $\imdeg(K)=\imdeg(N_a)=\imdeg(M),$ so actually $\NN_a\models\texttt{SAMK}_e^{\l,\ac}.$ 
    By \cite[Corollary 5.1]{sm}, we get that $\NN_a\preceq\MM.$ Mutatis mutandis, we also obtain that $\NN_b\preceq\MM,$ where $\NN_b$ is the $\LL$-structure associated to $N_b.$ Following Claim \ref{restriso} and its preceding discussion, if we put $K_a=\langle K,a\rangle$ and $K_b=\langle K,b\rangle,$ we see that there is an $\LL_{3s}$-isomorphism $\iota:(N_a,v)\to(N_b,v)$ extending $f.$ Since angular extensions extend uniquely in immediate extensions, the embedding $\iota$ induces moreover an $\LL_{ac}$-isomorphism. Finally, since $M|\iota(N_a)$ is separable, then $\iota$ commutes with the parameterized $\l$-functions of $M$ as seen in \cite[Lemma 2.7]{sm}, meaning that $\iota$ determines a well defined $\LL$-isomorphism of $\NN_a$ and $\NN_b.$ The result follows.
    \qedhere    
\end{proof}

We can finally state the expected transfer principle.

\begin{cor}[Cf.~{\cite[Proposition 4.1]{aj}}]
\label{fulltrans}
    Suppose that $(M,v)$ is a henselian valued field of equal characteristic, seen as a structure in $\LL=\LL(\LL_{\kk},\LL_{\GG}).$ If $(M,v)$ is separably defectless Kaplansky, $Kv$ is NIP with respect to $\LL_{\kk}$ and $vM$ is NIP with respect to $\LL_{\GG},$ then $(M,v)$ is NIP with respect to $\LL_{3s}.$  
\end{cor}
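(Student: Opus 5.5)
The plan is to reduce to the theory $\texttt{SAMK}_e^{\l,\ac}$ and apply Proposition \ref{niptransfer}, whose side conditions are Lemma \ref{e+se} (\textbf{SE}) and Lemma \ref{e+im} (\textbf{Im}).

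\emph{Step 1: pass to the richer language.} A henselian equicharacteristic separably defectless Kaplansky field is separable-algebraically maximal Kaplansky, so $(M,v)$ is a \texttt{SAMK} field. Let $e\in\N\cup\{\infty\}$ be its Ershov degree. It may not admit an angular component, but a sufficiently saturated elementary extension $(M^{*},v)$ of $(M,v)$ does (for instance an $\aleph_1$-saturated one). Since $\LL_{3s}$-NIP is a property of the complete theory, it suffices to prove it for $(M^{*},v)$.

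Expand $(M^{*},v)$ by such an angular component and by its parameterized $\l$-functions. This gives an $\LL$-structure $\MM$, with $\LL=\LL(\LL_\kk,\LL_\GG)$, which is a model of $\texttt{SAMK}_e^{\l,\ac}$. Each $\l$-function is $\LL_{3s}$-definable, as noted after the axiomatization. Every $\LL_{3s}$-formula is therefore in particular an $\LL$-formula. So if $\Th_\LL(\MM)$ is NIP, the reduct $\Th_{\LL_{3s}}(M^{*},v)$ is NIP, since a reduct of a NIP theory is NIP.

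\emph{Step 2: NIP of the auxiliary sorts.} The residue field and value group of $M^{*}$ are elementarily equivalent, in $\LL_\kk$ and $\LL_\GG$ respectively, to those of $M$. This holds because $\kk$ and $\GG$ are interpreted in the $\LL_{3s}$-structure, and the elementary extension restricts to elementary extensions of these sorts in the expanded languages. Hence $\kk(\MM)$ is NIP in $\LL_\kk$ and $\GG(\MM)$ is NIP in $\LL_\GG$.

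\emph{Step 3: apply the transfer.} By Lemma \ref{e+se} and Lemma \ref{e+im}, $\Th(\MM)$ satisfies \textbf{SE} and \textbf{Im}. Proposition \ref{niptransfer} then gives that $\MM$ is NIP in $\LL$, and Step 1 concludes.

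The main obstacle is the bookkeeping in Step 1. First, one must check that adding an angular component to a saturated elementary extension yields a model of the full axiom scheme of $\texttt{SAMK}_e^{\l,\ac}$, including the Ershov-degree and $\l$-axioms; this is immediate once the $\l$-functions are interpreted by their defining formulas. Second, the hypothesis ``$Kv$'' in the statement should be read as $Mv$. A further point is that \textbf{Im} in Lemma \ref{e+im} was proved for arbitrary models of $\texttt{SAMK}_e^{\l,\ac}$, so no extra saturation beyond what Proposition \ref{niptransfer} uses in the monster model is needed.
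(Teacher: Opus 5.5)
Your proposal is correct and follows the paper's proof essentially step for step. The paper's steps are: henselian plus separably defectless gives separable-algebraically maximal; an $\aleph_1$-saturated elementary extension admits an angular component and is therefore a model of $\texttt{SAMK}_e^{\l,\ac}$; Lemmas \ref{e+se} and \ref{e+im} let one apply Proposition \ref{niptransfer}; and NIP descends to the $\LL_{3s}$-reduct, whose theory is that of $(M,v)$. Your reading of ``$Kv$'' as $Mv$ is also right.
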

\begin{proof}
    By the fundamental equality, if a valued field is henselian and separably defectless, then it is separably algebraically maximal. If $\MM$ is the $\LL$-structure generated by $(M,v)$ and if $\NN\succeq\MM$ is $\aleph_1$-saturated, then $\NN$ admits an angular component map and thus $\NN\models\texttt{SAMK}_{e}^{\l,\ac},$ where $e\in\N\cup\{\infty\}$ equals the Ershov degree of $M.$ By Lemmas \ref{e+se} and \ref{e+im}, the hypotheses of Proposition \ref{niptransfer} are satisfied by the $\LL$-theory of $\NN,$ implying that it is NIP, and so is its $\LL_{3s}$-theory. This last theory coincides with the $\LL_{3s}$-theory of $(M,v).$      
\end{proof}

\section{NIP Fields Revisited}
\label{5}

In this section we highlight some existential formulas from the literature that, under the assumption of being NIP, account for crucial algebraic properties of fields and valued fields. 

\subsection{Interpretations of Fields and Valued Fields}

We start with a well known remark.

\begin{remark}
\label{inte}
Let $L|K$ be a finite simple extension of fields. We will describe an interpretation of the field structure $(L,+^{L},\times^{L},0^{L},1^{L})$ in the structure $(K,+,\times,0,1),$ following Lemma \ref{intcrit}. 
If $L=K(\b)$ for some $\b\in L,$ let $\a=(\a_0,\.,\a_{n-1})\in K^{n}$ be the coefficients of the minimal polynomial of $\b$ over $K,$ so that $n=[L:K].$ If we define $\Phi:K^{n}\to L$ as $\Phi(a_0,\.,a_{n-1})=a_0+a_1\b+\.+a_{n-1}\b^{n-1},$ then the fact that $B=\{1,\b,\.,\b^{n-1}\}$ is a $K$-basis of $L$ implies that $\Phi$ is surjective. Also, $+^{L}$ is quantifier-free-definable over 0 and $\times^{L}$ is quantifier-free-definable over $\a.$ Indeed, $(x_1,\.,x_n)+^{L}(y_1,\.,y_n)=(z_1,\.,z_n)$ if and only if $\bigwedge_{i=1}^{n}x_i+y_i=z_i,$ and in $\Z[X_1,\.,X_n,Y_1,\.,Y_n,Z_1,\.,Z_n],$ there are $n$ polynomials $P_1,\.,P_{n}$ such that $(x_1,\.,x_n)\times^{L}(y_1,\.,y_n)=(z_1,\.,z_n)$ if and only if $\bigwedge_{i=1}^{n}z_i=P_i(x_1,\.,x_n,y_1,\.,y_n,\a_0,\.,\a_{n-1}).$ Since the latter interpretation was made with quantifier-free formulas, Statement 2 of Corollary \ref{eint} implies that this interpretation is $\E_n$-to-$\E_n$ and $\A_n$-to-$\A_n$ for all $n\geq 1.$  
\end{remark}

\begin{lema}
\label{finextint}
    Let $n\geq 1,$ let $K$ be an $\E_n$-NIP field and let $L|K$ be a simple algebraic extension. Then the field $L$ is $\E_n$-NIP too.
\end{lema}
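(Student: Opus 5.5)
The plan is to combine the interpretation of Remark \ref{inte} with Lemma \ref{dtod'nip}.

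First I dispose of the infinite case. A simple algebraic extension $L=K(\b)$ is finite, of degree $n'=[L:K]$, which is the degree of the minimal polynomial of $\b$. To avoid a clash with the quantifier rank $n$, I write $n'$ for the degree. So Remark \ref{inte} applies.

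The interpretation is given by $\Phi:K^{n'}\to L$, $\Phi(a_0,\.,a_{n'-1})=\sum_i a_i\b^i$, with parameters $\a$ the coefficients of the minimal polynomial. I then check the hypotheses of Lemma \ref{intcrit} and of Corollary \ref{eint}, and record the following quantifier-free descriptions.
\begin{itemize}
\item Item 1: $\Phi^{-1}L=K^{n'}$ is defined by $\top$.
\item Item 2: $\Phi(x)=\Phi(y)$ iff $x=y$ coordinatewise, since $\{1,\b,\.,\b^{n'-1}\}$ is a basis. This is quantifier-free.
\item Item 3: the constants are $0=(0,\.,0)$ and $1=(1,0,\.,0)$, both quantifier-free.
\item Item 4: addition and multiplication are quantifier-free over $\a$, by Remark \ref{inte}.
\item Item 5: in $\LL_{ring}$ there are no relation symbols besides equality, so this item is vacuous (or trivially quantifier-free).
\end{itemize}
Since every atomic $\LL_{ring}$-formula is then quantifier-free definable, it is in particular both $\E_n$ and $\A_n$ (pad with dummy quantifiers). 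Statement 2 of Corollary \ref{eint} therefore gives that $(n',\Phi)$ is $\E_n$-to-$\E_n$.

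The step needing care is the term and equality clauses in the proof of Lemma \ref{intcrit}. There, translating an atomic formula $t_1=t_2$ introduces an existential (or universal) quantifier over witnesses $v$ for the values of the terms. This would seem to raise the quantifier rank when it sits inside an $\E_n$-formula. I expect the resolution to be one of two arguments.
\begin{itemize}
\item \emph{Choose the appropriate form.} The existential form can be used under an $\E$ block and the universal form in negative positions. Corollary \ref{eint} is precisely the statement that this bookkeeping works, and I will invoke it as given.
\item \emph{Direct observation.} Here the interpretation is even better: every $\LL_{ring}$-term in $L$ is computed coordinatewise by polynomials over $\Z[\a]$. So an atomic formula $t_1(\Phi(y))=t_2(\Phi(y))$ translates directly to a conjunction of polynomial equations in $y$ and $\a$, with no quantifier at all. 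Quantifiers $\E x$ over $L$ become blocks $\E x_0\.x_{n'-1}$ over $K$, so prenex form and quantifier rank $n$ (counting blocks) are preserved.
\end{itemize}

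Finally, I apply Lemma \ref{dtod'nip} with $\MM=L$, $\NN=K$ and $\D=\D'=\E_n$. Since $K$ is $\E_n$-NIP, it follows that $L$ is $\E_n$-NIP. One point must be checked along the way: the ultrapower argument in Lemma \ref{dtod'nip} only uses that the translating formula $\psi\in\D'$ is uniform in the parameters. This holds here because the formula is uniform and only the parameter $\a$ is fixed.
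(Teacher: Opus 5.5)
Your proposal is correct and is essentially the paper's argument: Remark \ref{inte}, via Corollary \ref{eint}, gives an $\E_n$-to-$\E_n$ interpretation of $L$ in $K$, and Lemma \ref{dtod'nip} then transfers $\E_n$-NIP from $K$ to $L$. Your direct observation is a useful extra check on the bookkeeping that the paper leaves to Corollary \ref{eint}: every $\LL_{ring}$-term of $L$ is computed coordinatewise by polynomials over $\Z[\a]$, so atomic formulas translate without quantifiers and quantifier blocks over $L$ become blocks of the same kind over $K$.
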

\begin{proof}
    By Remark \ref{inte}, the field $L$ is $\E_n$-to-$\E_n$ interpretable in $K.$ Since $K$ is $\E_n$-NIP, the result follows from Lemma \ref{dtod'nip}.    
\end{proof}

The celebrated theorem of Kaplan, Scanlon and Wagner for NIP fields \cite[Theorem 4.3]{ksw} shows that NIP fields are Artin-Schreier closed. In fact,

\begin{fact}[Cf.~{\cite[Theorem 4.3]{ksw}}]
    If $K$ is an $\E$-NIP field, then $K$ is Artin-Schreier closed.
\end{fact}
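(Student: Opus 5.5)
The plan is to check that the proof of Kaplan--Scanlon--Wagner only uses NIP for finitely many existential formulas, and to feed those formulas into the fragment machinery of Section~\ref{2}. Here $\E$-NIP means that every existential formula is NIP (i.e.\ $\E_n$-NIP for all $n$). Let $K$ have characteristic $p>0$; in characteristic $0$ there is nothing to prove. By Lemma \ref{finextint}, every finite (simple, hence every finite separable, and after iterating every finite) extension $L$ of $K$ is again $\E$-NIP. So it suffices to show: every infinite $\E$-NIP field $L$ of characteristic $p$ satisfies $\wp(L)=L$, where $\wp(x)=x^{p}-x$. If $K$ is finite it is not Artin--Schreier closed, but finite fields are excluded in the usual statement; I assume $K$ is infinite, as in \cite{ksw}.

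The core of the KSW argument runs as follows. Suppose $\wp(L)\neq L$ and let $\UU$ be a monster model of $\Th(L)$.

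\begin{enumerate}
\item Show that the additive group $\GG_a(\UU)$ has no proper definable subgroup of finite index arising from $\wp$-type maps. For this, consider for tuples $\bar a=(a_1,\dots,a_n)$ the additive endomorphisms $x\mapsto a_i\wp(x)$ and the groups $G_{\bar a}=\bigcap_i a_i\wp(\UU)$. Each $G_{\bar a}$ is defined by an existential formula $\E y_1\ldots y_n\bigwedge_i x=a_i(y_i^{p}-y_i)$, uniformly in $\bar a$.
\item Apply Baldwin--Saxl to this uniform family; only NIP of this single existential formula family is needed. It gives a bound $N$ such that any finite intersection equals a subintersection of size $N$.
\item Choose $a_1,\dots,a_{N+1}$ algebraically independent over $\F_p$, using that $\UU$ is infinite and of infinite transcendence degree. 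KSW show that then $\bigcap_{i\le N+1}a_i\wp(\UU)\neq\bigcap_{i\neq j}a_i\wp(\UU)$. The idea is that the map $(y_1,\dots,y_{N+1})\mapsto(a_i\wp(y_i))$, restricted to the relevant algebraic group, is an isogeny whose kernel grows, by the computation on the Artin--Schreier--Witt-type variety.
\end{enumerate}

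The Baldwin--Saxl step is the only place NIP enters. I would reprove it for the fragment by noting that the failure of the chain condition produces, exactly as in the final step of Proposition \ref{g00}, elements $g_I$ witnessing that the formula $\E\bar y\,(x=\sum\ldots)$ has IP. Concretely, with $\th(x;\bar a)$ the existential formula defining $G_{\bar a}$ and products replaced by sums, $\th(g_I,\bar a_j)$ holds iff $j\notin I$, contradicting $\E$-NIP. Since the group operation is $+$, a quantifier-free term, conjunctions and sums of such formulas stay existential. Thus $\E$-NIP suffices.

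Where KSW deduce $\wp(L)=L$, the argument is algebraic: it shows that $\bigcap_i a_i\wp(\UU)$ has index $p^{N+1}$ if $\wp$ is not onto, giving an infinite strictly decreasing chain. Combined with the Baldwin--Saxl bound this yields a contradiction, so $\wp$ is onto. This part uses no further model theory, only finite index computations that transfer verbatim.

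I expect the main obstacle to be bookkeeping in the KSW reduction to infinite fields and to finite extensions. KSW pass to a finite extension containing roots of unity and invoke the fact that NIP is preserved by interpretation. Here I will use Lemma \ref{finextint} in place of that fact, which only covers simple algebraic extensions. Since every finite extension in positive characteristic is a tower of simple ones, iterating the lemma handles arbitrary finite extensions.
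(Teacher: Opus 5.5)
Your route is the paper's: both defer to the Kaplan--Scanlon--Wagner argument and observe that NIP enters only through Baldwin--Saxl, applied to the single existential formula $\f(x,y)=\E z\,(x=y(z^p-z))$ defining $a\wp(\UU)$. Baldwin--Saxl is already a statement about one NIP formula, so nothing needs to be reproved for the fragment. The detour through Lemma~\ref{finextint} is also unnecessary for this statement.

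There is, however, a genuine gap in your Step~3. You take $a_1,\dots,a_{N+1}$ algebraically independent anywhere in $\UU$ and attribute to Kaplan--Scanlon--Wagner the strict inclusions $\bigcap_i a_i\wp(\UU)\subsetneq\bigcap_{i\neq j}a_i\wp(\UU)$. Their algebraic input is that $G_{\bar a}=\{\bar y: a_1\wp(y_1)=\dots=a_{N+1}\wp(y_{N+1})\}$ is isomorphic to $\mathbb{G}_a$ via an isomorphism defined over a \emph{perfect} field containing the $a_i$. This yields $G_{\bar a}(\UU)\cong(\UU,+)$ only when that perfect field lies inside $\UU$. For arbitrary $a_i\in\UU$ this fails in general: already for two parameters, comparing leading coefficients shows that a nonzero additive map $\mathbb{G}_a\to G_{\bar a}$ over a field $F$ forces $a_2/a_1\in F^p$. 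Nothing in your argument then controls the $\UU$-points.

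This missing ingredient is exactly what the paper records. Take the model $\aleph_0$-saturated and let $k=\bigcap_n\UU^{p^n}$, a type-definable infinite perfect subfield. By saturation $k$ contains $N+1$ elements algebraically independent over $\F_p$, and these are the parameters you must use. Infinite transcendence degree of $\UU$ does not replace this.

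Your account of how the contradiction arises is also wrong. With $a_i\in k$ the index of $\bigcap_i a_i\wp(\UU)$ is $[\UU:\wp(\UU)]^{N+1}$, not $p^{N+1}$. If $[\UU:\wp(\UU)]$ is infinite, no count of absolute indices gives a contradiction. The argument has to be structural instead:
\begin{enumerate}
\item If Baldwin--Saxl gives $\bigcap_i a_i\wp(\UU)=\bigcap_{i\neq j}a_i\wp(\UU)$, then forgetting the $j$-th coordinate maps $G_{\bar a}(\UU)$ onto $G_{\bar a'}(\UU)$, with kernel $\F_p$.
\item Read through the $k$-isomorphisms with $\mathbb{G}_a$, this map becomes an additive polynomial $b(X^p-\lambda^{p-1}X)$ with $b,\lambda\in k^{\times}$ that is onto on $\UU$.
\item Substituting $X=\lambda U$ gives $\wp(\UU)=\UU$.
\end{enumerate}
Equivalently, the relative index $[\bigcap_{i\neq j}a_i\wp(\UU):\bigcap_i a_i\wp(\UU)]$ equals $[\UU:\wp(\UU)]$; this relative statement, not an absolute count, is what yields the strict inclusions you need.

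Two smaller points. $\E$-NIP means $\E_1$-NIP; $\E_n$-NIP for all $n$ would be full NIP. You are right that $K$ must be assumed infinite; the paper's proof uses this tacitly when it asserts that $k$ is infinite.
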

\begin{proof}
    Suppose $K$ is $\aleph_0$-saturated, and let $k=K^{p^{\infty}}=\bigcap_{n<\omega}K^{p^{n}}.$ Then $k$ is a type-definable infinite perfect subfield of $K.$ Consider the existential formula $\f(x,y)=\E z\,(x+yz=yz^{p})$ saying that $\f(K,a)=a\cdot\wp(K),$ where $\wp:K\to K$ is the Artin-Schreier map $\wp(y)=y^{p}-y.$ By Baldwin-Saxl Theorem \cite[Theorem 2.13]{simon}, since $\f(x,y)$ is NIP, there is some $n<\omega$ such that for any $(n+1)$-tuple $\ol{a}$ of $K$ there is some $n$-subtuple $\ol{a}'$ of $\ol{a}$ satisfying $\bigcap_{a\in\ol{a}}\f(K,a)=\bigcap_{a\in\ol{a}'}\f(K,a).$
    The proof continues exactly as in the proof of \cite[Theorem 4.3]{ksw}. 
\end{proof}

Also, Duret's \cite[Theorem 6.4]{duret} says that every relatively algebraically closed, pseudo-algebraically closed subfield of a NIP field is separably closed. Recall that a field $K$ is called \emph{pseudo-algebraically closed} if every plane curve defined over $K$ has a $K$-rational point, or equivalently, if $K$ is existentially closed in any of its regular extensions, cf.~\cite[Proposition 11.3.5]{fj}. In fact,

\begin{fact}[Cf.~{\cite[Theorem 6.4]{duret}}]
\label{dur}
    Every relatively algebraically closed, pseudo-algebraically closed subfield of an $\E$-NIP field $K$ is separably closed. 
\end{fact}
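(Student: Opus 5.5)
We follow Duret's proof of \cite[Theorem 6.4]{duret} and check that the formula witnessing the independence property there is existential. Let $F\subseteq K$ be relatively algebraically closed and pseudo-algebraically closed, and suppose, aiming for a contradiction, that $F$ is not separably closed. Then there is a finite Galois extension $F'|F$ of some degree $d>1$. Passing to a prime-degree subextension of $F'|F$, and replacing $F$ by a finite extension if needed, we may assume there is a Galois extension of prime degree $\ell$.

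\emph{Reduction to an Artin--Schreier or Kummer extension.} If $\ell=p=\char(F)$, the extension is Artin--Schreier. If $\ell\neq p$, we first adjoin the $\ell$-th roots of unity and obtain a Kummer extension $F(\zeta)(\sqrt[\ell]{c})$. Since $F$ is relatively algebraically closed in $K$, the compositum $L=K(\zeta)$ is a simple algebraic extension of $K$. By Lemma \ref{finextint} (applied with $n=1$), $L$ is again $\E$-NIP. Moreover $F(\zeta)$ is still PAC, being an algebraic extension of a PAC field, and it is relatively algebraically closed in $L$. So we may work inside $L$ and assume $\zeta\in F$.

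\emph{Producing the independence pattern.} Consider the existential formula
\[
\f(x,y)=\E z\,(x+y=z^{\ell})
\]
in the Kummer case, or $\f(x,y)=\E z\,(x+y=z^{p}-z)$ in the Artin--Schreier case. Because $F$ is PAC and the extension $K|F$ is regular, $F$ is existentially closed in suitable regular extensions. Following Duret, for any finite disjoint sets $I,J$ of elements $a_i\in F$, we use the PAC property to find $b\in F$ such that $a_i+b$ is an $\ell$-th power in $F$ for $i\in I$ and is not an $\ell$-th power in $F$ for $j\in J$. To get the ``not'' part we use a variety over $F$ built from the non-trivial cyclic extension: we solve $a_j+b=c\,w_j^{\ell}$ with $c\notin F^{\ell}$.

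\emph{Transfer to $K$.} For $j\in J$, the element $a_j+b\in F$ is not an $\ell$-th power in $F$. Since $F$ is relatively algebraically closed in $K$, any $\ell$-th root of $a_j+b$ lying in $K$ would be algebraic over $F$ and hence lie in $F$. So $a_j+b$ is not an $\ell$-th power in $K$ either. Positive instances are witnessed inside $F\subseteq K$. Hence the existential formula $\f$ shatters arbitrarily large finite sets in $K$, so it has IP, contradicting that $K$ is $\E$-NIP.

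The main obstacle is the PAC step: building the right absolutely irreducible variety so that the negative conditions hold, and making sure the reductions (adjoining roots of unity, passing to prime degree) keep $F$ relatively algebraically closed and PAC. We expect to take this step verbatim from Duret's argument, since it only uses field arithmetic of $F$ and not the quantifier complexity of formulas in $K$.
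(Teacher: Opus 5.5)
Your strategy is the same as the paper's: run Duret's construction inside a finite simple extension of $K$, and use Lemma \ref{finextint} to carry $\E$-NIP from $K$ up to that extension. Your Kummer case works. The genuine gap is the Artin--Schreier case, where your formula cannot witness IP.

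The formula $\E z\,(x+y=z^{p}-z)$ is NIP in every field. For fixed $b$ it defines $\wp(F)-b$, a coset of the single additive subgroup $\wp(F)$. Such a family has VC-dimension at most $1$: if $a_1,a_2$ lie in a common coset, then $a_1-a_2\in\wp(F)$, so every coset containing $a_1$ also contains $a_2$.

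The PAC step fails for the same reason. On the variety $a_i+b=\wp(w_i)$ $(i\in I)$, $a_j+b=c+\wp(w_j)$ $(j\in J)$, the element $\wp(w_1-w_2)$ is a constant from $F$. Hence over $F^{alg}$ the variety splits into $p$ components as soon as $|I\cup J|\geq 2$, so it is not absolutely irreducible. Additive translates work in the Kummer case only because $F^{\ell}$ is a multiplicative subgroup.

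In the Artin--Schreier case you need the multiplicative formula $\E z\,(z^{p}-z=xy)$, which is the one the paper (following Duret) uses:
\begin{itemize}
\item Take $a_1,\dots,a_n\in F$ linearly independent over $\F_p$.
\item In $F^{alg}(t)$ the elements $a_1t,\dots,a_nt$ are then $\F_p$-independent modulo $\wp(F^{alg}(t))$, since a nonzero constant multiple of $t$ is a polynomial of degree $1$, prime to $p$.
\item So, with $c\notin\wp(F)$, the variety $\wp(w_i)=a_it$ $(i\in I)$, $\wp(w_j)=a_jt-c$ $(j\in J)$ is absolutely irreducible.
\item An $F$-point with $t=b$ gives the required pattern, and the negative instances transfer to $K$ exactly as in your Kummer argument.
\end{itemize}

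Alternatively, you can avoid the case $\ell=p$ altogether. Each relevant finite extension of $K$ is $\E$-NIP and infinite, hence Artin--Schreier closed by the Kaplan--Scanlon--Wagner fact stated just before. Relative separable-algebraic closedness then forces the corresponding finite extension of $F$ to be Artin--Schreier closed.

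Two smaller points.
\begin{itemize}
\item When you replace $F$ by a finite extension $F_1$ to get a cyclic extension of prime degree, $F_1$ no longer lies in $K$. You must pass to $KF_1$, which is again $\E$-NIP by Lemma \ref{finextint}, and check that $F_1$ is relatively separably algebraically closed in $KF_1$. This holds because $K\cap E=F$ for every finite Galois $E|F$; hence $K$ and $E$ are linearly disjoint over $F$, and $KF_1\cap E=F_1$. The same argument justifies your claim about $F(\zeta)$ in $K(\zeta)$, and separable closedness is all your transfer step uses.
\item Your assertion that $K|F$ is regular is unjustified, since relative algebraic closedness does not give separability. Nothing depends on it, so drop it.
\end{itemize}
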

\begin{proof}
     If $K$ admits a relatively algebraically closed, pseudo-algebraically closed subfield, then \cite[Theorem 6.4]{duret} shows that there is some prime $p$ and some finite simple extension $L$ of $K$ in which one of the formulas $\E z(z^{p}=x+y)$ or $\E z(z^{p}=xy+z)$ interpret the Random Graph, hence they have IP in $L$. If the formulas $\phi_1$ and $\psi_1$ are the respective interpretations in $K$ of such formulas, then following the proof of \cite[Theorem 6.4]{duret} ---or Lemma \ref{finextint}--- we see that $\phi_1$ and $\psi_1$ are also existential, transferring IP from $L$ to $K.$     
\end{proof}

This allows us to restate \cite[Corollary 4.5]{ksw} as follows.

\begin{cor}[Cf.~{\cite[Corollary 4.5]{ksw}}]
\label{enipfpalg}
    Let $K$ be an infinite $\E$-NIP field of characteristic $p>0.$ Then $K$ contains $\F_p^{alg}.$  
\end{cor}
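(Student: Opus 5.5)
We follow the proof of \cite[Corollary 4.5]{ksw}, checking at each step that only existential formulas are used. The plan has three steps.

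\textbf{Step 1: reduce to $F=\F_p^{alg}\cap K$.} Let $F=\F_p^{alg}\cap K$ be the relative algebraic closure of $\F_p$ in $K$. It suffices to show that $F$ is algebraically closed, since then $\F_p^{alg}=F\subseteq K$.

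\textbf{Step 2: finite extensions of $K$ are $\E$-NIP.} Let $L|K$ be a finite extension. Since $K$ is infinite, $L|K$ may fail to be simple only when it is inseparable. Any finite extension, however, is reached by a finite tower of simple extensions. So iterating Lemma \ref{finextint} (with $n$ arbitrary, hence for all existential formulas) shows that every finite extension of $K$ is again an infinite $\E$-NIP field of characteristic $p$. By the Fact following Lemma \ref{finextint} (the $\E$-NIP version of \cite[Theorem 4.3]{ksw}), every such $L$ is Artin--Schreier closed.

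\textbf{Step 3: $F$ has no extensions of degree $p$ or of prime degree $\ell\neq p$.} For degree $p$, take any finite extension $F'|F$. Then $F'K$ is a finite extension of $K$, and $F'=\F_p^{alg}\cap F'K$ because $F$ is relatively algebraically closed in $K$. Since $F'K$ is Artin--Schreier closed, every root of $X^p-X-c$ with $c\in F'$ lies in $F'K$. Being algebraic over $\F_p$, such a root lies in $F'$. Hence every finite extension of $F$ is Artin--Schreier closed, so $\Gal(F)$ has no quotient of order $p$, i.e. $p\nmid$ the supernatural order of $\Gal(F)$.

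For $\ell\neq p$ the argument of \cite{ksw} goes as follows. Pass to $K'=K(\mu_\ell)$, a finite, hence $\E$-NIP, extension. The subgroup $(K')^{\times\ell}$ is existentially definable by $\E z\,(z^\ell=x)$. The idea is to use the uniform chain condition (Baldwin--Saxl) for the existential formula $\E z\,(x=yz^\ell)$, together with the structure of $\F_p^{alg}$-points inside $K'$, to force every element of $F(\mu_\ell)$ to be an $\ell$-th power in $K'$, hence in $F(\mu_\ell)$ by algebraicity. Doing this for every finite extension of $F$ shows $F$ is $\ell$-closed for all primes $\ell$. Since $\Gal(F)$ is a quotient of $\hat{\Z}$, it is then trivial, so $F$ is algebraically closed.

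Step 3 for $\ell\neq p$ is the main obstacle. The original argument in \cite{ksw} for this case goes through the Artin--Schreier closure of all finite extensions combined with a counting and chain argument on $\F_{p^n}\subseteq K$. First I would check whether their proof invokes only the Artin--Schreier conclusion for finite extensions, which Step 2 already delivers existentially. If it does, nothing more is needed. Otherwise, I would verify that each auxiliary formula they use, such as $\ell$-th power maps or norm forms from finite extensions, is existential after the interpretation of Remark \ref{inte}, so that it is NIP by Lemma \ref{finextint}.
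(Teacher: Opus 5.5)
The gap is the case $\ell\neq p$ of your Step 3. This is the only part that needs a real argument, and you give only an intention. The mechanism you propose also does not work as stated. As $y$ varies, the instances of $\E z\,(x=yz^{\ell})$ are, up to the point $0$, just the cosets of the single subgroup $(K')^{\times\ell}$. Distinct cosets are disjoint, so the Baldwin--Saxl chain condition tells you nothing about them. (In the Artin--Schreier argument it is applied to the family of subgroups $a\cdot\wp(K)$, which genuinely varies with $a$.)

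Nor can you get there from the $p$-part of Step 3. The field $F=\bigcup_n\F_{p^{p^n}}$ has absolute Galois group $\prod_{\ell\neq p}\Z_\ell$. So it and all its finite extensions are Artin--Schreier closed, yet it is not algebraically closed. You need some property of $F$ beyond what your Steps 2 and 3 extract.

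The missing idea, which is exactly the paper's proof, is pseudo-algebraic closedness. The field $F=\F_p^{alg}\cap K$ is Artin--Schreier closed because $K$ is, hence infinite. Every infinite algebraic extension of a finite field is pseudo-algebraically closed. Since $F$ is relatively algebraically closed in $K$, Duret's theorem in its existential form, Fact \ref{dur}, shows that $F$ is separably closed. As $F$ is perfect, it is then algebraically closed.

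This replaces all of your Step 3. The passage to finite simple extensions that you set up in Step 2 is what the proof of Fact \ref{dur} uses to pull the independence property back to $K$ by existential formulas. Apart from that, the only thing you need from Steps 1--2 is that $F$ is infinite.
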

\begin{proof}
    The algebraic part of $K$ is Artin-Schreier closed, hence infinite, hence pseudo-algebraically closed. By Fact \ref{dur}, it is separably closed.
\end{proof}

We can also make Corollary 4.4 of \cite{ksw} more explicit as follows.

\begin{cor}[Cf.~{\cite[Corollary 4.4]{ksw}}]
\label{sepextp}
    Let $K$ be an $\E$-NIP field of characteristic $p>0$ and let $L|K$ be a finite separable extension. Then $p$ does not divide $[L:K].$
\end{cor}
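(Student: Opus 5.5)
The plan is to reduce to the Artin--Schreier closedness of $\E$-NIP fields, transferred along the finite extension via Lemma \ref{finextint}, and then use Galois theory.

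\textbf{Step 1: reduce to degree $p$.} Let $L|K$ be finite separable with $p\mid[L:K]$. Replace $L$ by its normal closure $L'$ over $K$, which is finite Galois, and note $p\mid[L':K]$. Let $G=\Gal(L'|K)$ and let $P\leq G$ be a Sylow $p$-subgroup, which is nontrivial. Put $F=(L')^{P}$, so that $[F:K]=[G:P]$ is prime to $p$ and $L'|F$ is Galois with group the $p$-group $P$. A nontrivial $p$-group has a normal subgroup of index $p$, say $P_1$. Then $E=(L')^{P_1}$ is a Galois extension of $F$ of degree $p$. In characteristic $p$, Artin--Schreier theory says every cyclic extension of degree $p$ is generated by a root of some $X^{p}-X-c$ with $c\in F$. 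Hence $F$ is not Artin--Schreier closed.

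\textbf{Step 2: $F$ is $\E$-NIP.} $F|K$ is finite separable, hence simple by the primitive element theorem. By Remark \ref{inte}, the interpretation of $F$ in $K$ is quantifier-free, so it is $\E_n$-to-$\E_n$ for every $n$. Lemma \ref{finextint} then gives that $F$ is $\E_n$-NIP for every $n$, i.e. every existential formula of $F$ is NIP. Here I read ``$\E$-NIP'' as $\E_n$-NIP for all $n$, i.e. all existential formulas NIP. If instead it means only $\E_1$, the same lemma with $n=1$ works, and Remark \ref{inte} keeps the quantifier rank unchanged.

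\textbf{Step 3: conclude.} The Artin--Schreier fact stated above, applied to $F$, says $F$ is Artin--Schreier closed, contradicting Step 1.

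The main point to check is that the Artin--Schreier fact needs no further hypothesis that $F$ might fail. For instance, if $F$ is finite, then $F$ itself is $\E$-NIP but has Artin--Schreier extensions, so the fact as stated must implicitly assume $K$ infinite. I would therefore add the case split: if $K$ is finite, the statement is false (for example $\F_p$ has an extension of degree $p$). So the corollary presumably assumes $K$ infinite, as in \cite[Corollary 4.4]{ksw}, and then $F$ is infinite too, so the fact applies. Everything else (normal closure, Sylow subgroup, Artin--Schreier description of cyclic degree-$p$ extensions) is routine.
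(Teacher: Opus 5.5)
Your argument is correct and is essentially the paper's: Galois closure, an intermediate field simple over $K$ carrying a cyclic degree-$p$ extension, Lemma \ref{finextint}, and Artin--Schreier closedness. The one difference is that the paper takes the fixed field $K^{G}$ of a subgroup $G\leq\Gal(L|K)$ of order $p$ (Cauchy's theorem), so $L|K^{G}$ is itself the Artin--Schreier extension and no Sylow or index-$p$ subgroup is needed. Two remarks on the side points you raised:

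\emph{Reading of $\E$-NIP.} Here $\E$-NIP means that existential formulas are NIP, i.e.\ the case $n=1$ of Lemma \ref{finextint}. Your phrase ``$\E_n$-NIP for all $n$'' would instead mean full NIP, but your hedge already covers the $n=1$ case.

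\emph{Infinite $K$.} Your caveat is correct. The Artin--Schreier fact as stated, and hence this corollary, needs $K$ to be infinite, as in \cite[Corollary 4.4]{ksw}: its proof uses that $K^{p^{\infty}}$ is infinite, and $\F_p\subseteq\F_{p^{p}}$ is a counterexample otherwise. The paper omits this hypothesis, which is harmless in its applications, where the fields involved are infinite.
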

\begin{proof}
    Suppose otherwise. We may assume that $L|K$ is Galois. Let $K^{G}$ be the fix field of some subgroup $G\leq\Gal(L|K)$ of order $p.$ Then $K^{G}|K$ is a finite separable extension, implying that it is simple. It follows that $K^{G}$ is $\E$-NIP by Lemma \ref{finextint}, hence Artin-Schreier closed, contradicting that $L|K^{G}$ is an Artin-Schreier extension.
\end{proof}

Duret's Fact \ref{dur} implies that every pseudo-algebraically closed non-separably closed field has the independence property, witnessed by an existential formula. Examples of such fields are \emph{pseudo-finite} fields, cf.~\cite[Lemma 2]{ax}. Pseudo-algebraically closed fields $k$ are \emph{large} in the sense that $k\preceq_\E\hs{k}{}$, cf.~\cite[Proposition 11.3.5 and Remark 16.12.3]{fj}. Equivalently, a field $k$ is large if and only if for every smooth $k$-curve $C,$ if $C(k)$ is non-empty, then it is infinite, cf.~\cite[Proposition 1.1]{pop}.
For such fields $k$ we have the following existentially-closed embeddings.

\begin{fact}
\label{largeecl}
    Let $k$ be a large field. Then $k\preceq_{\E} K$ for every extension $K|k$ that admits a \emph{(not necessarily henselian)} valuation $v$ with $Kv=k$ in case 
    $k$ is perfect or $k$ is pseudo-algebraically closed.
\end{fact}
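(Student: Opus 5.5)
Since $\E$-formulas with parameters from $k$ are preserved upward along any inclusion, the content of the statement is the downward direction. Given an existential $\LL_{ring}$-formula $\E y\,\th(x,y)$ with $\th$ quantifier-free and parameters in $k$, and $a\in k$ with $K\models\E y\,\th(a,y)$, we must produce a witness in $k$. By Fact \ref{exembg}, it suffices to find some $k^{*}\succeq k$ and a $k$-embedding $k(c)\to k^{*}$, where $c\in K$ is a witness. Equivalently, we need $k\preceq_\E k(c)$ for every finitely generated subextension. So the plan is to reduce to a finitely generated subfield $F=k(c)\subseteq K$, equipped with the restriction of $v$. Then $Fv\subseteq Kv=k$, and $Fv\supseteq k$ because $v$ is trivial on $k$. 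This triviality holds since the residue map restricted to $\O_v\cap k$ must land inside $k$ and the situation forces $k\hookrightarrow Kv$ identically. I would state explicitly that the intended reading is that $v$ is trivial on $k$ and that the residue map identifies $k$ with $Kv$. Hence $Fv=k$, and $F|k$ is a finitely generated extension with a $k$-rational place $F\to k\cup\{\infty\}$.

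The key step is the classical fact that if $k$ is large and $F|k$ is a function field admitting a $k$-rational place, then $F$ embeds over $k$ into $\hs{k}{}$, or into an elementary extension of $k$. From there, largeness ($k\preceq_\E\hs{k}{}$) finishes the argument. Here the perfect or PAC hypothesis enters. If $F|k$ is separable, i.e. regular, then a $k$-rational place on $F$ gives a smooth $k$-point on a model $V$ of $F$ after passing to a suitable regular point. Concretely, one uses that a rational place of a separable function field centers at a point which is simple, or can be moved to one. Largeness then gives Zariski-density of $V(k)$, hence $F$ embeds into an ultrapower $k^{U}$ over $k$, which yields $k\preceq_\E F$ (cf. Pop's characterization via smooth curves, \cite[Proposition 1.1]{pop}). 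Separability is automatic when $k$ is perfect. When $k$ is PAC, it is existentially closed in every regular extension by \cite[Proposition 11.3.5]{fj}. So the reduction needed is to show $F|k$ is regular. Separability is again the issue for imperfect PAC $k$: $k$ is relatively algebraically closed in $F$ because any algebraic element would reduce injectively into the residue field $k$ ($v$ trivial on $k$, and a finite extension of $k$ inside $F$ has residue field of degree at most its degree, which must be $1$ by immediacy-type counting). Separability then follows similarly, since a purely inseparable element $c^{1/p}$ of $k$ lying in $F$ would have residue $c^{1/p}\in Fv=k$.

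The main obstacle is the non-perfect case: a $k$-rational place on a finitely generated $F|k$ need not guarantee a smooth $k$-point when $F|k$ is inseparable. I would first try to prove that $F|k$ is in fact separable from $Fv=k$. If that fails, I would fall back on the PAC hypothesis, where only regularity of $F|k$ is needed rather than smoothness, and handle the perfect case via the standard Hironaka-free argument that rational places of function fields over perfect fields have smooth centers on some model.
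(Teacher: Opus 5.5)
The proposal has a genuine gap in the perfect case, and its separability argument is wrong as written.

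\textbf{The perfect case.} Everything there rests on the claim that a $k$-rational place of a separable function field $F|k$ ``centers at a point which is simple, or can be moved to one''. You attribute this to a ``standard Hironaka-free argument'', but you give no argument, and no such standard argument exists. Requiring the given place to have a smooth center on some model is local uniformization. In positive characteristic that is known only in special cases, such as low dimension or Abhyankar places. It is harmless when $\mathrm{trdeg}(F|k)=1$: the center on the normal projective model is a regular point with residue field $k$, hence smooth. In higher transcendence degree, however, the center of $P$ on a given model may be singular. You give no way of passing to another model, or to another $k$-rational place, with smooth center. What you actually need is weaker: some smooth $k$-point on some model of $F$, which for large $k$ is exactly $k\preceq_\E F$. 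So the ``classical fact'' you quote for finitely generated $F$ is just the statement itself. This is the substance of Kuhlmann's theorem, which the paper does not reprove but cites as \cite[Theorem 17]{fvk17}. Note also that your argument uses perfectness only to get separability of $F|k$. Separability holds for every $k$ (see below), so if your smoothness step were routine you would have proved the statement for all large $k$. The perfect/PAC restriction would then be pointless, which shows that this step carries the whole difficulty.

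\textbf{Separability and the PAC case.} Showing that no $c^{1/p}\notin k$ lies in $F$ only gives $F\cap k^{1/p}=k$. That is part of relative algebraic closedness and is strictly weaker than separability. For instance, if $a,b$ are $p$-independent in $k$, then $k(t,u)$ with $u^p=a+bt^p$ has $k$ relatively algebraically closed but is inseparable over $k$. Your fallback to the PAC hypothesis does not avoid this, since regularity includes separability. The fix is short. Suppose $c_1,\dots,c_r\in k$ are $p$-independent in $k$ and $\sum_m x_m^p c^m=0$, with $x_m\in K$ not all zero, $m$ ranging over $\{0,\dots,p-1\}^r$ and $c^m=\prod_j c_j^{m_j}$. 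Put $y_m=x_m/x_{m_0}$, where $v(x_{m_0})$ is minimal, so that all $y_m\in\O_v$ and $y_{m_0}=1$. The residue map is the identity on $k$ and lands in $Kv=k$, so applying it gives a nontrivial relation $\sum_m\ol{y}_m^{\,p}c^m=0$ in $k$, a contradiction. Your relative algebraic closedness argument is correct in substance: the residue map embeds $k(\alpha)$ into $Kv=k$ over $k$. Together with separability, $K|k$ is regular. The PAC case then follows at once from \cite[Proposition 11.3.5]{fj}, without reducing to finitely generated subfields or using largeness. (The paper handles this case by citing \cite[Lemma 9]{feh} and \cite[Proposition 2.3]{afd}.) The perfect case remains unproved in your proposal.
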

See \cite[Theorem 17]{fvk17} for the case in which $k$ is perfect. On the other hand, if $k$ is pseudo-algebraically closed, then $k$ is large, cf.~\cite[Remark 16.12.3]{fj}. The result follows by \cite[Lemma 9]{feh} and \cite[Proposition 2.3]{afd}.
The following corollary has the advantage of including valued fields that are not necessarily henselian. 

\begin{cor}
    Let $(K,v)$ be a valued field that extends $Kv$ and such that $Kv$ is pseudo-algebraically closed non-separably closed. Then $K$ has the independence property with respect to an existential $\LL_{ring}$-formula.
\end{cor}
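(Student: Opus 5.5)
The idea is to transfer the independence property from $Kv$ up to $K$ through an existentially closed embedding. Lemma \ref{dclosedip}, applied with $\D=\E$, then finishes the argument.

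First, since $Kv$ is pseudo-algebraically closed and not separably closed, it has IP witnessed by an existential $\LL_{ring}$-formula. This is the content of Fact \ref{dur}, applied to $k=Kv$ viewed as a relatively algebraically closed PAC subfield of itself. More concretely, the proof of \cite[Theorem 6.4]{duret} yields a finite simple extension $L$ of $k$ and one of the formulas $\E z(z^{p}=x+y)$ or $\E z(z^{p}=xy+z)$ with IP in $L$. By Remark \ref{inte} and Lemma \ref{finextint}, the interpretation of $L$ in $k$ is $\E$-to-$\E$, so Lemma \ref{dtod'nip} gives an existential formula $\f(x,y)$ with IP in $k$.

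Second, the hypothesis that $(K,v)$ extends $Kv$ means that $k=Kv$ embeds as a subfield of $K$ and the valuation $v$ has residue field $k$. A PAC field is large \cite[Remark 16.12.3]{fj}, so Fact \ref{largeecl} gives $k\preceq_{\E}K$ in the language of rings. Henselianity of $v$ is not needed for this step.

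Third, we would apply Lemma \ref{dclosedip} with $\D=\E$, $\AA=k$ and $\BB=K$. Since $k$ is not $\E$-NIP, $K$ is not $\E$-NIP either. Concretely, the same existential formula $\f$ has IP in $\Th(K)$. The proof of Lemma \ref{dclosedip} only uses that $\AA^{U}\preceq_{\E}\BB^{U}$ preserves truth of $\E$-formulas in both directions, and this is exactly what $\preceq_{\E}$ provides.

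The step I expect to need the most care is the first. It requires that the IP witnessed in $Kv$ really comes from an existential formula over $Kv$ itself, not only over some finite extension. This hinges on the interpretation $L\leftrightarrow K^{n}$ of Remark \ref{inte} being quantifier-free, which Corollary \ref{eint} guarantees. A secondary point is the characteristic. If $\char Kv=0$, a PAC non-separably closed field still has IP by Duret, and the witnessing formulas in that case are of the form $\E z(z^{q}=x+y)$ for a prime $q$, so they remain existential. The argument therefore goes through uniformly.
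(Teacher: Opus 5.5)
Your proposal is correct and follows the paper's proof exactly. Fact \ref{largeecl} in the PAC case gives $Kv\preceq_{\E}K$, Fact \ref{dur} applied to $Kv$ as a relatively algebraically closed subfield of itself gives an existential formula with IP in $Kv$, and Lemma \ref{dclosedip} transfers that formula to $K$. The unpacking of Duret's argument and the separate discussion of characteristic $0$ are not needed, since Fact \ref{dur} is stated with no restriction on the characteristic.
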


\begin{proof}
    By Fact \ref{largeecl} we have that $Kv\preceq_\E K$ with respect to $\LL_{ring}.$ Duret's Fact \ref{dur} implies that $Kv$ has IP with respect to an existential $\LL_{ring}$-formula, and so does $K$ by Lemma \ref{dclosedip}.
\end{proof}

Now we turn our attention to valued fields.

\begin{lema}
\label{resfieldint}
    Let $n\geq 1$ and let $\LL_{3s}=\LL_{3s}(\LL_\kk,\LL_\GG).$ If the $\LL_{3s}$-structure associated to $(K,v)$ is $\E_n$-NIP, then the $\LL_{\kk}$-structure generated by $Kv$ and the $\LL_{\GG}$-structure generated by $vK$ are $\E_n$-NIP.
\end{lema}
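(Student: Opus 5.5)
The plan is to reduce the statement to Lemma \ref{dclosedip}, or to its proof, by noting that the residue field and the value group are simply sorts of the three-sorted structure. The residue field sort needs no interpretation at all. An $\LL_{\kk}$-formula $\f(x,y)$ in prenex form, with all variables of sort $\kk$, is literally an $\LL_{3s}$-formula all of whose variables and quantifiers live in the sort $\kk$. Its quantifier prefix is unchanged, so it is again an $\E_n$-formula of $\LL_{3s}$.

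Next I would check that the truth value of such a formula is the same whether it is evaluated in the $\LL_\kk$-structure $Kv$ or in the $\LL_{3s}$-structure $(K,v)$. This is immediate, because quantifiers of sort $\kk$ range over $\kk(\MM)=Kv$ and the symbols of $\LL_\kk$ are interpreted identically. The same holds in every elementary extension $\MM^{*}$ of the three-sorted structure, and $\kk(\MM^{*})\equiv Kv$ in $\LL_\kk$.

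Now suppose some $\E_n$-formula $\f(x,y)$ of $\LL_\kk$ had IP in $\Th(Kv)$. I would realize the witnessing sequences $(a_i:i<\omega)$ and $(b_S:S\subseteq\omega)$ inside $\kk(\MM^{*})$ for a sufficiently saturated $\MM^{*}\succeq\MM$. For that I would either use compactness on $\Th(\MM)$ together with the sort-restricted copy of the IP pattern, or, as in Lemma \ref{dtod'nip}, pass to an ultrapower $\MM^{U}$, whose $\kk$-sort is the ultrapower $(Kv)^{U}$ into which any model of $\Th(Kv)$ of small size embeds elementarily. By the sort-relativization observation, the same $\E_n$-formula of $\LL_{3s}$ then has IP in $\Th(\MM)$, which is a contradiction. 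The value group $vK$ is treated identically with the sort $\GG$ and $\LL_\GG$. The symbol $\infty$ causes no problem, since it belongs to $\LL_\GG$ itself.

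The only real obstacle is the realizability step: an IP pattern in an arbitrary model of $\Th(Kv)$ must sit inside the residue sort of a model of $\Th(\MM)$. I would handle this exactly as in the proof of Lemma \ref{dtod'nip}, taking a $|M'|^{+}$-saturated ultrapower, where $M'$ is the model of $\Th(Kv)$ carrying the pattern. Alternatively, I would note that IP of $\f$ is expressed by the consistency of the set $\{\f(a_i,b_S)^{\mathrm{if}\ i\in S}\}$, which is finitely satisfiable in $\kk(\MM)$ because it is finitely satisfiable in $Kv$. One could also phrase the whole argument as an application of Lemma \ref{dtod'nip}, via the identity interpretation $\Phi=\mathrm{id}$ with $n=1$ on the relevant sort, which is trivially $\E_n$-to-$\E_n$ with $\Phi^{-1}$ the identity. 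This requires only the harmless convention that interpretations may land in a single sort.
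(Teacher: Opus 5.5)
Your proposal is correct and is essentially the paper's argument. The paper views $Kv$ and $vK$ as naturally interpreted in the three-sorted structure, uses Corollary \ref{eint} (the highlighted sets of Lemma \ref{intcrit} are quantifier-free definable) to see that this interpretation is $\E_n$-to-$\E_n$, and concludes with Lemma \ref{dtod'nip}, which is exactly your closing ``identity interpretation'' phrasing; your direct version (reusing the $\LL_\kk$-formula verbatim as an $\LL_{3s}$-formula and moving the IP pattern into a model of $\Th(\MM)$ by compactness or a saturated ultrapower) just unpacks that lemma in this trivial case, and has the small advantage of bypassing the quantifier bookkeeping of Corollary \ref{eint}.
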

\begin{proof}
    Recall Corollary \ref{eint}. The highlighted sets from Lemma \ref{intcrit}, with respect to the natural interpretations of the $\LL_{\kk}$-structure associated to $Kv$ and of the $\LL_{\GG}$-structure associated to $vK$ in the $\LL$-structure generated by $(K,v),$ are quantifier-free definable. By Corollary \ref{eint}, both interpretations are $\E_n$-to-$\E_n.$ If the $\LL$-structure associated to $(K,v)$ is $\E_n$-NIP, then the result follows from \ref{dtod'nip}.
\end{proof}

We highlight the following corollaries, which are minor modifications of Propositions 5.3 and 5.4 of \cite{ksw}.

\begin{cor}[Cf.~{\cite[Proposition 5.3 and 5.4]{ksw}}] 
\label{vgpdiv}
    Let $(K,v)$ is a valued field of characteristic $p>0,$ and suppose its associated $\LL$-structure is $\E$-NIP. Then $Kv$ contains $\F_p^{alg}$ and
    $vK$ is $p$-divisible.
\end{cor}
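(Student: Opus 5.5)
We prove the two claims separately, starting with the residue field. By Lemma \ref{resfieldint} with $n=1$, the $\LL_{\kk}$-structure on $Kv$, and hence its $\LL_{ring}$-reduct, is $\E$-NIP. If $Kv$ is infinite, Corollary \ref{enipfpalg} gives $\F_p^{alg}\subseteq Kv$ directly.

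If $Kv$ is finite, we need an extra reduction. Pass to an $\aleph_1$-saturated elementary extension $(K^{*},v^{*})$ in $\LL_{3s}$; being $\E$-NIP is a property of the theory, and the residue field $K^{*}v^{*}\equiv Kv$ stays finite and equal to $Kv$. Here we follow the strategy of \cite[Proposition 5.3]{ksw}. The field $K$ itself is $\E$-NIP as a pure field, since it is a reduct. It is infinite because $v$ is nontrivial (a trivial valuation has $Kv=K$, and then we are in the infinite case or the statement is vacuous). So Corollary \ref{enipfpalg} gives $\F_p^{alg}\subseteq K$. Since $\F_p^{alg}$ consists of roots of unity and $0$, every nonzero element of it has value $0$. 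The residue map is then injective on $\F_p^{alg}$ (nonzero elements have nonzero residue), so it embeds $\F_p^{alg}$ into $Kv$. In particular $Kv$ is infinite, and the finite case cannot occur.

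For the value group, we adapt \cite[Proposition 5.4]{ksw} and show that every $\g\in vK$ is $p$-divisible. Take $a\in K$ with $v(a)=\g<0$ (replace $a$ by $a^{-1}$ if needed). Since $K$ is $\E$-NIP and of characteristic $p$, it is Artin-Schreier closed by the Kaplan–Scanlon–Wagner fact. Hence there is $z\in K$ with $z^{p}-z=a$. Then $v(z)<0$, because otherwise $v(z^p-z)\geq 0>v(a)$. With $v(z)<0$ we get $v(z^{p})=pv(z)<v(z)$, so $v(z^{p}-z)=pv(z)$. Thus $\g=v(a)=p\,v(z)$. Positive values are handled by negation, so $vK$ is $p$-divisible.

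The step I expected to be the main obstacle was transferring $\E$-NIP from the three-sorted structure to the residue field and to the pure home field with the same quantifier complexity. This is handled by Lemma \ref{resfieldint} and by the reduct remark. It needs the mild check that the relevant interpretations and reducts preserve $\E_1$-formulas, which Corollary \ref{eint} provides.
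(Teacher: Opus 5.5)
Your argument is correct for infinite $K$, which is the setting the paper's proof also tacitly assumes (see the last paragraph). For the residue field you take a different route from the paper. The paper derives everything from Artin--Schreier closedness of $K$. That gives $p$-divisibility of $vK$, and it also makes $Kv$ Artin--Schreier closed: lift $\alpha\in Kv$ to $a\in\O$, and note that any root of $z^p-z=a$ must lie in $\O$. Hence $Kv$ is infinite, Lemma \ref{resfieldint} makes it an infinite $\E$-NIP field, and Corollary \ref{enipfpalg} is applied to $Kv$.

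You instead apply Corollary \ref{enipfpalg} to $K$ itself and push $\F_p^{alg}$ down along the residue map. The map is injective on this subfield because its nonzero elements are roots of unity, hence units with nonzero residue. This argument already proves the residue-field claim outright, with no case split and without Lemma \ref{resfieldint}. So the step you flagged as the main obstacle, transferring $\E$-NIP to $Kv$, is not actually needed, and neither is the passage to a saturated extension. Your value-group computation is exactly the valuation-theoretic content the paper leaves implicit in ``then $vK$ is $p$-divisible.''

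One correction. When $v$ is trivial and $K$ is finite, the statement is not vacuous but false. For example, $K=\F_p$ with the trivial valuation is a finite, hence NIP, structure, and its residue field $\F_p$ does not contain $\F_p^{alg}$. The paper's proof carries the same tacit assumption, since the Kaplan--Scanlon--Wagner fact it invokes requires $K$ to be infinite. The corollary should therefore be read with $K$ infinite, for instance $v$ non-trivial as in Lemma \ref{kapl}. Under that reading your proof is complete.
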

\begin{proof}
    First, if the $\LL_{3s}$-structure associated to $(K,v)$ is $\E$-NIP, then $K$ is an $\E$-NIP field and therefore Artin-Schreier closed. Then $vK$ is $p$-divisible and $Kv$ is Artin-Schreier closed too, hence infinite. Since the $\LL_{\kk}$-structure generated by $Kv$ is $\E$-NIP by Lemma \ref{resfieldint}, we get that $Kv$ is an $\E$-NIP field too. The result follows by Corollary \ref{enipfpalg}. 
\end{proof}

We are now able to restate Proposition 4.1 of \cite{js}.
\begin{lema}[Cf.~{\cite[Proposition 4.1]{js}}]
\label{kapl}
    If $(K,v)$ is an $\E$-NIP non-trivially valued field of characteristic $p>0,$ then $(K,v)$ is Kaplansky.
\end{lema}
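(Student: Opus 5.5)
We must show: $vK$ is $p$-divisible, and $Kv$ admits no finite extension of degree divisible by $p$. The first is given directly by Corollary \ref{vgpdiv}. So the work is on the residue field.

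\emph{Step 1.} By Lemma \ref{resfieldint}, $Kv$ is an $\E$-NIP field in $\LL_{ring}$. Since $\E$-NIP is preserved by simple algebraic extensions (Lemma \ref{finextint}), every finite extension of $Kv$ is again $\E$-NIP.

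\emph{Step 2.} Let $\ell|Kv$ be a finite extension of degree divisible by $p$, and split $\ell|Kv$ into a separable part followed by a purely inseparable part. If the separable degree is divisible by $p$, Corollary \ref{sepextp} gives a contradiction. So it remains to show that $Kv$ is perfect. Once that is known, the purely inseparable part is trivial and $p\nmid[\ell:Kv]$.

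\emph{Step 3 (main obstacle).} To show $Kv$ is perfect, I would mimic the proof of \cite[Proposition 4.1]{js}. The argument of Kaplan--Scanlon--Wagner applied to $K$ itself gives more than Artin--Schreier closedness: applied to finite extensions $L$ of $K$, which are $\E$-NIP by Lemma \ref{finextint}, every such $L$ is Artin--Schreier closed.

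Suppose $c\in\O$ has residue $\bar c\notin(Kv)^p$. Pick $t\in\M\setminus\{0\}$, which exists because $v$ is nontrivial. Consider the Artin--Schreier polynomial
\[
X^p-X-c\,t^{-p}.
\]
By Artin--Schreier closedness it has a root $x$ in $K$. The value $v(c\,t^{-p})=-p\,v(t)<0$, so $v(x)=-v(t)$. Write $x=y/t$ with $v(y)=0$. Then
\[
y^p-t^{p-1}y=c,
\]
so $\bar y^{\,p}=\bar c$, a contradiction.

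So $Kv$ is perfect, and with Corollary \ref{vgpdiv} we conclude that $(K,v)$ is Kaplansky. The key point to check is that only existential formulas were used. Artin--Schreier closedness of $K$ came from the existential $\f(x,y)=\E z\,(x+yz=yz^p)$ in the Fact above, and the transfers went through Lemmas \ref{resfieldint} and \ref{finextint}, which preserve $\E_n$.
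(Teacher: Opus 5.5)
Your proof is correct. It has the same overall structure as the paper's: $p$-divisibility of $vK$ from Corollary~\ref{vgpdiv}, perfectness of $Kv$, and Corollary~\ref{sepextp} applied to the $\E$-NIP field $Kv$ for the separable part. The only genuine difference is how you show that $Kv$ is perfect.

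The paper lifts $\alpha\in Kv^{\times}$ to a unit $a$ and picks $c\in\M$. It then argues that $X^{p}+cX-a$ must have a root in $K$, since otherwise it would generate a separable extension of degree $p$, contradicting Corollary~\ref{sepextp}. The root lies in $\O$ by integral closedness, and its residue is a $p$-th root of $\alpha$. This route passes through Corollary~\ref{sepextp} for $K$ itself, and hence through Galois theory and the $\E$-to-$\E$ interpretation of finite simple extensions (Lemma~\ref{finextint}). It also tacitly needs $c\neq 0$ and the irreducibility of $X^{p}+cX-a$; the latter holds because its reduction $X^{p}-\alpha$ is irreducible when $\alpha\notin(Kv)^{p}$.

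Your rescaled Artin--Schreier equation $X^{p}-X-ct^{-p}$ needs only Artin--Schreier closedness of $K$ itself. That makes your argument more elementary and self-contained. The paper's version has the advantage of reusing one tool, Corollary~\ref{sepextp}, which both arguments need anyway for the separable part of the residue field.

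Two small remarks:
\begin{itemize}
\item In Step 1 you claim that \emph{every} finite extension of $Kv$ is $\E$-NIP, and in Step 3 you make the same claim for finite extensions of $K$. Lemma~\ref{finextint} does not justify this, since it only covers simple extensions, and finite inseparable extensions need not be simple. Neither claim is used, so you can delete both.
\item You should say explicitly that $\bar c\neq 0$, hence $v(c)=0$, since your computation $v(ct^{-p})=-p\,v(t)$ relies on it.
\end{itemize}
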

\begin{proof}
    First, $vK$ is $p$-divisible as seen in Corollary \ref{vgpdiv}. 
    Second, $Kv$ is perfect. To see this, let $\a=\res(a)\in Kv^{\times},$ $c\in\M$ and consider the polynomial $P(X)=X^{p}+cX-a\in\O[X].$ If $P(X)$ does not have a root in $K,$ then $K[X]/(P)$ would yield a proper separable algebraic extension of degree $p,$ implying by Lemma \ref{sepextp} that neither $K$ nor $(K,v)$ can be $\E$-NIP. Therefore there is some root of $P$ in $K.$ As $\O$ is integrally closed, such a root is an element of $\O.$ Taking $\res,$ we obtain a root in $Kv,$ which would be a $p^{th}$ root of $\a.$    
    Finally, $Kv$ does not admit any proper separable algebraic extension of degree divisible by $p.$ Indeed, since $Kv$ is an $\E$-NIP field by Lemma \ref{resfieldint}, the result follows from Corollary \ref{sepextp}.
\end{proof}

We can also restate Proposition 3.1 of \cite{aj}.

\begin{lema}[Cf.~{\cite[Proposition 3.1]{aj}}]
\label{sd}
    If $(K,v)$ is an equi-characteristic $\E$-NIP valued field, then $(K,v)$ is trivial or separably defectless Kaplansky.
\end{lema}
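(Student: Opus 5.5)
We follow the proof of \cite[Proposition 3.1]{aj}, checking at each step that the witnessing formula is existential (more precisely, that its negation is used only as a boolean combination of existential formulas, which is fine since NIP formulas are closed under boolean combinations). Assume $(K,v)$ is non-trivial. We split according to the residue characteristic. Since $(K,v)$ is equi-characteristic, $\char(Kv)=\char(K)$.

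If $\char(K)=p>0$, then Lemma \ref{kapl} already gives that $(K,v)$ is Kaplansky. So it remains to show that $(K,v)$ is separably defectless. Here the plan is as follows. Suppose $(L|K,w)$ is a finite separable extension with defect. Using the Kaplansky property, $vK$ is $p$-divisible and $Kv$ is perfect with no separable extensions of degree divisible by $p$. By the standard reduction (passing to a suitable finite separable extension, which is $\E$-NIP by Lemma \ref{finextint} after replacing by a simple extension), we may assume there is a defect Artin--Schreier extension. We aim to derive a contradiction from Artin--Schreier closedness: by the $\E$-NIP version of Kaplan--Scanlon--Wagner (the Fact stated above), every finite separable extension of $K$ (being simple, hence $\E$-NIP by Lemma \ref{finextint}) is Artin--Schreier closed. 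Any finite separable extension $L$ of $K$ therefore has no Artin--Schreier extensions. In particular, $L$ admits no Galois extension of degree $p$.

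A defect extension of degree a power of $p$ would, after taking Galois closure and a $p$-Sylow fixed field (degree prime to $p$ over $K$, separable, hence still $\E$-NIP), produce a Galois $p$-extension. Its Galois group is solvable and thus has a normal subgroup of index $p$, which gives an Artin--Schreier extension of an $\E$-NIP field, a contradiction. Hence every finite separable extension $L|K$ has degree prime to $p$, and then the defect, being a power of $p$ dividing $[L:K]$, is trivial. Strictly, the defect is computed for the extension of $v$ to $L$; non-henselian $K$ has several extensions. Here I would invoke the fundamental inequality $[L:K]=\sum_i d_ie_if_i$ with each $d_i$ a power of $p$ (Ostrowski--type statement for the unique-extension case). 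In general, separable defectlessness should be read as equality in the fundamental inequality, and I expect to need that the defect factors are powers of $p$ via passing to henselizations. This general-valuation bookkeeping is the main obstacle. I would first try to avoid it by noting that the henselization $K^h$ is a separable algebraic extension, and that defects of $K$ are those of $K^h$. Then I would apply the prime-to-$p$ argument to finite subextensions of $K^h$, each of which is $\E$-NIP.

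If $\char(K)=0$, then $(K,v)$ is trivially Kaplansky and defectless by Ostrowski's lemma (residue characteristic $0$ forces defect $1$), so there is nothing to prove.
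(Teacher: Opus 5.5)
Your argument is correct and is essentially the paper's proof. Both reduce to characteristic $p$, get Kaplansky from Lemma \ref{kapl}, and pass to the henselization; the paper cites \cite[Theorem 18.2]{oe} for exactly the bookkeeping you flag. Both then rule out defect because an $\E$-NIP field of characteristic $p$ has no finite separable extension of degree divisible by $p$. That fact is Corollary \ref{sepextp}, so your Sylow argument only re-proves it, and the detour through a defect Artin--Schreier extension is unnecessary.

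The one step you should write out is the descent from $K^h$. Let $L=K^h(\alpha)$ be finite separable over $K^h$, and let $K'$ be a finite subextension of $K^h|K$ containing the coefficients of the minimal polynomial of $\alpha$ over $K^h$. Then $[K'(\alpha):K']=[L:K^h]$. Hence $p\mid[L:K^h]$ would give a finite separable extension $K'(\alpha)|K$ of degree divisible by $p$, contradicting Corollary \ref{sepextp}, which is how the paper concludes. So every defect over $K^h$ is trivial.
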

\begin{proof}
    We may assume that the characteristic of $K$ is $p>0.$ Since $(K,v)$ is Kaplansky by Lemma \ref{kapl}, we only need to show that $(K,v)$ is separably defectless. By \cite[Theorem 18.2]{oe}, $(K,v)$ is separably defectless if and only if any henselization $(K^{h},v^{h})$ is separably defectless. Let $L|K^{h}$ be a separable finite extension. Since $K^{h}|K$ is separable algebraic, $L|K$ is also separable algebraic. If $L|K^{h}$ has defect $d\geq1$, then
    $$[L:K^{h}]=p^{d}\cdot(wL:v^{h}K^{h})\cdot[Lw:K^{h}v^{h}]$$ by the fundamental equality and henselianity, where $w$ is the unique extension of $v^{h}$ to $L.$ This shows that $p$ divides $[L:K^{h}].$ Thus, there is some finite extension $F|K$ for which $p$ divides $[F:K],$ implying by Corollary \ref{sepextp} that neither $K$ nor $(K,v)$ can be $\E$-NIP.
\end{proof}

\subsection{Henselianity in Positive Characteristic}

In this section, we will study (multi-) valued fields using the language of rings expanded by predicate symbols to be interpreted as the valuation rings of the corresponding valuations. The following Lemma, which revisits the proof of \cite[Remark 2.7]{dp1a}, we will stress the role of the parameters that are involved in the defining formula of $\O^{*}.$ 

\begin{lema}[Cf.~{\cite[Remark 2.7]{dp1a}}]
\label{defin}
    Let $(K,\O)$ be a separably defectless valued field and let $(L,\O^{*})\supseteq(K,\O)$ be a finite Galois extension. Then $\O^{*}$ is existentially and universally definable in $(L,K,\O).$ 
\end{lema}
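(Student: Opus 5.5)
Throughout, $(L,K,\O)$ denotes the ring $L$ expanded by a predicate for the subfield $K$ and a predicate for $\O\subseteq K$. The plan follows Johnson's argument for \cite[Remark 2.7]{dp1a}: describe $\O^{*}$ algebraically through norms and traces down to $K$, and then read these descriptions as formulas.

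First, the algebraic description. Since $L|K$ is Galois, the extensions of $\O$ to $L$ are exactly the conjugates $\sigma(\O^{*})$ for $\sigma\in\Gal(L|K)$. Let $S=\bigcap_\sigma\sigma(\O^{*})$, the integral closure of $\O$ in $L$. We will show that $S$ is definable. For $x\in L$, the element $x$ lies in $S$ if and only if the coefficients of its characteristic polynomial over $K$ lie in $\O$. These coefficients are polynomial functions, with integer coefficients, of the coordinates of $x$ in a fixed $K$-basis $\beta_1,\dots,\beta_n$ of $L$, once the structure constants of the basis are taken as parameters. Saying that the coordinates exist is an existential condition: $\E c_1,\dots,c_n\in K\,(x=\sum c_i\beta_i\wedge\bigwedge_j P_j(c)\in\O)$. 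It can also be written universally: $\A c_1,\dots,c_n\in K\,(x=\sum c_i\beta_i\to\bigwedge_j P_j(c)\in\O)$. The predicates for $K$ and $\O$ are atomic, so both formulas are fine.

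Second, we cut $S$ down to $\O^{*}$. By the approximation theorem for finitely many incomparable valuations, $S$ is a semilocal ring whose maximal ideals $\mathfrak m_\sigma=S\cap\sigma(\M^{*})$ correspond to the distinct conjugates, and $\O^{*}=S_{\mathfrak m_{\mathrm{id}}}$. Here the separably defectless hypothesis enters. It makes $S$ a finitely generated (free) $\O$-module, because a defectless finite extension has an integral basis. More importantly, it lets us pick a parameter $t\in S$ with $t\in 1+\mathfrak m_{\mathrm{id}}$ and $t\in\mathfrak m_\sigma$ for every $\sigma$ whose conjugate differs from $\O^{*}$. Then
\[
x\in\O^{*}\iff \E y\in S\;\E m\geq0\ldots
\]
The ``$\E m$'' is not first order, so we instead use the standard trick. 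We have $x\in\O^{*}$ if and only if $x\in S$ or $1/x\in\sigma(\M^{*})$ for some $\sigma$ with $\sigma(\O^{*})\neq\O^{*}$. Equivalently, $x\in\O^{*}$ if and only if $tx^{N}\in S$ for a suitable fixed $N$, or $x=s/u$ with $s,u\in S$ and $u\in 1+\mathfrak m_{\mathrm{id}}$. The cleanest route I would follow is $\O^{*}=\{s/u : s,u\in S,\ u-1\in J\}$, where $J=\{z\in S: z t\in tS\text{-radical}\}$. Concretely, $\mathfrak m_{\mathrm{id}}$ should be the set of $z\in S$ with $1+zy$ a unit of $S$ for all $y\in S t'$, where $t'$ is an idempotent-like parameter. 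Since $S$ has both existential and universal definitions, $x\in\O^{*}$ if and only if $\E s,u\,(s\in S\wedge u\in S\wedge u-1\in\mathfrak m_{\mathrm{id}}\wedge xu=s)$. This gives an existential definition as long as $\mathfrak m_{\mathrm{id}}$ is existentially definable. For the universal definition, use the complement: $x\notin\O^{*}$ if and only if $x\neq0$ and $x^{-1}\in\M^{*}$, and $\M^{*}=\{s/u: s\in\mathfrak m_{\mathrm{id}},\ u\in 1+\mathfrak m_{\mathrm{id}}\}$. This is existential once $\mathfrak m_{\mathrm{id}}$ is, and the negation of an existential formula is universal.

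The main obstacle is therefore to show that $\mathfrak m_{\mathrm{id}}$ is existentially definable in $(L,K,\O)$ with parameters. I would take a parameter $\pi\in S$ lying in $\mathfrak m_{\mathrm{id}}$ and in no other $\mathfrak m_\sigma$; this exists by Chinese remainder in the semilocal ring $S$. I would then show $\mathfrak m_{\mathrm{id}}=\{z\in S:\E w\in S\,(z^{k}=\pi w)\}$ for a suitable $k$, using that $S/\pi S$ is finite over $\O/\pi$-related data. This is the point where defectlessness, via finiteness of $S$ over $\O$, would be used to bound $k$ uniformly. If that fails, the fallback is $\mathfrak m_{\mathrm{id}}=\{z\in S: \E w\in S\,((1-\pi)z\cdot w\ldots)\}$, which encodes $z\in\sigma(\O^{*})$-units elsewhere. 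In either version the formula only quantifies over $S$, which stays existential by the first step.
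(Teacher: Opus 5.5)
Your first step is correct: the integral closure $S=\bigcap_\sigma\sigma(\O^{*})$ is both existentially and universally definable via the characteristic polynomial. Your reduction is also sound. Since $\O^{*}=\{s/u: s,u\in S,\ u-1\in\mathfrak m_{\mathrm{id}}\}$ and $\M^{*}=\{s/u: s\in\mathfrak m_{\mathrm{id}},\ u\in S,\ u-1\in\mathfrak m_{\mathrm{id}}\}$, an existential definition of $\mathfrak m_{\mathrm{id}}$ would give existential definitions of $\O^{*}$ and $\M^{*}$, and hence a universal one of $\O^{*}$.

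But you never supply that existential definition of $\mathfrak m_{\mathrm{id}}$, and your candidate is false. The set $\{z\in S:\exists w\in S\,(z^{k}=\pi w)\}$ equals $\{z\in S: k\,v_{\mathrm{id}}(z)\geq v_{\mathrm{id}}(\pi)\}$. So for each fixed $k$ it misses every $z\in S$ with $0<v_{\mathrm{id}}(z)<v_{\mathrm{id}}(\pi)/k$. Such $z$ exist whenever, for instance, $vK$ is nontrivial and $p$-divisible. That is exactly the situation in which the lemma is used: Theorem \ref{eniphens}, where $vK$ is $p$-divisible by Corollary \ref{vgpdiv}. Finite generation of $S$ over $\O$ cannot bound $k$, because it gives no control over the values of individual elements.

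The rest of the step does not close the gap either:
\begin{itemize}
\item The fallback formula is unfinished.
\item The description ``$1+zy$ is a unit of $S$ for all $y$'' has the shape $\forall\exists$, so it is not existential.
\item The intermediate claim that $x\in\O^{*}$ iff $x\in S$ or $1/x\in\sigma(\M^{*})$ for some $\sigma$ with $\sigma(\O^{*})\neq\O^{*}$ is false. Take $x$ of negative value for both $v_{\mathrm{id}}$ and $v_\sigma$.
\end{itemize}

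The missing idea is to treat $\mathfrak m_{\mathrm{id}}$ the way you treated $S$.
\begin{itemize}
\item The Jacobson radical $J=\bigcap_\sigma\mathfrak m_\sigma$ is the set of $x\in L$ whose characteristic polynomial over $K$ has all non-leading coefficients in $\M$. If they do, then $x\in S$ and $x^{n}\in\sum_{i<n}\M x^{i}$, which forces $v_\sigma(x)>0$ for every $\sigma$. Conversely, those coefficients are symmetric functions of the conjugates $\tau(x)$, all of which have positive value.
\item Membership $c\in\M$ can be expressed from $\O$ both existentially and universally, so $J$ is both existentially and universally definable.
\item By the Chinese remainder theorem, choose $e\in S$ with $e-1\in\mathfrak m_{\mathrm{id}}$ and $e\in\mathfrak m_\sigma$ whenever $\sigma(\O^{*})\neq\O^{*}$. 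Then $\mathfrak m_{\mathrm{id}}=\{z\in S: ez\in J\}$.
\item Put $\pi=1-e$. Then $\O^{*}=\{x:\exists s\in S\,((1+\pi s)x\in S)\}$. Given $x\in\O^{*}$, pick $u\in S\setminus\mathfrak m_{\mathrm{id}}$ with $ux\in S$; since $uS+\pi S=S$, some $ut$ with $t\in S$ lies in $1+\pi S$. Conversely, $1+\pi s$ is a unit of $\O^{*}$.
\item Likewise $\M^{*}=\{y:\exists s\in S\,((1+\pi s)y\in\mathfrak m_{\mathrm{id}})\}$. Hence $x\in\O^{*}\iff\forall y\,(xy=1\to y\notin\M^{*})$ is universal.
\end{itemize}

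Completed this way, your route is genuinely different from the paper's. As far as I can see, it does not use defectlessness at all; your choice of $t$ and $\pi$ is just the Chinese remainder theorem.

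The paper instead takes Johnson's formula $\f(x,a)$ with $a_i\in\O^{*}\setminus\O_i$. It uses defectlessness and the fundamental equality $n=ref$ to show that the number $r$ of extensions is the same in every model of the theory of $(L,K,\O,a,\sigma_2a,\dots,\sigma_ra)$. It then proves $\f(L_2,b_1)\cap L_1=\f(L_1,b_1)$ whenever $\MM\subseteq\NN$ are such models. The left-hand side is one of the $r$ extensions to $L_1$ and contains every $b_{1,i}$, so it is not $\f(L_1,b_i)$ for $i\geq 2$. Preservation then gives both an existential and a universal equivalent.
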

\begin{proof}
    By Remark 2.7 of \cite{dp1a}, $\O^{*}$ is definable in the structure $(L,K,\O).$ For the sake or clarity, we will explain the role of the parameters $a=(a_2,\.,a_r)$ from $L$ appearing in the definition of $\O^{*}.$ Let $\{\O^{*},\O_2,\.,\O_r\}$ be the set of all extensions of $\O$ to $L.$ If $i\in\{2,\.,r\},$ then $\O^{*}$ and $\O_i$ are incomparable, and we can take $a_i\in\O^{*}\setminus\O_i.$ It follows that if $\sigma\in\Gal(L|K)$ is such that $\sigma a=a,$ then $\sigma\O^{*}=\O^{*},$ for otherwise $\sigma\O^{*}=\O_i$ for some $i\in\{2,\.,r\}$ and $a_i=\sigma a_i\in\sigma \O^{*}=\O_i,$ an absurd. Following Johnson's argument, we find a formula $\f(x,y)$ in the language of $(L,K,\O)$ such that $\O^{*}=\f(L,a).$ Since $L|K$ is normal, by the Conjugation Theorem \cite[Theorem 3.2.15]{ep}, there are some $\sigma_2,\.,\sigma_r\in\Gal(L|K)$ such that $\O_i=\f(L,\sigma_i a)$ and thus $(L,K,\O)\models\f(a_i,a)\wedge\neg\f(a_i,\sigma_i a)$ for $i\in\{2,\.,r\}.$ Note also that if $n=[L:K],$ $e=e(\O^{*}/\O)$ and $f=f(\O^{*}/\O),$ we get that the fundamental equality reads as $n=ref$ because $L|K$ is Galois and $(K,\O)$ is separably defectless. 
    It follows that if $(L',K',\O',a_1',a_2',\.,a_r')\equiv(L,K,\O,a,\sigma_2a,\.,\sigma_ra),$ then the number $r'$ of extensions of $\O'$ to $L'$ is still $r.$ This holds because $n,e$ and $f$ remain unchanged when computed in $(L',K',\O'),$ so there is some $d\in\N$ for which the fundamental equality computed for the extension $L'|K'$ is $n=r'efp^{d}.$ It follows that $r'\leq r,$ and since $\f(x,a_1'),\.,\f(x,a_r^{'})$ are \emph{pairwise different} extensions of $\O'$ to $L',$ we get that $r\leq r'$ and $r=r'.$ 
    
    Now we are ready to prove that $\O^{*}$ is existentially and universally definable in $(L,K,\O).$ 
    Indeed, we claim that $\O^{*}$ is existentially and universally $\emptyset$-definable in the structure $\AA=(L,K,\O,a,\sigma_2 a,\.,\sigma_r a).$ To this end, let $\MM,\NN\models\Th(\AA)$ be such that $\MM\subseteq\NN.$ If we let $\MM=(L_1,K_1,\O_1,b_1,b_2,\.,b_r)$ and $\NN=(L_2,K_2,\O_2,b_1,b_2,\.,b_r)$ with $b_i=(b_{i,2},\.,b_{i,r})\in L_1^{r-1},$ then we want to prove that $\f(L_1,b_1)\subseteq\f(L_2,b_1)$ and that $\f(L_2,b_1)\cap L_1\subseteq\f(L_1,b_1)$ for existential and universal definability respectively. We will see that $\f(L_2,b_1)\cap L_1=\f(L_1,b_1),$ which will take care of both statements at once. 
    First, note that $\f(L_2,b_1)\cap L_1$ is a valuation ring of $L_1$ extending $\O_1.$ Indeed, $\MM\subseteq\NN$ implies that $L_1\subseteq L_2,$ that $K_2\cap L_1= K_1$ and that $\O_2\cap L_1=\O_1.$ Then $\f(L_2,b_1)\cap L_1$ is a valuation ring of $L_1$ and 
    $$(\f(L_2,b_1)\cap L_1)\cap K_1=\f(L_2,b_1)\cap K_1=\f(L_2,b_1)\cap(K_2\cap L_1)=\O_2\cap L_1=\O_1.$$
    It follows that either $\f(L_2,b_1)\cap L_1=\f(L_1,b_1)$ or that $\f(L_2,b_1)\cap L_1=\f(L_1,b_i)$ for some $i\in\{2,\.,r\}.$ Indeed, since $\f(x,a),\f(x,a_2),\.,\f(x,a_r)$ define pairwise different extensions of $\O$ to $L$ in $\AA,$ then the same holds for $\MM$ and $\NN,$ and since $r$ remains unchanged when computed in $\AA$ and in $\MM$ or $\NN,$ it follows that $\f(x,b_1),\f(x,b_2),\.,\f(x,b_r)$ define \emph{all} extensions of $\O_1$ to $L_1$ and \emph{all} the extensions of $\O_2$ to $L_2.$ We also have that $b_{1,i}\in\f(L_1,b_1)\setminus\f(L_1,b_i)$ and $b_{1,i}\in\f(L_2,b_1)\setminus\f(L_2,b_i)$ because $a_i\in\f(L,a)\setminus\f(L,a_i)$ in $\AA$ for all $i\in\{2,\.,r\}.$ Hence, if $\f(L_2,b_1)\cap L_1=\f(L_1,b_i)$ for some $i\in\{2,\.,r\},$ then $b_{1,i}\in\f(L_2,b_1)\cap L_1=\f(L_1,b_i),$ which is absurd, and thus $\f(L_2,b_1)\cap L_1=\f(L_1,b_1)$ as wanted.       
\end{proof}

\begin{cor}
\label{enipexp}
    Let $(K,\O)$ be a non-trivial $\E$-NIP valued field, and let $L|K$ be a finite Galois extension. Then, for any pair of extensions $\O_1,\O_2$ of $\O$ to $L,$ the structure $(L,\O_1,\O_2)$ is $\E$-NIP.
\end{cor}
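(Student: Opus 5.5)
The plan is to realise $(L,\O_1,\O_2)$ inside $(K,\O)$ through an interpretation that preserves existential formulas, and then apply Lemma \ref{dtod'nip}. Two ingredients are needed: the field part comes from Remark \ref{inte}, and the valuation rings come from Lemma \ref{defin}. The interpretation will be via $(L,K,\O)$, which is itself interpreted in $(K,\O)$.

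First, since $L|K$ is finite Galois it is separable, hence simple, say $L=K(\b)$ with $n=[L:K]$. By Remark \ref{inte}, the map $\Phi:K^{n}\to L$, $(a_0,\.,a_{n-1})\mapsto\sum a_i\b^{i}$, interprets the field $L$ in $K$ with quantifier-free formulas. We add the predicates as follows.
\begin{itemize}
\item The subfield $K$ is the quantifier-free set $x_1=\.=x_{n-1}=0$.
\item $\O$ (as a subset of $L$) is $\{x\in K^{n}:x_1=\.=x_{n-1}=0\wedge\O(x_0)\}$, again quantifier-free.
\end{itemize}
Hence $(L,K,\O)$ is interpretable in $(K,\O)$ with all the sets of Lemma \ref{intcrit} quantifier-free definable. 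By Corollary \ref{eint}, this interpretation is $\E_m$-to-$\E_m$ and $\A_m$-to-$\A_m$ for every $m$.

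Second, we check that Lemma \ref{defin} applies, i.e.\ that $(K,\O)$ is separably defectless. In characteristic $p>0$ this follows from Lemma \ref{sd}, since $(K,\O)$ is non-trivial and $\E$-NIP. In characteristic $0$ one can argue similarly or directly: the defect is a power of the residue characteristic, and Corollary \ref{sepextp} applied to the $\E$-NIP field $Kv$ (which is $\E$-NIP by Lemma \ref{resfieldint}) should rule it out. This mixed/zero characteristic case is the point I expect to need the most care, and I may have to restrict to what Lemma \ref{sd} actually gives.

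Lemma \ref{defin} then shows that each $\O_i$, being an extension of $\O$ to $L$, is defined in $(L,K,\O)$ both by an existential formula and by a universal formula, with parameters from $L$. Composing with the interpretation of the previous step, $\Phi^{-1}\O_i$ is defined in $(K,\O)$ both by an $\E_1$-formula and by an $\A_1$-formula, with parameters. So we have the interpretation $(n,\Phi)$ of $(L,\O_1,\O_2)$ in $(K,\O)$, in which:
\begin{itemize}
\item the sets for $+,\times,0,1$ and for equality are quantifier-free definable;
\item the sets for the relation symbols $\O_1,\O_2$ are definable both by $\E_1$- and by $\A_1$-formulas.
\end{itemize}

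Finally, Corollary \ref{eint}(2) with $m=1$ says that every $\E_1$-formula of $(L,\O_1,\O_2)$ pulls back to an $\E_1$-formula of $(K,\O)$. Existential formulas of arbitrary length are $\E_1$ up to merging the block of existential quantifiers, and the induction of Lemma \ref{intcrit} only adds existential quantifiers in front. Hence the interpretation is $\E$-to-$\E$, and Lemma \ref{dtod'nip} yields that $(L,\O_1,\O_2)$ is $\E$-NIP.

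The main obstacle is the bookkeeping showing that negated atomic formulas in $\O_i$ stay existential. This is exactly why we need universal definability of $\O_i$ too: $\neg\O_i(t)$ is existential because $\O_i$ is universally definable. The parameters introduced by Lemma \ref{defin} are harmless, since Lemma \ref{dtod'nip} allows parameters.
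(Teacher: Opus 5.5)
Your proposal is correct and follows the paper's proof: the paper interprets $(L,K,\O)$ in $(K,\O)$ quantifier-freely, deduces via Lemma \ref{dtod'nip} that it is $\E$-NIP, gets separable defectlessness from Lemma \ref{sd}, and then interprets $(L,\O_1,\O_2)$ in $(L,K,\O)$ using the existential and universal definitions from Lemma \ref{defin}; you instead compose the two interpretations into one, which changes nothing of substance. Like the paper, you really obtain separable defectlessness only from Lemma \ref{sd}, i.e.\ in the equicharacteristic case, which is all Theorem \ref{eniphens} needs. Your suggested residue-field argument via Corollary \ref{sepextp} applied to $Kv$ would not exclude defect in mixed characteristic, since defect is not reflected in finite extensions of $Kv$ (think of immediate extensions).
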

\begin{proof}
    By Corollary \ref{eint}, the structure $(L,K,\O)$ is $\E$-to-$\E$ interpretable in $(K,\O)$ as the highlighted sets of Lemma \ref{intcrit} can be chosen to be quantifier-free definable. Hence $(L,K,\O)$ is itself $\E$-NIP by Lemma \ref{dtod'nip}. Also, $(K,\O)$ is separably defectless by Lemma \ref{sd}. By Lemma \ref{defin}, $\O_1$ and $\O_2$ are existentially and universally definable in $(L,K,\O),$ so it follows from Corollary \ref{eint} that $(L,\O_1,\O_2)$ is $\E$-to-$\E$-interpretable in $(L,K,\O).$ We conclude by Lemma \ref{dtod'nip}.        
\end{proof}

\begin{lema}[Cf.~{\cite[Lemma 2.6]{dp1a}}]
\label{compar}
    Let $\KK=(K,\O_1,\O_2)$ be a 2-valued field of characteristic $p>0.$ If $\KK$ is $\E$-NIP, $\O_1$ and $\O_2$ are comparable.
\end{lema}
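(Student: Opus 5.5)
We follow Johnson's argument for \cite[Lemma 2.6]{dp1a}, keeping track of quantifier complexity. Suppose for contradiction that $\O_1$ and $\O_2$ are incomparable, and pass to an $\aleph_0$-saturated elementary extension; the theory, and hence $\E$-NIP, is unchanged. Let $R=\O_1\cap\O_2$. The plan is to produce an $\E$-type-definable ideal $I$ of $R$, show that $I=I^{00,\E}$, and then derive a contradiction from the approximation property for incomparable valuations.

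First we check the residue fields. Each reduct $(K,\O_i)$ is $\E$-NIP and non-trivial, since incomparability excludes triviality. By Lemma \ref{resfieldint} and Corollary \ref{vgpdiv}, each $\O_i/\M_i$ contains $\F_p^{alg}$, so it is infinite. This holds for the residue fields of the valuations themselves; the residue fields at the maximal ideals $\mathfrak m_i=R\cap\M_i$ of $R$ are the same by \cite[Lemma 2.1]{dp1a}.

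Next we set up the $00$-machinery. Lemma \ref{j00} requires $\D$ closed under negations, while $\E$ is not. We therefore work with $\D=\pm\E$, the fragment generated by existential and universal formulas. Every formula in it is a boolean combination of existential formulas, so $T$ is $\pm\E$-NIP by closure of NIP under boolean combinations \cite[Lemma 2.9]{simon}. Proposition \ref{g00} and Lemma \ref{j00} then apply to $\KK=(K,\O_1,\O_2)$. Take $I$ to be the type-definable ideal $\bigcap_{n}\,c_n R$ for a suitable small sequence, or more simply, following Johnson, $I=\M_1\cap\M_2$ or $I=R$ itself. Lemma \ref{j00} gives $I=I^{00,\pm\E}$, so $I$ has no proper $\pm\E$-type-definable subgroup of bounded index.

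To get the contradiction, we use incomparability and the approximation theorem for two independent valuations. In characteristic $p$ this is where positive characteristic enters, via Artin–Schreier closedness and the fact that $\O_1\O_2$ is the coarsening. The goal is to write down a quantifier-free (hence $\pm\E$) definable subgroup of $R$ of bounded but infinite index. Concretely, consider $J=\{x\in R: x\in \M_1\}=R\cap\M_1$, which is quantifier-free definable in $\KK$. Then $R/J\cong \O_1/\M_1$ when $\O_1,\O_2$ are independent, and this quotient is large in a saturated model. To get bounded index we instead take a type-definable intersection $\bigcap_n (R\cap c^n\M_1)$ for suitable $c$ chosen via approximation, so that its index is controlled by the value group contribution. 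Johnson reaches the contradiction by showing that $R$ then admits a nontrivial decomposition as $R\to \O_1/\M_1\times \O_2/\M_2$, whose kernel is type-definable of index at most $2^{|T|}$ after truncation.

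The main obstacle is exactly this last step: checking that every definable set Johnson uses is quantifier-free or existential in $(K,\O_1,\O_2)$. The dependent case, where the two valuations induce the same topology, requires passing to the common coarsening $\O_1\O_2$. That coarsening is defined by $\E z\,\E w\,(x=zw\wedge z\in\O_1\wedge w\in\O_2)$, which is existential; its complement, however, is universal. So we must make sure that every such set enters only positively, or else through the $\pm\E$ closure. If it does not, we reduce to the independent case by replacing $\O_i$ with its image in the residue field of $\O_1\O_2$, using Lemma \ref{resfieldint}.
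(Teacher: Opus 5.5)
Your preliminary steps agree with the paper. You pass to a saturated model and replace $\E$ by the negation-closed fragment $\pm\E$, which is still NIP. You check that $\O_1/\M_1$ and $\O_2/\M_2$ are infinite, and invoke Lemma \ref{j00} to get $J=J^{00,\pm\E}$ for $J=\M_1\cap\M_2$ (or for $R$).

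The gap is the step that should contradict this: you leave it unspecified, and the instances you suggest do not work. A contradiction needs a proper $\pm\E$-type-definable subgroup of \emph{bounded} index. But $R\cap\M_1$ has index $|\O_1/\M_1|$ in $R$, and this is unbounded in a saturated model precisely because you just showed the residue field is infinite. The same objection applies to the intersections $\bigcap_n(R\cap c^n\M_1)$ (with $c$ never specified). It also applies to the map $R\to\O_1/\M_1\times\O_2/\M_2$, whose kernel $J$ has quotient of unbounded size. None of these candidates uses that the characteristic is $p$, which is where the argument has to bite. The digression on dependent valuations and the coarsening $\O_1\O_2$ is unnecessary: incomparability alone makes $R\cap\M_1$ and $R\cap\M_2$ distinct, hence comaximal, maximal ideals of $R$.

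The missing idea is a definable subgroup of \emph{finite} index in $J$, built from Artin--Schreier roots.
\begin{itemize}
\item Since $K$ is Artin--Schreier closed, every $x\in J$ has a root $z$ of $z^p-z=x$. Such a $z$ is integral over each $\O_i$, so $z\in R$.
\item The residues of $z$ modulo $\M_1$ and $\M_2$ satisfy $X^p=X$, so $z\in(a+\M_1)\cap(b+\M_2)$ for unique $a,b\in\F_p$.
\item Any other root is $z+i$ with $i\in\F_p$, so $f(x)=a-b$ is well defined. This gives an existentially definable homomorphism $f:(J,+)\to(\Z/p\Z,+)$.
\item $f$ is surjective: by the Chinese remainder theorem in $R$ (weak approximation), pick $y\in(1+\M_1)\cap\M_2$. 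Then $y^p-y=(y-1)^p-(y-1)\in J$ and $f(y^p-y)=1$.
\end{itemize}
Thus $\ker f$ is an existentially definable subgroup of index $p$ in $J$, so $J^{00,\pm\E}\subseteq\ker f\subsetneq J$, contradicting Lemma \ref{j00}. This is the paper's proof, and it is where positive characteristic does the real work.
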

\begin{proof}
    If $\O_1,\O_2$ are incomparable, then $R=\O_1\cap\O_2$ is a multi-valuation ring of $K$ which is not a valuation ring. Since any valuation of a finite field is trivial, $K$ has to be infinite. Since $\KK$ is $\E$-NIP, $K$ is Artin-Schreier closed, forcing $\O_1/\M_1$ and $\O_2/\M_2$ to be also Artin-Schreier closed and hence infinite.
    
    The Jacobson radical $J$ of $R$ is equal to $\M_1\cap\M_2$ (cf.~\cite[Lemma 2.1]{dp1a}), so if $x\in J,$ then any Artin-Schreier root $y$ of $x$ would be an element of $R$ whose residue modulo $\M_i$  is fixed by the Frobenius. Therefore there are two unique elements $a,b\in\Z/p\Z$ such that $y\in(a+\M_1)\cap(b+\M_2).$
    If $z$ is another Artin-Schreier root of $x,$ then $(y-z)^p=y-z$ implies that $z=y+i$ for some $i\in\Z/p\Z,$ hence $z\in(a-i+\M_1)\cap(b-i+\M_2)$. Hence the map $f:(J,+)\to(\Z/p\Z,+)$ sending $x$ to $a-b$ is a well defined definable morphism, as it does not depend on the choice of an Artin-Schreier root of $x.$ 
    In particular, $f(x)=y$ is definable by the existential formula $$\E\,a,b,z\,(y=a-b\wedge z^{p}-z=x\wedge z-a\in\M_1\wedge z-b\in\M_2).$$  
    It is also a surjective morphism, since any element $y\in(1+\M_1)\cap\M_2$ will satisfy that $v_1(y^{p}-y)=v_1((y-1)^{p}-(y-1))=v_1(y-1)>0$ and that $v_2(y^{p}-y)=v_2(y)>0,$
    yielding $f(y^{p}-y)=1.$ Any such $y$ can be found by Weak Approximation of incomparable valuations on $K,$ cf.~\cite[Theorem 3.2.7]{ep}.
    Finally, given that $\ker(f)$ is an existentially definable subgroup of $J$ of index $p,$ it follows that $J^{00,\pm\E}\subseteq\ker(f)\subsetneq J,$ in contradiction with Lemma \ref{j00}. 
\end{proof}

\begin{teorema}[Johnson, cf.~{\cite[Theorem 2.8]{dp1a}}]
\label{eniphens}
    Let $(K,\O)$ be an $\E$-NIP valued field of characteristic $p>0.$ Then $\O$ is henselian. 
\end{teorema}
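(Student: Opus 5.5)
We will follow Johnson's strategy from \cite[Theorem 2.8]{dp1a}, and check that every formula it uses is existential, using Corollary \ref{enipexp} and Lemma \ref{compar}. If $\O$ is trivial there is nothing to prove, so assume $(K,\O)$ is non-trivial.

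We argue by contradiction and suppose that $\O$ is not henselian. Then there is a finite Galois extension $L|K$ to which $\O$ extends in at least two different ways. Indeed, henselianity of $\O$ is equivalent to $\O$ having a unique extension to every finite Galois extension of $K$ (equivalently, to $K^{sep}$). So we fix such an $L$ together with two distinct extensions $\O_1\neq\O_2$ of $\O$ to $L$.

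By the Conjugation Theorem \cite[Theorem 3.2.15]{ep}, the extensions of $\O$ to $L$ are all conjugate under $\Gal(L|K)$. Conjugate valuation rings over the same base are pairwise incomparable, since an inclusion $\O_1\subsetneq\O_2$ between two extensions of the same $\O$ is impossible (both contract to $\O$, and distinct extensions in an algebraic extension are incomparable, e.g.\ \cite[Lemma 3.2.8]{ep}). Hence $\O_1$ and $\O_2$ are incomparable.

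Next we apply Corollary \ref{enipexp}: since $(K,\O)$ is a non-trivial $\E$-NIP valued field and $L|K$ is finite Galois, the 2-valued field $(L,\O_1,\O_2)$ is $\E$-NIP. Note that $L$ still has characteristic $p>0$. So Lemma \ref{compar} applies to $(L,\O_1,\O_2)$ and shows that $\O_1$ and $\O_2$ are comparable. This contradicts the previous paragraph, and therefore $\O$ is henselian.

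The main obstacle is already absorbed into the earlier results. Corollary \ref{enipexp} relies on Lemma \ref{defin}, which requires $(K,\O)$ to be separably defectless; that is supplied by Lemma \ref{sd}, whose hypothesis "equi-characteristic" holds automatically in characteristic $p>0$ for a non-trivial valuation (the residue field has the same characteristic $p$). The only points left to verify carefully are the reduction of non-henselianity to finitely many Galois data and the incomparability of distinct extensions, both of which are standard.
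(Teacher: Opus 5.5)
Your proof is correct and is essentially the paper's argument. You pass to a finite Galois extension $L$ carrying two extensions $\O_1,\O_2$ of $\O$, use Corollary \ref{enipexp} to make $(L,\O_1,\O_2)$ $\E$-NIP, and then use Lemma \ref{compar} together with \cite[Lemma 3.2.8]{ep}. The paper phrases this directly (comparable, hence equal) rather than by contradiction, and it leaves implicit the trivial-valuation case that you handle explicitly; your appeal to the Conjugation Theorem is harmless but unnecessary.
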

\begin{proof}
    Let $L$ be a finite Galois extension of $K$ admitting two extensions $\O_1,\O_2$ of $\O.$ By Corollary \ref{enipexp}, the structure $(L,\O_1,\O_2)$ is also $\E$-NIP. Thus, by Lemma \ref{compar}, $\O_1$ and $\O_2$ are comparable, hence equal, cf.~\cite[Lemma 3.2.8]{ep}.
\end{proof}

\section{A Finer NIP Transfer}
\label{6}

\subsection{\texorpdfstring{$\E_n$}{}-Elimination}

The following fact, whose information is summarized in Figure \ref{nevenodd}, can be obtained as a corollary of Fact \ref{exembg}.  

\begin{fact}[Cf.~Theorem 5.3.8 and Exercise 5.2.7 of \cite{ck}]
\label{enequiv}
    Let $n\geq 1$ and let $\AA,\BB$ be two $\LL$-structures.
    \begin{enumerate}
        \item Suppose that $n$ is even. The following statements are equivalent.
        \begin{enumerate}
            \item $\AA\Rightarrow_{\E_{n}}\BB.$

            \item There are some $\AA=\AA_0\preceq\AA_2\preceq\AA_4\preceq\.\preceq\AA_n$, some $\BB=\BB_0\preceq\BB_1\preceq\BB_3\preceq\.\preceq\BB_{n-1}$ and some $\LL$-embeddings $f_i:\AA_{i-1}\to\BB_{i}$ if $i$ is odd, $f_i:\BB_{i-1}\to\AA_i$ if $i$ is even, $i\in\{1,\.,n\},$ such that $f_{i+1}\circ f_i=\id_{A_{i-1}}$ if $i$ is odd and $f_{i+1}\circ f_i=\id_{B_{i-1}}$ if $i$ is even.
        \end{enumerate}
    
        \item Suppose that $n$ is odd. The following statements are equivalent.
        \begin{enumerate}
            \item $\AA\Rightarrow_{\E_{n}}\BB.$

            \item There are some $\AA=\AA_0\preceq\AA_2\preceq\AA_4\preceq\.\preceq\AA_{n-1}$, some $\BB=\BB_0\preceq\BB_1\preceq\BB_3\preceq\.\preceq\BB_{n}$ and some $\LL$-embeddings $f_i:\AA_{i-1}\to\BB_{i}$ if $i$ is odd, $f_i:\BB_{i-1}\to\AA_i$ if $i$ is even, $i\in\{1,\.,n\},$ such that $f_{i+1}\circ f_i=\id_{A_{i-1}}$ if $i$ is odd and $f_{i+1}\circ f_i=\id_{B_{i-1}}$ if $i$ is even.
        \end{enumerate}
    \end{enumerate}
\end{fact}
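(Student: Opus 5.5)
The plan is to argue by induction on $n$, treating both parities at once. The basic tool is Fact \ref{exembg} applied to $\D=\E_n$, which up to logical equivalence is closed under conjunctions and existential quantification. Then $\AA\Rightarrow_{\E_n}\BB$ holds if and only if there are some $\BB_1\succeq\BB$ and some map $f_1:\AA\to_{\E_n}\BB_1$.

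\emph{Base case $n=1$.} The set $\E_1$ consists of existential formulas with quantifier-free matrix. In particular it contains atomic and negated atomic formulas, so any map $f_1:\AA\to_{\E_1}\BB_1$ is an $\LL$-embedding. Conversely, an embedding into an elementary extension of $\BB$ preserves existential formulas, which gives $\AA\Rightarrow_{\E_1}\BB$. This is exactly statement 2(b) with $n=1$.

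\emph{Inductive step.} The key observation is the following: for a map $f:\AA\to\BB_1$,
$$f:\AA\to_{\E_n}\BB_1\quad\Sii\quad(\BB_1,f(a))_{a\in A}\Rightarrow_{\E_{n-1}}(\AA,a)_{a\in A}.$$
For the right-to-left direction, an $\E_n$-formula is $\E\bar y\,\th$ with $\th\in\A_{n-1}$. A witness for it in $\AA$ is sent by $f$ to a witness in $\BB_1$, because preserving $\A_{n-1}$-formulas forward is the contrapositive of reflecting their negations, and those negations are $\E_{n-1}$. For the left-to-right direction, every $\A_{n-1}$-formula is (equivalent to) an $\E_n$-formula with a dummy quantifier, so $f$ preserves $\A_{n-1}$-formulas, and taking contrapositives gives the right-hand side. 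In particular $f$ preserves quantifier-free formulas and is therefore an embedding.

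Now apply the induction hypothesis for $n-1$ to the structures $(\BB_1,f_1(a))_{a\in A}$ and $(\AA,a)_{a\in A}$ in the language $\LL(A)$. This produces an alternating chain starting with elementary extensions $\BB_1=\BB'_0\preceq\BB'_2\preceq\cdots$ and $\AA=\AA'_0\preceq\AA'_1\preceq\cdots$, together with embeddings between consecutive terms. Because the embeddings are $\LL(A)$-embeddings, the first one $f_2:\BB_1\to\AA_2$ fixes the named constants, i.e.\ $f_2(f_1(a))=a$ for all $a\in A$; this gives $f_2\circ f_1=\id_{A}$. The later composition identities are inherited directly from the inductive chain. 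Prepending $\BB\preceq\BB_1$ and $f_1$ and shifting all indices by one turns the $(n-1)$-chain into the $n$-chain of the statement. The parity of the last structure flips, which is exactly why items 1 and 2 are swapped relative to each other.

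\emph{Converse.} Given a chain of length $n$, the tail of the chain starting at $f_2$ is a chain of length $n-1$ from $(\BB_1,f_1(a))$ to $(\AA,a)$ in $\LL(A)$; here the identity $f_2\circ f_1=\id$ is what makes the constants match. By induction, $(\BB_1,f_1(a))\Rightarrow_{\E_{n-1}}(\AA,a)$. The key observation then gives $f_1:\AA\to_{\E_n}\BB_1$, so $\AA\Rightarrow_{\E_n}\BB_1\equiv\BB$. The main obstacle I expect is not conceptual but bookkeeping. One point is to make sure that $\E_n$ means quantifier rank at most $n$, so that $\A_{n-1}\subseteq\E_n$ up to equivalence. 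The other is tracking the parity and index shift, and checking that the composition identities $f_{i+1}\circ f_i=\id$ correspond exactly to the constants of the expanded language at each stage.
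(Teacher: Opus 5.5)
Your proposal is correct and follows the route the paper indicates. The paper gives no argument beyond saying the fact is a corollary of Fact \ref{exembg}, and your key observation ($f:\AA\to_{\E_n}\BB_1$ iff $(\BB_1,f(a))_{a\in A}\Rightarrow_{\E_{n-1}}(\AA,a)_{a\in A}$), iterated by induction in the expanded language $\LL(A)$, is exactly the back-and-forth step the paper itself uses in the proof of Proposition \ref{delredn}. The one point to phrase carefully is that ``quantifier rank'' must mean the number of alternating quantifier blocks, as in Cherlin's convention; this is what makes $\E_n$ closed under existential quantification, as Fact \ref{exembg} requires, whereas under the literal nesting-depth reading both that closure and the fact itself fail.
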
 
\begin{figure}[ht]
    \centering
    \begin{tabular}{cccc}
    \hline
       \begin{tikzcd}
\AA_{n} \arrow[d, "\preceq"', no head]                 &                                                              \\
\vdots                                                 & \BB_{n-1} \arrow[lu, "f_{n}"'] \arrow[d, "\preceq", no head] \\
\AA_2 \arrow[u, "\preceq", no head]                    & \vdots                                                       \\
                                                       & \BB_1 \arrow[lu, "f_2"'] \arrow[u, "\preceq"', no head]      \\
\AA_0 \arrow[ru, "f_1"] \arrow[uu, "\preceq", no head] & \BB_0 \arrow[u, "\preceq"', no head]                        
\end{tikzcd} & & &\begin{tikzcd}
                                                           & \BB_{n}                                                                                        \\
\AA_{n-1} \arrow[d, "\preceq"', no head] \arrow[ru, "f_n"] &                                                                                                \\
\vdots                                                     & \BB_{n-2} \arrow[lu, "f_{n-1}"'] \arrow[d, "\preceq", no head] \arrow[uu, "\preceq"', no head] \\
\AA_2 \arrow[u, "\preceq", no head]                        & \vdots                                                                                         \\
                                                           & \BB_1 \arrow[lu, "f_2"'] \arrow[u, "\preceq"', no head]                                        \\
\AA_0 \arrow[ru, "f_1"] \arrow[uu, "\preceq", no head]     & \BB_0 \arrow[u, "\preceq"', no head]                                                          
\end{tikzcd} \\ 

       The case $\AA\Rightarrow_{\E_{n}}\BB,$ $n$ even.  & & &  The case $\AA\Rightarrow_{\E_{n}}\BB,$ $n$ odd.\\
    \hline
    \end{tabular}
    \caption{Embeddings of Fact \ref{enequiv}}
    \label{nevenodd}
\end{figure}
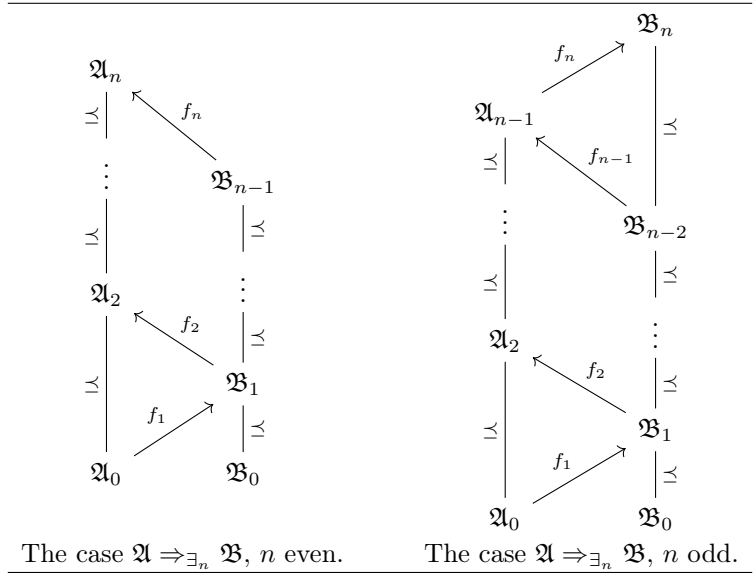

For $n=1,$ the following proposition recovers \cite[Corollary 4.6]{sm}.

\begin{propo}
\label{delredn}
Let $e\in\N\cup\{\infty\},$ let $\LL=\LL(\LL_\kk,\LL_\GG),$ let $n\geq 1$ and let $x,y,z$ be tuples of variables of sort $\KK,\kk$ and $\GG,$ respectively. Any $\E_n$-formula $\Phi(x,y,z)$ from $\LL$ is equivalent, modulo $\texttt{SAMK}_{e}^{\l,\ac},$ to a $\{\vee,\wedge\}$-combination of the following formulas:
    \begin{enumerate}
        \item $t(x)=0$ and their negations,
            
        \item $\th(\underline{ac}(t_1(x)),\.,\underline{ac}(t_n(x)),y),$ where $\th$ is an $\E_n$-formula from $\LL_\kk$, 

        \item $\psi(\underline{v}(t_1(x)),\.,\underline{v}(t_n(x)),z),$ where $\psi$ is an $\E_n$-formula from $\LL_\GG$
    \end{enumerate}
    and $t(x),t_1(x),\.,t_n(x)$ are $\LL$-terms of sort $\KK^{|x|}\to\KK$.
\end{propo}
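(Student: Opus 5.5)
We plan to prove the statement with the type-theoretic criterion of Corollary \ref{delredeq}, taking as $\D'$ the fragment $\D'_n$ generated by the formulas of types (1)--(3). It is closed under $\wedge$ and $\vee$ by definition. By Statement 2 of Corollary \ref{delredeq}, it suffices to show the following. Let $\MM,\NN\models\texttt{SAMK}_e^{\l,\ac}$ and let $a\in M$, $b\in N$ be tuples of the same length with $(\MM,a)\Rightarrow_{\D'_n}(\NN,b)$. Then $(\MM,a)\Rightarrow_{\E_n}(\NN,b)$.

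We argue by induction on $n$. The case $n=1$ is \cite[Corollary 4.6]{sm}, which we take as given. For the inductive step we work in the style of Fact \ref{enequiv}. First, we use that $\D'_n$ contains all equations $t(x)=0$ and their negations. From $(\MM,a)\Rightarrow_{\D'_n}(\NN,b)$ we therefore obtain an $\LL$-isomorphism $f\colon\langle a\rangle\to\langle b\rangle$ of the generated $\LL$-substructures with $f(a)=b$. Here $\langle a\rangle_{\LL}$ has home sort the $\l$-closure $\langle\F_p,a\rangle$, which is separable in $M$ by Fact \ref{lcl}. Next, the hypotheses on the formulas of types (2) and (3) say exactly the following: on the tuples $\ac(t(a))$ and $v(t(a))$, with the corresponding tuples for $b$, we have $(Mv,\ac(t(a)),c)\Rightarrow_{\E_n}(Nv,\ac(t(b)),c')$ for the residue parameters, and similarly in the value groups. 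By Fact \ref{enequiv}, these $\E_n$-implications are realised by alternating chains of elementary extensions and embeddings of residue fields and of value groups, of length $n$.

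The core step is to lift these chains to chains of models of $\texttt{SAMK}_e^{\l,\ac}$. Suppose we are given an embedding $Mv\to k'$ into an elementary extension $k'\succeq Nv$ and an embedding of value groups $vM\to\G'\succeq vN$, both compatible with $f$ on $\langle a\rangle$. We want an elementary extension $\NN'\succeq\NN$ with $\NN'v=k'$ and $v\NN'=\G'$, together with an $\LL$-embedding $\MM\to\NN'$ extending $f$ that induces the given residue and value group maps. For this we use the relative embedding lemma for $\texttt{SAMK}_e^{\l,\ac}$ underlying \cite[Corollaries 5.1 and 4.6]{sm}: a separable, $\ac$-compatible embedding of a substructure with good residue and value group data extends to an embedding of the whole model into a sufficiently saturated target, realising prescribed residue and value group embeddings. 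Applying this $n$ times, alternating sides, produces the chains of Fact \ref{enequiv}(b) for $\MM,\NN$ over $a,b$. The composition identities $f_{i+1}\circ f_i=\id$ are inherited from the corresponding identities on the sorts $\kk$ and $\GG$, provided each lift is chosen over the previous one, so that the lift of a composite is the composite of the lifts. Fact \ref{enequiv} then yields $(\MM,a)\Rightarrow_{\E_n}(\NN,b)$.

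The main obstacle is this lifting step. Two things must be ensured. First, each lifted embedding must keep the image separable in the target, so that it respects the $\l$-functions; this is the reason for working with $\l$-closures, as in Remark \ref{gaps}. Second, the Ershov degree must be preserved along the chain. I would first try to derive the lift from the proof of \cite[Corollary 4.6]{sm}. That argument first extends $f$ to the relative algebraic closure, using Claim \ref{restriso}, and then treats the residue and value group parts separately by transcendence-degree bookkeeping. The place requiring care is coherence: the lift at stage $i+1$ must restrict to a left inverse of the lift at stage $i$. We would secure this by always taking the next extension over the image of the previous stage, working in saturated elementary extensions.
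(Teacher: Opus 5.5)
The overall architecture matches the paper's: you reduce via Corollary \ref{delredeq}, obtain an $\LL$-isomorphism of the generated substructures, and build an alternating chain of length $n$ as in Fact \ref{enequiv}, with the home-sort maps supplied by the embedding lemma for $\texttt{SAMK}_{e}^{\l,\ac}$ (the paper uses \cite[Theorem 1.1]{sm}). The gap is in how you set up the core lifting step.

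You first build chains for the sorts $\kk$ and $\GG$ separately, via Fact \ref{enequiv}. You then ask for an elementary extension $\NN'\succeq\NN$ with $\NN'v=k'$ and $v\NN'=\G'$ \emph{exactly}, plus an $\LL$-embedding $\MM\to\NN'$ inducing the prescribed maps. You need these equalities, because the next map in your precomputed $\kk$- and $\GG$-chains is defined only on $k'$ and $\G'$. Nothing you cite provides them. As the paper uses it, the embedding lemma starts from a given, sufficiently saturated target. It produces an embedding inducing given maps into \emph{that target's} residue field and value group; it does not let you prescribe that residue field and value group. Realising two arbitrary elementary extensions $k'\succeq Nv$ and $\G'\succeq vN$ simultaneously as the invariants of one elementary extension of $\NN$ would be a separate result you have not supplied. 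That extension would also have to be saturated enough for the next application of the lemma.

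Likewise, the identities $f_{i+1}\circ f_i=\id$ are not inherited from the identities on $\kk$ and $\GG$. An $\LL$-embedding is not determined by the maps it induces on those two sorts; for example, $t\mapsto t+t^{2}$ on a power series field preserves values and leading coefficients without being the identity.

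The repair is to interleave the two steps rather than precompute the sort-wise chains, which is what the paper does.
\begin{itemize}
\item At each stage, take a sufficiently saturated elementary extension of the model you are mapping into; for the first step, an $|M|^{+}$-saturated $\NN_1\succeq\NN$.
\item The residue and value group maps of the previous stage preserve $\E_m$-formulas, so they reflect $\E_{m-1}$-formulas over their image. At the first stage, the hypothesis on formulas of types (2) and (3) plays this role.
\item Fact \ref{exembg}, applied inside the residue field and value group of \emph{this} extension, then gives maps that preserve $\E_{m-1}$-formulas and extend the inverses of the previous ones.
\item Next, apply the embedding lemma to the image of the previous $\LL$-embedding, which is an $\LL$-substructure, with the inverse of that embedding as the partial map. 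This yields an $\LL$-embedding extending the inverse on all three sorts, so the identities of Fact \ref{enequiv} hold by construction.
\end{itemize}
Your concerns about separability and the Ershov degree are absorbed automatically: every target is an elementary extension of a model of $\texttt{SAMK}_{e}^{\l,\ac}$, and the lemma outputs $\LL$-embeddings. Finally, your ``induction on $n$'' is never used. The argument is a direct construction of a single chain of length $n$.
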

\begin{proof} 
Let $\D'$ be the collection of all $\{\vee,\wedge\}$-combinations of formulas of the form (1), (2) and (3) at level $n.$ Let $\MM,\NN$ be models of $\texttt{SAMK}_{e}^{\l,\ac},$ and let let $a,\a,\g,b,\b,\d$ be finite tuples from $M,$ $\kk(M)$, $\GG(M)$, $N,$ $\kk(N)$ and $\GG(N)$ respectively. Suppose that $|a|=|b|,$ $|\a|=|\b|,$ and $|\g|=|\d|.$ Suppose as well that $\MM$ is $\aleph_0$-saturated. By Item 2 of Lemma \ref{delredeq}, it is enough to prove that $(\MM,a,\a,\g)\Rightarrow_{\E_{n}}(\NN,b,\b,\d)$ whenever 
    \begin{equation}
    \label{d'equivind}
        (\MM,a,\a,\g)\Rightarrow_{\D'}(\NN,b,\b,\d). 
    \end{equation}     
Let $\AA$ be the $\LL$-substructure of $\MM$ generated by $a,\a$ and $\g$, and let $A$ be the universe of the $\KK$-sort of $\AA.$ Since $\D'$ contains all quantifier-free $\LL$-formulas, then equation \ref{d'equivind} implies that there is an $\LL$-embedding $\Sigma_{0}=(\iota_{0},\sigma_{0},\rho_{0}):\AA\to\NN$ mapping $(a,\a,\g)$ to $(b,\b,\d).$ Also, since $\D'$ contains all formulas of the form (2) and (3), we get that $$(\kk(M),\xi)_{\xi\in\kk(A)}\Rightarrow_{\E_{n}}(\kk(N),\sigma_{0}(\xi))_{\xi\in\kk(A)}$$
with respect to the corresponding expansion by constants of $\LL_\kk,$ and $$(\GG(M),\e)_{\e\in\GG(A)}\Rightarrow_{\E_{n}}(\GG(N),\rho_{0}(\e))_{\e\in\GG(A)}$$ with respect to the corresponding expansion by constants of $\LL_\GG.$ 
If $\NN_1$ is an $|M|^{+}$-saturated elementary extension of $\NN,$ then Fact \ref{exembg} yields the existence of two functions $\sigma_1:\kk(M)\to\kk(N_1)$ and $\rho_1:\GG(M)\to\GG(N_1)$ preserving $\E_{n}$-formulas from $\LL_\kk$ and $\LL_\GG$ and such that $\sigma_1|_{\kk(A)}=\sigma_{0}$ and $\rho_1|_{\GG(A)}=\rho_{0}$ respectively. In particular, $\sigma_1|_{Av}=\sigma_{0}|_{Av}$ and $\rho_1|_{vA}=\rho_{0}|_{vA}$ respectively. By the Embedding Lemma \cite[Theorem 1.1]{sm} applied to $\langle A\rangle$, there is some ring embedding $\iota_1:M\to N_1$ inducing an $\LL$-embedding $\Sigma_1=(\iota_1,\sigma_1,\rho_1):\MM\to\NN_1$ that extends $\Sigma_0.$   

Suppose $\MM_{l-1},\NN_{l}$ have been constructed, together with an $\LL$-embedding $\Sigma_l=(\iota_l,\sigma_l,\rho_l):\MM_{l-1}\to\NN_{l}$ satisfying that $\sigma_l:\kk(M_{l-1})\to_{\E_{n-l}}\kk(N_{l})$ with respect to $\LL_\kk$ and that $\rho_l:\GG(M_{l-1})\to_{\E_{n-l}}\GG(N_{l})$ with respect to $\LL\GG.$ In particular, $$(\kk(N_{l}),\sigma_l(\xi))_{\xi\in\kk(M_{l-1})}\Rightarrow_{\E_{n-l-1}}(\kk(M_{l-1}),\xi)_{\xi\in\kk(M_{l-1})}$$
with respect to the corresponding expansion by constants of $\LL_\kk,$ and $$(\GG(N_{l}),\rho_l(\e))_{\e\in\GG(M_{l-1})}\Rightarrow_{\E_{n-l-1}}(\GG(M_{l-1}),\e)_{\e\in\GG(M_{l-1})}$$ with respect to the corresponding expansion by constants of $\LL_\GG.$ Let $\MM_{l+1}$ be an $|N_{l}|^{+}$-saturated elementary extension of $\MM_{l-1}.$ By Fact \ref{exembg}, there are two functions $\sigma_{l+1}:\kk(N_{l})\to\kk(M_{l+1})$ and $\rho_{l+1}:\GG(N_{l})\to\GG(M_{l+1})$ preserving $\E_{n-l-1}$-formulas from $\LL_\kk$ and $\LL_\GG$ respectively, and satisfying that $\sigma_{l+1}(\sigma_l(\xi))=\xi$ for all $\xi\in\kk(M_{l-1})$ and $\rho_{l+1}(\rho_{l}(\e))=\e$ for all $\e\in\GG(M_{l-1}).$ By the Embedding Lemma \cite[Theorem 1.1]{sm}, there is a ring embedding $\iota_{l+1}:N_{l}\to M_{l+1}$ inducing an $\LL$-embedding $\Sigma_{l+1}=(\iota_{l+1},\sigma_{l+1},\rho_{l+1}):\NN_{l}\to\MM_{l+1}$ that extends $\Sigma_l^{-1}.$ Analogously, one may repeat the same argument and find, for an $|M_{l+1}|^{+}$-saturated elementary extension $\NN_{l+2}$ of $\NN_l,$ an embedding $\Sigma_{l+2}=(\iota_{l+2},\sigma_{l+2},\rho_{l+2}):\MM_{l+1}\to\NN_{l+2}$ extending $\Sigma_{l+1}^{-1}$ and such that $\sigma_{l+2}:\kk(M_{l+1})\to_{\E_{n-l-2}}\kk(N_{l+2})$ with respect to $\LL_\kk,$ and $\rho_{l+2}:\GG(M_{l+1})\to_{\E_{n-l-2}}\GG(N_{l+2})$ with respect to $\LL_\GG.$ Iterating this construction until the $n^{th}$ stage, we may conclude that $(\MM,a,\a,\g)\Rightarrow_{\E_{n}}(\NN,b,\b,\d)$ by Fact \ref{enequiv}, as wanted.
\end{proof}

\begin{cor}
\label{sent}
    Any $\E_n$-sentence from $\LL$ is equivalent modulo $\texttt{SAMK}_e^{\l,\ac}$ to a $\{\vee,\wedge\}$-combination of $\E_n$-sentences of $\LL(\kk)$ and $\LL(\GG).$
\end{cor}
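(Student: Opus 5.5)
The plan is to specialize Proposition \ref{delredn} to formulas with no free variables, i.e.\ to take $x,y,z$ all empty. Then an $\E_n$-sentence $\Phi$ is equivalent modulo $\texttt{SAMK}_e^{\l,\ac}$ to a $\{\vee,\wedge\}$-combination of formulas of the three listed forms, and we check what each form becomes when the tuple $x$ is empty.

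For form (1), $t$ is then a closed $\LL$-term of sort $\KK$. Such a term is built from $0,1$ with $+,-,\cdot$ and the parameterized $\l$-functions. We note that $\l_{n,m}$ applied to prime-field elements stays in the prime field, since the prime field is perfect, so these values are $0$ or determined by $p$. Hence $t=0$ is decided by the characteristic, and the characteristic is determined by the residue characteristic because the theory is equicharacteristic. Concretely, we replace $t=0$ with the $\LL(\kk)$-sentence obtained by computing the same closed ring term in $\kk$. This translation is legitimate because residue and home characteristic coincide and the prime field embeds residually. Alternatively, in a given model $t=0$ is equivalent to $\top$ or $\bot$ according to $\sum_{i=1}^{p}1_\kk=0$ for the relevant primes, and we make this uniform by case distinction on $p$ through sentences of $\LL(\kk)$.

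For form (2), the formula becomes $\th(\underline{\ac}(t_1),\.,\underline{\ac}(t_n))$ with closed terms $t_i$. Each $t_i$ evaluates to a prime-field element $c_i$, and $\underline{\ac}(c_i)=\res(c_i)$ since $v(c_i)=0$ or $c_i=0$. So we substitute the corresponding closed $\LL(\kk)$-terms, which yields an $\E_n$-sentence of $\LL(\kk)$. To keep this uniform across characteristics, we write it as a disjunction over the finitely many relevant cases, each guarded by a quantifier-free characteristic sentence of $\kk$. This preserves membership in $\E_n$, since $\E_n$ is closed under $\wedge$ and $\vee$.

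For form (3), $\underline{v}(t_i)$ is either $0$ or $\infty$, and the choice is dictated by the same characteristic case distinction. Substituting gives an $\E_n$-sentence of $\LL(\GG)$, guarded by $\LL(\kk)$ characteristic sentences.

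The main obstacle is the uniform evaluation of closed terms involving $\l$-functions. Specifically, we need to verify that $\l_{n,m}$ on prime-field inputs returns prime-field values determined by $p$ alone; this holds because a tuple of prime-field elements is never $p$-independent, so $\l_{n,m}=0$. Once that is checked, the case split over characteristic is finite for any given sentence, since only finitely many primes divide the integers involved, plus a ``generic'' case covering characteristic $0$ or large $p$. That completes the reduction.
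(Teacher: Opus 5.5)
Your proposal is correct and follows the paper's route: specialize Proposition \ref{delredn} to empty tuples of variables, then eliminate the closed $\KK$-terms using equicharacteristic and the vanishing of $\l_{n,m}$ on prime-field tuples. You are more careful than the paper's proof, which silently treats the instances of forms (2) and (3) with closed terms as sentences of $\LL(\kk)$ and $\LL(\GG)$, whereas you rewrite $\underline{\ac}(t_i)$ as a closed $\kk$-term and split $\underline{v}(t_i)\in\{0,\infty\}$ by quantifier-free characteristic sentences of $\LL(\kk)$ (form (3) being the only place a case distinction is genuinely needed).
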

\begin{proof}
    Any $\E_n$-sentence from $\LL$ is equivalent to a $\{\vee,\wedge\}$-combination of $\E_n$-sentences from $\LL(\kk),$ from $\LL(\GG)$ and of sentences of the form $t=0$ and $t\neq 0$ where $t$ is a term in $\LL(\KK)$ without free variables. These terms are either of the form $\sum_{i=1}^{n}1_{\KK}$ for $n\in\N,$ or of the form $\l_{n,m}\left(l,\sum_{i=1}^{k}1_{\KK}\right)$ where $n\geq 1,$ $m\in\Mon(n),$ $l=\left(\sum_{i=1}^{l_1}1_{\KK},\.,\sum_{i=1}^{l_n}1_{\KK}\right)$ and $l_1,\.,l_n,k\in\N.$ In the first case, since $\texttt{SAMK}_e^{\l,\ac}$ is a theory of \emph{equi-characteristic} valued fields, the sentences $\sum_{i=1}^{n}1_{\KK}=0$ and $\sum_{i=1}^{n}1_{\KK}\neq 0$ are equivalent modulo $\texttt{SAMK}_e^{\l,\ac}$ to $\sum_{i=1}^{n}1_{\kk}=0$ and $\sum_{i=1}^{n}1_{\kk}\neq 0$ respectively, which are $\E_n$-sentences from $\LL(\kk).$ In the second case, since the prime subfield $\F$ of any field is perfect, there are no $p$-independent tuples of elements of $\F$ for any $p,$ implying that $\texttt{SAMK}_{e}^{\l,\ac}\vdash\l_{n,m}\left(l,\sum_{i=1}^{k}1_{\KK}\right)=0,$ so the corresponding sentences $t=0$ and $t\neq0$ are equivalent to $0=0$ or $0\neq0,$ which can be trivially dismissed from the $\{\vee,\wedge\}$-combination.
\end{proof}

\begin{cor}[AKE$^{\Rightarrow_{\E_n}},$ AKE$^{\preceq_{\E_n}}$]
\label{akesen}
    Let $\MM,\NN\models \texttt{SAMK}_{e}^{\l,\ac}$ and let $n\geq1.$ Then: 
    \begin{enumerate}
        \item $\MM\Rightarrow_{\E_n}\NN$ if and only if $\GG(M)\Rightarrow_{\E_n}\GG(N)$ as $\LL_\GG$-structures and $\kk(M)\Rightarrow_{\E_n}\kk(N)$ as $\LL_\kk$-structures. 

        \item If $\MM\subseteq\NN,$ then $\MM\preceq_{\E_n}\NN$ if and only if $\GG(M)\preceq_{\E_n}\GG(N)$ as $\LL_\GG$-structures and $\kk(M)\preceq_{\E_n}\kk(N)$ as $\LL_\kk$-structures.
        
    \end{enumerate}
\end{cor}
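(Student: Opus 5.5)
Both items reduce to Proposition \ref{delredn} and Corollary \ref{sent}, plus the interpretations of the residue field and value group in the valued field. In each item, one direction (``only if'') is immediate and the other comes from Proposition \ref{delredn} or Corollary \ref{sent}.

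For item 1, the ``only if'' direction uses the natural interpretations of $\kk$ and $\GG$ in the three-sorted structure. These are quantifier-free definable sort restrictions, so every $\E_n$-sentence of $\LL_\kk$ (resp.\ $\LL_\GG$) is literally an $\E_n$-sentence of $\LL$, with quantifiers relativized to the sort (cf.\ Corollary \ref{eint}, as in Lemma \ref{resfieldint}). Hence $\MM\Rightarrow_{\E_n}\NN$ yields $\kk(M)\Rightarrow_{\E_n}\kk(N)$ and $\GG(M)\Rightarrow_{\E_n}\GG(N)$.

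For the ``if'' direction of item 1, I take an $\E_n$-sentence $\Phi$ true in $\MM$. By Corollary \ref{sent}, $\Phi$ is equivalent modulo $\texttt{SAMK}_e^{\l,\ac}$ to a $\{\vee,\wedge\}$-combination of $\E_n$-sentences from $\LL(\kk)$ and $\LL(\GG)$. This combination is monotone: it has no negations. I evaluate it as a positive Boolean function of its atoms. Every atom true in $\MM$ is true in $\NN$ by hypothesis, and positive Boolean combinations preserve truth under such a pointwise inclusion. So the combination holds in $\NN$, and since $\NN$ is also a model of $\texttt{SAMK}_e^{\l,\ac}$, $\Phi$ holds in $\NN$.

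For item 2, I apply item 1 to the expansions by constants. The ``only if'' direction again follows by restricting to sorts, now with parameters. For the ``if'' direction, I assume $\MM\subseteq\NN$ and take an $\E_n$-formula $\Phi(x,y,z)$ with parameters $a,\a,\g$ from $M$, $\kk(M)$, $\GG(M)$ such that $\MM\models\Phi(a,\a,\g)$. By Proposition \ref{delredn}, $\Phi$ is equivalent modulo $\texttt{SAMK}_e^{\l,\ac}$ to a $\{\vee,\wedge\}$-combination of three kinds of formulas, and I check each kind.
\begin{enumerate}
\item Atoms $t(x)=0$ and their negations are preserved because $\MM$ is an $\LL$-substructure of $\NN$: the terms, including the $\l$-functions and $\ac$, are computed identically.
\item Formulas $\th(\ac(t_1(a)),\.,\ac(t_k(a)),\a)$ have residue-field arguments lying in $\kk(M)$ and computed the same in both structures. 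They are preserved by $\kk(M)\preceq_{\E_n}\kk(N)$.
\item Formulas $\psi(v(t_i(a)),\g)$ are preserved in the same way by $\GG(M)\preceq_{\E_n}\GG(N)$.
\end{enumerate}
Monotonicity of the combination then gives $\NN\models\Phi(a,\a,\g)$, so $(\MM,m)_{m}\Rightarrow_{\E_n}(\NN,m)_m$.

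The remaining issue is the converse inclusion needed for $\preceq_{\E_n}$, which is defined through $\equiv_{\E_n}$. I handle it symmetrically. If $\NN\models\Phi(a,\a,\g)$ with parameters from $M$, I rewrite $\Phi$ via Proposition \ref{delredn} in $\NN$. Each component true in $\NN$ transfers down to $\MM$, by the reverse inclusions in the hypotheses $\kk(M)\preceq_{\E_n}\kk(N)$ and $\GG(M)\preceq_{\E_n}\GG(N)$ and by the substructure relation for the atoms.

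The only delicate point I expect is that the equivalence in Proposition \ref{delredn} holds uniformly modulo the theory. The same combination therefore works in both $\MM$ and $\NN$, which is exactly what the monotone-transfer argument requires.
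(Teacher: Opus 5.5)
Your proposal is correct and follows the paper's proof. Item 1 goes via Corollary \ref{sent}, using that positive $\{\vee,\wedge\}$-combinations transfer. Item 2 goes by checking, via Proposition \ref{delredn}, that each of the three kinds of components is preserved in both directions, using $\MM\subseteq\NN$, $\kk(M)\preceq_{\E_n}\kk(N)$ and $\GG(M)\preceq_{\E_n}\GG(N)$.

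One small point: your framing sentence ``apply item 1 to the expansions by constants'' would not literally work, because Corollary \ref{sent} does not handle home-sort constants. Your actual argument never uses it and goes through Proposition \ref{delredn} directly, exactly as the paper does.
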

\begin{proof}
    All direct implications are clear. We prove their converse implications. 
    \begin{enumerate}[wide]
        \item Suppose that $\kk(M)\Rightarrow_{\E_n}\kk(N)$ and $\GG(M)\Rightarrow_{\E_n}\GG(M).$ By Corollary \ref{sent}, any $\LL$-sentence is equivalent, modulo $\texttt{SAMK}_{e}^{\l,\ac},$ to a $\{\vee,\wedge\}$-combination of $\E_n$-sentences from $\LL(\kk)$ and $\LL(\GG),$ so the hypotheses yield that $\MM\Rightarrow_{\E_n}\NN,$ as wanted.

        \item Suppose that $\MM\subseteq\NN,$ that $\kk(M)\preceq_{\E_n}\kk(N)$ and that $\GG(M)\preceq_{\E_n}\GG(N)$. If $t(x)$ is an $\LL(\KK)$-term, then $\MM\subseteq\NN$ implies that $\MM\models t(m)=0$ if and only if $\NN\models t(m)=0$ for any $|x|$-tuple $m$ of $M.$ Also, since $\kk(M)\preceq_{\E_n}\kk(N)$ and $\GG(M)\preceq_{\E_n}\GG(N),$ then $\MM\models\chi(m)$ if and only if $\NN\models\chi(m)$ for any $\{\vee,\wedge\}$-combination $\chi(x)$ of $\E_n$-formulas from $\LL(\kk)$ and $\LL(\GG).$ We conclude that $\MM\preceq_{\E_n}\NN$ by Theorem \ref{delredn}.  
        \qedhere
    \end{enumerate}
\end{proof}

\subsection{Conditional \texorpdfstring{$\E_n$}{}-NIP Transfer}

In Section \ref{sectlres} we saw that performing $\l$-resolution to an indiscernible sequence of tuples of length $N,$ with respect to any $\LL_{\l}$-term, produces an indiscernible sequence of tuples of length \emph{at least} $N$. The goal of this section is to prove a refined version of Corollary \ref{fulltrans}, replacing NIP by $\E_n$-NIP for any given $n\geq 1,$ conditional to Condition \ref{acv}, which is an adaptation of \cite[Lemma A.18]{simon} to our multi-variable context. 
For readability purposes, we now change the notation for indiscernible sequences of tuples, putting the coordinate index as a subscript rather than as a superscript, i.e. the $i^{th}$ element $a_i$ of a sequence $a_{<\omega}$ of $N$-tuples is now noted as $(a_{i,1},\.,a_{i,N})$ rather than $(a_{i}^{1},\.,a_{i}^{N}).$ We do this so that exponentiation keeps its original notation: for example, $a_{i}^{2}$ denotes the \emph{square} of $a_i$ rather than the second coordinate of $a_i.$   

\begin{condition}[Cf.~{\cite[Lemma A.18]{simon}} for $N=1$]
\label{acv}
    Let $(M,v,\ac)$ be an ac-valued field and, for some $N\geq 1,$ let $a_{<\omega}$ be an indiscernible sequence of $N$-tuples of $M.$ Let $\mathbf{d}$ be any finite tuple from $M.$ Then there is an indiscernible sequence $(v_{<\omega,1},\.,v_{<\omega,N})$ of $N$-tuples of $vM$ and an indiscernible sequence $(c_{<\omega,1},\.,c_{<\omega,N})$ of $N$-tuples of $Mv$ such that,
    for any $P(X_1,\.,X_N)\in \F_p[\mathbf{d}][X_1,\.,X_N],$
    \begin{enumerate}
        \item there are $r_1,\.,r_{N}\in\N$ and $\g\in vM$ such that $v(P(a_i))=\g+\sum_{j=1}^{N}r_j\cdot v_{i,j}$, and
        \item there is some $q(X_1,\.,X_N)\in Mv[X_1,\.,X_N]$ such that $\ac(P(a_i))=q(c_{i,1},\.,c_{i,N}$  
    \end{enumerate}
    for all sufficiently large $i<\omega$.
\end{condition}

\begin{propo}
\label{niptrd}
    Let $(M,v,\ac)$ be a non-trivial ac-valued field of equal characteristic.
    Let $\LL_{3s}=\LL_{3s}(\LL_{\kk},\LL_{\GG}),$ $\LL=\LL(\LL_{\kk},\LL_{\GG})$ and let $\MM$ and $\MM^{\l,\ac}$ denote the $\LL_{3s}$- and $\LL$-structures generated by $(M,v,\ac)$ respectively. Finally, let $n\geq 1.$
    \begin{enumerate}
        \item If $\MM$ is $\E_n$-NIP, then $(M,v)$ is separably-defectless Kaplansky, $Mv$ is $\E_n$-NIP with respect to $\LL_\kk$ and $vM$ is $\E_n$-NIP with respect to $\LL_\GG.$ 

        \item Suppose Condition \ref{acv} holds. If $(M,v)$ is separable-algebraically maximal Kaplansky, then $\MM^{\l,\ac}$ is $\E_n$-NIP whenever $Mv$ is $\E_n$-NIP with respect to $\LL_\kk$ and $vM$ is $\E_n$-NIP with respect to $\LL_\GG.$
    \end{enumerate}
\end{propo}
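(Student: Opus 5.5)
Part (1) is assembled from Section \ref{5}, and Part (2) is a fragment-wise version of the coheir-free transfer of Section \ref{4}, with $\E_n$-elimination (Proposition \ref{delredn}) in place of quantifier elimination.

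\emph{Part (1).} An $\E_n$-NIP structure is in particular $\E$-NIP, since every existential formula is, up to dummy quantifiers, an $\E_n$-formula. Lemma \ref{sd} then gives that $(M,v)$ is separably defectless Kaplansky; $(M,v)$ is non-trivial and of equal characteristic. Lemma \ref{resfieldint} says the $\LL_\kk$-structure on $Mv$ and the $\LL_\GG$-structure on $vM$ are $\E_n$-to-$\E_n$ interpreted in $\MM$, so they are $\E_n$-NIP by Lemma \ref{dtod'nip}. Henselianity is not part of the conclusion, so nothing more is needed; if it were wanted in positive characteristic, Theorem \ref{eniphens} would supply it.

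\emph{Part (2).} I would first pass to an $\aleph_1$-saturated elementary extension. It is still $\texttt{SAMK}$, so $\MM^{\l,\ac}\models\texttt{SAMK}_e^{\l,\ac}$, and $\E_n$-NIP is a property of the theory. By Proposition \ref{delredn}, every $\E_n$-formula $\Phi(x,y)$ is equivalent to a $\{\vee,\wedge\}$-combination of three kinds of formula:
\begin{itemize}
\item formulas $t(x,y)=0$ and their negations;
\item formulas $\th(\ac(t_1),\dots,\ac(t_k),y')$ with $\th$ an $\E_n$-formula of $\LL_\kk$;
\item formulas $\psi(v(t_1),\dots,v(t_k),z')$ with $\psi$ an $\E_n$-formula of $\LL_\GG$.
\end{itemize}
NIP formulas are closed under boolean combinations, so it suffices to show each of these is NIP. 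The first kind is NIP by Theorem \ref{lqfnip}.

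For the second and third kinds, suppose $(a_i)_{i<\omega}$ is an indiscernible sequence of $N$-tuples and $b$ is a parameter tuple on which the formula alternates infinitely often. I would apply the Indiscernible $\l$-Resolution Lemma \ref{indtra} to the terms $t_j(x,b)$. This gives rational functions $f_j=P_j/Q_j$ over $\F_p$, an indiscernible sequence $(\a_i)_{i\geq l}$ of tuples uniformly definable from $a_i a_{<l}$, and parameters $d$, such that $t_j(a_i,b)=f_j(\a_i,d)$ for all large $i$. Since $v$ and $\ac$ are multiplicative, $v(t_j)=v(P_j)-v(Q_j)$ and $\ac(t_j)=\ac(P_j)\ac(Q_j)^{-1}$. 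I would then invoke Condition \ref{acv} for $(\a_i)$ with $\mathbf d=d$. It yields indiscernible sequences $(v_i)$ in $vM$ and $(c_i)$ in $Mv$ such that, for large $i$:
\begin{itemize}
\item each $v(P(\a_i))$ equals $\g_P+\sum_j r_{P,j}v_{i,j}$;
\item each $\ac(P(\a_i))$ equals $q_P(c_i)$.
\end{itemize}
Substituting these, the formula $\psi(v(t_1(a_i,b)),\dots)$ becomes $\psi'(v_i,\g,z')$, where $\psi'$ is obtained from $\psi$ by plugging in $\LL_{og}$-terms. It is still an $\E_n$-formula of $\LL_\GG$, and it alternates infinitely often along the indiscernible sequence $(v_i)$ with fixed parameters. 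This contradicts that $vM$ is $\E_n$-NIP. The residue case is the same: the $q_P$ are $\LL_{ring}$-terms with coefficients in $Mv$, and field inverses can be removed inside the quantifier-free matrix, so the resulting formula is again an $\E_n$-formula. One point to check is that the parameter $b$ may involve residue and value-group variables $y',z'$; these only add fixed parameters in $\kk$ and $\GG$, which is harmless. Another is that "alternates infinitely along an indiscernible sequence" is the correct criterion for IP of a single formula; this is \cite[Lemma 2.7]{simon}, and it is exactly why the multi-variable Condition \ref{acv} is needed, since single-variable reduction is unavailable for fragments.

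The main obstacle is making the substitution step precise. The values $v(t_j(a_i,b))$ must be expressed along a \emph{single} indiscernible sequence in $\GG$ (resp. $\kk$) simultaneously for all the $t_j$ and for both numerators and denominators. So Condition \ref{acv} has to be applied once, to the combined resolved sequence $(\a_i)$ and the combined parameters $d$, with all the polynomials $P_j,Q_j$ at once. One must also verify that the sequence $(v_i)$ (resp. $(c_i)$) it produces is indiscernible in the induced $\LL_\GG$- (resp. $\LL_\kk$-) structure, as the Condition asserts; only then is the alternation a genuine witness of IP for an $\E_n$-formula of $\LL_\GG$ (resp. $\LL_\kk$).
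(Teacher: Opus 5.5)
Your proposal is correct and follows the paper's proof essentially step for step: Part (1) is Lemmas \ref{sd} and \ref{resfieldint}, and you usefully make explicit that $\E_n$-NIP implies $\E$-NIP; Part (2) uses the reduction of Proposition \ref{delredn}, Theorem \ref{lqfnip} for the equations, and, for the residue and value-group formulas, Lemma \ref{indtra} followed by one application of Condition \ref{acv} to the combined resolved sequence. One small precision: for an arbitrary expansion $\LL_\kk$ of $\LL_{ring}$ you cannot always clear denominators inside the matrix, but you can name the quotients $q_P(c_i)/q_Q(c_i)$ by an extra existential quantifier absorbed into the leading $\E$-block, so the resulting formula stays in $\E_n$.
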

\begin{proof}
    \begin{enumerate}[wide]
        \item Follows from Lemmas \ref{sd} and \ref{resfieldint}.

        \item Since $\MM^{\l,\ac}\models\texttt{SAMK}^{\l,\ac}_e,$ where $e$ is the Ershov degree of $M,$ we only need to check that the highlighted formulas of Proposition \ref{delredn} are NIP. By Corollary \ref{lqfnip}, the formulas of the form of item 1 are NIP. Now, for the formula $\Phi(x;yz)=\th(\ac(t_1(x,y)),\.,\ac(t_{\nu}(x,y)),z)$ (resp.~$\Phi(x;yz)=\psi(v(t_1(x,y)),\.,v(t_{\nu}(x,y)),z)$) of the form of item 2 (resp.~item 3), if $\NN$ is an $\aleph_1$-saturated elementary extension of $\MM^{\l,\ac},$ $a_{<\omega}$ is an indiscernible sequence of $|x|$-tuples of $N,$ $\mathbf{b}$ is a $|y|$-tuple of parameters of $N$ and $\ol{b}$ is a $|z|$-tuple of parameters of $Nv$ (resp.~$vN$) such that $\NN\models\Phi(a_i;\mathbf{b},\ol{b})$ if and only if $i$ is even, then by Lemma \ref{indtra}, there is some $l<\omega,$  some polynomials $P_1,\.,P_{\nu},Q_1,\.,Q_{\nu}$ over $\F_p,$ some indiscernible sequences $\a^{(1)}_{<\omega},\.,\a^{(\nu)}_{<\omega}$ and some parameters $d_1,\.,d_{\nu}$ such that $\a_i^{(k)}\subseteq^{u}\dcl(a_ia_{<l})$ for all $i\geq l$ and all $k\in[\nu],$ and $t_k(a_i,\mathbf{b})=P_k(\a^{(k)}_i,d_i)/Q_k(\a^{(k)}_i,d_i)$ for all $k\in[\nu]$ and all sufficiently large $i\geq l.$
        It follows that there is some $\E_{n}$-formula $\th'$ (resp. $\psi'$) from $\LL_\kk$ (resp. from $\LL_\GG$) such that $$\Phi(a_i;\mathbf{b},\ol{b})=\th'(\ac(P_1(\a_i^{(1)},d_1)),\ac(Q_1(\a_i^{(1)},d_1)),\.,\ac(P_{\nu}(\a_i^{(\nu)},d_{\nu})),\ac(Q_{\nu}(\a_i^{(\nu)},d_{\nu})),\ol{b})$$ (resp. $\Phi(a_i;\mathbf{b},\ol{b})=\psi'(v(P_1(\a_i^{(1)},d_1)),v(Q_1(\a_i^{(1)},d_1)),\.,v(P_{\nu}(\a_i^{(\nu)},d_{\nu})),v(Q_{\nu}(\a_i^{(\nu)},d_{\nu}),\ol{b}))$) for all sufficiently large $i<\omega.$ Note that the sequence $(\a_i^{(1)},\.,\a_i^{(\nu)})_{i<\omega}$ remains indiscernible by definition of $\subseteq^{u}$.  
        Finally, applying Condition \ref{acv} to the polynomials $R_k,S_k,$ the sequence $\a_{<\omega}$ of length $N$ and the parameters $\mathbf{d},$ there is some indiscernible sequence $(c_{<\omega,1},\.,c_{<\omega,N})$ of $N$-tuples of $Mv$ (resp.~ an indiscernible sequence $(v_{<\omega,1},\.,v_{<\omega,N})$ of $N$-tuples of $vM$), some $\E_n$-formula $\th''$ (resp.~$\psi''$) from $\LL_\kk$ (resp.~from $\LL_\GG$) and some parameters $\e$ of $Nv$ (resp.~of $vN$) such that, if $i<\omega$ is sufficiently large, 
        $$\NN\models \th''(c_{i,1},\.,c_{i,N},\e,\ol{b})$$ (resp.~$\NN\models \psi''(v_{i,1},\.,v_{i,N},\e,\ol{b})$) if and only if $i$ is even, which contradicts the hypothesis of $Nv\succeq Mv$ (resp.~$vN\succeq vM$) being $\E_n$-NIP with respect to $\LL_{\kk}$ (resp.~$\LL_\GG$). The remaining splittings of the free variables of $\Phi$ are clear. 
        \qedhere
    \end{enumerate}
\end{proof}

\begin{cor}[Conditional $\E_n$-NIP Transfer]
\label{enniptr}
    Let $\LL_{3s}=\LL_{3s}(\LL_\kk,\LL_\GG),$ let $n\geq 1$ and let $\MM$ be the $\LL_{3s}$-structure generated by a henselian equi-characteristic valued field $(M,v)$. If $\MM$ is $\E_n$-NIP, then $(M,v)$ is separably-defectless Kaplansky, $Mv$ is $\E_n$-NIP with respect to $\LL_{\kk}$ and $vM$ is $\E_n$-NIP with respect to $\LL_{\GG}$. The converse implication holds if Condition \ref{acv} is satisfied.
\end{cor}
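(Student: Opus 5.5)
The plan is to reduce both directions to Proposition \ref{niptrd}, passing through a suitable saturated elementary extension so that an angular component is available. The three-sorted structure generated by a valued field does not in general carry an angular component. So I first fix an $\aleph_1$-saturated elementary extension $\NN\succeq\MM$ in $\LL_{3s}$; such an $\NN$ admits an angular component map, exactly as in the proof of Corollary \ref{fulltrans}. The property of being $\E_n$-NIP depends only on the theory, so $\MM$ is $\E_n$-NIP if and only if $\NN$ is. Likewise $Mv\equiv Nv$ in $\LL_\kk$ and $vM\equiv vN$ in $\LL_\GG$. Being henselian, equi-characteristic, Kaplansky and separably defectless are all transferred between $(M,v)$ and its elementary extension. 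For separable defectlessness I would use the fundamental equality together with the elementary characterisation of $\texttt{SAMK}$ recalled in Section \ref{2}.

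For the forward direction, suppose $\MM$ is $\E_n$-NIP. If $v$ is trivial, the residue field is $M$ itself and the value group is trivial. The claims then reduce to Lemma \ref{resfieldint}, and the Kaplansky condition is either vacuous or handled by the trivial case in Lemma \ref{sd}. If $v$ is nontrivial, I expand $\NN$ by its angular component and apply Item 1 of Proposition \ref{niptrd}. Alternatively, I can apply Lemmas \ref{sd} and \ref{resfieldint} directly to $\MM$: $\E_n$-NIP implies $\E$-NIP, since $\E=\E_1\subseteq\E_n$ up to adding dummy quantifiers. This gives that $(M,v)$ is separably defectless Kaplansky, and that $Mv$ and $vM$ are $\E_n$-NIP in their respective languages.

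For the converse, assume Condition \ref{acv}, that $(M,v)$ is henselian and separably defectless Kaplansky, and that $Mv$ and $vM$ are $\E_n$-NIP. Henselian together with separably defectless gives separable-algebraically maximal, by the fundamental equality, so $\NN$ with its angular component is a $\texttt{SAMK}$ field, and $Nv$, $vN$ remain $\E_n$-NIP. Item 2 of Proposition \ref{niptrd} then yields that $\NN^{\l,\ac}$ is $\E_n$-NIP in $\LL$.

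The remaining step, which I expect to be the main subtlety, is descending from $\LL$ back to $\LL_{3s}$ without losing control of the quantifier rank. Every $\LL_{3s}$-formula is an $\LL$-formula, and $\NN^{\l,\ac}$ is an expansion of $\NN$. Hence any $\E_n$-formula of $\LL_{3s}$ having IP in $\Th(\NN)$ would be an $\E_n$-formula of $\LL$ having IP in $\Th(\NN^{\l,\ac})$, since the witnesses of IP live in the same underlying model. Therefore $\NN$, and hence $\MM$, is $\E_n$-NIP. Since the expansion is by an actual map rather than by interpretation, no quantifiers are introduced in this step.
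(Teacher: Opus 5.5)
Your main line is the paper's own proof. You pass to an $\aleph_1$-saturated $\NN\succeq\MM$ to obtain an angular component. For the forward direction you use Item 1 of Proposition \ref{niptrd}, or equivalently Lemmas \ref{sd} and \ref{resfieldint}, which is how the paper proves that item. For the converse you use that henselian plus separably defectless gives $\texttt{SAMK}$, apply Item 2 of Proposition \ref{niptrd} to $\NN^{\l,\ac}$, and descend to the $\LL_{3s}$-reduct; your justification of that last step just spells out the paper's ``in particular, $\MM$''.

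The one step that is wrong is your treatment of a trivial valuation.

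\begin{itemize}
\item In characteristic $p$ the Kaplansky condition is \emph{not} vacuous when $v$ is trivial. It requires $Mv=M$ to admit no finite extension of degree divisible by $p$, so in particular $M$ must be perfect.
\item The alternative ``$(K,v)$ is trivial'' in Lemma \ref{sd} says nothing about this.
\item The perfectness step in Lemma \ref{kapl} needs a non-zero $c\in\M$ to make $X^{p}+cX-a$ separable, and no such $c$ exists when $\M=0$.
\end{itemize}

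In fact the conclusion fails. Take $\LL_\kk=\LL_{ring}$, $\LL_\GG=\LL_{og}\cup\{\infty\}$, and let $M$ be a separably closed imperfect field with the trivial valuation. It is henselian and equi-characteristic. Its $\LL_{3s}$-structure is interpretable in the stable field $M$, hence NIP. Yet $(M,v)$ is not Kaplansky.

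So the corollary has to be read with $v$ non-trivial. That is the standing hypothesis of Proposition \ref{niptrd}, and the axioms of $\texttt{SAMK}_e^{\l,\ac}$ used in the converse also require it; the paper's proof relies on it tacitly. You should impose non-triviality explicitly rather than claim the trivial case is covered.
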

\begin{proof}
    The direct implication is Statement 1 of Proposition \ref{niptrd}. For the converse implication, if $\MM$ is separably-defectless, Kaplansky and henselian, then it is a separable-algebraically maximal Kaplansky field. If $\NN\succeq\MM$ is $\aleph_1$-saturated, then $\NN$ admits an angular component map $\ac.$ If $\NN^{\l,\ac}$ is the associated $\LL$-structure, then $\NN^{\l,\ac}\models\texttt{SAMK}_e^{\l,\ac}$ where $e$ equals the Ershov degree of $M,$ and $Nv$ and $vN$ are still $\E_n$-NIP with respect to $\LL_\kk$ and $\LL_\GG$ respectively. It follows by Statement 2 of Proposition \ref{niptrd} that $\NN^{\l,\ac}$ and, in particular, $\MM,$ are $\E_n$-NIP as wanted. 
\end{proof}

If we set $\LL_{\GG}$ to be the usual language of ordered abelian groups with a symbol $\infty$ for a point at infinity, then \cite[Theorem 3.1]{gs} shows that any theory of ordered abelian groups is NIP in this language. This means that, in this language, formulas from the value group sort play no role in the transfer principle. Thus, if we set $\LL_{\kk}$ to be the language of rings, we get our third main theorem as a corollary of Corollary \ref{enniptr}.

\begin{teorema}
\label{nonrespltr}
    Let $\LL_{3s}$ be the usual three-sorted language of valued fields, let $n\geq 1$ and let $\MM$ be the $\LL_{3s}$-structure generated by a henselian equi-characteristic valued field $(M,v)$. If $\MM$ is $\E_n$-NIP, then $(M,v)$ is separably-defectless Kaplansky and $Mv$ is $\E_n$-NIP with respect to $\LL_{ring}$. The converse implication holds if Condition \ref{acv} is satisfied.
\end{teorema}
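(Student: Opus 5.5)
The plan is to obtain Theorem \ref{nonrespltr} as the special case of Corollary \ref{enniptr} with $\LL_\kk=\LL_{ring}$ and $\LL_\GG=\LL_{og}\cup\{\infty\}$. For the direct implication, Corollary \ref{enniptr} (equivalently Statement 1 of Proposition \ref{niptrd}, which rests on Lemmas \ref{sd} and \ref{resfieldint}) applies verbatim: if $\MM$ is $\E_n$-NIP then $(M,v)$ is separably-defectless Kaplansky and $Mv$ is $\E_n$-NIP in $\LL_\kk=\LL_{ring}$. The value group clause is simply discarded from the conclusion. One detail to note: if $(M,v)$ is trivially valued, Lemma \ref{sd} only yields ``trivial or separably defectless Kaplansky''. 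A trivial valuation is vacuously defectless, however, and the Kaplansky condition should then be read through the usual convention, or else handled by the non-trivial hypothesis that is implicit in Proposition \ref{niptrd}. I would add a one-line remark to this effect.

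For the converse, assume Condition \ref{acv}. Suppose $(M,v)$ is separably-defectless Kaplansky and $Mv$ is $\E_n$-NIP as a field. To invoke the converse of Corollary \ref{enniptr}, it suffices to check that $vM$ is $\E_n$-NIP with respect to $\LL_{og}\cup\{\infty\}$. By \cite[Theorem 3.1]{gs}, every ordered abelian group is NIP in the pure language of ordered groups. Adjoining the constant $\infty$ does not affect this: it is definable as a new point placed above everything, and $vM\cup\{\infty\}$ is interpretable in $vM$ with quantifier-free data via Lemma \ref{intcrit}, with $\infty$ coded, say, by an extra coordinate. Lemma \ref{dtod'nip} then transfers NIP, in particular $\E_n$-NIP. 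Henselianity is part of the hypothesis, so Corollary \ref{enniptr} gives that $\MM$ is $\E_n$-NIP.

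The only potential obstacle is bookkeeping rather than mathematics. First, the treatment of $\infty$ in the value group sort must be handled correctly, which the interpretation argument above settles. Second, the ``usual three-sorted language'' must coincide with $\LL_{3s}(\LL_{ring},\LL_{og}\cup\{\infty\})$ as set up in Section \ref{2}. I expect both points to be routine. All the substantial work lies in Proposition \ref{niptrd}, which is already established conditional on Condition \ref{acv}.
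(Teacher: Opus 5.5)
Your proposal is correct and is the paper's own argument: Theorem \ref{nonrespltr} is obtained by specializing Corollary \ref{enniptr} to $\LL_\kk=\LL_{ring}$ and $\LL_\GG=\LL_{og}\cup\{\infty\}$, with Gurevich--Schmitt \cite{gs} discharging the value-group hypothesis in the converse; your explicit treatment of $\infty$ is a harmless extra check. On the trivial-valuation caveat you raise, only your second option works: under the paper's definition of Kaplansky, a trivially valued separably closed imperfect field is NIP but not Kaplansky, because its residue field has a purely inseparable extension of degree $p$, so the direct implication genuinely needs the non-triviality assumption of Proposition \ref{niptrd}, which the statement as written omits.
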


The study of Condition \ref{acv} is still in progress. For instance, we were able to note the following generalization of \cite[Lemma A.17]{simon}. 

\begin{lema}[Cf.~{\cite[Lemma A.17]{simon}}]
\label{mono}
    For each $i<\omega,$ let $a_i=(a_{i,1},\.,a_{i,N})$ be an $N$-tuple of non-zero elements of an ac-valued field $(M,v,\ac).$ If the sequence $a_{<\omega}$ is indiscernible and the set $\left\{v\left(\frac{a_{1,j}}{a_{0,j}}\right):j\in[N]\right\}$ is rationally independent in $vM,$ then for any polynomial $P\in M[X_1,\.,X_N],$ there are some $\g\in vM,$ some $\a\in Mv$ and some non-negative integers $r_1,\.,r_{N}$ such that $$v(P(a_{i,1},\.,a_{i,N}))=\g+\sum_{j=1}^{N}r_j\cdot v(a_{i,j})\text{ and }\ac(P(a_{i,1},\.,a_{i,N}))=\a\cdot\prod_{j=1}^{N}\ac(a_{i,j})^{r_j}$$ for all sufficiently large $i<\omega.$  
\end{lema}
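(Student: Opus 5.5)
We expand $P$ into monomials and show that, for large $i$, exactly one monomial has strictly minimal valuation; this pins down both $v(P(a_i))$ and $\ac(P(a_i))$. Concretely, write $P=\sum_{\mu\in E}c_\mu X^{\mu}$ with $E\subseteq\N^{N}$ finite and $c_\mu\neq 0$; the case $P=0$ is excluded (or treated separately as trivial). For $\mu\in E$ the monomial value is
$$v(c_\mu a_i^{\mu})=v(c_\mu)+\sum_{j=1}^{N}\mu_j v(a_{i,j}).$$
Put $\delta_j=v(a_{1,j}/a_{0,j})$. By indiscernibility, $v(a_{i+1,j})-v(a_{i,j})=\delta_j$ for all $i$, so $v(a_{i,j})=v(a_{0,j})+i\delta_j$. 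Hence each monomial value is an affine function of $i$:
$$v(c_\mu a_i^\mu)=\g_\mu+i\,\langle\mu,\delta\rangle,\quad\text{where }\g_\mu=v(c_\mu)+\sum_j\mu_jv(a_{0,j})\text{ and }\langle\mu,\delta\rangle=\sum_j\mu_j\delta_j.$$

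The key point is that rational independence of $\{\delta_j\}$ makes the slopes $\langle\mu,\delta\rangle$, $\mu\in E$, pairwise distinct. Indeed, $\langle\mu-\mu',\delta\rangle=0$ with $\mu-\mu'\in\Z^{N}$ forces $\mu=\mu'$. Two affine functions $i\mapsto\g+i s$ and $i\mapsto\g'+is'$ on $\omega$, with values in an ordered abelian group and with $s\neq s'$, compare in a fixed way for all sufficiently large $i$. Suppose $s<s'$, so $s'-s>0$. The difference $(\g'-\g)+i(s'-s)$ is then increasing in $i$, and it is eventually positive unless $\g-\g'\geq i(s'-s)$ for all $i$. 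That bound can genuinely hold in a non-archimedean group, in which case the comparison is eventually (indeed always) negative. Either way, monotonicity in $i$ gives an eventually constant sign, and the sign is never $0$ for more than one value of $i$. Since $E$ is finite, there is $i_0$ beyond which the order of the values $v(c_\mu a_i^\mu)$ is fixed and strict, with a unique minimum attained at some $\mu^{*}\in E$.

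For $i\geq i_0$ the ultrametric inequality then gives $v(P(a_i))=v(c_{\mu^*}a_i^{\mu^*})$. Moreover $P(a_i)=c_{\mu^*}a_i^{\mu^*}(1+m_i)$ with $v(m_i)>0$. Since $\ac$ is multiplicative and $\ac(1+m)=1$ whenever $v(m)>0$, we get
$$\ac(P(a_i))=\ac(c_{\mu^*})\prod_j\ac(a_{i,j})^{\mu^*_j}.$$
So we set $r_j=\mu^*_j$, $\g=v(c_{\mu^*})$ and $\a=\ac(c_{\mu^*})$. The formula is stated in terms of $v(a_{i,j})$ rather than $v(a_{0,j})$, so $\g$ should be $v(c_{\mu^*})$, not $\g_{\mu^*}$. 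Note also that indiscernibility is used only through the arithmetic progression of values.

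The only real obstacle is the eventual-order step in a possibly non-archimedean value group. I would handle it exactly as above: monotonicity of $i\mapsto(\g'-\g)+i(s'-s)$ ensures that its sign changes at most once, so it is eventually constant and nonzero. If $a_i$ is required to consist of nonzero entries only, this is already given by hypothesis.
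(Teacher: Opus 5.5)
Your argument breaks at the step ``by indiscernibility, $v(a_{i+1,j})-v(a_{i,j})=\delta_j$ for all $i$''. Indiscernibility gives equality of \emph{types}, not equality of elements of $vM$. The pairs $(a_i,a_{i+1})$ and $(a_0,a_1)$ have the same type, so $v(a_{i+1,j}/a_{i,j})$ and $\delta_j$ satisfy the same parameter-free formulas, but they need not be equal.

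Under the hypotheses of the lemma, the arithmetic progression is in fact impossible. If it held, $(a_{0,j},a_{1,j},a_{2,j})$ would satisfy the parameter-free formula $v(x_0x_2)=v(x_1^2)$. By indiscernibility, so would $(a_{0,j},a_{1,j},a_{3,j})$, which forces $v(a_{3,j})=v(a_{2,j})$ and hence $\delta_j=0$. That contradicts rational independence. So the ``affine in $i$'' description and the ``pairwise distinct slopes'' argument built on it are unfounded. Your closing remark that indiscernibility is used only through the arithmetic progression points to exactly the wrong use.

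The repair is short, and it is the paper's first step. For $\mu\neq\mu'$ in $E$, put $n=\mu-\mu'\in\Z^N\setminus\{0\}$ and $s_i=\sum_j n_j v(a_{i,j})$.
\begin{itemize}
\item Rational independence of the $\delta_j$ gives $s_0\neq s_1$.
\item The comparison $\sum_j n_jv(x_j)<\sum_j n_jv(y_j)$ is a parameter-free formula, so indiscernibility makes $(s_i)_{i<\omega}$ strictly monotone.
\item Take $c=v(c_{\mu'})-v(c_\mu)$. Then $v(c_\mu a_i^{\mu})-v(c_{\mu'}a_i^{\mu'})=s_i-c$. Along a strictly monotone sequence this vanishes for at most one $i$ and changes sign at most once.
\end{itemize}
This is exactly what your eventual-order paragraph needs, with $s_i-c$ replacing the affine difference. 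Ranging over the finitely many pairs gives an $i_0$ beyond which the order of the monomial values is fixed and strict. From there your unique-minimum argument and $\ac(1+m)=1$ go through unchanged, with $\g=v(c_{\mu^*})$, $\a=\ac(c_{\mu^*})$ and $r_j=\mu^*_j$.

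Compared with the paper: after getting monotonicity and avoidance of the finitely many differences $v(b_\nu)-v(b_{\nu'})$, the paper invokes NIP of ordered abelian groups (Gurevich--Schmitt) to get an eventually constant minimizing monomial. Your pairwise sign argument, once it rests on strict monotonicity, reaches this directly and needs no NIP input.
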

If $N=1,$ this lemma does generalize \cite[Lemma A.17]{simon}. Indeed, the sequence $(v(a_{i,1}))_{i<\omega}$ is monotone if and only if $v(\frac{a_{1,1}}{a_{0,1}})$ is nonzero, which in turn is equivalent to $\left\{v(\frac{a_{1,1}}{a_{0,1}})\right\}$ being a rationally independent subset of $vM.$

\begin{proof}[Proof of Lemma \ref{mono}]
    Note first that if the set $\left\{v\left(\frac{a_{1,j}}{a_{0,j}}\right):j\in[N]\right\}$ is rationally independent in $vM,$ then for any non-trivial $N$-tuple of integers $(r_1,\.,r_N),$ we have that $\sum_{j=1}^{N}r_j\cdot v(a_{0,j})\neq \sum_{j=1}^{N}r_j\cdot v(a_{1,j}),$ which, by indiscernibility, implies that the sequence $\left(\sum_{j=1}^{N}r_j\cdot v(a_{i,j})\right)_{i<\omega}$ is monotone. Second, write $P(X_1,\.,X_N)=\sum_{\nu=1}^{k}b_{\nu}X_1^{m_{\nu,1}}\.X_N^{m_{\nu,N}}$ where $\{(m_{\nu,1},\.,m_{\nu,N}):\nu\in[k]\}$ is a set of pairwise different $N$-tuples of $\N.$ It follows from monotonicity of the sequence $\left(\sum_{j=1}^{N}r_j\cdot v(a_{i,j})\right)_{i<\omega}$ that there is some $i^{*}<\omega$ such that $$\sum_{j=1}^{N}(m_{l,j}-m_{l',j})\cdot v(a_{i,j})\not\in\Bigg\{v(b_\nu)-v(b_{\nu'}):\nu,\nu'\in[k],\nu\neq \nu'\Bigg\}$$ for any $i\geq i^{*}$ and any pair $l,l'\in[k]$ with $l\neq l'.$  
    Let $S_i$ be the set of values of the monomials of $P(a_{i,1},\.,a_{i,N}),$ namely $$S_i=\left\{v(b_{\nu})+\sum_{j=1}^{N}m_{\nu,j}\cdot v(a_{i,j}):\nu\in[k]\right\}.$$ In particular, for all $i\geq i^{*},$ $S_i$ consists of pairwise different elements of $vM,$ so there is some unique index $\nu_i\in[k]$ such that $v(b_{\nu_i})+\sum_{j=1}^{N}m_{\nu_i,j}\cdot v(a_{i,j})$ is the minimum element of $S_i.$ 
    Since the sequence $(v(a_{i,1}),\.,v(a_{i,N}))_{i<\omega}$ is indiscernible and the ordered abelian group $vM$ is NIP, cf.~\cite[Theorem 3.1]{gs}, there is some common $\nu\in[k]$ such that $v(b_{\nu})+\sum_{j=1}^{N}m_{\nu,j}\cdot v(a_{i,j})$ is the unique minimum element of $S_i$ for all sufficiently large $i<\omega.$ 
    It follows that $v(P(a_{i,1},\.,a_{i,N}))=v(b_{\nu})+\sum_{j=1}^{N}m_{\nu,j}\cdot v(a_{i,j})$ and $\ac(P(a_{i,1},\.,a_{i,N}))=\ac(b_{\nu})\cdot\prod_{j=1}^{N}\ac(a_{i,j})^{m_{\nu,j}}$ for all sufficiently large $i<\omega,$ as wanted.
\end{proof}

\section*{Acknowledgments}
The author would like to thank his supervisor Sylvy Anscombe for encouraging him to pursue this project and for all her valuable advice. The author has received funding from the European Union’s Horizon 2020 research and innovation program under the Marie Sk\lw odowska-Curie grant agreement No 945332. 
\includegraphics*[scale = 0.028]{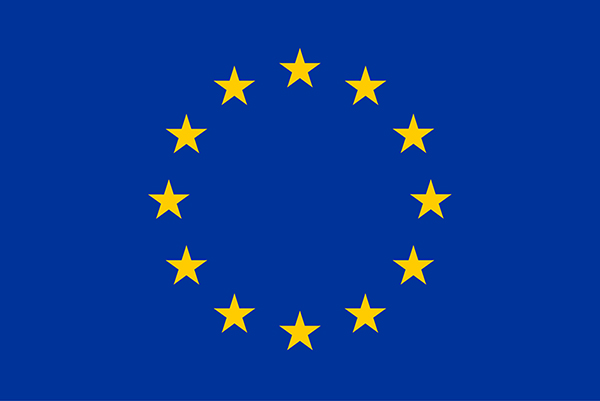}

\bibliographystyle{alpha}
\bibliography{bib}

@misc{ht,
      title={Immediate expansions by valuation of fields}, 
      author={Jizhan Hong},
      year={2013},
      note={PhD Thesis. Available at \url{https://macsphere.mcmaster.ca/handle/11375/13278}},
}

@misc{sm,
      title={Relative {Q}uantifier {E}limination for {S}eparable-{A}lgebraically {M}aximal {K}aplansky Fields}, 
      author={Paulo Andrés Soto Moreno},
      year={2025},
      eprint={2505.07418},
      archivePrefix={arXiv},
      primaryClass={math.LO},
      url={https://arxiv.org/abs/2505.07418}, 
note={\href{https://arxiv.org/abs/2505.07418}{arXiv:2505.07418 [math.LO]}}
}

@misc{at,
      title={On Lambda Functions in Henselian and Separably Tame Valued Fields}, 
      author={Sylvy Anscombe},
      year={2025},
    eprint={2505.07518},
    archivePrefix={arXiv},
    primaryClass={math.LO},
      url={https://arxiv.org/abs/2505.07518}, 
note={\href{https://arxiv.org/abs/2505.07518}{arXiv:2505.07518 [math.LO]}}
}

@article{dp1a,
title = {Dp-finite fields {I}(A): The infinitesimals},
journal = {Annals of Pure and Applied Logic},
volume = {172},
number = {6},
pages = {102947},
year = {2021},
issn = {0168-0072},
doi = {https://doi.org/10.1016/j.apal.2021.102947},
url = {https://www.sciencedirect.com/science/article/pii/S0168007221000063},
author = {Will Johnson},
}

@article {ax,
    AUTHOR = {Ax, James},
     TITLE = {The elementary theory of finite fields},
   JOURNAL = {Ann. of Math. (2)},
  FJOURNAL = {Annals of Mathematics. Second Series},
    VOLUME = {88},
      YEAR = {1968},
     PAGES = {239--271},
      ISSN = {0003-486X},
   MRCLASS = {10.80},
  MRNUMBER = {229613},
MRREVIEWER = {D.\ J.\ Lewis},
       DOI = {10.2307/1970573},
       URL = {https://doi.org/10.2307/1970573},
}

@article{fvk17,
title = {Places of algebraic function fields in arbitrary characteristic},
journal = {Advances in Mathematics},
volume = {188},
number = {2},
pages = {399-424},
year = {2004},
issn = {0001-8708},
doi = {https://doi.org/10.1016/j.aim.2003.07.021},
url = {https://www.sciencedirect.com/science/article/pii/S0001870803003451},
author = {Franz-Viktor Kuhlmann}
}

@article{feh,
author = {Fehm, Arno},
year = {2011},
month = {03},
pages = {533-544},
title = {Embeddings of function fields into ample fields},
volume = {134},
journal = {Manuscripta Mathematica},
doi = {10.1007/s00229-010-0415-8}
}

@book {hodges,
    AUTHOR = {Hodges, Wilfrid},
     TITLE = {Model theory},
    SERIES = {Encyclopedia of Mathematics and its Applications},
    VOLUME = {42},
 PUBLISHER = {Cambridge University Press, Cambridge},
      YEAR = {1993},
     PAGES = {xiv+772},
      ISBN = {0-521-30442-3},
   MRCLASS = {03-01 (03-02 03Cxx)},
  MRNUMBER = {1221741},
MRREVIEWER = {J.\ M.\ Plotkin},
       DOI = {10.1017/CBO9780511551574},
       URL = {https://doi.org/10.1017/CBO9780511551574},
}

@book {poizmt,
    AUTHOR = {Poizat, Bruno},
     TITLE = {A course in model theory},
    SERIES = {Universitext},
      NOTE = {An introduction to contemporary mathematical logic,
              Translated from the French by Moses Klein and revised by the
              author},
 PUBLISHER = {Springer-Verlag, New York},
      YEAR = {2000},
     PAGES = {xxxii+443},
      ISBN = {0-387-98655-3},
   MRCLASS = {03Cxx (03-02 03C07 03C45)},
  MRNUMBER = {1757487},
       DOI = {10.1007/978-1-4419-8622-1},
       URL = {https://doi.org/10.1007/978-1-4419-8622-1},
}

@incollection {jf,
    AUTHOR = {Fehm, Arno and Jahnke, Franziska},
     TITLE = {Recent progress on definability of {H}enselian valuations},
 BOOKTITLE = {Ordered algebraic structures and related topics},
    SERIES = {Contemp. Math.},
    VOLUME = {697},
     PAGES = {135--143},
 PUBLISHER = {Amer. Math. Soc., Providence, RI},
      YEAR = {2017},
      ISBN = {978-1-4704-2966-9; 978-1-4704-4222-4},
   MRCLASS = {12J10 (03C60 12L12)},
  MRNUMBER = {3716069},
MRREVIEWER = {Franz-Viktor\ Kuhlmann},
       DOI = {10.1090/conm/697/14049},
       URL = {https://doi.org/10.1090/conm/697/14049},
}

@incollection {delon,
    AUTHOR = {Delon, Fran\c{c}oise},
     TITLE = {Types sur {${\bf C}((X))$}},
 BOOKTITLE = {Study {G}roup on {S}table {T}heories ({B}runo {P}oizat),
              {S}econd year: 1978/79 ({F}rench)},
     PAGES = {Exp. No. 5, 29},
 PUBLISHER = {Secr\'{e}tariat Math., Paris},
      YEAR = {1981},
      ISBN = {2-85926-274-1},
   MRCLASS = {03C60 (03C45 12L99)},
  MRNUMBER = {620032},
MRREVIEWER = {G.\ Cherlin},
}

@book{ss, place={Cambridge}, series={New Mathematical Monographs}, title={Spectral Spaces}, publisher={Cambridge University Press}, author={Dickmann, Max and Schwartz, Niels and Tressl, Marcus}, year={2019}, collection={New Mathematical Monographs}}

@misc{af,
      title={Interpretations of syntactic fragments of theories of fields}, 
      author={Sylvy Anscombe and Arno Fehm},
      year={2024},
      eprint={2312.17616},
      archivePrefix={arXiv},
      primaryClass={math.LO},
      url={https://arxiv.org/abs/2312.17616}, 
note={\href{https://arxiv.org/abs/2312.17616}{arXiv:2312.17616 [math.LO]}}
}

@misc{b,
      title={{NIP}n {CHIPS}}, 
      author={Blaise Boissonneau},
      year={2024},
      eprint={2401.04697},
      archivePrefix={arXiv},
      primaryClass={math.LO},
      url={https://arxiv.org/abs/2401.04697}, 
}

@article {bel,
    AUTHOR = {Bélair, Luc},
     TITLE = {Types dans les corps valu\'{e}s munis d'applications coefficients},
   JOURNAL = {Illinois J. Math.},
  FJOURNAL = {Illinois Journal of Mathematics},
    VOLUME = {43},
      YEAR = {1999},
    NUMBER = {2},
     PAGES = {410--425},
      ISSN = {0019-2082},
   MRCLASS = {03C60 (03C10 12J10 12L12)},
  MRNUMBER = {1703196},
MRREVIEWER = {Franz-Viktor Kuhlmann},
       URL = {http://projecteuclid.org/euclid.ijm/1255985223},
}

@article{h,
 ISSN = {00224812, 19435886},
 URL = {http://www.jstor.org/stable/44161512},
 author = {Jizhan Hong},
 journal = {The Journal of Symbolic Logic},
 number = {3},
 pages = {887--900},
 publisher = {[Association for Symbolic Logic, Cambridge University Press]},
 title = {SEPARABLY CLOSED VALUED FIELDS: QUANTIFIER ELIMINATION},
 urldate = {2024-12-06},
 volume = {81},
 year = {2016}
}

@article {r,
    AUTHOR = {Rideau, Silvain},
     TITLE = {Some properties of analytic difference valued fields},
   JOURNAL = {J. Inst. Math. Jussieu},
  FJOURNAL = {Journal of the Institute of Mathematics of Jussieu. JIMJ.
              Journal de l'Institut de Math\'{e}matiques de Jussieu},
    VOLUME = {16},
      YEAR = {2017},
    NUMBER = {3},
     PAGES = {447--499},
      ISSN = {1474-7480},
   MRCLASS = {03C10 (03C45 03C60 12H10 32B05 32P05)},
  MRNUMBER = {3646280},
MRREVIEWER = {G. Cherlin},
       DOI = {10.1017/S1474748015000183},
       URL = {https://doi.org/10.1017/S1474748015000183},
}

@book {cherlin,
    AUTHOR = {Cherlin, Greg},
     TITLE = {Model theoretic algebra---selected topics},
    SERIES = {Lecture Notes in Mathematics, Vol. 521},
 PUBLISHER = {Springer-Verlag, Berlin-New York},
      YEAR = {1976},
     PAGES = {iv+234},
   MRCLASS = {02H15 (12L99 16-02 20K99)},
  MRNUMBER = {539999},
}

@article {a,
    AUTHOR = {Anscombe, Sylvy},
     TITLE = {Existentially generated subfields of large fields},
   JOURNAL = {J. Algebra},
  FJOURNAL = {Journal of Algebra},
    VOLUME = {517},
      YEAR = {2019},
     PAGES = {78--94},
      ISSN = {0021-8693},
   MRCLASS = {12L12 (03C60 12J10)},
  MRNUMBER = {3869267},
MRREVIEWER = {Ricardo Bianconi},
       DOI = {10.1016/j.jalgebra.2018.09.021},
       URL = {https://doi.org/10.1016/j.jalgebra.2018.09.021},
}

@article {fvktf,
    AUTHOR = {Kuhlmann, Franz-Viktor},
     TITLE = {The algebra and model theory of tame valued fields},
   JOURNAL = {J. Reine Angew. Math.},
  FJOURNAL = {Journal f\"{u}r die Reine und Angewandte Mathematik. [Crelle's
              Journal]},
    VOLUME = {719},
      YEAR = {2016},
     PAGES = {1--43},
      ISSN = {0075-4102},
   MRCLASS = {03C60 (12J10)},
  MRNUMBER = {3552490},
MRREVIEWER = {Sylvy Anscombe},
       DOI = {10.1515/crelle-2014-0029},
       URL = {https://doi.org/10.1515/crelle-2014-0029},
}

@book {fj,
    AUTHOR = {Fried, Michael D. and Jarden, Moshe},
     TITLE = {Field arithmetic},
    SERIES = {Ergebnisse der Mathematik und ihrer Grenzgebiete. 3. Folge. A Series of Modern Surveys in Mathematics},
    VOLUME = {11},
PUBLISHER = {Springer, Cham},
      YEAR = {2023},
      ISBN = {978-3-031-28019-1; 978-3-031-28020-7},
   MRCLASS = {12E30 (03B25 03C10 03C20 03C60 11Rxx)},
  MRNUMBER = {4647281},
       DOI = {10.1007/978-3-031-28020-7},
       URL = {https://doi.org/10.1007/978-3-031-28020-7},
}

@book {ep,
    AUTHOR = {Engler, Antonio J. and Prestel, Alexander},
     TITLE = {Valued fields},
    SERIES = {Springer Monographs in Mathematics},
 PUBLISHER = {Springer-Verlag, Berlin},
      YEAR = {2005},
     PAGES = {x+205},
      ISBN = {978-3-540-24221-5; 3-540-24221-X},
   MRCLASS = {12J20 (12F05 12J10 12J12 12J15)},
  MRNUMBER = {2183496},
MRREVIEWER = {Niels Schwartz},
}

@article {ksw,
    AUTHOR = {Kaplan, Itay and Scanlon, Thomas and Wagner, Frank O.},
     TITLE = {Artin-{S}chreier extensions in {NIP} and simple fields},
   JOURNAL = {Israel J. Math.},
  FJOURNAL = {Israel Journal of Mathematics},
    VOLUME = {185},
      YEAR = {2011},
     PAGES = {141--153},
      ISSN = {0021-2172,1565-8511},
   MRCLASS = {03C45 (12J20 12L12)},
  MRNUMBER = {2837131},
MRREVIEWER = {Assaf\ Hasson},
       DOI = {10.1007/s11856-011-0104-7},
       URL = {https://doi.org/10.1007/s11856-011-0104-7},
}

@InProceedings{duret,
author="Duret, Jean-Louis",
title="Les corps faiblement algebriquement clos non separablement clos ont la propriete d'independance",
booktitle="Model Theory of Algebra and Arithmetic",
year="1980",
publisher="Springer Berlin Heidelberg",
address="Berlin, Heidelberg",
pages="136--162",
isbn="978-3-540-38393-2"
}

@book {simon,
    AUTHOR = {Simon, Pierre},
     TITLE = {A guide to {NIP} theories},
    SERIES = {Lecture Notes in Logic},
    VOLUME = {44},
 PUBLISHER = {Association for Symbolic Logic, Chicago, IL; Cambridge
              Scientific Publishers, Cambridge},
      YEAR = {2015},
     PAGES = {vii+156},
      ISBN = {978-1-107-05775-3},
   MRCLASS = {03-02 (03C45 03C60 03C64 68Q32)},
  MRNUMBER = {3560428},
MRREVIEWER = {Alf Onshuus},
       DOI = {10.1017/CBO9781107415133},
       URL = {https://doi.org/10.1017/CBO9781107415133},
}

@article{poizat,
title={Théories instables}, 
volume={46}, 
DOI={10.2307/2273753}, 
number={3}, 
journal={Journal of Symbolic Logic}, 
author={Poizat, Bruno}, 
year={1981}, 
pages={513–522}}

@article {js,
    AUTHOR = {Jahnke, Franziska and Simon, Pierre},
     TITLE = {N{IP} henselian valued fields},
   JOURNAL = {Arch. Math. Logic},
  FJOURNAL = {Archive for Mathematical Logic},
    VOLUME = {59},
      YEAR = {2020},
    NUMBER = {1-2},
     PAGES = {167--178},
      ISSN = {0933-5846,1432-0665},
   MRCLASS = {03C45 (12L12)},
  MRNUMBER = {4054852},
MRREVIEWER = {G.\ Cherlin},
       DOI = {10.1007/s00153-019-00685-8},
       URL = {https://doi.org/10.1007/s00153-019-00685-8},
}

@article{gs,
author = {Gurevich, Yuri and Schmitt, P.},
year = {1984},
month = {07},
pages = {},
title = {The Theory of Ordered Abelian Groups does not have the Independence Property},
volume = {284},
journal = {Transactions of The American Mathematical Society - Trans. Amer. Math. Soc.},
doi = {10.2307/1999281}
}

@article {kap,
    AUTHOR = {Kaplansky, Irving},
     TITLE = {Maximal fields with valuations},
   JOURNAL = {Duke Math. J.},
  FJOURNAL = {Duke Mathematical Journal},
    VOLUME = {9},
      YEAR = {1942},
     PAGES = {303--321},
      ISSN = {0012-7094},
   MRCLASS = {09.1X},
  MRNUMBER = {6161},
MRREVIEWER = {Saunders Mac Lane},
       URL = {http://projecteuclid.org/euclid.dmj/1077493226},
}

@article {afd,
    AUTHOR = {Anscombe, Sylvy and Dittmann, Philip and Fehm, Arno},
     TITLE = {Axiomatizing the existential theory of {$\mathbb{F}_q((t))$}},
   JOURNAL = {Algebra \& Number Theory},
  FJOURNAL = {Algebra \& Number Theory},
    VOLUME = {17},
      YEAR = {2023},
    NUMBER = {11},
     PAGES = {2013--2032},
      ISSN = {1937-0652,1944-7833},
   MRCLASS = {11U05 (03C60 11D88 11G25 12L05)},
  MRNUMBER = {4648855},
MRREVIEWER = {Franz-Viktor\ Kuhlmann},
       DOI = {10.2140/ant.2023.17.2013},
       URL = {https://doi.org/10.2140/ant.2023.17.2013},
}

@article{pop,
 ISSN = {0003486X, 19398980},
 URL = {http://www.jstor.org/stable/2118581},
 author = {Florian Pop},
 journal = {Annals of Mathematics},
 number = {1},
 pages = {1--34},
 publisher = {[Annals of Mathematics, Trustees of Princeton University on Behalf of the Annals of Mathematics, Mathematics Department, Princeton University]},
 title = {Embedding Problems Over Large Fields},
 urldate = {2026-01-29},
 volume = {144},
 year = {1996}
}

@article{aj,
   title={Characterizing {NIP} henselian fields},
   volume={109},
   ISSN={1469-7750},
   url={http://dx.doi.org/10.1112/jlms.12868},
   DOI={10.1112/jlms.12868},
   number={3},
   journal={Journal of the London Mathematical Society},
   publisher={Wiley},
   author={Anscombe, Sylvy and Jahnke, Franziska},
   year={2024},
   month=mar }

@book {oe,
    AUTHOR = {Endler, Otto},
     TITLE = {Valuation theory},
    SERIES = {Universitext},
      NOTE = {To the memory of Wolfgang Krull (26 August 1899--12 April
              1971)},
 PUBLISHER = {Springer-Verlag, New York-Heidelberg},
      YEAR = {1972},
     PAGES = {xii+243},
   MRCLASS = {12J20},
  MRNUMBER = {357379},
MRREVIEWER = {P. Ribenboim},
}

@book {tz,
    AUTHOR = {Tent, Katrin and Ziegler, Martin},
     TITLE = {A course in model theory},
    SERIES = {Lecture Notes in Logic},
    VOLUME = {40},
 PUBLISHER = {Association for Symbolic Logic, La Jolla, CA; Cambridge
              University Press, Cambridge},
      YEAR = {2012},
     PAGES = {x+248},
      ISBN = {978-0-521-76324-0},
   MRCLASS = {03C35 (03-01 03C45)},
  MRNUMBER = {2908005},
MRREVIEWER = {David Evans},
       DOI = {10.1017/CBO9781139015417},
       URL = {https://doi.org/10.1017/CBO9781139015417},
}

@article {fvkp,
    AUTHOR = {Kuhlmann, Franz-Viktor and Pal, Koushik},
     TITLE = {The model theory of separably tame valued fields},
   JOURNAL = {J. Algebra},
  FJOURNAL = {Journal of Algebra},
    VOLUME = {447},
      YEAR = {2016},
     PAGES = {74--108},
      ISSN = {0021-8693},
   MRCLASS = {03C60 (03C10 12J10 12L12)},
  MRNUMBER = {3427630},
MRREVIEWER = {G. Cherlin},
       DOI = {10.1016/j.jalgebra.2015.09.022},
       URL = {https://doi.org/10.1016/j.jalgebra.2015.09.022},
}

@book {ck,
    AUTHOR = {Chang, C. C. and Keisler, H. J.},
     TITLE = {Model theory},
    SERIES = {Studies in Logic and the Foundations of Mathematics},
    VOLUME = {73},
   EDITION = {Third},
 PUBLISHER = {North-Holland Publishing Co.},
   ADDRESS = {Amsterdam},
      YEAR = {1990},
     PAGES = {xvi+650},
      ISBN = {0-444-88054-2},
   MRCLASS = {03Cxx (03-01 03C57 03C95 03H05)},
  MRNUMBER = {1059055 (91c:03026)},
  BOEKCODE = {69F41},
}

\end{document}